\documentclass[11pt]{amsart}  

\usepackage{amsmath}
\usepackage{amsthm}
\usepackage{amsfonts}
\usepackage{amssymb}
\usepackage{eucal}
\usepackage{yfonts}
\usepackage[all]{xy}
\usepackage{amsxtra}
\usepackage{appendix} 
\usepackage{tensor} 
\usepackage{url}
\usepackage{upgreek}
\usepackage{etoolbox}
\apptocmd{\sloppy}{\hbadness 10000\relax}{}{}

\usepackage[pdftex, breaklinks, linktocpage=true, bookmarksopen=true,bookmarksopenlevel=0,bookmarksnumbered=true]{hyperref}

\RequirePackage{color}
\definecolor{bwgreen}{rgb}{0.183,1,0.5}
\definecolor{bwmagenta}{rgb}{0.7,0.0,0.1}
\definecolor{bwblue}{rgb}{0.317,0.161,1}

\vbadness=12000

\hypersetup{
pdfauthor = {\textcopyright\ Bryden Cais},
pdfcreator = {\LaTeX\ with package \flqq hyperref\frqq},
colorlinks = {true},
linkcolor={bwmagenta},	 
anchorcolor={},	 
citecolor={bwblue},	
filecolor={},	
menucolor={},	
runcolor={},	
urlcolor={bwgreen}, 
}

   \topmargin=0in
   \oddsidemargin=-0.1in
   \evensidemargin=-0.1in
   \textwidth=6.7in
   \textheight=8.2in

\CompileMatrices
\entrymodifiers={+!!<0pt,\fontdimen22\textfont2>}

\DeclareFontFamily{OT1}{rsfs}{}
\DeclareFontShape{OT1}{rsfs}{n}{it}{<-> rsfs10}{}
\DeclareMathAlphabet{\mathscr}{OT1}{rsfs}{n}{it}

\DeclareFontFamily{OT1}{pzc}{}
\DeclareFontShape{OT1}{pzc}{n}{it}{<->s*[2.2]pzc}{}
\DeclareMathAlphabet{\mathpzc}{OT1}{pzc}{b}{sl}


\makeatletter
\newcommand{\rmnum}[1]{\romannumeral #1}
\newcommand{\Rmnum}[1]{\expandafter\@slowromancap\romannumeral #1@}
\makeatother



\DeclareMathOperator{\id}{id}

\DeclareMathOperator{\Frac}{Frac}
\DeclareMathOperator{\ord}{ord} 
\DeclareMathOperator{\nil}{nil}

\newcommand*{\pr}{\rho}
\newcommand*{\ps}{\sigma}

\DeclareMathOperator{\Hom}{Hom}
\DeclareMathOperator{\End}{End}
\DeclareMathOperator{\Ext}{Ext}
\DeclareMathOperator{\Gal}{Gal}
\DeclareMathOperator{\GL}{GL}
\DeclareMathOperator{\SL}{SL}

\DeclareMathOperator{\Aut}{Aut}
\DeclareMathOperator{\Spec}{Spec}

\DeclareMathOperator{\et}{\acute{e}t}

\DeclareMathOperator{\dR}{dR}
\DeclareMathOperator{\Cris}{Cris}
\DeclareMathOperator{\cris}{cris}

\DeclareMathOperator{\tr}{tr}

\DeclareMathOperator{\Pic}{Pic}
\DeclareMathOperator{\Alb}{Alb}

\DeclareMathOperator{\Extrig}{Extrig}
\DeclareMathOperator{\Lie}{Lie}
\DeclareMathOperator{\Inf}{Inf}

\DeclareMathOperator{\Fil}{Fil}

\DeclareMathOperator{\sep}{sep}
\DeclareMathOperator{\perf}{rad}

\DeclareMathOperator{\Ig}{Ig}

\DeclareMathOperator{\im}{im}
\DeclareMathOperator{\rad}{rad}

\DeclareMathOperator{\Cot}{Cot}
\DeclareMathOperator{\pdiv}{pdiv}

\DeclareMathOperator{\mult}{m}
\DeclareMathOperator{\loc}{ll}

\DeclareMathOperator{\proj}{proj}
\DeclareMathOperator{\incl}{incl}
\DeclareMathOperator{\Hodge}{Hdg}

\DeclareMathOperator{\Null}{null}
\DeclareMathOperator{\sh}{sh}


\newcommand*{\R}{\ensuremath{\mathbf{R}}}   
\renewcommand*{\c}{\ensuremath{\mathbf{C}}}              
\newcommand*{\Z}{\ensuremath{\mathbf{Z}}}               
\newcommand*{\Q}{\ensuremath{\mathbf{Q}}}                           
             
\newcommand*{\Qbar}{\overline{\Q}}

\newcommand*{\Kbar}{\overline{K}}    
          
           		
\newcommand*{\Gm}{\ensuremath{{\mathbf{G}_m}}}   
\newcommand*{\Ga}{\ensuremath{{\mathbf{G}_a}}}   

\newcommand*{\m}{\mathfrak{M}}

\newcommand*{\s}{\mathfrak{S}}

\newcommand*{\A}{\ensuremath{\mathcal{A}}}

\newcommand*{\B}{\mathcal{B}}

\newcommand*{\C}{\mathbf{C}}
\newcommand*{\E}{\mathscr{E}}     
\newcommand*{\F}{\mathbf{F}}
\newcommand*{\scrF}{\mathscr{F}}

\newcommand*{\G}{\mathcal{G}}
\newcommand*{\scrG}{\mathscr{G}}  
\newcommand*{\scrH}{\mathscr{H}}                           
                               
\newcommand*{\I}{\mathscr{I}}                               
\newcommand*{\J}{\mathcal{J}}

\newcommand*{\scrM}{\mathscr{M}}

\renewcommand*{\O}{\mathscr{O}}                    
 
\newcommand*{\X}{\mathcal{X}}     
\newcommand*{\Y}{\mathcal{Y}}


\newcommand*{\scrHom}{\mathscr{H}\mathit{om}}      
\newcommand*{\scrExtrig}{\mathscr{E}\mathit{xtrig}}	
\newcommand*{\scrExt}{\mathscr{E}\mathit{xt}}               
\newcommand*{\scrLie}{\mathscr{L}\mathit{ie}}


\newcommand*{\D}{\ensuremath{\mathbf{D}}}

\renewcommand*{\H}{\ensuremath{\mathfrak{H}}}


\newcommand*{\Dual}[1]{{{#1}^t}}

\renewcommand*{\int}{\ensuremath{\mathrm{int}}}

\newcommand*{\e}{\ensuremath{\mathbf{E}}}
\renewcommand*{\a}{\ensuremath{\mathbf{A}}}

\renewcommand*{\u}[1]{\underline{#1}}
\renewcommand*{\o}[1]{\overline{#1}}
\newcommand*{\wh}[1]{\widehat{#1}}
\newcommand*{\wt}[1]{\widetilde{#1}}
\newcommand*{\nor}[1]{{#1}^{\mathrm{n}}}

\newcommand*{\tens}{\mathop{\otimes}\limits}

\newcommand*{\fiber}{\mathop{\times}\limits}

\DeclareMathOperator{\BT}{BT}


\theoremstyle{plain}
  \newtheorem{theorem}{Theorem}
  \newtheorem{proposition}[theorem]{Proposition}
  \newtheorem{lemma}[theorem]{Lemma}
  \newtheorem{corollary}[theorem]{Corollary}

\theoremstyle{definition}
  \newtheorem{definition}[theorem]{Definition}

\theoremstyle{remark}
  \newtheorem{example}[theorem]{Example}
  \newtheorem{remark}[theorem]{Remark}

  \newtheorem{warning}[theorem]{Warning}

\include{header}

\numberwithin{theorem}{subsection}  
\numberwithin{equation}{subsection}






\begin{document}
\title{The Geometry of Hida Families \Rmnum{2}: $\Lambda$-adic $(\varphi,\Gamma)$-modules
and $\Lambda$-adic Hodge Theory}

\author{Bryden Cais}
\address{University of Arizona, Tucson}
\curraddr{Department of Mathematics, 617 N. Santa Rita Ave., Tucson AZ. 85721}
\email{cais@math.arizona.edu}

\thanks{
	During the writing of this paper, the author was partially supported by an NSA Young Investigator grant
	(H98230-12-1-0238) and an NSF RTG (DMS-0838218).
	}

\dedicatory{To Haruzo Hida, on the occasion of his $60^{\text{th}}$ birthday.}

\subjclass[2010]{Primary: 11F33  Secondary: 11F67, 11G18, 11R23}
\keywords{Hida families, integral $p$-adic Hodge theory, de~Rham cohomology, crystalline cohomology.}
\date{\today}

\begin{abstract}	
	We construct the $\Lambda$-adic crystalline and Dieudonn\'e analogues of Hida's
	ordinary $\Lambda$-adic \'etale cohomology, 
	and employ integral $p$-adic Hodge theory to prove $\Lambda$-adic
	comparison isomorphisms between these cohomologies and
	the $\Lambda$-adic de Rham cohomology studied in \cite{CaisHida1}
	as well as Hida's $\Lambda$-adic \'etale cohomology.
	As applications of our work, we provide a ``cohomological" construction of the 
	family of $(\varphi,\Gamma)$-modules attached to Hida's ordinary $\Lambda$-adic \'etale
	cohomology by \cite{Dee}, and we give a new and purely geometric 
	proof of Hida's finitenes and control theorems.  We also prove suitable 
	$\Lambda$-adic duality theorems for each of the cohomologies we construct.  
\end{abstract}

\maketitle

\section{Introduction}\label{intro}

\subsection{Motivation}

In a series of groundbreaking papers \cite{HidaGalois} and \cite{HidaIwasawa}, Hida
constructed $p$-adic analytic families of $p$-ordinary Galois representations
interpolating the Galois representations attached to
$p$-ordinary cuspidal Hecke eigenforms in integer weights $k\ge 2$ by
Deligne \cite{DeligneFormes}, \cite{CarayolReps}.
At the heart of Hida's construction is the $p$-adic \'etale
cohomology $H^1_{\et}:=\varprojlim_r H^1_{\et}(X_1(Np^r)_{\Qbar},\Z_p)$ of the tower of modular curves
over $\Q$, 
which is naturally a module for the 
``big" $p$-adic Hecke algebra $\H^*:=\varprojlim_r \H_r^*$, which is itself an algebra
over the completed group ring $\Lambda:=\Z_p[\![\Delta_1]\!]\simeq \Z_p[\![T]\!]$
via the diamond operators $\Delta_r:=1+p^r\Z_p$.  
Writing $e^*\in \H^*$ for the idempotent attached to the (adjoint) Atkin operator $U_p^*$,
Hida proves (via explicit computations in group cohomology) that
the ordinary part $e^*H^1_{\et}$ of $H^1_{\et}$ is
finite and free as a module over $\Lambda$, and that the resulting Galois representation
\begin{equation*}
	\xymatrix{
		{\rho: \scrG_{\Q}} \ar[r] & {\Aut_{\Lambda}(e^*H^1_{\et}) \simeq \GL_m(\Z_p[\![T]\!])} 
		}
\end{equation*}
$p$-adically interpolates the representations attached to ordinary cuspidal eigenforms.

By analyzing the geometry of the tower of modular curves, Mazur and Wiles \cite{MW-Hida}
showed that both the inertial invariants and convariants of the
the local (at $p$) representation $\rho_p$ are free of the finite same rank
over $\Lambda$, and hence that the 
ordinary filtration of the Galois representations attached
to ordinary cuspidal eigenforms interpolates in Hida's $p$-adic family.
As an application, they gave examples of cuspforms $f$ and primes $p$
for which the specialization of the associated Hida family of Galois representations 
to weight $k=1$ is not Hodge--Tate,
and so does not arise from a weight one cuspform via the construction of Deligne-Serre
\cite{DeligneSerre}.
Shortly thereafter, Tilouine \cite{Tilouine} clarified the geometric underpinnings
of \cite{HidaGalois} and \cite{MW-Hida}.

In \cite{OhtaEichler}, \cite{Ohta1} and \cite{Ohta2},
Ohta initiated the study of the $p$-adic Hodge theory of
Hida's ordinary $\Lambda$-adic (local) Galois representation $\rho_p$.
Using the invariant differentials on the tower of $p$-divisible groups
over $R_{\infty}:=\Z_p[\mu_{p^{\infty}}]$ 
attached to the ``good quotient" modular abelian varieties
introduced in \cite{MW-Iwasawa} and
studied in \cite{MW-Hida} and \cite{Tilouine}, Ohta constructs
a certain 
$\Lambda_{R_{\infty}}:= R_{\infty}[\![\Delta_1]\!]$-module $e^*H^1_{\Hodge}$,
which is the Hodge cohomology analogue of $e^*H^1_{\et}$.
Via an integral version of the Hodge--Tate comparison isomorphism \cite{Tate}
for ordinary $p$-divisible groups, Ohta establishes a $\Lambda$-adic
Hodge-Tate comparison isomorphism relating
$e^*H^1_{\Hodge}$ and the semisimplification
of the ``semilinear representation" $\rho_{p}\wh{\otimes} \O_{\C_p}$.
Using Hida's results, Ohta shows that $e^*H^1_{\Hodge}$
is free of finite rank over $\Lambda_{R_{\infty}}$
and specializes to finite level exactly as one expects.  As applications
of his theory, Ohta provides a construction of two-variable $p$-adic $L$-functions
attached to families of ordinary cuspforms differing from that of Kitagawa \cite{Kitagawa},
and, in a subsequent paper \cite{Ohta2}, 
provides a new and streamlined proof of the theorem of Mazur--Wiles \cite{MW-Iwasawa}
(Iwasawa's Main Conjecture for $\Q$; see also \cite{WilesTotallyReal}).
We remark that Ohta's $\Lambda$-adic Hodge-Tate isomorphism is a crucial ingredient
in the forthcoming proof of Sharifi's conjectures \cite{SharifiConj}, \cite{SharifiEisenstein} 
due to Fukaya and Kato \cite{FukayaKato}.

In \cite{CaisHida1}, we continued the trajectory begun by Ohta
by constructing the de Rham analogue of $e^*H^1_{\et}$.   
Using the canonical integral structures in de Rham cohomology
studied in \cite{CaisDualizing} and certain Katz-Mazur \cite{KM} integral
models $\X_r$ of $X_1(Np^r)$ over $R_r:=\Z_p[\mu_{p^r}]$, for each $r> 0$ we 
constructed a canonical
short exact sequences of free $R_r$-modules
\begin{equation}
	\xymatrix{
		0\ar[r] & {H^0(\X_r, \omega_{\X_r/R_r})} \ar[r] & {H^1(\X_r/R_r)} \ar[r] & {H^1(\X_r,\O_{\X_r})}
		\ar[r] & 0 
	}\label{finiteleveldRseq}
\end{equation}
whose scalar extension to $K_r:=\Frac(R_r)$ recovers the Hodge filtration of the de Rham cohomology 
of $X_1(Np^r)$ over $K_r$.
Extending scalars to $R_{\infty}$, taking projective limits, and passing to ordinary parts
gives a sequence of modules over $\Lambda_{R_{\infty}}$
with semilinear $\Gamma:=\Gal(K_{\infty}/K_0)$-action
and commuting linear $\H^*$-action 
\begin{equation}
		\xymatrix{
		0\ar[r] & {e^*H^0(\omega)} \ar[r] & {e^*H^1_{\dR}} \ar[r] & {e^*H^1(\O)} \ar[r] & 0
	}.\label{orddRseq}
\end{equation}
The main result of \cite{CaisHida1} is that (\ref{orddRseq}) 
is the correct de Rham analogue of Hida's ordinary $\Lambda$-adic \'etale cohomology
and Ohta's ordinary $\Lambda$-adic Hodge cohomology (see \cite[Theorem 3.2.3]{CaisHida1}):

\begin{theorem}\label{dRMain}
	Let $d=\sum_{k=3}^{p+1} d_k$ for $d_k:=\dim_{\F_p} S_k(\Upgamma_1(N);\F_p)^{\ord}$
	the $\F_p$-dimension of the space of mod $p$ weight $k$ ordinary cuspforms for $\Upgamma_1(N)$.
	Then $(\ref{orddRseq})$ is a short exact sequence of free $\Lambda_{R_{\infty}}$-modules
	of ranks $d$, $2d$, and $d$, respectively.  
	Applying $\otimes_{\Lambda_{R_{\infty}}} R_{\infty}[\Delta_1/\Delta_r]$ to $(\ref{orddRseq})$
	recovers the ordinary part of the scalar extension of $(\ref{finiteleveldRseq})$ to $R_{\infty}$.
\end{theorem}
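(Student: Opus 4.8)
The plan is to bootstrap from the finite-level sequences (\ref{finiteleveldRseq}), using two geometric inputs: the behaviour of ordinary coherent cohomology under the trace maps of the tower, and a computation of the reduction modulo $p$ of the outer terms.

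First I would observe that (\ref{finiteleveldRseq}) is a short exact sequence of finite free $R_r$-modules and that $e^*$ is an idempotent acting $R_r$-linearly; since $R_r$ is local, applying $e^*$ splits off a direct summand, yielding a short exact sequence of finite free $R_r$-modules. Scalar extension along the flat map $R_r \to R_\infty$ preserves exactness and freeness. The real point is to commute $\varprojlim_r$ with exactness, which I would do by checking the Mittag--Leffler condition: the transition maps of the tower $\{e^* H^0(\X_r, \omega_{\X_r/R_r})\}$, induced by the finite morphisms $\X_{r+1}\to\X_r$ via the trace, are surjective after applying $e^*$. This surjectivity is the vertical form of Hida's control, reflecting the invertibility of $U_p^*$ on the ordinary part; it forces $\varprojlim^1 = 0$ on the left-hand terms, so that (\ref{orddRseq}) is exact.

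Next I would reduce the freeness claim to the outer terms. An extension of a free module by a free module is again free, so it suffices to prove $e^* H^0(\omega)$ and $e^* H^1(\O)$ are free of rank $d$. Grothendieck--Serre duality for $\X_r/R_r$ furnishes a perfect $R_r$-bilinear pairing identifying $H^1(\X_r,\O_{\X_r})$ with the dual of $H^0(\X_r,\omega_{\X_r/R_r})$, under which $U_p$ and $U_p^*$ are mutually adjoint; compatibly with the tower this identifies $e^* H^1(\O)$ with the $\Lambda_{R_\infty}$-linear dual of $e\, H^0(\omega)$, where $e$ is the ordinary idempotent for $U_p$. As duality preserves freeness and rank, it is enough to prove that $e^* H^0(\omega)$ and $e\, H^0(\omega)$ are finitely generated and free of rank $d$; the statement for $e^* H^1(\O)$ then follows, and exactness forces $e^* H^1_{\dR}$ to be free of rank $2d$.

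The heart of the matter is therefore the structure of $M := e^* H^0(\omega)$ (and, symmetrically, $e\, H^0(\omega)$). I would first establish finite generation over the complete local ring $\Lambda_{R_\infty}$, with residue field $\F_p$, by a topological Nakayama argument, reducing the problem to bounding the fibre $M \otimes_{\Lambda_{R_\infty}} \F_p$. I would then identify this fibre, using the Katz--Mazur integral models $\X_r$ and the reduction modulo $p$ of $\omega$ together with the diamond action, with the space $\bigoplus_{k=3}^{p+1} S_k(\Upgamma_1(N);\F_p)^{\ord}$ of ordinary mod $p$ cuspforms, of total dimension $d$. The restriction to weights $3 \le k \le p+1$ is dictated by the filtration of mod $p$ forms by the Hasse invariant, of weight $p-1$, together with the action of the diamond operators of $p$-power order, which operates through this weight filtration; the ordinary projection selects exactly this window, and is the source of the numerical invariant $d$. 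The same analysis produces, for each $r$, a base-change isomorphism $M \otimes_{\Lambda_{R_\infty}} R_\infty[\Delta_1/\Delta_r] \xrightarrow{\sim} e^* H^0(\X_r,\omega)\otimes_{R_r} R_\infty$ with vanishing higher Tor; since the targets are finite free over the quotients $R_\infty[\Delta_1/\Delta_r]$, which form a cofinal system of specializations of $\Lambda_{R_\infty}$, this flatness upgrades finite generation to freeness, of rank $d$. Running the argument for all three terms simultaneously yields the control statement: applying $\otimes_{\Lambda_{R_\infty}} R_\infty[\Delta_1/\Delta_r]$ to (\ref{orddRseq}) recovers the ordinary part of the scalar extension of (\ref{finiteleveldRseq}) to $R_\infty$.

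The main obstacle is the fibre computation together with the base-change isomorphism. Both demand a delicate analysis of the Katz--Mazur models $\X_r$ in characteristic $p$, whose special fibres are highly singular, and of the behaviour of the coherent cohomology of $\omega$ and $\O$ under the degeneracy and trace maps. One must show that applying $e^*$ annihilates all the error terms --- higher cohomology and $\varprojlim^1$ contributions --- that would otherwise obstruct base change and exactness; this is precisely where ordinarity, through the invertibility of $U_p$, does the decisive work. Pinning down that the surviving fibre is governed by mod $p$ cuspforms in the narrow weight range $[3,p+1]$, rather than something larger or smaller, is the most delicate point and the true origin of the rank $d$.
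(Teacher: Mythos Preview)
This theorem is not proved in the present paper; it is quoted from the companion article \cite{CaisHida1} (as Theorem 3.2.3 there), where the Hodge filtration sequence (\ref{orddRseq}) is constructed and analyzed. Nevertheless, the method of proof is fully visible here through the parallel arguments for Theorems \ref{MainDieudonne} and \ref{MainThmCrystal}, which invoke the same commutative-algebra formalism \cite[Lemma 3.1.2]{CaisHida1}. Against that template, your sketch has the right ingredients---surjectivity of ordinary trace maps, analysis of the Katz--Mazur special fibres, a Nakayama-type freeness criterion---but the organization differs from the paper's in one substantive respect, and your version carries a risk of circularity.

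The paper's formalism does \emph{not} compute the single residue-field fibre $M\otimes_{\Lambda_{R_\infty}}\F_p$ of the limit. Instead, \cite[Lemma 3.1.2]{CaisHida1} takes as input two finite-level hypotheses: for each $r$, the reduction $M_r/pM_r$ is free of rank $d$ over $\F_p[\Delta/\Delta_r]$, and the mod-$p$ transition maps $\o{M}_r\to\o{M}_s$ are surjective. Freeness of the limit and the control isomorphisms then follow formally. The finite-level freeness is established by identifying the mod-$p$ reduction of $e^*H^0(\X_r,\omega)$ (respectively $e^*H^1(\X_r,\O)$) with the $V$-ordinary (respectively $F$-ordinary) part of the cohomology of the good Igusa components of $\o{\X}_r$; see the references to \cite[Proposition 2.4.1, Lemma 2.5.5, Proposition 2.5.6]{CaisHida1} in the proof of Theorem \ref{MainDieudonne} here. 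The weight range $3\le k\le p+1$ and the formula for $d$ emerge from that Igusa-curve computation, not from a Hasse-invariant filtration argument as you suggest. Your proposal to compute $M\otimes_{\Lambda_{R_\infty}}\F_p$ directly presupposes that one can commute the tensor product with the inverse limit defining $M$, but that commutation is essentially the control theorem you are trying to prove; the paper's level-by-level verification avoids this circularity. Finally, the paper does not use your duality reduction from $e^*H^1(\O)$ to the dual of $e\,H^0(\omega)$: all three terms of the Hodge filtration are handled in parallel by the same lemma, and the duality statement is recorded separately (\cite[Proposition 3.2.4]{CaisHida1}).
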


The natural cup-product auto-duality of 
$(\ref{finiteleveldRseq})$ over $R_r':=R_r[\mu_N]$ induces a canonical
$\Lambda_{R_{\infty}'}$-linear and $\H^*$-equivariant auto-duality 
of (\ref{orddRseq}) which intertwines the dual semilinear action of $\Gamma \times \Gal(K_0'/K_0)\simeq \Gal(K_{\infty}'/K_0)$
with a certain $\H^*$-valued twist of its standard action; see \cite[Proposition 3.2.4]{CaisHida1}
for the precise statement.
We moreover proved that, as one would expect, the $\Lambda_{R_{\infty}}$-module $e^*H^0(\omega)$ 
is canonically isomorphic to the  module $eS(N,\Lambda_{R_{\infty}})$ of ordinary $\Lambda_{R_{\infty}}$-adic
cusp forms of tame level $N$; see \cite[Corollary 3.3.5]{CaisHida1}.

\subsection{Results}\label{resultsintro}

In this paper, we cmplete our study of the geometry and $\Lambda$-adic Hodge 
theory of Hida families begun in \cite{CaisHida1} by constructing the crystalline counterpart
to Hida's ordinary $\Lambda$-adic \'etale cohomology,  Ohta's 
$\Lambda$-adic Hodge cohomology, and our $\Lambda$-adic de Rham cohomology.
Via a careful study of the geometry of modular curves and abelian varieties
and comparison isomorphisms in integral $p$-adic cohomology, 
we prove the appropriate control and finiteness theorems,
and a suitable $\Lambda$-adic version of every integral comparison isomorphism 
one could hope for.
In particular, we are able to recover the entire family
of $p$-adic Galois representations $\rho_{p}$ (and not just its
semisimplification) from our $\Lambda$-adic crystalline cohomology.
A remarkable byproduct of our work is a {\em cohomological} construction
of 
the family of \'etale $(\varphi,\Gamma)$-modules attached to $e^*H^1_{\et}$
by Dee \cite{Dee}.
  As an application of our theory, we give a new 
and purely geometric proof of Hida's freeness and control theorems
for $e^*H^1_{\et}$. 

In order to survey our main results more precisely, we introduce
some notation.   Throughout this paper, we fix a prime $p>2$ and a positive integer
$N$ with $Np > 4$.
Fix an algebraic closure $\Qbar_p$
of $\Q_p$ 
as well as a $p$-power compatible sequence 
$\{\varepsilon^{(r)}\}_{r\ge 0}$ of primitive $p^r$-th roots of unity in $\Qbar_p$.
As above, we set $K_r:=\Q_p(\mu_{p^r})$ and $K_r':=K_r(\mu_N)$, and we
write $R_r$ and $R_r'$ for the rings of integers
in $K_r$ and $K_r'$, respectively.  
Denote by $\scrG_{\Q_p}:=\Gal(\Qbar_p/\Q_p)$ the absolute Galois group
and by $\scrH$
the kernel of the $p$-adic cyclotomic character
$\chi: \scrG_{\Q_p}\rightarrow \Z_p^{\times}$.
Using that $K_0'/\Q_p$ is unramified, we canonically identify $\Gamma=\scrG_{\Q_p}/\scrH$
with $\Gal(K_{\infty}'/K_0')$.
We will denote by $\langle u\rangle$ (respectively $\langle v\rangle_N)$
the diamond operator\footnote{Note that $\langle u^{-1}\rangle=\langle u\rangle^*$
and $\langle v^{-1}\rangle_N = \langle v\rangle_N^*$, where $\langle\cdot\rangle^*$
and $\langle \cdot\rangle_N^*$
are the adjoint diamond operators; see \cite[\S2.2]{CaisHida1}.
}
in $\H^*$ attached to $u^{-1}\in \Z_p^{\times}$
(respectively $v^{-1}\in (\Z/N\Z)^{\times}$) and write 
$\Delta_r$ for the image of the restriction of $\langle\cdot\rangle :\Z_p^{\times}\hookrightarrow \H^*$
to $1+p^r\Z_p\subseteq \Z_p^{\times}$.  For convenience, we put $\Delta:=\Delta_1$,
and for any ring $A$ we write
$\Lambda_{A}:=\varprojlim_r A[\Delta/\Delta_r]$ for the completed group ring
on $\Delta$ over $A$; if $\varphi$ is an endomorphism of $A$, we again write $\varphi$
for the induced endomorphism of $\Lambda_A$ that acts as the identity on $\Delta$.
For any ring homomorphism $A\rightarrow B$,
we will write $(\cdot)_B:=(\cdot)\otimes_A B$ and $(\cdot)_B^{\vee}:=\Hom_B((\cdot)\otimes_A B , B)$ for these
functors from $A$-modules to $B$-modules.\footnote{This convention is unfortunately
somewhat at odds with our notation $\Lambda_A$, which (as an $A$-module) is in general neither
the tensor product $\Lambda\otimes_{\Z_p} A$ nor (unless $A$ is a complete $\Z_p$-algebra)
the completed tensor product $\Lambda \wh{\otimes}_{\Z_p} A$; we hope that this small abuse 
causes no confusion.}
  If $G$ is any group of automorphisms of $A$ and $M$
is an $A$-module with a semilinear action of $G$, for any ``crossed" 
homomorphism\footnote{That is, $\psi(\sigma\tau) = \psi(\sigma)\cdot\sigma\psi(\tau)$ for all $\sigma,\tau\in\Gamma$,} 
$\psi:G\rightarrow A^{\times}$ we will write $M(\psi)$ for the 
$A$-module $M$ with ``twisted" semilinear $G$-action given by $g\cdot m:=\psi(g)g m$.
Finally, we denote by $X_r:=X_1(Np^r)$ the 
usual modular curve over $\Q$ classifying (generalized) elliptic curves
with a $[\mu_{Np^r}]$-structure,
and by $J_r:=J_1(Np^r)$ its Jacobian.

We analyze the tower of $p$-divisible groups attached to the ``good quotient" modular abelian
varieties introduced by Mazur-Wiles \cite{MW-Iwasawa}. To avoid technical complications with logarithmic
$p$-divisible groups, following \cite{MW-Hida} and \cite{OhtaEichler}, we will henceforth remove the trivial tame character by working with the sub-idempotent ${e^*}'$ of $e^*$ corresponding to projection to the part
where $\mu_{p-1}\subseteq \Z_p^{\times}$ acts {\em non}-trivially via the diamond operators.
As is well-known (e.g. \cite[\S9]{HidaGalois} and \cite[Chapter 3, \S2]{MW-Iwasawa}), the $p$-divisible group $G_r:={e^*}'J_r[p^{\infty}]$
over $\Q$ extends to a $p$-divisible group $\G_r$ over $R_r$, and we write 
$\o{\G}_r:=\G_r\times_{R_r} \F_p$ for its special fiber.  Denoting by $\D(\cdot)$
the contravariant Dieudonn\'e module functor on $p$-divisible groups over $\F_p$,
we form the projective limits
\begin{equation}
	\D_{\infty}^{\star}:=\varprojlim_r \D(\o{\G}_r^{\star})\quad\text{for}\quad \star\in \{\et,\mult,\Null\},
	\label{DlimitsDef}
\end{equation}
taken along the mappings induced by 
$\o{\G}_r\rightarrow \o{\G}_{r+1}$.  Each of these is naturally a $\Lambda$-module
equipped with linear (!) Frobenius $F$ and Verscheibung $V$ morphisms satisfying $FV=VF=p$,
as well as a linear action of $\H^*$ and a ``geometric inertia" action of $\Gamma$, which reflects
the fact that the generic fiber of $\G_r$ descends to $\Q_p$.
The $\Lambda$-modules (\ref{DlimitsDef}) have the expected structure (see Theorem \ref{MainDieudonne}):

\begin{theorem}\label{DieudonneMainThm}
	There is a canonical split short exact sequence of finite and free $\Lambda$-modules	
	\begin{equation}
		\xymatrix{
			0 \ar[r] & {\D^{\et}_{\infty}} \ar[r] & {\D_{\infty}} \ar[r] & {\D_{\infty}^{\mult}} \ar[r] & 0
			}\label{Dieudonneseq}
	\end{equation} 
	with linear $\H^*$ and $\Gamma$-actions.
	As a $\Lambda$-module, $\D_{\infty}$ is free of rank $2d'$, while $\D_{\infty}^{\et}$
	and $\D_{\infty}^{\mult}$ are free of rank $d'$, where $d':=\sum_{k=3}^p \dim_{\F_p} S_k(\Upgamma_1(N);\F_p)^{\ord}$.
	For $\star\in \{\mult,\et,\Null\}$, there are canonical isomorphisms
	\begin{equation*}
		\D_{\infty}^{\star}\tens_{\Lambda} \Z_p[\Delta/\Delta_r] \simeq \D(\o{\G}_r^{\star})
	\end{equation*}
	which are compatible with the extra structures.
	Via the canonical splitting of $(\ref{Dieudonneseq})$, $\D_{\infty}^{\star}$ for $\star=\et$
	$($respetively $\star=\mult$$)$ is
	identified with the maximal subpace of $\D_{\infty}$ on which $F$ $($respectively $V$$)$ acts
	invertibly .	
	The Hecke operator $U_p^*\in \H^*$ acts as $F$ on $\D_{\infty}^{\et}$ and as $\langle p\rangle_N V$ on 
	$\D_{\infty}^{\mult}$,
	while $\Gamma$ acts trivially on $\D_{\infty}^{\et}$ and via $\langle \chi(\cdot)\rangle^{-1}$
	on $\D_{\infty}^{\mult}$.
\end{theorem}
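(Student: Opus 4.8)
The plan is to establish every assertion at finite level $r$, where $\D(\o{\G}_r)$ is an honest finite free $\Z_p$-module, and then to pass to the inverse limit over the tower.

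First I would record the finite-level structure. Since $\G_r$ is ordinary, its special fibre $\o{\G}_r$ over the perfect field $\F_p$ has no local--local part, so the connected--\'etale sequence $0 \to \o{\G}_r^{\mult} \to \o{\G}_r \to \o{\G}_r^{\et} \to 0$ has connected part of multiplicative type and splits canonically over $\F_p$. Applying the contravariant Dieudonn\'e functor $\D$ reverses the arrows and produces a canonically split short exact sequence $0 \to \D(\o{\G}_r^{\et}) \to \D(\o{\G}_r) \to \D(\o{\G}_r^{\mult}) \to 0$ of finite free $\Z_p$-modules, in which $F$ is bijective on $\D(\o{\G}_r^{\et})$ and $V$ is bijective on $\D(\o{\G}_r^{\mult})$; these pieces are intrinsically characterised as the maximal subquotients on which $F$, respectively $V$, is invertible (the slope decomposition), so the splitting is canonical and survives the limit. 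The $\H^*$-action is induced by functoriality from the Hecke action on $J_r$, and the linear $\Gamma$-action is the geometric inertia action arising because the generic fibre of $\G_r$ descends to $\Q_p$; as $\Gamma$ acts trivially on $\F_p$, this action is $\Z_p$-linear and commutes with $F$, $V$, and $\H^*$.

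Next I would compute the actions on the two graded pieces. That $U_p^*$ acts as $F$ on the \'etale part and as $\langle p\rangle_N V$ on the multiplicative part is the refinement, in the ordinary setting, of the Eichler--Shimura congruence relation on the special fibre of $J_r$ (as in \cite{MW-Hida}, \cite{Tilouine}): on the unit-root part $U_p^*$ is Frobenius, while on the multiplicative part the second degeneracy map contributes the tame twist $\langle p\rangle_N$ to Verschiebung. For the inertia action, $\o{\G}_r^{\et}$ is \'etale, hence unramified, so $\Gamma$ acts trivially; whereas $\o{\G}_r^{\mult}$ is of multiplicative type, on which inertia acts cyclotomically, and matching the cyclotomic character against the diamond operators through $\Gamma \simeq \Gal(K_{\infty}'/K_0')$ yields the character $\langle\chi(\cdot)\rangle^{-1}$.

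Finally --- and this is the crux --- I would prove finiteness, freeness, and control by an inverse-limit argument. The diamond operators make each $\D(\o{\G}_r)$ a module over $\Z_p[\Delta/\Delta_r]$, and the decisive input is a finite-level freeness-and-control statement: $\D(\o{\G}_r)$ is free over $\Z_p[\Delta/\Delta_r]$, compatibly with the decomposition, and $\D(\o{\G}_{r+1}) \otimes_{\Z_p[\Delta/\Delta_{r+1}]} \Z_p[\Delta/\Delta_r] \cong \D(\o{\G}_r)$. Granting this, the inverse limit $\D_\infty^\star$ is finite free over $\Lambda$ with control isomorphisms $\D_\infty^\star \otimes_\Lambda \Z_p[\Delta/\Delta_r] \simeq \D(\o{\G}_r^\star)$, and the canonical splittings at finite level assemble into the split sequence (\ref{Dieudonneseq}). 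The ranks are then read off after reduction modulo the maximal ideal $\mathfrak{m}$ of $\Lambda$: identifying the multiplicative part of $\D_\infty \otimes_\Lambda \F_p$ with the space $\bigoplus_{k=3}^{p} S_k(\Upgamma_1(N);\F_p)^{\ord}$ of mod $p$ ordinary cusp forms (the tame-nontrivial part cut out by ${e^*}'$) of dimension $d'$, and its \'etale part with the $\F_p$-dual, gives ranks $d'$ and $2d'$ as claimed, the weight range $3 \le k \le p$ (rather than the de Rham range $3 \le k \le p+1$ of Theorem \ref{dRMain}) reflecting the structure of the mod $p$ Hodge filtration on the special fibre. The main obstacle is precisely this finite-level freeness and exact control: because the base $R_r$ is totally ramified over $\Z_p$, there is no naive crystalline--de Rham comparison, and one is forced to extract the statement from the geometry of the good-reduction special fibre together with integral $p$-adic Hodge theory, rather than transporting it directly from the de Rham sequence (\ref{finiteleveldRseq}).
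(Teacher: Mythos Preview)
Your overall strategy matches the paper's: split connected--\'etale sequence at each finite level, identify the $\H^*$ and $\Gamma$ actions on the graded pieces, then invoke an inverse-limit formalism requiring finite-level $\Z_p[\Delta/\Delta_r]$-freeness and surjectivity of transition maps. You correctly identify that last step as the crux.

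The gap is that you do not prove the crux. You write ``Granting this\ldots'' and then, at the end, acknowledge that establishing finite-level freeness and exact control is the ``main obstacle'', gesturing toward ``the geometry of the good-reduction special fibre together with integral $p$-adic Hodge theory'' without carrying this out. The paper closes precisely this gap, and the mechanism is specific: one does \emph{not} need integral $p$-adic Hodge theory here. Via Proposition~\ref{SpecialFiberDescr} and Proposition~\ref{GisOrdinary} (which rests on Oda's identification of the Dieudonn\'e module of a Jacobian with de~Rham cohomology in characteristic~$p$, Proposition~\ref{OdaDieudonne}), the mod~$p$ reduction $\D(\o{\G}_r^\star)\otimes\F_p$ is identified with the $V$-ordinary (respectively $F$-ordinary) part of $H^0(I_r^\star,\Omega^1)$ (respectively $H^1(I_r^\star,\O)$) for the two good Igusa components $I_r^0$, $I_r^\infty$ of $\o{\X}_r$. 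Freeness of these as $\F_p[\Delta/\Delta_r]$-modules of rank~$d'$ and surjectivity of the trace maps down the Igusa tower were established in \cite[Proposition~2.4.1, Lemma~2.5.5]{CaisHida1}; the commutative-algebra formalism of \cite[Lemma~3.1.2]{CaisHida1} then gives $\Lambda$-freeness and control. This Igusa-curve reduction is the missing idea in your proposal.

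A smaller point: your attribution of the $U_p^*$-action on the graded pieces to ``the Eichler--Shimura congruence'' is imprecise. The paper derives $U_p^*=F$ on $\D_\infty^{\et}$ and $U_p^*=\langle p\rangle_N V$ on $\D_\infty^{\mult}$ from the explicit identification (Corollary~\ref{SpecialFiberOrdinary1}, Proposition~\ref{MWSharpening}) of $\o{\G}_r$ with the $p$-divisible groups of the Igusa Jacobians, using the computations of \cite[Proposition~2.2.20]{CaisHida1}; this is finer than the usual mod~$p$ congruence $T_p\equiv F+\langle p\rangle V$.
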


The short exact sequence (\ref{Dieudonneseq}) is very nearly $\Lambda$-adically
auto-dual (see Proposition \ref{DieudonneDuality}):

\begin{theorem}\label{DDuality}
		There is a canonical $\H^*$-equivariant isomorphism of exact sequences of $\Lambda_{R_0'}$-modules
		\begin{equation*}
			\xymatrix{
	0 \ar[r] & {\D_{\infty}^{\et}(\langle \chi \rangle\langle a\rangle_N)_{\Lambda_{R_0'}}} \ar[r]\ar[d]^-{\simeq} & 
	{\D_{\infty}(\langle \chi \rangle\langle a\rangle_N)_{\Lambda_{R_0'}}}\ar[r]\ar[d]^-{\simeq} & 
	{\D_{\infty}^{\mult}(\langle \chi \rangle\langle a\rangle_N)_{\Lambda_{R_0'}}}\ar[r]\ar[d]^-{\simeq} & 0 \\		
		0\ar[r] & {(\D_{\infty}^{\mult})^{\vee}_{\Lambda_{R_0'}}} \ar[r] & 
				{(\D_{\infty})^{\vee}_{\Lambda_{R_0'}}} \ar[r] & 
				{(\D_{\infty}^{\et})^{\vee}_{\Lambda_{R_0'}}}\ar[r] & 0
			}
		\end{equation*}
		that is $\Gamma\times \Gal(K_0'/K_0)$-equivariant, 
		and intertwines $F$
		$($respectively $V$$)$ on the top row with $V^{\vee}$
		$($respectively $F^{\vee}$$)$ on the bottom.\footnote{Here, $F^{\vee}$ (respectively $V^{\vee}$)
		is the map taking a linear functional $f$ to $\varphi^{-1}\circ f\circ F$ 
		(respectively $\varphi\circ f\circ V$), where $\varphi$
		is the Frobenius automorphism of $R_0'=\Z_p[\mu_N]$.}	
\end{theorem}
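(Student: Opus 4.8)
The plan is to descend the asserted $\Lambda_{R_0'}$-adic duality from a level-by-level duality of Dieudonn\'e modules, which I would engineer from the canonical principal polarization of the Jacobian $J_r$ together with Cartier duality of $p$-divisible groups. First I would recall that the contravariant Dieudonn\'e functor $\D$ carries Cartier duality of $p$-divisible groups over a perfect field to a perfect duality of Dieudonn\'e modules that exchanges Frobenius and Verschiebung: for a $p$-divisible group $G$ over $\F_{p^f}$ there is a canonical isomorphism $\D(G^{\vee})\simeq \D(G)^{\vee}$ under which $F$ on the left corresponds to $V^{\vee}$ on the right and $V$ to $F^{\vee}$, the transposes being formed with the Witt-vector Frobenius $\varphi$ of $W(\F_{p^f})=R_0'$ exactly as in the footnote. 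Since Cartier duality interchanges the multiplicative and \'etale parts of $G$, applying $\D$ to the connected--\'etale sequence of $\o{\G}_r^{\vee}$ produces precisely the shape of rows displayed in the theorem, with the \'etale and multiplicative constituents swapped.

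Next I would produce the level-$r$ duality. The principal polarization of $J_r$ gives a perfect Weil pairing on $J_r[p^{\infty}]$ valued in $\mu_{p^{\infty}}$; this pairing is Galois-equivariant (whence the cyclotomic twist) and satisfies the Hecke adjunction $\langle Tx,y\rangle=\langle x,T^*y\rangle$. Applying the idempotent ${e^*}'$ and using the adjunctions $\langle u^{-1}\rangle=\langle u\rangle^*$, $\langle v^{-1}\rangle_N=\langle v\rangle_N^*$ together with the Atkin--Lehner involution $w_r$ to identify the adjoint good quotient $e'J_r[p^{\infty}]$ with a diamond-twist of $G_r={e^*}'J_r[p^{\infty}]$, I would obtain a perfect, $\H^*$-equivariant self-duality of $\G_r$ up to the twist $\langle\chi\rangle\langle a\rangle_N$; here the $\mu_N$-component of the Weil pairing and the tame part of the Atkin--Lehner twist account for $\langle a\rangle_N$, while the $\mu_{p^{\infty}}$-component accounts for $\langle\chi\rangle$. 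Passing to special fibres over $\F_p$ and base-changing to $\F_{p^f}$ (i.e.\ to $R_0'$), I would then feed this through the contravariant Dieudonn\'e duality of the first step to obtain, at each finite level, the duality isomorphism of exact sequences with $F\leftrightarrow V^{\vee}$ and the stated $\Gamma\times\Gal(K_0'/K_0)$-equivariance.

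Finally I would pass to the limit. The Weil pairings at consecutive levels are compatible along the tower only up to a factor of $p$, reflecting that the transition maps on the relevant $p$-divisible constituents are divisible by $p$; under $\D$ this is the very factor producing the identifications $U_p^*=F$ on $\D_{\infty}^{\et}$ and $U_p^*=\langle p\rangle_N V$ on $\D_{\infty}^{\mult}$ recorded in Theorem~\ref{DieudonneMainThm}, and it is what makes the inverse limit of the finite-level pairings a perfect $\Lambda_{R_0'}$-bilinear pairing rather than a degenerate one. Invoking the freeness of $\D_{\infty}^{\star}$ over $\Lambda_{R_0'}$ and the control isomorphisms of Theorem~\ref{DieudonneMainThm}, I would identify $\varprojlim_r \D(\o{\G}_r^{\star})^{\vee}$ with $(\D_{\infty}^{\star})^{\vee}_{\Lambda_{R_0'}}$ and conclude that the limit of the level-$r$ dualities is the desired isomorphism of exact sequences.

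The main obstacle is the bookkeeping of the two twists against the direction of functoriality: Cartier duality reverses the arrows of the tower, so one must verify that the projective system defining $\D_{\infty}^{\star}$ is carried by the finite-level pairings to the projective system computing $(\D_{\infty}^{\star'})^{\vee}_{\Lambda_{R_0'}}$ with the \'etale and multiplicative decorations interchanged, compatibly with transition maps, and that the accumulated $p$-power discrepancies organize into exactly the operators $U_p^*$, $\langle p\rangle_N V$, and the crossed homomorphism $\langle\chi\rangle\langle a\rangle_N$. Pinning down $\langle a\rangle_N$ and confirming that the semilinear transposes $F^{\vee},V^{\vee}$ with their $\varphi^{\pm1}$ match the duality over $W(\F_{p^f})$ is where the delicate computation lies; I expect the cleanest independent check to be a comparison with the cup-product autoduality of the de~Rham sequence $(\ref{orddRseq})$ established in \cite{CaisHida1}, transported through the crystalline comparison.
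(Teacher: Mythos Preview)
Your overall strategy matches the paper's: build a finite-level perfect duality from Cartier duality of $p$-divisible groups together with the principal polarization of $J_r$ and the Atkin--Lehner involution $w_r$ (the paper packages this as Proposition~\ref{GdualTwist}, an $\H_r^*$-equivariant isomorphism $G_r^{\vee}\simeq G_r(\langle\chi\rangle\langle a\rangle_N)$ over $\Q_p$), then apply $\D$ over $R_0'$ and pass to the projective limit.

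The gap is in your third paragraph. The finite-level pairings are \emph{not} ``compatible up to a factor of $p$''; the correct compatibility is the averaging identity
\[
\langle \rho_{r,s}\,x,\ \rho_{r,s}'\,y\rangle_{s}\ =\ \sum_{\delta\in\Delta_{s}/\Delta_{r}}\langle x,\ \delta^{-1}y\rangle_{r},
\]
and the paper engineers this by building a $U_p^*$-twist into the transition maps on the \emph{dual} tower, $\rho_{r,s}':=\big(({U_p^*}^{-1}\Alb(\sigma))^{\vee}\big)^{r-s}$, and then invoking the relation $\Pic^0(\rho)\circ\Alb(\sigma)=U_p^*\sum_{\delta\in\Delta_r/\Delta_{r+1}}\langle\delta^{-1}\rangle$ (a direct consequence of Lemma~\ref{MFtraceLem}). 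Without this twist, the two towers of transition maps are adjoint under the Weil pairing only after inserting extra powers of $U_p^*$ at each step, and no ``accumulated factor of $p$'' will salvage a $\Lambda$-perfect pairing. Relatedly, you cannot simply identify $\varprojlim_r \D(\o{\G}_r^{\star})^{\vee}$ with $(\D_{\infty}^{\star})^{\vee}_{\Lambda_{R_0'}}$: the inverse limit of $\Z_p$-duals with respect to \emph{dualized} transition maps is not the $\Lambda_{R_0'}$-dual of $\D_{\infty}^{\star}$. The averaging identity above is exactly the hypothesis of \cite[Lemma~3.1.4]{CaisHida1}, which is what converts the compatible system of finite-level pairings into a perfect $\Lambda_{R_0'}$-bilinear pairing on the limit. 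Once you insert the correct dual-tower transition maps and invoke that lemma, your argument becomes the paper's.
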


In \cite{MW-Hida}, Mazur and Wiles relate the ordinary-filtration of ${e^*}'H^1_{\et}$
to the \'etale cohomology of the Igusa tower studies in \cite{MW-Analogies}.
We can likewise interpret the slope filtraton (\ref{Dieudonneseq})
in terms of the crystalline cohomology of the Igusa tower as follows.
For each $r$, let $I_r^{\infty}$ and $I_r^0$ be the two ``good" irreducible components of
$\X_r\times_{R_r}\F_r$ (see the discussion preceding Proposition \ref{GisOrdinary}), each of which is isomorphic to the Igusa curve $\Ig(p^r)$
of tame level $N$ and $p$-level $p^r$. For $\star\in \{0,\infty\}$ we form the projective limit
\begin{equation*} 
	H^1_{\cris}(I^{\star}):=\varprojlim_{r} H^1_{\cris}(I_r^{\star}/\Z_p);
\end{equation*}
with respect to the trace mappings on crystalline cohmology induced by the canonical
degeneracy maps on Igusa curves.
Then $H^1_{\cris}(I^{\star})$ is naturally a $\Lambda$-module with linear Frobenius $F$ and Verscheibung $V$ endomorphisms, and we write $H^1_{\cris}(I^{\star})^{V_{\ord}}$
(respecytively $H^1_{\cris}(I^{\star})^{F_{\ord}}$) for the maximal $V$- (respectively $F$-) stable 
submodule on which $V$ (respectively $F$) acts invertibly.
Letting $f'$ be the idempotent of $\Lambda$ corresponding to projection to the part
where $\mu_{p-1}\subseteq \Z_p^{\times}$ acts nontrivially via the diamond operators, we prove
(see Theorem \ref{DieudonneCrystalIgusa}):

\begin{theorem}
 There is a canonical isomorphism of $\Lambda$-modules, compatible with $F$ and $V,$
	\begin{equation}
		\D_{\infty} =\D_{\infty}^{\mult}\oplus \D_{\infty}^{\et}\simeq 
		f'H^1_{\cris}(I^{0})^{V_{\ord}} \oplus
						f'H^1_{\cris}(I^{\infty})^{F_{\ord}}.\label{crisIgusa}
	\end{equation}
	which preserves the direct sum decompositions of source and target.
	This isomorphism is Hecke and $\Gamma$-equivariant, with $U_p^*$ and $\Gamma$
	acting as $\langle p\rangle_N V\oplus F$ and 
	$ \langle \chi(\cdot)\rangle^{-1}\oplus \id$, respectively, 
	on each direct sum.	
\end{theorem}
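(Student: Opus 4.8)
The plan is to prove the isomorphism (\ref{crisIgusa}) one finite level $r$ at a time and then pass to the projective limit, grounding the finite-level statement in the crystalline incarnation of Dieudonn\'e theory. Recall that for an abelian variety $A$ over $\F_p$ one has a canonical $F$- and $V$-compatible identification $\D(A[p^\infty])\simeq H^1_{\cris}(A/\Z_p)$, and that for a smooth proper curve $Y/\F_p$ the Albanese morphism furnishes $H^1_{\cris}(Y/\Z_p)\simeq H^1_{\cris}(\Jac(Y)/\Z_p)$. Since each $I_r^{\star}$ is the smooth proper Igusa curve $\Ig(p^r)$, these two facts already identify $H^1_{\cris}(I_r^{\star}/\Z_p)$ with the contravariant Dieudonn\'e module of $\Jac(I_r^{\star})[p^\infty]$, so the problem reduces to comparing the ordinary $p$-divisible group $\o{\G}_r$ of Theorem \ref{DieudonneMainThm} with the $p$-divisible groups of the two good Igusa Jacobians, compatibly with Frobenius, Verschiebung, and the transition maps defining the two limits.

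The geometric input is the Katz--Mazur description of the special fiber $\overline{\X}_r:=\X_r\times_{R_r}\F_r$ as a union of irreducible components meeting transversally at the supersingular points, among which $I_r^0$ and $I_r^{\infty}$ are the two good components, each isomorphic to $\Ig(p^r)$. Over the ordinary locus the universal generalized elliptic curve carries the connected--\'etale sequence $0\to \mu_{p^\infty}\to E[p^\infty]\to E[p^\infty]^{\et}\to 0$, and the two good components are distinguished by whether the $[\mu_{p^r}]$-structure is of multiplicative type (the component matched to $I_r^0$) or of \'etale type (the component matched to $I_r^{\infty}$). Because $J_r$ acquires semi-abelian reduction over $R_r$, the special fiber of its N\'eron model is an extension of $\prod_i \Jac(\widetilde{C}_i)$ by a torus; applying the idempotent ${e^*}'$ isolates the good-quotient abelian variety of Mazur--Wiles, whose $p$-divisible group is exactly $\G_r$. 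First I would use this dictionary to identify $\o{\G}_r^{\mult}$ with the multiplicative part of $\Jac(I_r^0)[p^\infty]$ and $\o{\G}_r^{\et}$ with the \'etale part of $\Jac(I_r^{\infty})[p^\infty]$, compatibly with all structures and with the degeneracy/trace maps.

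The matching of slopes to components is then forced by the Dieudonn\'e dictionary: a multiplicative $p$-divisible group has $V$ bijective on its Dieudonn\'e module and an \'etale one has $F$ bijective, so $\D(\o{\G}_r^{\mult})$ lands in the $V$-ordinary part $H^1_{\cris}(I_r^0)^{V_{\ord}}$ while $\D(\o{\G}_r^{\et})$ lands in the $F$-ordinary part $H^1_{\cris}(I_r^{\infty})^{F_{\ord}}$; the crystalline idempotent $f'$ is simply the shadow of ${e^*}'$ after the trivial tame character has been removed. The extra structure then follows essentially formally. The $\Gamma$-action on $\o{\G}_r^{\mult}$ is the cyclotomic action on $\mu_{p^\infty}$ transported through the identification, giving $\langle\chi(\cdot)\rangle^{-1}$, and is trivial on the unramified \'etale part; the Eichler--Shimura congruence relation in characteristic $p$, which expresses the Hecke correspondence as $F+\langle p\rangle V$, forces $U_p^*$ to act as $F$ on the \'etale summand and as $\langle p\rangle_N V$ on the multiplicative summand once the $p$-part of the diamond operator is absorbed into $\Gamma$. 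Passing to the limit over $r$ along the degeneracy maps, which are compatible with the comparison isomorphisms by functoriality of crystalline cohomology, produces (\ref{crisIgusa}) with the asserted equivariance; Theorem \ref{DDuality} provides a useful consistency check, since duality interchanges the two components and swaps $F$ with $V$.

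The hard part will be the finite-level identification of $\o{\G}_r^{\star}$ with the correct slope part of $\Jac(I_r^{\star})[p^\infty]$. Although $J_r$ has only semistable, not good, reduction, the idempotent ${e^*}'$ yields an honest $p$-divisible group $\G_r$ over $R_r$, and one must show that its reduction sees precisely the ordinary parts of the two Igusa Jacobians, with the toric contribution to the N\'eron special fiber correctly accounted for and the two good components attached to the two slopes in exactly the claimed way. Pinning down the precise normalizations---in particular the tame twist $\langle p\rangle_N$ in the Hecke action and the compatibility of every identification with the trace maps needed to form the projective limit---is where the bulk of the geometric bookkeeping resides.
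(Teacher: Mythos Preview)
Your plan is essentially the paper's approach: the proof of Theorem~\ref{DieudonneCrystalIgusa} proceeds exactly by invoking the finite-level identification of $\o{\G}_r^{\mult}$ and $\o{\G}_r^{\et}$ with $f'j_r^0[p^\infty]^{\mult}$ and $f'j_r^\infty[p^\infty]^{\et}$ (already packaged as Corollary~\ref{SpecialFiberOrdinary1}, via Propositions~\ref{SpecialFiberDescr} and~\ref{MWSharpening}), then applying the standard isomorphisms $\D(J_X[p^\infty])\simeq H^1_{\cris}(J_X)\simeq H^1_{\cris}(X)$ and passing to the inverse limit. The ``hard part'' you flag is precisely the content of those earlier propositions, where the matching of components to slopes is pinned down not through the moduli heuristic or the Eichler--Shimura relation but by reducing the Dieudonn\'e-module statement mod~$p$ and invoking the de~Rham computation of \cite[Proposition~2.5.6]{CaisHida1}.
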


We note that our ``Dieudonn\'e module" analogue (\ref{crisIgusa}) is a significant
sharpening of its \'etale counterpart \cite[\S4]{MW-Hida}, which is formulated
only up to isogeny (i.e. after inverting $p$).  From $\D_{\infty}$, 
we can recover the $\Lambda$-adic Hodge filtration (\ref{orddRseq}), so
the latter is canonically split (see Theorem \ref{dRtoDieudonneInfty}):

\begin{theorem}\label{dRtoDieudonne}
	There is a canonical $\Gamma$ and $\H^*$-equivariant isomorphism of 
	exact sequences 
	\begin{equation}
	\begin{gathered}
		\xymatrix{
		0 \ar[r] & {{e^*}'H^0(\omega)} \ar[r]\ar[d]^-{\simeq} & 
		{{e^*}'H^1_{\dR}} \ar[r]\ar[d]^-{\simeq} & {{e^*}'H^1(\O)} \ar[r]\ar[d]^-{\simeq} & 0 \\
		0 \ar[r] & {\D_{\infty}^{\mult}\tens_{\Lambda} \Lambda_{R_{\infty}}} \ar[r] &
		{\D_{\infty}\tens_{\Lambda} \Lambda_{R_{\infty}}} \ar[r] &
		{\D_{\infty}^{\et}\tens_{\Lambda} \Lambda_{R_{\infty}}} \ar[r] & 0
		}\label{dRcriscomparison}
	\end{gathered}
	\end{equation}
	where the mappings on bottom row are the canonical inclusion and projection morphisms
	corresponding to the direct sum decomposition $\D_{\infty}=\D_{\infty}^{\mult}\oplus \D_{\infty}^{\et}$.
	In particular, the Hodge filtration exact sequence $(\ref{orddRseq})$ is canonically 
	split, and admits a canonical descent to $\Lambda$.
\end{theorem}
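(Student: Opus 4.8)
The plan is to descend to finite level $r$, establish a canonical filtered comparison between the ${e^*}'$-part of the de~Rham cohomology of $\X_r$ and the Dieudonn\'e module of $\o{\G}_r$ after extension of scalars to $R_r$, and then pass to the projective limit and take ordinary parts. First I would invoke the standard comparison between the de~Rham cohomology of an abelian scheme with good reduction and the de~Rham realization of its $p$-divisible group (Mazur--Messing, Berthelot--Breen--Messing): since $H^1_{\dR}(\X_r/R_r)\cong H^1_{\dR}(J_r/R_r)$ and $\G_r={e^*}'J_r[p^{\infty}]$ is the prescribed extension of $J_r[p^{\infty}]$ to $R_r$, applying the idempotent ${e^*}'$ gives a canonical Hecke-equivariant isomorphism ${e^*}'H^1_{\dR}(\X_r/R_r)\cong H^1_{\dR}(\G_r/R_r)$ identifying $\omega$ with the Hodge filtration, where the right-hand side is the value on the trivial thickening $R_r$ of the Dieudonn\'e crystal of $\G_r$. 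This reduces the theorem to a canonical, equivariant identification of that crystal value with $\D(\o{\G}_r)\otimes_{\Z_p}R_r$.

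The crux is the second step, where ordinariness is essential. By Proposition~\ref{GisOrdinary} the group $\G_r$ over $R_r$ is ordinary, so its connected--\'etale sequence $0\to\G_r^{\mult}\to\G_r\to\G_r^{\et}\to 0$ splits the evaluation of the Dieudonn\'e crystal and matches it, slope by slope, with the classical decomposition $\D(\o{\G}_r)=\D(\o{\G}_r^{\mult})\oplus\D(\o{\G}_r^{\et})$ over $\F_p$. On the multiplicative part I would use the canonical $d\log$ identification $\omega_{\G_r^{\mult}}\cong\D(\o{\G}_r^{\mult})\otimes_{\Z_p}R_r$, which simultaneously matches $\omega$ with the $V$-ordinary piece and supplies the cyclotomic twist: the descent of the generic fibre of $\G_r$ to $\Q_p$ endows the multiplicative part with a geometric $\Gamma$-action through the periods $\varepsilon^{(r)}$, realizing the $\langle\chi(\cdot)\rangle^{-1}$-action predicted in Theorem~\ref{DieudonneMainThm}. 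On the \'etale part the comparison is the canonical unramified identification of $H^1(\O)$ with $\D(\o{\G}_r^{\et})\otimes_{\Z_p}R_r$, on which $\Gamma$ acts trivially and $U_p^*$ acts as $F$. Tracking $F$, $V$ and the $\H^*$-action (so that $U_p^*$ acts as $\langle p\rangle_N V$ on the multiplicative piece) then produces a canonical filtered, $\Gamma$- and $\H^*$-equivariant isomorphism at level~$r$.

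Next I would verify that these finite-level comparisons are compatible with the transition maps---the trace maps on de~Rham cohomology on the top and the maps induced by $\o{\G}_r\to\o{\G}_{r+1}$ on the bottom---so that passing to $\varprojlim_r$ and applying the ordinary idempotent yields a well-defined morphism of short exact sequences. After base change to $\Lambda_{R_{\infty}}$, I compare against the structural results already in hand: Theorem~\ref{dRMain} for the top row and Theorem~\ref{DieudonneMainThm} for the bottom show both sides to be finite free $\Lambda_{R_{\infty}}$-modules of the same ranks, so the limit map is the asserted isomorphism of sequences~(\ref{dRcriscomparison}). The final conclusions are then formal: since the bottom row is the $\Lambda_{R_{\infty}}$-base change of the canonically split sequence $\D_{\infty}=\D_{\infty}^{\mult}\oplus\D_{\infty}^{\et}$ of $\Lambda$-modules from Theorem~\ref{DieudonneMainThm}, transporting the splitting across the vertical isomorphisms shows that~(\ref{orddRseq}) is canonically split and descends canonically to $\Lambda$.

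I expect the main obstacle to be this second step: making the identification of the crystal value over the deeply ramified ring $R_r$ with the base change of the classical Dieudonn\'e module genuinely canonical and equivariant. For a general $p$-divisible group this comparison depends on a choice of period and fails to be $\Gamma$-equivariant on the nose; the decisive point is that ordinariness splits the difficulty into a multiplicative piece, where $d\log$ furnishes a canonical cyclotomically twisted trivialization, and an \'etale piece, where the comparison is unramified. Arranging the cyclotomic twist and the interaction between the geometric inertia action and the $U_p^*$-action to match exactly the normalizations of Theorem~\ref{DieudonneMainThm} is where the real work lies.
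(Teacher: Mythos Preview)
Your strategy is the paper's strategy: establish a filtered finite-level comparison between ${e^*}'H^1(\X_r/R_r)$ and the Dieudonn\'e crystal of $\G_r$ evaluated on $R_r$, split the latter via ordinariness, descend to $\D(\o{\G}_r)\otimes_{\Z_p}R_r$, and pass to the limit. Two points deserve more care than you give them.

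First, the bridge from $\X_r$ to $\G_r$ cannot literally go through ``$H^1_{\dR}(J_r/R_r)$'' for an abelian scheme, because $J_r$ does \emph{not} have good reduction over $R_r$; only $B_r^*$ does. The paper routes this through the canonical extension $\scrExtrig(\J_r,\Gm)$ of the N\'eron model (Proposition~\ref{intcompare}), identifying ${e^*}'H^1(\X_r/R_r)$ with ${e^*}'\Lie\scrExtrig(\J_r,\Gm)$, and then uses the idempotent trick (the diagram coming from $U_r^*=\alpha_r^*\circ W_r^*$) to transfer to the abelian scheme $\B_r^*$ where Mazur--Messing applies. Your appeal to ``the standard comparison for abelian schemes with good reduction'' hides exactly this step.

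Second, and more substantively, the finite-level descent to $\D(\o{\G}_r)\otimes_{\Z_p}R_r$ is obtained via the isomorphisms $F^r$ on the \'etale part and $V^r$ on the multiplicative part (as in (\ref{Ftrick})--(\ref{Vtrick})). Because of this, the resulting identification is \emph{not} compatible with the naive transition maps $\D(\o{\rho})$: one picks up an extra factor of $F$ (resp.\ $V^{-1}$) on the \'etale (resp.\ multiplicative) piece when moving from level $r$ to $r+1$ (see Corollary~\ref{RelationToHodgeCor}). The paper handles this in the proof of Theorem~\ref{dRtoDieudonneInfty} by inserting the automorphisms $V^{-r}\times F^r$ of $\o{\G}_r$ to realign the two projective systems before taking the limit. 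Your proposal to ``verify compatibility with the transition maps'' would, if carried out directly, reveal this mismatch rather than confirm compatibility; you correctly sense that the second step is where the work lies, but the specific issue is this change-in-$r$ twist, not only the $\Gamma$-equivariance.
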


	We remark that under the identification (\ref{dRcriscomparison}),  
	the Hodge filtration (\ref{orddRseq}) and slope filtration (\ref{Dieudonneseq})
	correspond, but in the opposite directions.  As a consequence of Theorem 
	\ref{dRtoDieudonne}, we deduce (see Corollary \ref{MFIgusaDieudonne} and
	Remark \ref{MFIgusaCrystal}):

\begin{corollary}
\label{OhtaCor}
	There is a canonical isomorphism of finite free $\Lambda$ $($respectively $\Lambda_{R_0'}$$)$-modules
	\begin{equation*}
		{e}'S(N,\Lambda) \simeq \D_{\infty}^{\mult}
		\qquad\text{respectively}\qquad
		e'\H\tens_{\Lambda} \Lambda_{R_0'} \simeq \D_{\infty}^{\et}(\langle a\rangle_N)\tens_{\Lambda}{\Lambda_{R_0'}}
	\end{equation*}
	that  intertwines $T\in \H:=\varprojlim \H_r$ with $T^*\in \H^*$, where we let
	$U_p^*$ act as $\langle p\rangle_N V$ on $\D_{\infty}^{\mult}$ and as $F$ on $\D_{\infty}^{\et}$. 
	The second of these isomorphisms is in addition $\Gal(K_0'/K_0)$-equivariant.
\end{corollary}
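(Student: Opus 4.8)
The plan is to deduce both isomorphisms formally by combining the de~Rham-to-Dieudonn\'e comparison of Theorem~\ref{dRtoDieudonne} with the identification of the Hodge piece ${e^*}'H^0(\omega)$ as a space of $\Lambda$-adic cusp forms from \cite[Corollary~3.3.5]{CaisHida1}, and then descending from $\Lambda_{R_{\infty}}$ to $\Lambda$ (respectively from $\Lambda_{R_0'}$), tracking the Hecke- and Galois-equivariance throughout. The two displayed isomorphisms are thus a repackaging of results already in hand, and the real work lies in the descent and in the diamond-operator twist bookkeeping.

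For the first isomorphism I would start from the left-hand vertical identification in $(\ref{dRcriscomparison})$, the canonical $\Gamma$- and $\H^*$-equivariant isomorphism ${e^*}'H^0(\omega)\simeq \D_{\infty}^{\mult}\tens_{\Lambda}\Lambda_{R_{\infty}}$. Applying the sub-idempotent $e'$ to \cite[Corollary~3.3.5]{CaisHida1} supplies a second canonical, Hecke-equivariant identification ${e^*}'H^0(\omega)\simeq e'S(N,\Lambda)\tens_{\Lambda}\Lambda_{R_{\infty}}$. Composing produces a Hecke-equivariant isomorphism of $\Lambda_{R_{\infty}}$-modules
\begin{equation*}
\D_{\infty}^{\mult}\tens_{\Lambda}\Lambda_{R_{\infty}}\;\simeq\;e'S(N,\Lambda)\tens_{\Lambda}\Lambda_{R_{\infty}}.
\end{equation*}
Both sides are base changes along the (pro-)Galois extension $\Lambda\rightarrow\Lambda_{R_{\infty}}$ of finite free $\Lambda$-modules (of rank $d'$ for $\D_{\infty}^{\mult}$ by Theorem~\ref{DieudonneMainThm}), and by construction the composite intertwines the two semilinear $\Gamma$-descent data, each transported from the single $\Gamma$-action on ${e^*}'H^0(\omega)$. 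Faithfully flat Galois descent then yields the asserted $\Lambda$-linear isomorphism $e'S(N,\Lambda)\simeq \D_{\infty}^{\mult}$, and the intertwining of $T$ with $T^*$ together with the rule $U_p^*\mapsto \langle p\rangle_N V$ is read off from the Hecke-equivariance of the two identifications and the description of the $U_p^*$-action in Theorem~\ref{DieudonneMainThm}.

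For the second isomorphism I would invoke the perfect $\Lambda$-adic pairing between the ordinary Hecke algebra and ordinary $\Lambda$-adic cusp forms, which on the relevant part gives $e'\H\simeq(e'S(N,\Lambda))^{\vee}$ and exchanges $T$ with $T^*$; combined with the first isomorphism this identifies $e'\H$ with $(\D_{\infty}^{\mult})^{\vee}$. Base-changing to $\Lambda_{R_0'}$ and applying the left-hand vertical isomorphism of the duality diagram of Theorem~\ref{DDuality}, which identifies $(\D_{\infty}^{\mult})^{\vee}_{\Lambda_{R_0'}}$ with $\D_{\infty}^{\et}(\langle\chi\rangle\langle a\rangle_N)_{\Lambda_{R_0'}}$, produces a comparison of the desired shape up to a $\langle\chi\rangle$-twist. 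This discrepancy is resolved by the adjoint relation $\langle u^{-1}\rangle=\langle u\rangle^*$ governing the passage from the $\H$-action to the $\H^*$-action under the pairing, which absorbs the factor $\langle\chi\rangle$ and leaves the net twist $\langle a\rangle_N$; the $\Gal(K_0'/K_0)$-equivariance is then inherited directly from Theorem~\ref{DDuality}, and the compatibility of $U_p^*$ with $F$ on $\D_{\infty}^{\et}$ follows from the intertwining of $V$ with $F^{\vee}$ (equivalently $V^{\vee}$ with $F$) recorded there.

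The main obstacle is precisely the bookkeeping of the semilinear $\Gamma$- and $\Gal(K_0'/K_0)$-actions and their diamond-operator twists through this chain. Concretely, one must verify that the $\langle\chi(\cdot)\rangle^{-1}$-twisted $\Gamma$-action on $\D_{\infty}^{\mult}$ is exactly the descent datum transported from ${e^*}'H^0(\omega)$ under $(\ref{dRcriscomparison})$, so that the $\Lambda_{R_{\infty}}$-isomorphism genuinely descends to $\Lambda$ (this uses that the governing $\langle\chi\rangle$-cocycle becomes a coboundary over $\Lambda_{R_{\infty}}$, i.e.\ is trivialized by a cyclotomic period), and one must check that the adjoint and duality twists in the second isomorphism combine to give precisely $\langle a\rangle_N$ rather than $\langle\chi\rangle\langle a\rangle_N$. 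Once these compatibilities are confirmed, both isomorphisms together with their Hecke- and Galois-equivariance follow formally from the cited results.
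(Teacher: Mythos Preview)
Your overall strategy matches the paper's exactly: for the first isomorphism you combine Theorem~\ref{dRtoDieudonne} with \cite[Corollary~3.3.5]{CaisHida1} over $\Lambda_{R_{\infty}}$ and then descend; for the second you dualize using Ohta's pairing $e'\H\simeq (e'S(N,\Lambda))^{\vee}$ and apply Theorem~\ref{DDuality}. This is precisely what the paper does in Corollary~\ref{MFIgusaDieudonne} and Remark~\ref{MFIgusaCrystal}.

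However, your explanation of the twist bookkeeping---which you rightly flag as the crux---is more complicated than necessary and partly misguided. For the descent step, no ``cyclotomic period'' is needed: the composite isomorphism over $\Lambda_{R_{\infty}}$ is $\Lambda_{R_{\infty}}$-linear and carries the action $\langle\chi(\gamma)\rangle^{-1}\otimes\gamma$ on one tensor product to the \emph{same} action on the other; since multiplication by $\langle\chi(\gamma)\rangle$ is a $\Lambda$-scalar that the isomorphism already commutes with, the map equally intertwines the standard actions $1\otimes\gamma$ on both sides, and one simply takes $\Gamma$-invariants. The paper phrases this as ``twist the $\Gamma$-action by $\langle\chi\rangle$ and pass to $\Gamma$-invariants.'' For the second isomorphism, your proposed mechanism for absorbing $\langle\chi\rangle$ via the adjoint relation $\langle u^{-1}\rangle=\langle u\rangle^*$ is not the right one: the factor $\langle\chi\rangle$ in Theorem~\ref{DDuality} twists only the $\Gamma$-action, whereas the Corollary claims only $\Gal(K_0'/K_0)$-equivariance for the second map. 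As a $\Lambda_{R_0'}$-module with $\Gal(K_0'/K_0)$-action, $\D_{\infty}^{\et}(\langle\chi\rangle\langle a\rangle_N)_{\Lambda_{R_0'}}$ is literally the same object as $\D_{\infty}^{\et}(\langle a\rangle_N)_{\Lambda_{R_0'}}$, so the $\langle\chi\rangle$ simply drops out of the statement.
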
	
	
We are also able to recover the semisimplification of ${e^*}'H^1_{\et}$ from $\D_{\infty}$.
Writing $\I\subseteq \scrG_{\Q_p}$ 
for the inertia subgroup at $p$, for any $\Z_p[\scrG_{\Q_p}]$-module $M$, we denote by $M^{\I}$ (respectively $M_{\I}:=M/M^{\I}$)
the sub (respectively quotient) module of invariants (respectively covariants) under $\I$. 

\begin{theorem}\label{FiltrationRecover}
	There are canonical isomorphisms of $\Lambda_{W(\o{\F}_p)}$-modules
	with linear $\H^*$-action and semilinear actions of $F$, $V$, and $\scrG_{\Q_p}$
	\begin{subequations}
		\begin{equation}
			\D_{\infty}^{\et} \tens_{\Lambda} \Lambda_{W(\o{\F}_p)} 
			\simeq ({e^*}'H^1_{\et})^{\I}\tens_{\Lambda} \Lambda_{W(\o{\F}_p)} 
			\label{inertialinvariants}
		\end{equation}
		and
		\begin{equation}
			\D_{\infty}^{\mult}(-1) \tens_{\Lambda} \Lambda_{W(\o{\F}_p)} 
			\simeq ({e^*}'H^1_{\et})_{\I}\tens_{\Lambda} \Lambda_{W(\o{\F}_p)}.
			\label{inertialcovariants}
		\end{equation}
	\end{subequations}
	Writing $\sigma$ for the 
	Frobenius automorphism of $W(\o{\F}_p)$,
	the isomorphism $(\ref{inertialinvariants})$ intertwines $F\otimes \sigma$ with $\id\otimes\sigma$
	and $\id\otimes g$ with $g\otimes g$ for $g\in \scrG_{\Q_p}$, whereas $(\ref{inertialcovariants})$ 
	intertwines $V\otimes \sigma^{-1}$ with $\id\otimes\sigma^{-1}$ and $g\otimes g$ with $g\otimes g$,
	where $g\in\scrG_{\Q_p}$ acts on the Tate twist 
	$\D_{\infty}^{\mult}(-1):=\D_{\infty}^{\mult}\otimes_{\Z_p}\Z_p(-1)$ as
	$\langle \chi(g)^{-1}\rangle \otimes \chi(g)^{-1}$.
\end{theorem}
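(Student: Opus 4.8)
The plan is to reduce both isomorphisms, at each finite level $r$, to the comparison between the contravariant Dieudonn\'e module of the special fibre $\o{\G}_r$ and the $p$-adic Tate module of the generic fibre $G_r={e^*}'J_r[p^\infty]$, and then to pass to the limit over $r$. First I would recall the identification of ${e^*}'H^1_{\et}$ with $\varprojlim_r (T_pG_r)^\vee$, compatibly with the trace maps, the $\H^*$-action, and the arithmetic $\scrG_{\Q_p}$-action, coming from $H^1_{\et}(X_{r,\Qbar},\Z_p)\simeq (T_pJ_r)^\vee$ and the autoduality of $J_r$; dually one has $\D_\infty^\star=\varprojlim_r \D(\o{\G}_r^\star)$ as in $(\ref{DlimitsDef})$. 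Next I would invoke the connected-\'etale sequence of the ordinary $p$-divisible group $\G_r$ over $R_r$, \[ 0 \to \G_r^{\mult} \to \G_r \to \G_r^{\et} \to 0, \] whose special fibre realizes the slope filtration $(\ref{Dieudonneseq})$ under $\D(\cdot)$ and whose generic fibre computes the inertia filtration of $T_pG_r$. Because $G_r$ is ordinary, $\G_r^{\mult}$ is of multiplicative type, so $\I$ acts on $T_pG_r^{\mult}$ through $\chi$ and trivially on $T_pG_r^{\et}$; dualizing and using that $\chi$ is nontrivial on inertia (so that $((T_pG_r^{\mult})^\vee)^{\I}=0$) yields the exact identifications $((T_pG_r)^\vee)^{\I}=(T_pG_r^{\et})^\vee$ and $((T_pG_r)^\vee)_{\I}=(T_pG_r^{\mult})^\vee$, which in the limit give $({e^*}'H^1_{\et})^{\I}$ and $({e^*}'H^1_{\et})_{\I}$. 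This is the integral refinement of the Mazur--Wiles description of the ordinary filtration.

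For $(\ref{inertialinvariants})$ it then remains to compare $(T_pG_r^{\et})^\vee$ with $\D(\o{\G}_r^{\et})$. Since $\o{\G}_r^{\et}$ is \'etale over $\F_p$ it corresponds to an unramified $\Z_p$-representation, and base change to $W(\o{\F}_p)$ furnishes a canonical isomorphism $(T_pG_r^{\et})^\vee\otimes_{\Z_p}W(\o{\F}_p)\simeq \D(\o{\G}_r^{\et})\otimes_{\Z_p}W(\o{\F}_p)$ intertwining $F\otimes\sigma$ with $\id\otimes\sigma$ and $\id\otimes g$ with $g\otimes g$ --- the unramified-descent (unit-root) comparison, for which the extension of scalars to $W(\o{\F}_p)$ is indispensable. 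For $(\ref{inertialcovariants})$ I would pass to the Cartier dual $(\G_r^{\mult})^t$, which is \'etale, apply the \'etale case to it, and dualize back. Cartier duality of $p$-divisible groups supplies the Tate twist, $(T_pG_r^{\mult})^\vee=T_p((\G_r^{\mult})^t)(-1)$, and on Dieudonn\'e modules interchanges $F$ and $V$ (in accordance with Theorem \ref{DDuality}); this accounts for both the twist $\D_\infty^{\mult}(-1)$ and the appearance of $V\otimes\sigma^{-1}$ in place of $F\otimes\sigma$.

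Finally I would pass to the inverse limit over $r$, extend scalars to $\Lambda_{W(\o{\F}_p)}$, and verify that the finite-level isomorphisms are compatible with the trace maps defining both sides and with the $\H^*$- and $\Gamma$-actions. Here the $\Gamma$-action on $\D_\infty$ is geometric --- trivial on $\D_\infty^{\et}$ and via $\langle\chi(\cdot)\rangle^{-1}$ on $\D_\infty^{\mult}$ by Theorem \ref{DieudonneMainThm} --- whereas on $H^1_{\et}$ it is part of the arithmetic $\scrG_{\Q_p}$-action; the two are reconciled precisely after $\otimes W(\o{\F}_p)$, exactly as for the Frobenius structure, the Frobenius part of $\scrG_{\Q_p}$ being absorbed into $\sigma$ on $W(\o{\F}_p)$. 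The Tate twist on $\D_\infty^{\mult}(-1)$ then makes the covariant case land with $g$ acting as $\langle\chi(g)^{-1}\rangle\otimes\chi(g)^{-1}$, as required.

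The main obstacle is the comparison of the second paragraph, and specifically the simultaneous bookkeeping of all the semilinear structures ($F$, $V$, the Witt Frobenius $\sigma$, and the full $\scrG_{\Q_p}$-action with its cyclotomic twist) through it. Matching $F$ is the unit-root normalization; the genuinely delicate point is that the geometric Galois action on the crystalline side is trivial while the arithmetic action on the \'etale side is not, so one must show the comparison is defined over $W(\o{\F}_p)$ and there intertwines $\id\otimes g$ with $g\otimes g$. On the multiplicative side the same analysis must be carried out after Cartier duality, where correctly accounting for the Tate twist and the $F\leftrightarrow V$ interchange is the crux. By contrast, the exactness of the (co)invariants computation and the functoriality of the whole construction under the tower's trace maps and the $\H^*$-action are comparatively formal.
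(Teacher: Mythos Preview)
Your approach is essentially the same as the paper's. The paper first isolates a finite-level lemma giving exactly the isomorphisms $\D(\o{\G}_r^{\et})\otimes_{\Z_p} W(\o{\F}_p)\simeq (T_pG_r^{\et})^{\vee}\otimes_{\Z_p} W(\o{\F}_p)$ and $\D(\o{\G}_r^{\mult})(-1)\otimes_{\Z_p} W(\o{\F}_p)\simeq (T_pG_r^{\mult})^{\vee}\otimes_{\Z_p} W(\o{\F}_p)$ via the standard comparison for \'etale (respectively multiplicative) $p$-divisible groups over a perfect field, then identifies the connected-\'etale sequence with the inertial (co)invariants filtration and passes to the limit---precisely your plan. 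The one point you gloss over is the interchange of $\varprojlim_r$ with $\otimes_{\Lambda}\Lambda_{W(\o{\F}_p)}$ on both sides (and the freeness of $({e^*}'H^1_{\et})^{\I}$, $({e^*}'H^1_{\et})_{\I}$ over $\Lambda$ needed to make sense of this); the paper handles this via its commutative-algebra control formalism \cite[Lemma~3.1.2]{CaisHida1}, first deducing from the finite-level isomorphisms and Theorem~\ref{DieudonneMainThm} that each $H^1_{\et}(G_r^{\star})$ is free over $\Z_p[\Delta/\Delta_r]$ with surjective transition maps, and only then concluding.
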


Theorem \ref{FiltrationRecover} gives the following ``explicit" description
of the semisimplification of ${e^*}'H^1_{\et}$:

\begin{corollary}
	For any $T\in (e^*\H^{*})^{\times}$, let 
	$\lambda(T):\scrG_{\Q_p}\rightarrow e^*\H^{*}$ be the 
	unique continuous $($for the $p$-adic topology on $e^*\H^{*}$$)$
	unramified character whose value on $($any lift of$)$
	$\mathrm{Frob}_p$ is $T$.
	Then $\scrG_{\Q_p}$ acts on $({e^*}'H^1_{\et})^{\I}$ through the
	character $\lambda({U_p^*}^{-1})$ and on $({e^*}'H^1_{\et})_{\I}$ 
	through $\chi^{-1} \cdot \langle \chi^{-1}\rangle \lambda(\langle p\rangle_N^{-1}U_p^*)$.
\end{corollary}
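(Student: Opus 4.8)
The plan is to read off the corollary directly from the two isomorphisms of Theorem~\ref{FiltrationRecover}, which I will denote by $\Phi$ (for the inertial invariants) and $\Phi'$ (for the inertial covariants). Both are isomorphisms of $\Lambda_{W(\o{\F}_p)}$-modules that are simultaneously $\H^*$-equivariant and compatible with the prescribed semilinear actions of $F$, $V$, and $\scrG_{\Q_p}$. Since the Galois action on the invariants (respectively covariants) will turn out to be through scalars in the commutative ring $e^*\H^*$, it factors through $\scrG_{\Q_p}^{\mathrm{ab}}\cong\widehat{\Q_p^{\times}}$; as the latter is topologically generated by the image of the inertia subgroup $\I$ together with a single Frobenius lift, it suffices to compute the action of $\I$ and of one arithmetic Frobenius $\phi\in\scrG_{\Q_p}$ (normalized so that $\phi$ acts as $\sigma$ on $W(\o{\F}_p)$), and to check that the claimed characters take the same values there. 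The real work is purely the bookkeeping of the $\sigma$-semilinear twists; I flag this as the main obstacle below.

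For the invariants, recall from Theorem~\ref{DieudonneMainThm} that $\Gamma$, and hence $\scrG_{\Q_p}$, acts trivially on $\D_{\infty}^{\et}$; unwinding the intertwining $\id\otimes g\leftrightarrow g\otimes g$ of $(\ref{inertialinvariants})$ at $g\in\I$ (which acts trivially on $W(\o{\F}_p)$) immediately shows that $\I$ acts trivially on $({e^*}'H^1_{\et})^{\I}$, as it must. For $g=\phi$ I would combine three inputs: that $\Phi$ intertwines $F\otimes\sigma$ with $\id\otimes\sigma$; that $F$ acts as $U_p^*$ on $\D_{\infty}^{\et}$ (Theorem~\ref{DieudonneMainThm}), which together with the $\H^*$-equivariance of $\Phi$ lets me replace $F\otimes\id$ by $U_p^*\otimes\id$; and the Galois intertwining $\id\otimes g\leftrightarrow g\otimes g$ at $g=\phi$. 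Writing $\rho(\phi)$ for the Frobenius action on the invariants, these combine to the identity $U_p^*\rho(\phi)\otimes\sigma=\id\otimes\sigma$ on $({e^*}'H^1_{\et})^{\I}\otimes_{\Lambda}\Lambda_{W(\o{\F}_p)}$; cancelling the common factor $\sigma$ (using that $\Lambda\hookrightarrow\Lambda_{W(\o{\F}_p)}$ is injective) yields $\rho(\phi)=(U_p^*)^{-1}$. Thus $\scrG_{\Q_p}$ acts on $({e^*}'H^1_{\et})^{\I}$ through the unramified character $\lambda({U_p^*}^{-1})$.

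For the covariants I would run the identical argument with $\Phi'$, now using the Tate-twisted data of $(\ref{inertialcovariants})$. Here $g\in\scrG_{\Q_p}$ acts on $\D_{\infty}^{\mult}(-1)$ through the scalar $\langle\chi(g)^{-1}\rangle\chi(g)^{-1}$, so for $g\in\I$ (again trivial on $W(\o{\F}_p)$) the intertwining $g\otimes g\leftrightarrow g\otimes g$ shows that $\I$ acts on $({e^*}'H^1_{\et})_{\I}$ through $\chi^{-1}\langle\chi^{-1}\rangle$. For $g=\phi$, the same three inputs as before—the intertwining of $V\otimes\sigma^{-1}$ with $\id\otimes\sigma^{-1}$, the relation $V=\langle p\rangle_N^{-1}U_p^*$ on $\D_{\infty}^{\mult}$ combined with $\H^*$-equivariance, and the Galois intertwining—produce, after cancelling the semilinear twist, the value $\rho'(\phi)=\langle\chi(\phi)^{-1}\rangle\chi(\phi)^{-1}\langle p\rangle_N^{-1}U_p^*$. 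Since $\lambda(\langle p\rangle_N^{-1}U_p^*)$ is by definition the unramified character sending $\mathrm{Frob}_p$ to $\langle p\rangle_N^{-1}U_p^*$, the proposed character $\chi^{-1}\langle\chi^{-1}\rangle\lambda(\langle p\rangle_N^{-1}U_p^*)$ agrees with the actual action on both $\I$ and $\phi$.

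It remains to justify that agreement on $\I$ and $\phi$ forces equality everywhere. Each candidate expression is a product of honest continuous characters of $\scrG_{\Q_p}$ valued in $(e^*\H^*)^{\times}$, and the computations above show that the genuine Galois action is itself valued in the scalars $e^*\H^*$ on the topological generators $\I\cup\{\phi\}$; as these scalars commute, the representation lands in a commutative subgroup, hence factors through $\scrG_{\Q_p}^{\mathrm{ab}}$, where local class field theory supplies the required reduction. The main obstacle—and essentially the only genuine work—is the careful bookkeeping of the $\sigma$-semilinear twists in the two isomorphisms: using $\H^*$-equivariance of $\Phi$ and $\Phi'$ to trade the crystalline operators $F$ and $V$ for the Hecke operators $U_p^*$ and $\langle p\rangle_N^{-1}U_p^*$, and then descending the resulting operator identities from the base change to $\Lambda_{W(\o{\F}_p)}$ back to genuine identities of $e^*\H^*$-scalars via the injectivity of $\Lambda\hookrightarrow\Lambda_{W(\o{\F}_p)}$. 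One must also match the arithmetic normalization of $\mathrm{Frob}_p$ with the Frobenius $\sigma$ of $W(\o{\F}_p)$, which is exactly what pins down the inverses appearing in $\lambda({U_p^*}^{-1})$ and $\lambda(\langle p\rangle_N^{-1}U_p^*)$.
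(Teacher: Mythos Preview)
Your proposal is correct and follows exactly the approach the paper intends: the corollary is stated immediately after Theorem~\ref{FiltrationRecover} as a direct consequence, with no separate proof given, and your argument unwinds precisely the intertwining relations of (\ref{inertialinvariants})--(\ref{inertialcovariants}) together with the identifications $F=U_p^*$ on $\D_{\infty}^{\et}$ and $V=\langle p\rangle_N^{-1}U_p^*$ on $\D_{\infty}^{\mult}$ from Theorem~\ref{DieudonneMainThm}. Your handling of the descent from $\Lambda_{W(\o{\F}_p)}$ to $\Lambda$ and the reduction to checking the action on $\I$ and a single Frobenius lift is exactly what is needed to make the deduction precise.
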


We remark that Corollary \ref{OhtaCor} and Theorem \ref{FiltrationRecover}
combined give a refinement of the main result of \cite{OhtaEichler}.
We are furthermore able to recover the main theorem of \cite{MW-Hida}
(that the ordinary filtration of ${e^*}'H^1_{\et}$ interpolates
$p$-adic analytically):

\begin{corollary}\label{MWmainThmCor}
	Let $d'$ be the integer of Theorem $\ref{DieudonneMainThm}$.  Then each of
	$({e^*}'H^1_{\et})^{\I}$ and $({e^*}'H^1_{\et})_{\I}$ is a free
	$\Lambda$-module of rank $d'$, and for each $r\ge 1$ there are canonical
	$\H^*$ and $\scrG_{\Q_p}$-equivariant isomorphisms of $\Z_p[\Delta/\Delta_r]$-modules
	\begin{subequations}
	\begin{equation}	
		({e^*}'H^1_{\et})^{\I} \tens_{\Lambda} \Z_p[\Delta/\Delta_r] \simeq 
		{e^*}'H^1_{\et}({X_r}_{\Qbar_p},\Z_p)^{\I}\label{HidaResultSub}
	\end{equation}
	\begin{equation}
		({e^*}'H^1_{\et})_{\I} \tens_{\Lambda} \Z_p[\Delta/\Delta_r] \simeq 
		{e^*}'H^1_{\et}({X_r}_{\Qbar_p},\Z_p)_{\I}
		\label{HidaResultQuo}
	\end{equation}
	\end{subequations}
\end{corollary}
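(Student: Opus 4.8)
The plan is to deduce both assertions of Corollary \ref{MWmainThmCor} from Theorem \ref{FiltrationRecover} by faithfully flat descent along the extension $\Lambda \to \Lambda_{W(\o{\F}_p)}$, using the freeness and control properties of $\D_{\infty}^{\et}$ and $\D_{\infty}^{\mult}$ furnished by Theorem \ref{DieudonneMainThm}. Throughout I will use that $\Z_p \to W(\o{\F}_p)$ is a faithfully flat local homomorphism, so that $\Lambda \to \Lambda_{W(\o{\F}_p)} \cong W(\o{\F}_p)[\![T]\!]$ and, for each $r$, also $\Z_p[\Delta/\Delta_r] \to W(\o{\F}_p)[\Delta/\Delta_r]$ are faithfully flat.

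First, freeness. Write $M := ({e^*}'H^1_{\et})^{\I}$. By \eqref{inertialinvariants} there is an isomorphism $M \otimes_{\Lambda} \Lambda_{W(\o{\F}_p)} \simeq \D_{\infty}^{\et} \otimes_{\Lambda} \Lambda_{W(\o{\F}_p)}$, and the right-hand side is free of rank $d'$ over $\Lambda_{W(\o{\F}_p)}$ since $\D_{\infty}^{\et}$ is free of rank $d'$ over $\Lambda$ (Theorem \ref{DieudonneMainThm}). Because $M \otimes_{\Lambda} \Lambda_{W(\o{\F}_p)}$ is finitely generated, one may select finitely many elements of $M$ whose images generate it, and faithful flatness then forces these to generate $M$; thus $M$ is a finitely generated $\Lambda$-module. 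As $M \otimes_{\Lambda} \Lambda_{W(\o{\F}_p)}$ is free, hence flat, descent of flatness along the faithfully flat map shows $M$ is flat over $\Lambda$. Since $\Lambda$ is Noetherian, $M$ is then finitely presented, whence projective, and since $\Lambda$ is local, $M$ is free, necessarily of rank $d'$. The same argument applied to \eqref{inertialcovariants} — the Tate twist $\D_{\infty}^{\mult}(-1)$ affects only the semilinear Galois action and not the underlying $\Lambda$-module, which is free of rank $d'$ — shows $({e^*}'H^1_{\et})_{\I}$ is free of rank $d'$ over $\Lambda$.

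Next, the control isomorphisms \eqref{HidaResultSub} and \eqref{HidaResultQuo}. Projection from the inverse limit to level $r$ carries $\I$-invariants to $\I$-invariants, and the diamond operators in $\Delta_r$ act trivially at level $r$, so the resulting map factors through a canonical specialization morphism $M \otimes_{\Lambda} \Z_p[\Delta/\Delta_r] \to {e^*}'H^1_{\et}({X_r}_{\Qbar_p}, \Z_p)^{\I}$, and similarly for covariants. To see this is an isomorphism it suffices, by faithful flatness of $W(\o{\F}_p)$ over $\Z_p$, to check it after $\otimes_{\Z_p} W(\o{\F}_p)$. There I would chain together the base change of \eqref{inertialinvariants}, the control isomorphism $\D_{\infty}^{\et} \otimes_{\Lambda} \Z_p[\Delta/\Delta_r] \simeq \D(\o{\G}_r^{\et})$ of Theorem \ref{DieudonneMainThm}, and the finite-level comparison identifying $\D(\o{\G}_r^{\et}) \otimes_{\Z_p} W(\o{\F}_p)$ with $({e^*}'H^1_{\et}(X_r))^{\I} \otimes_{\Z_p} W(\o{\F}_p)$ — the last being the classical statement that, for the ordinary $p$-divisible group $\G_r$ over $R_r$, the Dieudonn\'e module of the \'etale part of its special fiber recovers the unramified inertial invariants of the generic-fiber Tate module, which is the finite-level input underlying \eqref{inertialinvariants}. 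The covariant case proceeds identically via $\D_{\infty}^{\mult}(-1)$ and $\D(\o{\G}_r^{\mult})$.

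The hard part will be the descent along the Frobenius-twisted extension $W(\o{\F}_p)/\Z_p$. The comparison isomorphisms of Theorem \ref{FiltrationRecover} are only $\sigma$-semilinearly equivariant, so to descend the finite-level control isomorphism from $W(\o{\F}_p)[\Delta/\Delta_r]$ back to $\Z_p[\Delta/\Delta_r]$ I must check that the composite respects the Frobenius structures on both sides and genuinely coincides with the canonical specialization morphism constructed over $\Z_p$; only then does faithful flatness transfer the isomorphism property downward. A secondary technical point, most delicate on the covariant side, is that while $\I$-invariants commute with the inverse limit defining $H^1_{\et}$, $\I$-coinvariants are merely right exact, so I would use the freeness established above — together with the slope structure making $\I$ act through the explicit character recorded in Theorem \ref{FiltrationRecover} on the relevant graded piece — to guarantee that the coinvariants of the inverse limit are computed level by level and that the specialization map is well defined and surjective.
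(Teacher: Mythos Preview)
Your approach is correct and is precisely the route the paper itself indicates: in the combined proof of Theorem~\ref{FiltrationRecover} and Corollary~\ref{MWmainThmCor}, the paper first establishes the finite-level comparisons (\ref{etalecase})--(\ref{multcase}), uses them together with Theorem~\ref{MainDieudonne} and the descent lemma \cite[Lemma~3.1.3]{CaisHida1} to get finite-level freeness and surjectivity of transition maps, and then applies \cite[Lemma~3.1.2]{CaisHida1}; it then explicitly remarks that Corollary~\ref{MWmainThmCor} ``is then an easy consequence of Theorem~\ref{FiltrationRecover} and \cite[Lemma~3.1.3]{CaisHida1},'' which is exactly your faithfully-flat-descent argument. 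One small comment: your ``hard part'' is easier than you fear. The Frobenius semilinearity of (\ref{inertialinvariants}) is an extra decoration and plays no role in the descent; what you actually need is that (\ref{inertialinvariants}) is compatible with projection to level~$r$ (so that, after base change to $W(\o{\F}_p)$, your specialization map agrees with the composite of known isomorphisms), and this is automatic because (\ref{inertialinvariants}) is \emph{constructed} as the inverse limit of the finite-level isomorphisms (\ref{etalecase})--(\ref{multcase}).
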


To recover the full $\Lambda$-adic local Galois representation ${e^*}'H^1_{\et}$,
rather than just its semisimplification,
it is necessary to work with the full Dieudonn\'e {\em crystal} of $\G_r$ over $R_r$.
Following Faltings \cite{Faltings} and Breuil (e.g. \cite{Breuil}), this is equivalent
to studying the evaluation of the Dieudonn\'e crystal of $\G_r\times_{R_r} R_r/pR_r$
on the ``universal" divided power thickening $S_r\twoheadrightarrow R_r/pR_r$,
where $S_r$ is the $p$-adically completed PD-hull
of the surjection $\Z_p[\![u_r]\!]\twoheadrightarrow R_r$
sending $u_r$ to $\varepsilon^{(r)}-1$.  As the rings $S_r$ are too unwieldly
to directly construct a good crystalline analogue of Hida's
ordinary \'etale cohomology, we must functorially descend  
the ``filtered $S_r$-module" attached to $\G_r$ to the much simpler ring $\s_r:=\Z_p[\![u_r]\!]$.  
While such a descent is provided
(in rather different ways) by the work of Breuil--Kisin and Berger--Wach, neither of these
frameworks is suitable for our application: it is essential for us
that the formation of this descent to $\s_r$ commute with base change as one moves up
the cyclotomic tower, and it is not at all clear that this holds for Breuil--Kisin modules 
or for the Wach modules of Berger.  Instead, we use the theory of \cite{CaisLau}, which works with frames and 
windows \`a la Lau and Zink to provide the desired functorial descent to a ``$(\varphi,\Gamma)$-module" $\m_r(\G_r)$
over $\s_r$.  
We view $\s_r$
as a $\Z_p$-subalgebra of $\s_{r+1}$ via the map sending $u_r$ to $\varphi(u_{r+1}):=(1+u_{r+1})^p -1$,
and we write $\s_{\infty}:=\varinjlim \s_r$ for 
the rising union\footnote{As explained in Remark \ref{Slimits}, 
the $p$-adic completion of $\s_{\infty}$ is actually a very nice ring:
it is canonically and Frobenius equivariantly isomorphic to $W(\F_p[\![u_0]\!]^{\perf})$,
for $\F_p[\![u_0]\!]^{\perf}$ the perfect closure of the $\F_p$-algebra $\F_p[\![u_0]\!]$.
}
of the $\s_r$, equiped with its Frobenius {\em automorphism} $\varphi$ and commuting action of 
$\Gamma$ determined by $\gamma u_r:=(1+u_r)^{\chi(\gamma)} - 1$.
We then form the projective limits
\begin{equation*}
	\m_{\infty}^{\star}:=\varprojlim (\m_r(\G_r^{\star})\tens_{\s_r} \s_{\infty})\quad\text{for}\quad \star\in\{\et,\mult,\Null\}
\end{equation*}
taken along the mappings induced by $\G_{r}\times_{R_r} R_{r+1}\rightarrow \G_{r+1}$
via the functoriality of $\m_r(\cdot)$
and its compatibility with base change.
These are $\Lambda_{\s_{\infty}}$-modules equipped with a semilinear action of $\Gamma$,
a linear and commuting action of $\H^*$, and a $\varphi$ (respectively $\varphi^{-1}$) semilinear endomorphism $F$ (respectively $V$)
satisfying $FV=\omega$ and $VF = \varphi^{-1}(\omega)$, for 
$\omega:=\varphi(u_1)/u_1 = u_0/\varphi^{-1}(u_0)\in \s_{\infty}$,
and they provide
our crystalline analogue of Hida's ordinary \'etale cohomology (see Theorem \ref{MainThmCrystal}):

\begin{theorem}
	There is a canonical short exact sequence of finite free $\Lambda_{\s_{\infty}}$-modules
	with linear $\H^*$-action, semilinear $\Gamma$-action, and semilinear
	endomorphisms $F$, $V$ satisfying $FV=\omega$, $VF=\varphi^{-1}(\omega)$
	\begin{equation}
		\xymatrix{
			0 \ar[r] & {\m_{\infty}^{\et}} \ar[r] & {\m_{\infty}} \ar[r] & {\m_{\infty}^{\mult}} \ar[r] & 0
		}.\label{CrystallineAnalogue}
	\end{equation}
	Each of $\m_{\infty}^{\star}$ for $\star\in \{\et,\mult\}$ is free of rank $d'$ over 
	$\Lambda_{\s_{\infty}}$, while $\m_{\infty}$ is free of rank $2d'$, where
	$d'$ is as in Theorem $\ref{DieudonneMainThm}$.  Extending scalars on $(\ref{CrystallineAnalogue})$
	along the canonical surjection 
	$\Lambda_{\s_{\infty}}\twoheadrightarrow \s_{\infty}[\Delta/\Delta_r]$ yields the short exact 
	sequence
	\begin{equation*}
		\xymatrix{
			0 \ar[r] & {\m_r(\G_r^{\et})\tens_{\s_r} \s_{\infty}} \ar[r] &
			{\m_r(\G_r)\tens_{\s_r} \s_{\infty}} \ar[r] &
			{\m_r(\G_r^{\mult})\tens_{\s_r} \s_{\infty}} \ar[r] & 
			0
		}
	\end{equation*}
	compatibly with $\H^*$, $\Gamma$, $F$ and $V$.  The Frobenius endomorphism $F$
	commutes with $\H^*$ and $\Gamma$, whereas the Verscheibung $V$ commutes with $\H^*$ and satisfies
	$V\gamma = \varphi^{-1}(\omega/\gamma\omega)\cdot \gamma V$ for all $\gamma\in \Gamma$.
\end{theorem}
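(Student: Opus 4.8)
The plan is to produce the sequence (\ref{CrystallineAnalogue}) at finite level by applying the contravariant descent functor $\m_r(\cdot)$ of \cite{CaisLau} to the connected-\'etale sequence of the ordinary $p$-divisible group $\G_r$ over $R_r$, and then to assemble these over the tower, deducing freeness and control by bootstrapping from the Dieudonn\'e-module statement of Theorem \ref{DieudonneMainThm}. First I would record that $\G_r$, being ordinary, sits in a canonical connected-\'etale sequence $0\to \G_r^{\mult}\to \G_r\to \G_r^{\et}\to 0$ whose formation is Hecke-equivariant and compatible with the descent of the generic fibre to $\Q_p$. Since $\m_r(\cdot)$ is an exact contravariant functor into finite free $\s_r$-modules carrying $F$ and $V$, applying it yields a short exact sequence $0\to \m_r(\G_r^{\et})\to \m_r(\G_r)\to \m_r(\G_r^{\mult})\to 0$ of finite free $\s_r$-modules. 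The linear $\H^*$-action is induced by Hecke functoriality on $J_r$; the semilinear $\Gamma$-action combines the Galois descent data of the generic fibre with the action $\gamma u_r=(1+u_r)^{\chi(\gamma)}-1$ on $\s_{\infty}$; and $F$, $V$ with the relations $FV=\omega$, $VF=\varphi^{-1}(\omega)$ come from the frame/window structure of \cite{CaisLau} over $\s_{\infty}$ (here $\omega$ is the ``Eisenstein'' element attached to the surjection $\s_r\to R_r$, $u_r\mapsto \varepsilon^{(r)}-1$).

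The crucial bridge to Theorem \ref{DieudonneMainThm} is the reduction $\s_r\twoheadrightarrow \Z_p$, $u_r\mapsto 0$: evaluating the Dieudonn\'e crystal on this thickening identifies $\m_r(\G_r^{\star})\otimes_{\s_r}\Z_p$ with the Dieudonn\'e module $\D(\o{\G}_r^{\star})$ of the special fibre, compatibly with $F$, $V$, $\H^*$, and the $\Delta/\Delta_r$-action. These reductions are compatible with the transition maps, since $u_{r+1}\mapsto 0$ forces $u_r\mapsto 0$, so I expect Theorem \ref{DieudonneMainThm} to control the finite-level modules: there $\D(\o{\G}_r^{\star})$ is free over $\Z_p[\Delta/\Delta_r]$, while $\m_r(\G_r^{\star})$ is finite free over the local ring $\s_r$ (hence $u_r$-adically complete and $u_r$-torsion free), so a Nakayama argument over the local ring $\s_r[\Delta/\Delta_r]$ lifts a basis of $\D(\o{\G}_r^{\star})$ and shows $\m_r(\G_r^{\star})$ is free over $\s_r[\Delta/\Delta_r]$ of the expected rank ($d'$ in the \'etale and multiplicative cases, $2d'$ for $\m_r(\G_r)$).

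Next I would pass to the limit. The base-change compatibility of $\m_r(\cdot)$ --- the property for which \cite{CaisLau} was developed, and which fails for Breuil--Kisin or Wach modules --- gives canonical isomorphisms $\m_{r+1}(\G_r\times_{R_r}R_{r+1})\simeq \m_r(\G_r)\otimes_{\s_r}\s_{r+1}$, so that the maps $\G_r\times_{R_r}R_{r+1}\to \G_{r+1}$ furnish the transition morphisms defining $\m_{\infty}^{\star}=\varprojlim_r(\m_r(\G_r^{\star})\otimes_{\s_r}\s_{\infty})$. Following the limit formalism used to prove Theorem \ref{DieudonneMainThm}, the finite-level freeness over $\s_{\infty}[\Delta/\Delta_r]$ together with the control isomorphisms $\m_{\infty}^{\star}\otimes_{\Lambda_{\s_{\infty}}}\s_{\infty}[\Delta/\Delta_r]\simeq \m_r(\G_r^{\star})\otimes_{\s_r}\s_{\infty}$ yield that $\m_{\infty}^{\star}$ is finite free over $\Lambda_{\s_{\infty}}$ of the asserted rank and that (\ref{CrystallineAnalogue}) is exact, the relevant higher inverse limits vanishing by finiteness and surjectivity of the transition maps exactly as in the proof of Theorem \ref{DieudonneMainThm}. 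All of the structures $\H^*$, $\Gamma$, $F$, $V$ pass to the limit by functoriality.

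It remains to verify the commutation relations, and the one substantive point is that $F$ commutes with $\Gamma$: this holds because the Frobenius of a Dieudonn\'e crystal is intrinsic, hence compatible with the descent automorphisms defining the $\Gamma$-action, while the $\H^*$-action is linear by construction. Granting $F\gamma=\gamma F$, the relation $V\gamma=\varphi^{-1}(\omega/\gamma\omega)\cdot\gamma V$ is forced: writing $V\gamma=c\,\gamma V$ and using the $\varphi$-semilinearity of $F$ together with $FV=\omega$ and $F\gamma=\gamma F$ gives $\varphi(c)\,\gamma\omega=\omega$, i.e.\ $c=\varphi^{-1}(\omega/\gamma\omega)$. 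I expect the main obstacle to be neither this formal computation nor the exactness, but rather the careful deployment of the base-change compatibility of \cite{CaisLau}: one must check that the descent isomorphisms are simultaneously compatible with Hecke operators, with the Galois descent of the generic fibre, and with the $F$, $V$-structures, so that the resulting $\Gamma$-action on $\m_{\infty}^{\star}$ is genuinely semilinear over $\s_{\infty}$ and commutes with $F$. Once this compatibility is secured, the control theorem and freeness follow the template of Theorem \ref{DieudonneMainThm}.
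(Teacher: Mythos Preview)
Your proposal is correct and follows essentially the same route as the paper: apply the exact functor $\m_r(\cdot)$ to the connected--\'etale sequence of $\G_r$, reduce modulo $u_r$ to identify with the Dieudonn\'e modules $\D(\o{\G}_r^{\star})$, invoke Theorem~\ref{DieudonneMainThm} for freeness and surjectivity of transition maps, and then apply the limit formalism of \cite[Lemma~3.1.2]{CaisHida1} (your ``Nakayama over $\s_r[\Delta/\Delta_r]$'' is precisely this). The paper makes one cosmetic simplification you omit---it works throughout with $\varphi^*\m_r(\G_r^{\star})$ rather than $\m_r(\G_r^{\star})$, since the identification with the Dieudonn\'e module in Theorem~\ref{CaisLauMain}(\ref{EvaluationONW}) involves a $\varphi$-twist---and it derives the $V\gamma$ commutation relation directly from the identity (\ref{psiGammarel}) rather than by your formal manipulation with $FV=\omega$, but these are inessential differences.
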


Again, in the spirit of Theorem \ref{DDuality} and \cite[Proposition 3.2.4]{CaisHida1}, 
there is a corresponding ``autoduality" result for $\m_{\infty}$
(see Theorem \ref{CrystalDuality}).  To state it, we must work over
$\s_{\infty}':=\varinjlim_r \Z_p[\mu_N][\![u_r]\!]$, with the inductive limit taken along the 
$\Z_p$-algebra maps sending $u_r$ to $\varphi(u_{r+1})$.

\begin{theorem}
		Let $\mu:\Gamma\rightarrow \Lambda_{\s_{\infty}}^{\times}$ be the crossed homomorphism 
		given by $\mu(\gamma):=\frac{u_1}{\gamma u_1}\chi(\gamma) \langle \chi(\gamma)\rangle$.
		There is a canonical $\H^*$ and $\Gal(K_{\infty}'/K_0)$-compatible isomorphism of 
		exact sequences
		\begin{equation*}
		\begin{gathered}
			\xymatrix{
			0\ar[r] & {\m_{\infty}^{\et}(\mu \langle a\rangle_N)_{\Lambda_{\s_{\infty}'}}} \ar[r]\ar[d]_-{\simeq} & 
			{\m_{\infty}(\mu \langle a\rangle_N)_{\Lambda_{\s_{\infty}'}}} \ar[r]\ar[d]_-{\simeq} & 
				{\m_{\infty}^{\mult}(\mu \langle a\rangle_N)_{\Lambda_{\s_{\infty}'}}} \ar[r]\ar[d]_-{\simeq} & 0\\
	0\ar[r] & {(\m_{\infty}^{\mult})_{\Lambda_{\s_{\infty}'}}^{\vee}} \ar[r] & 
				{(\m_{\infty})_{\Lambda_{\s_{\infty}'}}^{\vee}} \ar[r] & 
				{(\m_{\infty}^{\et})_{\Lambda_{\s_{\infty}'}}^{\vee}} \ar[r] & 0 
		}
		\end{gathered}
		\end{equation*}
		intertwining $F$ and $V$ on the top row with  
		$V^{\vee}$ and $F^{\vee}$, respectively,  on the bottom.  The action 
		of $\Gal(K_{\infty}'/K_0)$ on the bottom row is the standard one
		$\gamma\cdot f:=\gamma f\gamma^{-1}$ on linear duals.
\end{theorem}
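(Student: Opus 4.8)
The plan is to reduce the statement to finite level, exploit the autoduality of the Jacobian there, transport it through the frame-and-window functor $\m_r(\cdot)$ of \cite{CaisLau} using its compatibility with Cartier duality, and then pass to the limit over $r$, tracking the twist at each stage. This runs exactly parallel to the derivation of the Dieudonn\'e autoduality of Theorem \ref{DDuality} and the de~Rham autoduality of \cite[Proposition 3.2.4]{CaisHida1}, and indeed the finite-level input is essentially the same.

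First I would record, at each level $r$, the twisted Cartier self-duality of $\G_r={e^*}'J_r[p^{\infty}]$. The canonical principal polarization of $J_r$ furnishes a Weil pairing identifying $J_r[p^{\infty}]$ with its Cartier dual; applying the idempotent ${e^*}'$ turns this into an isomorphism $\G_r\xrightarrow{\sim}\G_r^t$ twisted by the tame diamond operator $\langle a\rangle_N$ (the Atkin--Lehner involution interchanges $e'$ and ${e^*}'$), precisely the twisted self-duality underlying Theorem \ref{DDuality}. Since connected-\'etale duality for $p$-divisible groups interchanges the multiplicative-type subgroup with the \'etale quotient of the dual, this identification swaps the multiplicative and \'etale parts, which is exactly what is needed to match the top exact sequence of the diagram with the dual of the bottom one.

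Second I would invoke the compatibility of $\m_r(\cdot)$ with Cartier duality: the theory of \cite{CaisLau} carries the duality of $p$-divisible groups to a perfect, $F$- and $V$-compatible pairing of windows $\m_r(\G_r)\times\m_r(\G_r^t)\to\m_r(\mu_{p^{\infty}})$ that interchanges $F$ and $V$ (Cartier duality swaps Frobenius and Verschiebung). The crux is to compute the rank-one dualizing object $\m_r(\mu_{p^{\infty}})$ explicitly. Its $\Gamma$-semilinearity and its $F,V$ (with $FV=\omega$) are governed by the variable $u_r$, whose transformation law $\gamma u_r=(1+u_r)^{\chi(\gamma)}-1$ produces the factor $\tfrac{u_1}{\gamma u_1}$, while the values of the Weil pairing in $\mu_{p^{\infty}}$ contribute the cyclotomic factor $\chi(\gamma)$. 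Combining these with the geometric (inertial) $\Gamma$-action on $\G_r$, which accounts for $\langle\chi(\gamma)\rangle$, and the tame twist $\langle a\rangle_N$ from the first step, should reproduce exactly the crossed homomorphism $\mu(\gamma)=\tfrac{u_1}{\gamma u_1}\chi(\gamma)\langle\chi(\gamma)\rangle$ and the factor $\langle a\rangle_N$ appearing in the statement; the passage to $\s_{\infty}'$ and the enlarged group $\Gal(K_{\infty}'/K_0)$ is forced by the need to adjoin $\mu_N$ for the pairing to be defined.

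Finally I would pass to the projective limit. Using the functoriality of $\m_r(\cdot)$ and, crucially, its compatibility with base change along the degeneracy maps $\G_r\times_{R_r}R_{r+1}\to\G_{r+1}$, the finite-level pairings are compatible up the tower and, after extending scalars to $\s_{\infty}'$, assemble into a $\Lambda_{\s_{\infty}'}$-bilinear pairing on $\m_{\infty}$; the freeness and control assertions of Theorem \ref{MainThmCrystal} then guarantee that the limiting pairing remains perfect, yielding the asserted isomorphism of exact sequences with $F\leftrightarrow V^{\vee}$, $V\leftrightarrow F^{\vee}$, and the standard dual $\Gal(K_{\infty}'/K_0)$-action $\gamma\cdot f:=\gamma f\gamma^{-1}$. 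I expect the main obstacle to be this limit step: one must check that the twists $\mu$ and $\langle a\rangle_N$ are genuinely independent of $r$ and that the transition maps are \emph{strictly} compatible with the pairings, so that no spurious correction factor survives in the limit. This is precisely the point at which the base-change property of the construction in \cite{CaisLau}---emphasized earlier as the reason for preferring it over Breuil--Kisin or Wach modules---is indispensable.
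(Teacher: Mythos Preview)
Your outline is correct and follows the paper's own argument: the finite-level input is precisely Proposition~\ref{GdualTwist} (the twisted Cartier self-duality $\G_r'\simeq\G_r(\langle\chi\rangle\langle a\rangle_N)$), and the twist $\mu$ arises exactly as you say from the $\Gamma$-action on $\m_r(\mu_{p^{\infty}})$ (equivalently, from Definition~\ref{DualBTDef} of duality in $\BT_{\s_r}^{\varphi,\Gamma}$, where the factor $\chi(\gamma)^{-1}\varphi^{r-1}(\gamma u_r/u_r)=\chi(\gamma)^{-1}\gamma u_1/u_1$ appears).

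One correction on the limit step: base-change compatibility of $\m_r(\cdot)$ is necessary to even set up the tower, but it is not what makes the pairings assemble into a perfect $\Lambda_{\s_{\infty}'}$-pairing. The paper instead verifies the hypothesis of \cite[Lemma~3.1.4]{CaisHida1}, namely $\langle\rho_{r,s}x,\rho_{r,s}y\rangle_s=\sum_{\delta\in\Delta_s/\Delta_r}\langle x,\delta^{-1}y\rangle_r$, and this reduces (via the definition of $\rho_{r,s}'$ in (\ref{pdivTowers}) and the $\H^*$-equivariance of (\ref{GrprimeGr})) to the identity $\Pic^0(\pr)\circ\Alb(\ps)=U_p^*\sum_{\delta}\langle\delta^{-1}\rangle$ of (\ref{PicAlbRelation}). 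So the ``correction factor'' you worry about is absorbed not by base change but by the $U_p^*$-twist built into $\rho_{r,s}'$; once this is checked, the limit formalism and Theorem~\ref{MainThmCrystal} finish the proof as you indicate.
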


The $\Lambda_{\s_{\infty}}$-modules $\m_{\infty}^{\et}$ and $\m_{\infty}^{\mult}$
have a particularly simple structure (see Theorem \ref{etmultdescent}):

\begin{theorem}
	There are canonical $\H^*$, $\Gamma$, $F$ and $V$-equivariant isomorphisms
	of $\Lambda_{\s_{\infty}}$-modules
	\begin{subequations}
	\begin{equation}
		\m_{\infty}^{\et} \simeq \D_{\infty}^{\et}\tens_{\Lambda} \Lambda_{\s_{\infty}},
	\end{equation}	
	intertwining $F$ and $V$ with 
	$F\otimes \varphi$ and $F^{-1}\otimes \varphi^{-1}(\omega)\cdot \varphi^{-1}$, respectively,
	and $\gamma\in \Gamma$
	with $\gamma\otimes\gamma$, and
	\begin{equation}
		\m_{\infty}^{\mult}\simeq \D_{\infty}^{\mult}\tens_{\Lambda} \Lambda_{\s_{\infty}},	
	\end{equation}
	intertwing $F$ and $V$ with $V^{-1} \otimes \omega \cdot\varphi$
	and $V\otimes\varphi^{-1}$, respectively,
	and $\gamma$ with $\gamma\otimes \chi(\gamma)^{-1} \gamma u_1/u_1$.
	In particular, $F$ $($respectively $V$$)$ 
	acts invertibly on $\m_{\infty}^{\et}$ $($respectively $\m_{\infty}^{\mult}$$)$.
\end{subequations}
\end{theorem}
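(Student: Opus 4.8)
The plan is to descend to finite level and then exploit the special rigidity of the \'etale and multiplicative parts. By the definitions $\m_{\infty}^{\star}=\varprojlim_r(\m_r(\G_r^{\star})\tens_{\s_r}\s_{\infty})$ and the base-change isomorphisms $\D_{\infty}^{\star}\tens_{\Lambda}\Z_p[\Delta/\Delta_r]\simeq \D(\o{\G}_r^{\star})$ of Theorem~\ref{DieudonneMainThm} (whose finiteness and freeness let the relevant tensor products commute with $\varprojlim_r$), it suffices to produce, functorially in $r$ and compatibly with the transition maps induced by $u_r\mapsto\varphi(u_{r+1})$, isomorphisms of $\s_r$-modules $\m_r(\G_r^{\et})\simeq \D(\o{\G}_r^{\et})\tens_{\Z_p}\s_r$ and $\m_r(\G_r^{\mult})\simeq \D(\o{\G}_r^{\mult})\tens_{\Z_p}\s_r$ carrying the window structure, the $\Gamma$-action, and the $\H^*$-action to the asserted formulas; applying $\tens_{\s_r}\s_{\infty}$ and passing to the limit then yields the theorem. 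The $\H^*$-equivariance is automatic from the functoriality of $\m_r(\cdot)$ in \cite{CaisLau} applied to the Hecke correspondences, and the final invertibility assertions follow at once from the formulas $F\leftrightarrow F\tens\varphi$ and $V\leftrightarrow V\tens\varphi^{-1}$ once these are in hand, since $F$ (respectively $V$) is invertible on $\D_{\infty}^{\et}$ (respectively $\D_{\infty}^{\mult}$) by Theorem~\ref{DieudonneMainThm} and $\varphi$ is an automorphism of $\s_{\infty}$.

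For the \'etale module I would argue that its Dieudonn\'e crystal is constant. As $R_r/pR_r$ is local Artinian with residue field $\F_p$, reduction is an equivalence on \'etale $p$-divisible groups, so $\G_r^{\et}\times_{R_r}R_r/pR_r$ is the base change of $\o{\G}_r^{\et}$ and its Dieudonn\'e crystal is the constant crystal attached to $\D(\o{\G}_r^{\et})$. Evaluating this constant crystal on the PD-thickening $S_r\twoheadrightarrow R_r/pR_r$ and descending to $\s_r$ by the window formalism of \cite{CaisLau} produces $\D(\o{\G}_r^{\et})\tens_{\Z_p}\s_r$ with Frobenius $F\tens\varphi$; the Verschiebung is then forced to be $F^{-1}\tens\varphi^{-1}(\omega)\varphi^{-1}$ by the relation $FV=\omega$. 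For the geometric-inertia action, I would use that $\Gamma$ commutes with the invertible $F$ and hence preserves the $F$-horizontal (constant) sections $\D(\o{\G}_r^{\et})\tens 1$; since $\Gamma$ acts trivially on $\D_{\infty}^{\et}$ by Theorem~\ref{DieudonneMainThm} (in accordance with the unramifiedness recorded in Theorem~\ref{FiltrationRecover}), it fixes these sections, and semilinearity then forces $\gamma$ to act as $\gamma\tens\gamma$, i.e.\ through its standard action $\gamma u_r=(1+u_r)^{\chi(\gamma)}-1$ on $\s_r$ alone.

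For the multiplicative module the extra structure is governed by the window of $\mu_{p^{\infty}}$. Cartier duality identifies $(\G_r^{\mult})^{\vee}$ with an \'etale $p$-divisible group whose window is constant by the previous paragraph, and Dieudonn\'e duality---whose pairing takes values in the window of $\mu_{p^{\infty}}$---then identifies $\m_r(\G_r^{\mult})$ with the linear dual of that constant window, twisted by the $\mu_{p^{\infty}}$-window. This interchanges $F$ and $V$, so $V$ is the constant operator $V\tens\varphi^{-1}$ and $F=V^{-1}\omega=V^{-1}\tens\omega\varphi$, while the $\mu_{p^{\infty}}$-window contributes the geometric-inertia twist $\chi(\gamma)^{-1}\gamma u_1/u_1$. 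The same twist can be read off intrinsically by inserting the \'etale isomorphism into the two duality diagrams of Theorem~\ref{CrystalDuality} and Theorem~\ref{DDuality}: the $\langle a\rangle_N$ factors cancel, the $\langle\chi(\gamma)\rangle^{\pm 1}$ factors are absorbed into the $\Gamma$-action on $\D_{\infty}^{\mult}$ of Theorem~\ref{DieudonneMainThm} (after the harmless descent from $\s_{\infty}'$ to $\s_{\infty}$), and the surviving ratio of crossed homomorphisms is exactly $\bigl(\tfrac{u_1}{\gamma u_1}\chi(\gamma)\bigr)^{-1}=\chi(\gamma)^{-1}\tfrac{\gamma u_1}{u_1}$.

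The hard part will be the precise determination of this multiplicative $\Gamma$-twist. Unlike the \'etale crystal, the crystal of a multiplicative group is not constant over the cyclotomic tower: its geometric-inertia structure genuinely carries the period $\log(1+u_r)$, and it is exactly this non-constancy that produces the relation $FV=\omega$ in place of $FV=p$. Whether one extracts the $1$-unit factor $\chi(\gamma)^{-1}\gamma u_1/u_1$ from the two duality theorems as above or computes the window of $\mu_{p^{\infty}}$ over $\s_r$ by hand, the delicate point is the bookkeeping of this factor and the verification of its compatibility as $r$ varies under $u_r\mapsto\varphi(u_{r+1})$; everything else reduces to the functoriality and base-change properties of $\m_r(\cdot)$ already established in \cite{CaisLau}.
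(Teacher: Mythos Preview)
Your proposal is correct and follows essentially the same route as the paper, though the paper organizes the argument around Proposition~\ref{EtaleMultDescription}, which already packages the finite-level computation you are sketching. That proposition produces the isomorphisms $\m_r(\G_r^{\star})\simeq \D(\o{\G}_r^{\star})\otimes_{\Z_p,\varphi^{r-1}}\s_r$ (with the explicit $F$, $V$, $\Gamma$ formulas of (\ref{MrEtDef})--(\ref{MrMultDef})) by applying $\D(\cdot)_{S_r}$ to the isomorphisms $F^r:G_0^{\et}\xrightarrow{\simeq}(G_0^{\et})^{(p^r)}$ and $V^r:(G_0^{\mult})^{(p^r)}\xrightarrow{\simeq}G_0^{\mult}$ and then descending along ${\alpha_r}_*$. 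Your ``constant crystal'' argument for the \'etale part is exactly equivalent to this $F^r$ trick (both exploit that $G_0^{\et}$ is base-changed from $\F_p$); for the multiplicative part the paper uses the parallel $V^r$ trick directly rather than going through Cartier duality or the $\Lambda$-adic duality theorems, which is shorter and avoids the bookkeeping you flag as delicate. In particular, the $\Gamma$-twist $\chi(\gamma)^{-1}\gamma u_1/u_1$ drops out immediately from (\ref{MrMultDef}) and the computation of the window of $\mu_{p^{\infty}}$ in Example~\ref{GmQpZpExamples}, with no need to compare crossed homomorphisms.

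The one point you correctly identify as requiring care---compatibility with change in $r$---is handled in the paper by a further twist: the raw identifications of Proposition~\ref{EtaleMultDescription} are \emph{not} directly compatible with the transition maps $\rho_{r,s}$ (the diagram (\ref{EtMultSpecialIsomsBC}) has an extra $F\otimes\id$ or $V^{-1}\otimes\id$ on the right), so the paper composes with $F^r$ on the \'etale side and $V^{-r}$ on the multiplicative side to absorb these discrepancies before passing to the limit. This is the content of the first displayed formula in the paper's proof, and is precisely the resolution of what you call the ``hard part.''
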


From $\m_{\infty}$, we can recover $\D_{\infty}$ and ${e^*}'H^1_{\dR}$,
with their additional structures (see Theorem \ref{SRecovery}):
\begin{theorem}\label{MinftySpecialize}
	Viewing $\Lambda$ as a $\Lambda_{\s_{\infty}}$-algebra via the map induced by $u_r\mapsto 0$,
	there is a canonical isomorphism of short exact sequences of finite free $\Lambda$-modules
	\begin{equation*}
		\xymatrix{
			0 \ar[r] & {\m_{\infty}^{\et}\tens_{\Lambda_{\s_{\infty}}} \Lambda}\ar[d]_-{\simeq} \ar[r] &
			{\m_{\infty}\tens_{\Lambda_{\s_{\infty}}} \Lambda}\ar[r] \ar[d]_-{\simeq}&
			{\m_{\infty}^{\mult}\tens_{\Lambda_{\s_{\infty}}} \Lambda} \ar[r]\ar[d]_-{\simeq} & 0\\
			0 \ar[r] & {\D_{\infty}^{\et}} \ar[r] & {\D_{\infty}} \ar[r] &
			{\D_{\infty}^{\mult}} \ar[r] & 0
		}
	\end{equation*}
	which is $\Gamma$ and $\H^*$-equivariant and carries $F\otimes 1$ to $F$
	and $V\otimes 1$ to $V$.
	Viewing $\Lambda_{R_{\infty}}$ as a $\Lambda_{\s_{\infty}}$-algebra via the map 
	$u_r\mapsto (\varepsilon^{(r)})^p - 1$, there is a canonical
	isomorphism of short exact sequences of 
	$\Lambda_{R_{\infty}}$-modules
	\begin{equation*}
		\xymatrix{
				0 \ar[r] & {\m_{\infty}^{\et}\tens_{\Lambda_{\s_{\infty}}} \Lambda_{R_{\infty}}}
				\ar[d]_-{\simeq} \ar[r] &
			{\m_{\infty}\tens_{\Lambda_{\s_{\infty}}} \Lambda_{R_{\infty}}}\ar[r] \ar[d]_-{\simeq}&
			{\m_{\infty}^{\mult}\tens_{\Lambda_{\s_{\infty}}} \Lambda_{R_{\infty}}} \ar[r]\ar[d]_-{\simeq} & 0\\
		0 \ar[r] & {{e^*}'H^1(\O)} \ar[r]_{i} & 
		{{e^*}'H^1_{\dR}} \ar[r]_-{j} & {{e^*}'H^0(\omega)} \ar[r] & 0 
		}
	\end{equation*}
	that is $\Gamma$ and $\H^*$-equivariant, 
	where $i$ and $j$ the splittings 
	given by Theorem $\ref{dRtoDieudonne}$.
\end{theorem}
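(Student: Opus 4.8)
The plan is to deduce both isomorphisms from finite-level comparison isomorphisms for the windows $\m_r(\G_r)$ furnished by \cite{CaisLau}, together with the finite-level control already recorded in Theorem \ref{MainThmCrystal}. The finite freeness of $\m_\infty$ reduces each base change $\m_\infty\otimes_{\Lambda_{\s_\infty}}\Lambda_B$ (for a ring map $\s_\infty\to B$) to the inverse limit over $r$ of the layers $\m_\infty\otimes_{\Lambda_{\s_\infty}}B[\Delta/\Delta_r]$, and by the finite-level part of Theorem \ref{MainThmCrystal} together with the given map $\s_\infty\to B$ each such layer is $\m_r(\G_r)\otimes_{\s_r}B$. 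Thus it suffices to identify each layer compatibly with $F$, $V$, $\Gamma$ and $\H^*$, then pass to the limit and apply ${e^*}'$; exactness of the limiting sequences is automatic, since the quotients $\m_\infty^{\mult}$, $\D_\infty^{\mult}$ and ${e^*}'H^0(\omega)$ are free.

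For the first specialization, I would observe that $u_r\mapsto 0$ exhibits $\s_r\twoheadrightarrow\Z_p=W(\F_p)$ as a morphism from the frame underlying $\s_r$ to the Witt frame of $\F_p$, the composite $\s_r\to\Z_p\twoheadrightarrow\F_p$ agreeing with $\s_r\to R_r\twoheadrightarrow\F_p$ on special fibres. Under this morphism the window $\m_r(\G_r)$ base-changes to the contravariant Dieudonn\'e module of $\o{\G}_r=\G_r\times_{R_r}\F_p$, giving $\m_r(\G_r)\otimes_{\s_r,\,u_r\mapsto 0}\Z_p\simeq\D(\o{\G}_r)$ functorially. Since $\varphi$ fixes $\Z_p$, the semilinear $F$ and $V$ become linear and go over to the Dieudonn\'e $F$ and $V$; note that $\omega=\varphi(u_1)/u_1\equiv p\bmod u_1$, so the relations $FV=\omega$, $VF=\varphi^{-1}(\omega)$ degenerate to $FV=VF=p$ as required. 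Passing to the limit and applying ${e^*}'$ then yields the first displayed isomorphism, the outer-term identifications being the limits of the corresponding statements for $\o{\G}_r^{\et}$ and $\o{\G}_r^{\mult}$, with $\Gamma$- and $\H^*$-equivariance inherited from the functoriality of $\m_r(\cdot)$.

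For the second specialization, the key point is that $u_r\mapsto(\varepsilon^{(r)})^p-1$ is exactly the Frobenius-twisted structure map $\s_r\xrightarrow{\varphi}\s_r\xrightarrow{u_r\mapsto\varepsilon^{(r)}-1}R_r$, because $(1+u_r)^p-1$ evaluated at $u_r=\varepsilon^{(r)}-1$ equals $(\varepsilon^{(r)})^p-1$. By the crystalline comparison of \cite{CaisLau}, the window base-changed along this map computes the value of the Dieudonn\'e crystal of $\G_r$ at $R_r$, namely $H^1_{\dR}(\G_r/R_r)$ with its Hodge filtration; combined with $\G_r={e^*}'J_r[p^\infty]$ and the canonical comparison between the de Rham cohomology of $J_r$ and the Dieudonn\'e crystal of its $p$-divisible group (as in \cite{CaisHida1}, \cite{CaisDualizing}), this gives $\m_r(\G_r)\otimes_{\s_r,\,u_r\mapsto(\varepsilon^{(r)})^p-1}R_r\simeq{e^*}'H^1_{\dR}(\X_r/R_r)$, carrying the slope filtration $(\ref{CrystallineAnalogue})$ to the Hodge filtration. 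Taking the limit and applying ${e^*}'$ produces the middle isomorphism $\m_\infty\otimes_{\Lambda_{\s_\infty}}\Lambda_{R_\infty}\simeq{e^*}'H^1_{\dR}$. To match the maps $i$ and $j$ of Theorem \ref{dRtoDieudonne}, I would compute the outer terms via Theorem \ref{etmultdescent}: base-changing $\m_\infty^{\et}\simeq\D_\infty^{\et}\tens_{\Lambda}\Lambda_{\s_\infty}$ and $\m_\infty^{\mult}\simeq\D_\infty^{\mult}\tens_{\Lambda}\Lambda_{\s_\infty}$ to $\Lambda_{R_\infty}$ and invoking Theorem \ref{dRtoDieudonne} identifies $\m_\infty^{\et}\otimes\Lambda_{R_\infty}$ with ${e^*}'H^1(\O)$ and $\m_\infty^{\mult}\otimes\Lambda_{R_\infty}$ with ${e^*}'H^0(\omega)$; since the Hodge and slope filtrations correspond in opposite directions, the resulting sequence is precisely $0\to{e^*}'H^1(\O)\xrightarrow{i}{e^*}'H^1_{\dR}\xrightarrow{j}{e^*}'H^0(\omega)\to 0$ with $i,j$ the splittings of Theorem \ref{dRtoDieudonne}.

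The main obstacle is the de Rham identification in the third paragraph: verifying that the Frobenius-twisted base change of the window genuinely reproduces $H^1_{\dR}(\G_r/R_r)$ with its correct Hodge filtration, rather than a Frobenius pullback or a different integral lattice, and---crucially---that this identification is compatible with the transition maps as $r$ varies along the cyclotomic tower. It is exactly this base-change compatibility that fails for Breuil--Kisin and Wach modules and forces the use of \cite{CaisLau}; checking it, and reconciling the $\Gamma$-equivariance of $u_r\mapsto(\varepsilon^{(r)})^p-1$ with the geometric $\Gamma$-action on $\m_r(\G_r)$, will require the bulk of the work. By contrast, the first specialization and the interchange of limits with base change are comparatively formal.
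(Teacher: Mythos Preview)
Your proposal is correct and follows essentially the same strategy as the paper's proof (Theorem \ref{SRecovery}): reduce to finite level via the control theorem of Theorem \ref{MainThmCrystal}, invoke the comparison isomorphisms $\m_r(\G_r)\otimes_{\s_r,\varphi\circ\tau}W\simeq\D(\o{\G}_r)_W$ and $\m_r(\G_r)\otimes_{\s_r,\theta\circ\varphi}R_r\simeq\D(\G_{r,0})_{R_r}$ from Theorem \ref{CaisLauMain} (packaged in the paper as Proposition \ref{MrToHodge}), and pass to the limit using the commutative algebra formalism of \cite[Lemma 3.1.2]{CaisHida1}.

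Two remarks. First, the ``main obstacle'' you flag at the end---that the Frobenius-twisted base change really computes the Hodge filtration of $\D(\G_{r,0})_{R_r}$ compatibly with change in $r$---is already dispatched in the paper by Proposition \ref{MrToHodge} (specifically \eqref{MrToHodgeMap}) together with Proposition \ref{KeyComparison}; you can simply cite these rather than reprove them. Second, for the outer terms in the de~Rham comparison, the paper works directly with the finite-level Hodge filtration via Proposition \ref{KeyComparison} and Theorem \ref{dRMain}, whereas you pass through Theorem \ref{etmultdescent} and Theorem \ref{dRtoDieudonne}; both routes are valid and yield the same identifications of $\m_\infty^{\et}\otimes\Lambda_{R_\infty}$ and $\m_\infty^{\mult}\otimes\Lambda_{R_\infty}$ with ${e^*}'H^1(\O)$ and ${e^*}'H^0(\omega)$.
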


To recover Hida's ordinary \'etale cohomology from $\m_{\infty}$,
we introduce the ``period" ring of Fontaine\footnote{Though we use the notation introduced by Berger and Colmez.} 
$\wt{\e}^+:=\varprojlim \O_{\C_p}/(p)$, with the projective limit
taken along the $p$-power mapping; this is a perfect valuation ring of characteristic $p$
equipped with a canonical action of $\scrG_{\Q_p}$ via ``coordinates".  We write $\wt{\e}$
for the fraction field of $\wt{\e}^+$ and
$\wt{\a}:=W(\wt{\e})$ for its ring of Witt vectors, equipped 
with its canonical Frobenius automorphism $\varphi$ and $\scrG_{\Q_p}$-action induced by Witt functoriality.
Our fixed choice of $p$-power compatible sequence $\{\varepsilon^{(r)}\}$
determines an element $\u{\varepsilon}:=(\varepsilon^{(r)}\bmod p)_{r\ge 0}$
of $\wt{\e}^+$, and we $\Z_p$-linearly embed $\s_{\infty}$ in $\wt{\a}$ via 
$u_r\mapsto \varphi^{-r}([\u{\varepsilon}]-1)$ where $[\cdot]$ is the Teichm\"uller
section.  This embedding is $\varphi$ and $\scrG_{\Q_p}$-compatible, with $\scrG_{\Q_p}$
acting on $\s_{\infty}$ through the quotient $\scrG_{\Q_p}\twoheadrightarrow \Gamma$.

\begin{theorem}
\label{RecoverEtale}
	Twisting the structure map $\s_{\infty}\rightarrow \wt{\a}$ by the Frobenius automorphism $\varphi$,
	there is a canonical isomorphism of short exact sequences of $\Lambda_{\wt{\a}}$-modules
	with $\H^*$-action
	\begin{equation}
	\begin{gathered}
		\xymatrix{
				0 \ar[r] & {\m_{\infty}^{\et}\tens_{\Lambda_{\s_{\infty}},\varphi} \Lambda_{\wt{\a}}}
				\ar[d]_-{\simeq} \ar[r] &
			{\m_{\infty}\tens_{\Lambda_{\s_{\infty}},\varphi} \Lambda_{\wt{\a}}}\ar[r] \ar[d]_-{\simeq}&
			{\m_{\infty}^{\mult}\tens_{\Lambda_{\s_{\infty}},\varphi} \Lambda_{\wt{\a}}} \ar[r]\ar[d]_-{\simeq} & 0\\
			0 \ar[r] & {({e^*}'H^1_{\et})^{\I}\tens_{\Lambda} \Lambda_{\wt{\a}}} \ar[r] & 
			{{e^*}'H^1_{\et}\tens_{\Lambda} \Lambda_{\wt{\a}}} \ar[r] &
			({e^*}'H^1_{\et})_{\I}\tens_{\Lambda} \Lambda_{\wt{\a}}\ar[r] & 0
		}\label{FinalComparisonIsom}
	\end{gathered}
	\end{equation}
	that is $\scrG_{\Q_p}$-equivariant for the ``diagonal" action of $\scrG_{\Q_p}$
	$($with $\scrG_{\Q_p}$ acting on $\m_{\infty}$ through $\Gamma$$)$
	and intertwines
	$F\otimes \varphi$ with $\id\otimes\varphi$ and $V\otimes\varphi^{-1}$ with $\id\otimes \omega\cdot\varphi^{-1}$.
	In particular, there is a canonical isomorphism of $\Lambda$-modules, compatible
	with the actions of $\H^*$ and $\scrG_{\Q_p}$,
	\begin{equation}
		{e^*}'H^1_{\et} \simeq \left( \m_{\infty}\tens_{\Lambda_{\s_{\infty}},\varphi} 
		\Lambda_{\wt{\a}}\right )^{F\otimes\varphi = 1}.\label{RecoverEtaleIsom}
	\end{equation}
\end{theorem}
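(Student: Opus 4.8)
The plan is to descend to finite level, invoke the étale comparison isomorphism furnished by the window/frame formalism of \cite{CaisLau}, and then pass to the projective limit in $r$ and to ordinary parts. The crucial structural input is that $\m_r(\cdot)$ is a functor whose formation \emph{commutes with base change} along the transition maps $\G_r\times_{R_r}R_{r+1}\to\G_{r+1}$; this is precisely the property that was extracted from \cite{CaisLau} in place of the Breuil--Kisin or Berger--Wach descents, and it is what guarantees that the limit defining $\m_\infty$ is compatible with the limit defining ${e^*}'H^1_{\et}$.

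First I would record the finite-level comparison. For a $p$-divisible group $G$ over $R_r$ whose generic fiber descends to $\Q_p$, the theory of \cite{CaisLau} attaches to $G$ its $\s_r$-module $\m_r(G)$ together with a canonical, $\varphi$- and $\scrG_{\Q_p}$-equivariant isomorphism
\[
	\m_r(G)\tens_{\s_r,\varphi}\wt\a \xrightarrow{\ \sim\ } T_pG\tens_{\Z_p}\wt\a ,
\]
with $\scrG_{\Q_p}$ acting on $\s_\infty\subseteq\wt\a$ through $\Gamma$, under which $F$ matches $\id\otimes\varphi$ and $V$ matches $\id\otimes\omega\cdot\varphi^{-1}$; taking $F\otimes\varphi=1$ invariants then recovers $T_pG$ on the nose since $\wt\a^{\varphi=1}=\Z_p$. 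I would apply this to $G=\G_r$ and to its étale and multiplicative parts, and translate the Tate module $T_p\G_r$ of the good-quotient $p$-divisible group into the cohomology ${e^*}'H^1_{\et}(X_r)$ via the Mazur--Wiles description of $\G_r={e^*}'J_r[p^\infty]$ together with Poincaré duality on the modular curve, tracking the resulting Tate twist and matching the slope filtration on $\m_r$ with the inertia filtration on cohomology.

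Next I would globalize. Applying $\varprojlim_r$ and the idempotent ${e^*}'$, the base-change compatibility of $\m_r(\cdot)$ converts the finite-level isomorphisms into the displayed isomorphism of short exact sequences over $\Lambda_{\wt\a}$ in \eqref{FinalComparisonIsom}. The identification of the outer terms is essentially forced: the étale sub $\m_\infty^{\et}$, on which $F$ is invertible and $\Gamma$ acts trivially, must correspond to the unramified, inertia-invariant piece $({e^*}'H^1_{\et})^{\I}\tens_\Lambda\Lambda_{\wt\a}$, while $\m_\infty^{\mult}$ corresponds to the inertia covariants; this is consistent with, and can be verified directly against, the explicit descriptions of $\m_\infty^{\et}$ and $\m_\infty^{\mult}$ in Theorem \ref{etmultdescent} together with the recovery of inertial invariants and covariants in Theorem \ref{FiltrationRecover}. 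Freeness and exactness having already been established (and $({e^*}'H^1_{\et})^{\I}$, $({e^*}'H^1_{\et})_{\I}$ being free over $\Lambda$ by Corollary \ref{MWmainThmCor}), the middle isomorphism follows from the outer two by the five lemma, and the $\scrG_{\Q_p}$-equivariance, the $F/V$ intertwining, and the Frobenius twist on the structure map all propagate from the finite level. Finally, \eqref{RecoverEtaleIsom} is obtained by taking $F\otimes\varphi=1$ invariants of the middle vertical arrow, using $\Lambda_{\wt\a}^{\varphi=1}=\Lambda$.

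The main obstacle is the finite-level étale comparison with its full structure. The input from \cite{CaisLau} supplies the $\varphi$-module comparison; the work lies in upgrading it to a genuinely $\scrG_{\Q_p}$-equivariant statement in which the geometric $\Gamma$-action on $\m_r$ determined by $\gamma u_r=(1+u_r)^{\chi(\gamma)}-1$ is matched with the Galois action on $\wt\a$ through the embedding $u_r\mapsto\varphi^{-r}([\u{\varepsilon}]-1)$, and in pinning down the Frobenius twist of the structure map together with the factor $\omega=\varphi(u_1)/u_1$ that distinguishes $V$ from a naive inverse of $F$. A secondary subtlety is checking that the formation of $F\otimes\varphi=1$ invariants commutes both with $\varprojlim_r$ and with $\tens_\Lambda\Lambda_{\wt\a}$ --- that is, that the relevant $\varphi-1$ maps are surjective with vanishing $\varprojlim^1$ --- so that \eqref{RecoverEtaleIsom} is exact and the invariants of the limit are the limit of the invariants.
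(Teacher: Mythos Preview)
Your strategy is essentially the same as the paper's: apply the finite-level comparison of Theorem~\ref{comparison} to the connected--\'etale sequence of $\G_r$, extend scalars to $\wt{\a}$, pass to the inverse limit, and take $F\otimes\varphi$-invariants. A few organizational remarks are in order.

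First, your detour through Theorems~\ref{etmultdescent} and~\ref{FiltrationRecover} for the outer terms, followed by the five lemma for the middle, is unnecessary. The finite-level comparison already gives you an isomorphism of \emph{entire} short exact sequences (diagram~(\ref{etalecompdiag}) in the paper), so all three vertical isomorphisms arise simultaneously by passing to the limit; there is no need to argue the middle from the flanks.

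Second, what you label a ``secondary subtlety'' is in fact the only genuine technical content of the argument beyond quoting Theorem~\ref{comparison}: one must check that the canonical maps
\[
\m_{\infty}^{\star}\tens_{\Lambda_{\s_{\infty}},\varphi}\Lambda_{\wt{\a}}\longrightarrow
\varprojlim_r\bigl(\m_r(\G_r^{\star})\tens_{\s_r,\varphi}\wt{\a}\bigr)
\quad\text{and}\quad
{e^*}'H^1_{\et}\tens_{\Lambda}\Lambda_{\wt{\a}}\longrightarrow
\varprojlim_r\bigl(H^1_{\et}(G_r)\tens_{\Z_p}\wt{\a}\bigr)
\]
are isomorphisms. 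The paper handles this by the commutative-algebra formalism of \cite[Lemma~3.1.2]{CaisHida1}, whose hypotheses (freeness of the finite-level pieces over the appropriate group rings and surjectivity of transition maps) were verified in the proofs of Theorems~\ref{MainDieudonne} and~\ref{MainThmCrystal}; indeed, the paper even \emph{deduces} the freeness on the \'etale side from the crystalline side via \cite[Lemma~3.1.3]{CaisHida1} and the finite-level comparison, rather than importing it from elsewhere. Once (\ref{FinalComparisonIsom}) is in hand, the passage to (\ref{RecoverEtaleIsom}) is immediate from $\wt{\a}^{\varphi=1}=\Z_p$; there is no separate $\varprojlim{}^{\!1}$ issue.

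Finally, Theorem~\ref{comparison} is stated directly in terms of $H^1_{\et}(G_K)=(T_pG_K)^{\vee}$, so there is no need to pass through $T_p\G_r$ and Poincar\'e duality on the curve; the identification with modular-curve cohomology is just (\ref{etalecohcrvjac}) and (\ref{inertialinvariantsseq}).
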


Theorem \ref{RecoverEtale} allows us to give a new proof of Hida's finiteness and control theorems
for ${e^*}'H^1_{\et}$:

\begin{corollary}[Hida]\label{HidasThm}	
	Let $d'$ be as in Theorem $\ref{DieudonneMainThm}$.  Then
	${e^*}'H^1_{\et}$ is free $\Lambda$-module of rank $2d'$. 
	For each $r\ge 1$ there is a canonical isomorphism of $\Z_p[\Delta/\Delta_r]$-modules
	with linear $\H^*$ and $\scrG_{\Q_p}$-actions
	\begin{equation*}
		{e^*}'H^1_{\et} \tens_{\Lambda} \Z_p[\Delta/\Delta_r] \simeq {e^*}'H^1_{\et}({X_r}_{\Qbar_p},\Z_p)
	\end{equation*}
	which is moreover compatible with the isomorphisms $(\ref{HidaResultSub})$ and $(\ref{HidaResultQuo})$ 
	in the evident manner.
\end{corollary}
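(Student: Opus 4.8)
The plan is to transport the finiteness and control properties already established for the crystalline module $\m_{\infty}$ onto ${e^*}'H^1_{\et}$ by means of the comparison isomorphism of Theorem~\ref{RecoverEtale}. I would begin with freeness. Since the inertia subgroup $\I\subseteq\scrG_{\Q_p}$ commutes with the diamond and Hecke operators, the tautological sequence
\[
	0 \to ({e^*}'H^1_{\et})^{\I} \to {e^*}'H^1_{\et} \to ({e^*}'H^1_{\et})_{\I} \to 0
\]
is short exact as a sequence of $\Lambda$-modules. By Corollary~\ref{MWmainThmCor} its outer terms are free over $\Lambda$ of rank $d'$; since $({e^*}'H^1_{\et})_{\I}$ is free, hence projective, the sequence splits, and ${e^*}'H^1_{\et}$ is free of rank $2d'$. (The rank is independently confirmed by the middle column of Theorem~\ref{RecoverEtale}, which identifies ${e^*}'H^1_{\et}\tens_{\Lambda}\Lambda_{\wt{\a}}$ with the free rank-$2d'$ module $\m_{\infty}\tens_{\Lambda_{\s_{\infty}},\varphi}\Lambda_{\wt{\a}}$.)

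For the control isomorphism I would specialize Theorem~\ref{RecoverEtale} along $\Lambda\twoheadrightarrow\Z_p[\Delta/\Delta_r]$. By the control clause of Theorem~\ref{MainThmCrystal}, the formation of $\m_{\infty}$ commutes with this base change, returning the finite-level window $\m_r(\G_r)\tens_{\s_r}\s_{\infty}$ at level $r$ compatibly with $F$, $V$, $\Gamma$ and $\H^*$. At finite level, ${e^*}'H^1_{\et}({X_r}_{\Qbar_p},\Z_p)$ is recovered from the window $\m_r(\G_r)$ of the $p$-divisible group $\G_r$ via the \'etale realization underlying Theorem~\ref{RecoverEtale}, so that the $F\otimes\varphi=1$ fixed points of $(\m_r(\G_r)\tens_{\s_r}\s_{\infty})\tens_{\s_{\infty},\varphi}\wt{\a}$ are canonically ${e^*}'H^1_{\et}({X_r}_{\Qbar_p},\Z_p)$. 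Matching this against the specialization of $(\ref{RecoverEtaleIsom})$ yields the isomorphism ${e^*}'H^1_{\et}\tens_{\Lambda}\Z_p[\Delta/\Delta_r]\simeq{e^*}'H^1_{\et}({X_r}_{\Qbar_p},\Z_p)$, and its compatibility with $(\ref{HidaResultSub})$ and $(\ref{HidaResultQuo})$ follows because the sub- and quotient-columns of Theorem~\ref{RecoverEtale} specialize to the natural maps on inertial invariants and coinvariants, which one checks are precisely those of Corollary~\ref{MWmainThmCor}.

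The main obstacle is the interchange of the left-exact fixed-point functor $(\cdot)^{F\otimes\varphi=1}$ with the right-exact specialization functor $\tens_{\Lambda}\Z_p[\Delta/\Delta_r]$, which a priori need not commute. I would resolve this using the freeness of ${e^*}'H^1_{\et}$ over $\Lambda$ just obtained, which forces the relevant $\mathrm{Tor}_1^{\Lambda}(\cdot,\Z_p[\Delta/\Delta_r])$ terms to vanish, together with the base-change compatibility of the window functor $\m_r(\cdot)$ supplied by the theory of \cite{CaisLau}. It is exactly this compatibility---the reason for working with frames and windows rather than with Breuil--Kisin or Wach modules---that ensures the finite-level \'etale realizations are compatible with the transition maps up the cyclotomic tower, and hence that the level-$r$ isomorphisms assemble into the natural specialization map on the inverse limit.
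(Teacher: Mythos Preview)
Your freeness argument is correct and in fact more direct than the paper's: once Corollary~\ref{MWmainThmCor} is available, the freeness of the outer terms of the ordinary filtration forces ${e^*}'H^1_{\et}$ to be free of rank $2d'$ by the splitting you describe. The paper does not argue this way; it instead proves Theorem~\ref{RecoverEtale} and Corollary~\ref{HidasThm} \emph{simultaneously}, working bottom-up from the finite-level comparison (Theorem~\ref{comparison}) rather than top-down from the $\Lambda$-adic one. Concretely, the paper first shows that each $H^1_{\et}(\G_r^{\star})$ is free over $\Z_p[\Delta/\Delta_r]$ by descending the known freeness of $\m_r(\G_r^{\star})\otimes_{\s_r,\varphi}\a_r$ along the faithfully flat map $\Z_p\to\a_r$ (via \cite[Lemma~3.1.3]{CaisHida1}), and that the transition maps are surjective by the same faithful flatness. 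It then applies the commutative-algebra machine \cite[Lemma~3.1.2]{CaisHida1} directly to the projective system $\{H^1_{\et}(\G_r^{\star})\}_r$, obtaining freeness, control, and the base-change isomorphisms (\ref{etaleswitcheroo}) needed for Theorem~\ref{RecoverEtale} all at once. This route never needs to commute $(\cdot)^{F\otimes\varphi=1}$ with $\otimes_{\Lambda}\Z_p[\Delta/\Delta_r]$.

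Your resolution of that commutation is where the argument is incomplete. Vanishing of $\mathrm{Tor}_1^{\Lambda}({e^*}'H^1_{\et},\Z_p[\Delta/\Delta_r])$ is not the relevant obstruction: to pass from $N^{F\otimes\varphi=1}\otimes_{\Lambda}\Z_p[\Delta/\Delta_r]$ to $(N\otimes_{\Lambda}\Z_p[\Delta/\Delta_r])^{F\otimes\varphi=1}$ for $N=\m_{\infty}\otimes_{\Lambda_{\s_{\infty}},\varphi}\Lambda_{\wt{\a}}$, you would need both that $F\otimes\varphi-1$ is \emph{surjective} on $N$ (so that there is a short exact sequence to tensor) and that $\mathrm{Tor}_1^{\Lambda}(N,\Z_p[\Delta/\Delta_r])$ vanishes, i.e.\ $\Lambda$-flatness of $N$ itself. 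Neither is addressed. A cleaner fix, still in your top-down spirit, is to specialize the full isomorphism (\ref{FinalComparisonIsom}) of $\Lambda_{\wt{\a}}$-modules along $\Lambda_{\wt{\a}}\twoheadrightarrow\wt{\a}[\Delta/\Delta_r]$, identify the crystalline side with $H^1_{\et}(G_r)\otimes_{\Z_p}\wt{\a}$ via Theorem~\ref{MainThmCrystal} and Theorem~\ref{comparison}, identify the \'etale side with $({e^*}'H^1_{\et}\otimes_{\Lambda}\Z_p[\Delta/\Delta_r])\otimes_{\Z_p}\wt{\a}$, and only \emph{then} take $\varphi$-invariants, using that both sides are free $\Z_p$-modules tensored with $\wt{\a}$ and that $\wt{\a}^{\varphi=1}=\Z_p$.
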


We also deduce a new proof of the following duality result
\cite[Theorem 4.3.1]{OhtaEichler} ({\em cf.} \cite[\S6]{MW-Hida}):

\begin{corollary}[Ohta]\label{OhtaDuality}
	Let $\nu:\scrG_{\Q_p}\rightarrow \H^*$ be the 
	character $\nu:=\chi\langle \chi\rangle \lambda(\langle p\rangle_N)$.
	There is a canonical $\H^*$ and $\scrG_{\Q_p}$-equivariant isomorphism
	of short exact sequences of $\Lambda$-modules 
	\begin{equation*}
		\xymatrix{
			0 \ar[r] & {({e^*}'H^1_{\et})^{\I}(\nu)}
			\ar[d]^-{\simeq} \ar[r] & 
			{{e^*}'H^1_{\et}(\nu)}\ar[d]^-{\simeq} \ar[r] &
			{({e^*}'H^1_{\et})_{\I}(\nu)}
			\ar[d]^-{\simeq}\ar[r] & 0 \\
			0 \ar[r] & {\Hom_{\Lambda}(({e^*}'H^1_{\et})_{\I},\Lambda)} \ar[r] & 
			{\Hom_{\Lambda}({e^*}'H^1_{\et},\Lambda)} \ar[r] &
			{\Hom_{\Lambda}(({e^*}'H^1_{\et})^{\I},\Lambda)}\ar[r] & 0
		}
	\end{equation*}
\end{corollary}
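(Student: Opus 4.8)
The plan is to realize Ohta's duality as the étale incarnation of the crystalline autoduality of Theorem~\ref{CrystalDuality}, transported across the comparison isomorphism of Theorem~\ref{RecoverEtale}. The point is that both the source and target sequences of the asserted isomorphism become identified, after the faithfully flat base change $\Lambda\to\Lambda_{\wt\a}$, with the two rows of the $\m_\infty$-duality; the remaining work is then to descend back to $\Lambda$ and to check that the twisting cocycle is exactly $\nu$.

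First I would apply $(\cdot)\tens_{\Lambda_{\s_\infty'},\varphi}\Lambda_{\wt\a}$ to the isomorphism of exact sequences in Theorem~\ref{CrystalDuality}, i.e. base-change the crystalline autoduality along the $\varphi$-twisted structure map $\s_\infty'\to\wt\a$ of Theorem~\ref{RecoverEtale}. On the top row, the comparison $(\ref{FinalComparisonIsom})$ identifies $\m_\infty\tens_{\Lambda_{\s_\infty'},\varphi}\Lambda_{\wt\a}$ with ${e^*}'H^1_{\et}\tens_\Lambda\Lambda_{\wt\a}$, compatibly with the three-term filtration by inertial invariants and coinvariants and with the $\scrG_{\Q_p}$-action. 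On the bottom row, since ${e^*}'H^1_{\et}$ and its graded pieces are finite free over $\Lambda$ by Corollary~\ref{HidasThm} and Corollary~\ref{MWmainThmCor}, the formation of $\Lambda$-linear duals commutes with base change, so the $\Lambda_{\s_\infty'}$-dual of $\m_\infty$ becomes, after the same base change, the $\Lambda_{\wt\a}$-linear dual of ${e^*}'H^1_{\et}\tens_\Lambda\Lambda_{\wt\a}$. Thus Theorem~\ref{CrystalDuality} yields a $\scrG_{\Q_p}$-equivariant self-duality of the inertial filtration on ${e^*}'H^1_{\et}\tens_\Lambda\Lambda_{\wt\a}$, with $F$ and $V$ interchanged with their transposes.

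Next I would identify the resulting twist. The crossed homomorphism controlling Theorem~\ref{CrystalDuality} is $\mu\langle a\rangle_N$ with $\mu(\gamma)=\frac{u_1}{\gamma u_1}\chi(\gamma)\langle\chi(\gamma)\rangle$. Under the embedding $u_r\mapsto\varphi^{-r}([\u{\varepsilon}]-1)$ into $\wt\a$, the factor $\frac{u_1}{\gamma u_1}$ is, up to the standard period, the cyclotomic cocycle; combined with the Tate twist produced by $FV=\omega$ and the comparison's intertwining of $V\otimes\varphi^{-1}$ with $\id\otimes\omega\cdot\varphi^{-1}$, it contributes the genuine cyclotomic factor $\chi$, while the remaining $\langle\chi\rangle$ survives unchanged (consistently with the twist $\langle\chi\rangle\langle a\rangle_N$ of the Dieudonn\'e duality in Theorem~\ref{DDuality}). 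The unramified factor $\lambda(\langle p\rangle_N)$ then emerges in two linked ways: transporting the Frobenius structure across $(\ref{FinalComparisonIsom})$ converts the interchange $F\leftrightarrow V^\vee$ into an unramified character sending $\mathrm{Frob}_p$ to $\langle p\rangle_N$ (recall from Theorem~\ref{DieudonneMainThm} that $U_p^*$ acts as $F$ on the \'etale part and as $\langle p\rangle_N V$ on the multiplicative part), and simultaneously the descent from the $\Gal(K_0'/K_0)$-equivariant $\langle a\rangle_N$-twist over $\Lambda_{R_0'}$ to the full $\scrG_{\Q_p}$-action over $\Lambda$ converts the tame diamond $\langle a\rangle_N$ into precisely $\lambda(\langle p\rangle_N)$. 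Assembling these factors gives $\nu=\chi\langle\chi\rangle\lambda(\langle p\rangle_N)$.

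Finally I would descend from $\Lambda_{\wt\a}$ to $\Lambda$. The self-duality constructed above is compatible with the Frobenius operator $F\otimes\varphi$ on the source and its transpose on the target, and it is $\scrG_{\Q_p}$-equivariant; taking $F\otimes\varphi=1$-fixed parts and invoking $(\ref{RecoverEtaleIsom})$ recovers ${e^*}'H^1_{\et}(\nu)$ on the source and $\Hom_\Lambda({e^*}'H^1_{\et},\Lambda)$ on the target, together with the inertial sub and quotient on each side. This produces the desired $\Lambda$-linear, $\H^*$- and $\scrG_{\Q_p}$-equivariant isomorphism of short exact sequences, and the faithful flatness of $\Lambda\to\Lambda_{\wt\a}$ confirms that the descended map, which becomes the original isomorphism after $\tens_\Lambda\Lambda_{\wt\a}$, is itself an isomorphism. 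I expect the main obstacle to be the twist bookkeeping of the previous paragraph: precisely matching $\mu\langle a\rangle_N$, read through the Frobenius-twisted comparison $(\ref{FinalComparisonIsom})$ and the identity $FV=\omega$, against Ohta's character $\nu$, and in particular disentangling the cyclotomic contribution of $\frac{u_1}{\gamma u_1}$ from the unramified contribution of the $F$/$V$-interchange and of the $\Gal(K_0'/K_0)$-descent.
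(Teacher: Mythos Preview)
Your proposal follows what the paper itself calls the \emph{alternative} route: deduce the \'etale duality from the crystalline autoduality of Theorem~\ref{CrystalDuality} via the comparison isomorphism $(\ref{RecoverEtaleIsom})$ of Theorem~\ref{RecoverEtale}. The paper's primary argument is different and more direct: mimicking the proof of Proposition~\ref{DieudonneDuality}, one writes down explicit perfect pairings at each finite level $r$ on ${e^*}'H^1_{\et,r}$ (coming from the cup product twisted by $w_r{U_p^*}^r$, as in the displayed formula of Theorem~\ref{OhtaDualityText}), checks the compatibility condition of \cite[Lemma~3.1.4]{CaisHida1} using the relation $\Pic(\rho)\circ\Alb(\sigma)=U_p^*\sum_{\delta}\langle\delta^{-1}\rangle$, and passes to the limit with Corollary~\ref{HidasThm} supplying freeness. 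This direct approach never touches $\s_\infty$ or $\wt\a$ at all, and the character $\nu$ falls out immediately from the known Galois-transformation law of $w_r$ and the adjunction between $U_p^*$ and itself under cup product.

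What each buys: the direct argument is shorter, gives the explicit pairing formula, and avoids the twist bookkeeping you correctly flag as the main obstacle in your approach. Your route, by contrast, exhibits Ohta's duality as the \'etale avatar of a single crystalline statement, which is conceptually satisfying and fits the paper's comparison-isomorphism philosophy. The delicate step you identify---matching $\mu\langle a\rangle_N$ under $(\ref{FinalComparisonIsom})$, together with the interchange $F\leftrightarrow V^\vee$, against $\nu$---is genuine and your sketch of it is heuristic rather than complete; in particular, the claim that the $\langle a\rangle_N$-twist over $R_0'$ becomes $\lambda(\langle p\rangle_N)$ after descent, and the precise mechanism by which the $F$/$V^\vee$-interchange on the dual side produces the unramified factor when passing to $F\otimes\varphi$-invariants, both need to be made honest. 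The paper does not carry out these details either, merely noting that this alternative is available.
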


The $\Lambda$-adic splitting 
of the ordinary filtration of $e^*H^1_{\et}$ was considered by 
Ghate and Vatsal \cite{GhateVatsal}, who prove (under certain
technical hypotheses of ``deformation-theoretic nature")
that if the $\Lambda$-adic family $\scrF$ associated to 
a cuspidal eigenform $f$
is primitive and $p$-distinguished, then the associated
$\Lambda$-adic local Galois representation $\rho_{\scrF,p}$
is split if and only if 
some arithmetic specialization of $\scrF$
has CM \cite[Theorem 13]{GhateVatsal}.
We interpret the $\Lambda$-adic splitting of the ordinary filtration
as follows:

\begin{theorem}\label{SplittingCriterion}
	The short exact sequence $(\ref{CrystallineAnalogue})$ admits
	a $\Lambda_{\s_{\infty}}$-linear splitting which is compatible with $F$, $V$,
	and $\Gamma$ if and only if the ordinary filtration of ${e^*}'H^1_{\et}$ 
	admits a $\Lambda$-linear spitting which is compatible with the action of $\scrG_{\Q_p}$.
\end{theorem}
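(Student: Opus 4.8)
The plan is to use Theorem~\ref{RecoverEtale} as a bridge, viewing the functor
\[
	\mathbb{V}(M):=\left(M\tens_{\Lambda_{\s_{\infty}},\varphi}\Lambda_{\wt{\a}}\right)^{F\otimes\varphi=1}
\]
as a dictionary between the two sides. Indeed, applying $\mathbb{V}$ to the three columns of $(\ref{FinalComparisonIsom})$ and using that $(\Lambda_{\wt{\a}})^{\varphi=1}=\Lambda$ (because $\wt{\e}^{\,\varphi=1}=\F_p$, so $\wt{\a}^{\,\varphi=1}=\Z_p$ and $\varphi$ fixes $\Delta$), one identifies $\mathbb{V}(\m_{\infty}^{\et})$ with $({e^*}'H^1_{\et})^{\I}$, the module $\mathbb{V}(\m_{\infty})$ with ${e^*}'H^1_{\et}$ via $(\ref{RecoverEtaleIsom})$, and $\mathbb{V}(\m_{\infty}^{\mult})$ with $({e^*}'H^1_{\et})_{\I}$, so that $\mathbb{V}$ carries the sequence $(\ref{CrystallineAnalogue})$ to the ordinary filtration, compatibly with $\scrG_{\Q_p}$ (acting diagonally through $\Gamma$ on $\m_{\infty}$ and through coordinates on $\Lambda_{\wt{\a}}$). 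Under this dictionary a splitting of either sequence is a section $s$ of its surjection $q$, so the assertion reduces to showing that sections correspond under $\mathbb{V}$. I first record a simplification: since $\m_{\infty}$ is free and $\varphi^{-1}(\omega)=VF$ is a non-zero-divisor, the Frobenius $F$ is injective on $\m_{\infty}$; hence any $\Lambda_{\s_{\infty}}$-linear $s$ with $sF=Fs$ automatically satisfies $sV=Vs$, because $F(sV-Vs)=sFV-FVs=\omega s-\omega s=0$ using $FV=\omega$ and linearity. Thus ``compatible with $F$, $V$, and $\Gamma$'' on the crystalline side means exactly ``$\Lambda_{\s_{\infty}}$-linear and compatible with $F$ and $\Gamma$,'' i.e.\ a morphism in the $(\varphi,\Gamma)$-module category underlying $\mathbb{V}$.

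For the forward implication, given a $\Lambda_{\s_{\infty}}$-linear splitting $s\colon \m_{\infty}^{\mult}\to\m_{\infty}$ of $(\ref{CrystallineAnalogue})$ compatible with $F$, $V$, and $\Gamma$, I simply apply $\mathbb{V}$. Because $s$ commutes with $F$ and is $\Gamma$-equivariant, the base change $s\otimes\id$ commutes with $F\otimes\varphi$ and is equivariant for the diagonal $\scrG_{\Q_p}$-action (the subgroup $\scrH$ acting only through $\Lambda_{\wt{\a}}$), so it preserves the $F\otimes\varphi=1$ fixed spaces and descends to a $\Lambda$-linear, $\scrG_{\Q_p}$-equivariant map $\mathbb{V}(s)\colon ({e^*}'H^1_{\et})_{\I}\to{e^*}'H^1_{\et}$. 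Functoriality then gives $\mathbb{V}(q)\circ\mathbb{V}(s)=\mathbb{V}(q\circ s)=\mathbb{V}(\id)=\id$, so $\mathbb{V}(s)$ is the desired $\scrG_{\Q_p}$-compatible splitting of the ordinary filtration.

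The reverse implication is the heart of the matter and is where I expect the real work to lie: it requires that $\mathbb{V}$ be \emph{fully faithful} on the relevant category, so that a $\scrG_{\Q_p}$-equivariant splitting $t$ of the ordinary filtration descends to an $(F,V,\Gamma)$-compatible splitting of $(\ref{CrystallineAnalogue})$. Concretely, I would show that the natural map from the $\Lambda_{\s_{\infty}}$-linear, $F$- and $\Gamma$-compatible homomorphisms $\m_{\infty}^{\mult}\to\m_{\infty}$ to $\Hom_{\scrG_{\Q_p}}\big(({e^*}'H^1_{\et})_{\I},\,{e^*}'H^1_{\et}\big)$ induced by $\mathbb{V}$ is bijective; granting this, $t$ lifts to a unique $s$, and faithfulness together with $\mathbb{V}(q\circ s)=q\circ t=\id$ forces $q\circ s=\id$, so $s$ splits $(\ref{CrystallineAnalogue})$. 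To prove the bijectivity I would form the internal Hom $\scrHom_{\Lambda_{\s_{\infty}}}(\m_{\infty}^{\mult},\m_{\infty})$, which is again finite free over $\Lambda_{\s_{\infty}}$ and carries $F$, $V$, and $\Gamma$, becoming étale after the $\varphi$-twisted base change to $\Lambda_{\wt{\a}}$, and then invoke that for such objects the $F\otimes\varphi=1$ invariants after base change compute precisely the $\scrG_{\Q_p}$-invariant homomorphisms on the étale side; this is the same mechanism (the $\Lambda$-adic incarnation of the equivalence of Fontaine and Dee) that already underlies $(\ref{RecoverEtaleIsom})$. The explicit descriptions in Theorem~\ref{etmultdescent} of $\m_{\infty}^{\et}$ and $\m_{\infty}^{\mult}$ as base changes of the constant $\Lambda$-modules $\D_{\infty}^{\et}$ and $\D_{\infty}^{\mult}$ should make this internal-Hom computation fully explicit and reduce the full-faithfulness statement to the comparison isomorphisms already established, completing the equivalence.
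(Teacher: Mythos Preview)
Your forward implication is correct and is exactly what the paper does: extend scalars to $\Lambda_{\wt{\a}}$ along the $\varphi$-twist and take $F\otimes\varphi$-fixed points, using Theorem~\ref{RecoverEtale}. Your observation that $F$-compatibility of a $\Lambda_{\s_{\infty}}$-linear section automatically forces $V$-compatibility (via injectivity of $F$ on the free module $\m_{\infty}$) is a nice streamlining not made explicit in the paper.

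The reverse implication, however, is where your proposal diverges from the paper and where it has a genuine gap. You reduce to a full-faithfulness statement for $\mathbb{V}$ on the relevant category, but you do not prove it: the internal-Hom argument is only sketched (``I would show that\ldots'', ``should make this\ldots fully explicit''), and invoking Dee's equivalence at the $\Lambda$-adic level would itself require justification, since the comparison $(\ref{RecoverEtaleIsom})$ gives an isomorphism of objects, not a priori an equivalence of categories of $(\varphi,\Gamma)$-modules over $\Lambda_{\s_{\infty}}$ with Galois modules over $\Lambda$. This is not a fatal obstruction, but as written the argument is incomplete.

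The paper avoids this entirely by a more concrete route: given a $\scrG_{\Q_p}$-equivariant $\Lambda$-splitting of the ordinary filtration, apply $\otimes_{\Lambda}\Z_p[\Delta/\Delta_r]$ and use the control theorem (Corollary~\ref{HidasThm}) to get, for each $r$, a $\Z_p[\scrG_{\Q_p}]$-splitting of $0\to T_pG_r^{\mult}\to T_pG_r\to T_pG_r^{\et}\to 0$, compatibly in $r$. Tate's theorem then identifies $\Hom_{\Z_p[\scrG_{\Q_p}]}(T_pG_r^{\et},T_pG_r)$ with $\Hom_{\pdiv_{R_r}^{\Gamma}}(\G_r^{\et},\G_r)$, so the connected--\'etale sequence of $\G_r$ splits in $\pdiv_{R_r}^{\Gamma}$ compatibly in $r$; applying the anti-equivalence $\m_r$ (Theorem~\ref{CaisLauMain}) splits each finite-level sequence in $\BT_{\s_r}^{\varphi,\Gamma}$, and passing to the limit splits $(\ref{CrystallineAnalogue})$. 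The advantage of this approach is that full faithfulness is used only at finite level, where it is already established (Tate and Theorem~\ref{CaisLauMain}), rather than needing a new $\Lambda$-adic statement.
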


\subsection{Overview of the article}\label{Overview}

Section \ref{Prelim} is preliminary: we first review in \S\ref{DDR}--\ref{Universal} some background
material on Dieudonn\'e modules and crystals, as well as the integral $p$-adic
cohomology theories of \cite{CaisDualizing} and \cite{CaisNeron}.
In \S\ref{PhiGammaCrystals}, we summarize the theory developed in \cite{CaisLau},
which uses Dieudonn\'e crystals of $p$-divisible groups to
provide a ``cohomological" construction of the $(\varphi,\Gamma)$-modules attached 
to potentially Barsotti--Tate representations.  We then specialize these results
to the case of ordinary $p$-divisible groups in \S\ref{pDivOrdSection}; 
it is precisely this theory
which allows us to construct our crystalline analogue of Hida's ordinary $\Lambda$-adic
\'etale cohomology.  


Section \ref{results} constitutes the main body of this paper, and the reader who 
is content to refer back to \S\ref{DDR}--\ref{pDivOrdSection} as needed should skip directly there.
In section \ref{BTfamily}, we study the tower of $p$-divisible groups whose cohomology 
allows us to construct our $\Lambda$-adic Dieudonn\'e and crystalline analogues of
Hida's \'etale cohomlogy in \S\ref{OrdDieuSection} and \S\ref{OrdSigmaSection}, respectively. 
We establish $\Lambda$-adic comparison isomorphisms between each of these cohomologies
using the integral comparison isomorphisms of \cite{CaisNeron} and \cite{CaisLau}, recalled in \S\ref{Universal}
and \S\ref{PhiGammaCrystals}--\ref{pDivOrdSection}, respectively.  
This enables us to give a new proof of Hida's freeness and 
control theorems and of Ohta's duality theorem in \S\ref{OrdSigmaSection}.
A key technical ingredient in our proofs is the commutative algebra formalism 
developed in \cite[\S3.1]{CaisHida1} for dealing with projective limits of
cohomology and establishing appropriate ``freeness and control" theorems
by reduction to characteristic $p$.

As remarked in \S\ref{resultsintro}, and following \cite{OhtaEichler} and \cite{MW-Hida}, our construction
of the $\Lambda$-adic Dieudonn\'e and crystalline counterparts to Hida's \'etale cohomology
excludes the trivial eigenspace for the action of $\mu_{p-1}\subseteq \Z_p^{\times}$
so as to avoid technical complications with logarithmic $p$-divisible groups.  
In \cite{Ohta2}, Ohta uses the ``fixed part" (in the sense of Grothendieck \cite[2.2.3]{GroModeles})
of N\'eron models with semiabelian reduction to extend his results on 
$\Lambda$-adic Hodge cohomology to allow trivial tame nebentype character.  
We are confident that by using Kato's logarithmic Dieudonn\'e theory \cite{KatoDegen} 
one can appropriately generalize our results in \S\ref{OrdDieuSection} and \S\ref{OrdSigmaSection} to include
the missing eigenspace for the action of $\mu_{p-1}$.

\subsection{Notation}\label{Notation}

If $\varphi:A\rightarrow B$ is any map of rings, we will often write $M_B:=M\otimes_{A} B$ 
for the $B$-module induced from an $A$-module $M$ by extension of scalars.
When we wish to specify $\varphi$, we will write $M\otimes_{A,\varphi} B$.
Likewise, if $\varphi:T'\rightarrow T$ is any morphism of schemes, for any $T$-scheme $X$
we denote by $X_{T'}$ the base change of $X$ along $\varphi$.
If $f:X\rightarrow Y$ is any morphism of $T$-schemes,
we will write $f_{T'}: X_{T'}\rightarrow Y_{T'}$
for the morphism of $T'$-schemes obtained from $f$ by base change along $\varphi$.
When $T=\Spec(R)$ and $T'=\Spec(R')$ are affine, we abuse notation and write
$X_{R'}$ or $X\times_{R} R'$ for $X_{T'}$.
We frequently work with schemes over a discrete valuation ring $R$, and
will write $\X,\Y,\ldots$ for schemes over $\Spec(R)$, reserving
$X,Y,\ldots$ (respectively $\o{\X},\o{\Y},\ldots$) 
for their generic (respectively special) fibers.
As this article is a continuation of \cite{CaisHida1}, we will freely use the notation
and conventions therein.

\subsection{Acknowledgements}

It is a pleasure to thank Laurent Berger, Brian Conrad, Adrian Iovita, Tong Liu,
and Jacques Tilouine for many enlightening conversations and correspondence.
I am especially grateful to Haruzo Hida---to whom this paper is dedicated---for
his willingness to answer many questions concerning his unpublished
notes \cite{HidaNotes} and \cite{HidaNotes2}.\footnote{Hida has recently revised and expanded 
these very interesting notes
in his preprint \cite{HidaLambdaBT}, which includes new proofs
and refinements of some of the background material in \S\ref{BTfamily}
of the present paper.
} 

After a preliminary version of this paper was submitted for publication,
we learned of the preprint \cite{Wake}, in which Wake obtains
a new proof of Ohta's Eichler--Shimura isomorphism as well as
some refinements of Hida's finiteness and control theorems.
Wake's methods are largely different from ours, as he works with the $p$-adic \'etale
cohomology of the special fibers of modular curves, while we instead
use their mod $p$ de Rham cohomology in \cite{CaisHida1} and 
the theory of \cite{CaisLau} in the present article to compare
our $\Lambda$-adic de Rham cohomology with Hida's $\Lambda$-adic \'etale cohomology.
Our cohomological construction via Dieudonn\'e crystals of the family of \'etale $(\varphi,\Gamma)$-modules
attached to Hida's ordinary $\Lambda$-adic \'etale cohomology by \cite{Dee}
appears to be entirely new.

\tableofcontents

\section{Dieudonn\'e Crystals and Dieudonn\'{e} Modules}\label{Prelim}

This section is devoted to recalling the geometric 
background we will need in our constructions.  Much (though not all)
of this material is contained in \cite{CaisDualizing}, \cite{CaisNeron},
and \cite{CaisLau}. 

\subsection{Dieudonn\'e modules and {d}e~Rham cohomology}\label{DDR}

Let $k$ be a perfect field of characteristic $p$ and $X$ a smooth and proper curve over $k$.
We begin by recalling the relation between the de Rham cohomology of $X$ over $k$
and the Dieudonn\'e module of the $p$-divisible group of the Jacobian of $X$.  
Let us write $H(X/k)$ for the three-term ``Hodge filtration" exact sequence
\begin{equation*}
	\xymatrix{
	0\ar[r] & {H^0(X,\Omega^1_{X/k})} \ar[r] & {H^1_{\dR}(X/k)} \ar[r] & {H^1(X,\O_X)}\ar[r] & 0.
	}
\end{equation*}
Pullback by the absolute Frobenius gives an endomorphism
of $F:H(X/k)\rightarrow H(X/k)$ that is semilinear over the $p$-power Frobenius
automorphism $\varphi$ of $k$. 
Under the canonical cup-product autoduality of $H(X/k)$,
we obtain $\varphi^{-1}$-semilinear endomorphism
\begin{equation}
	\xymatrix{
		{V:={F}_*: H^1_{\dR}(X/k)} \ar[r] & {H^1_{\dR}(X/k)}
		}\label{CartierOndR}
\end{equation}
whose restriction to $H^0(X,\Omega^1_{X/k})$ coincides with the Cartier operator
\cite[\S2.3]{CaisHida1}.  
Let $A$ be the ``Dieudonn\'e ring", {\em i.e.}~the (noncommutative if $k\neq \F_p$)
ring $A:=W(k)[F,V]$, where  
 $F$, $V$ satisfy $FV=VF=p$, 
$F\alpha=\varphi(\alpha)F$, and $V\alpha=\varphi^{-1}(\alpha)V$ for all $\alpha\in W(k)$.
We view $H^1_{\dR}(X/k)$ as a left $A$-module 
in the obvious way.  
By Fitting's Lemma \cite[Lemma 2.3.3]{CaisHida1}, 
for $f=F$ or $V$,
any finite left $A$-module $M$
admits a canonical direct sum decomposition
\begin{equation}
		M=M^{f_{\ord}}\oplus M^{f_{\nil}}
		\label{ordnotation}
\end{equation}
where $M^{f_{\ord}}$ (respectively $M^{f_{\nil}}$) is the maximal $A$-submodule of $M$
on which $f$ is bijective (respectively $p$-adically topologically nilpotent).

\begin{proposition}[Oda]\label{OdaDieudonne}
	Let $J:=\Pic^0_{X/k}$ be the Jacobian of $X$ over $k$ and $G:=J[p^{\infty}]$
	its $p$-divisible group.  Denote by $\D(\cdot)$ the contravariant
	Dieudonn\'e crystal $($see $(\ref{DieudonneDef})$ below$)$, so the Dieudonn\'e module $\D(G)_W$ is naturally a left $A$-module, finite and
	free over $W:=W(k)$.  
	\begin{enumerate}
		\item \label{OdaIsom} There are canonical isomorphisms of left $A$-modules
		\begin{equation*}
			H^1_{\dR}(X/k)\simeq \D(J[p])_{k}\simeq \D(G)_k.
		\end{equation*}
		\item For any finite morphism $\rho:Y\rightarrow X$ of smooth and proper curves
		over $k$, the identification of $(\ref{OdaIsom})$ intertwines $\rho_*$
		with $\D(\Pic^0(\rho))$ and $\rho^*$ with $\D(\Alb(\rho))$.\label{OdaIsomFunctoriality}
		\item \label{GetaleGmult} Let $G=G^{\et}\times G^{\mult}\times G^{\loc}$
		be the canonical direct product decomposition of $G$ into its maximal \'etale,
		multiplicative, and local-local subgroups. 
		Via the identification of $(\ref{OdaIsom})$, the canonical mappings
		in the exact sequence $H(X/k)$ induce natural isomorphisms of left $A$-modules
		\begin{equation*}
			H^0(X,\Omega^1_{X/k})^{V_{\ord}} \simeq \D(G^{\mult})_k
			\quad\text{and}\quad
			H^1(X,\O_X)^{F_{\ord}} \simeq \D(G^{\et})_k
		\end{equation*}
		\item The isomorphisms of $(\ref{GetaleGmult})$ are dual to each other, using the 
		perfect duality on cohomology induced by the cup-product pairing \cite[Remark 2.3.3]{CaisHida1}
		and the identification $\D(G)_k^t\simeq  \D(G)_k$
		resulting from the compatibility of $\D(\cdot)_k$ with duality and
		the autoduality of $J$.\label{BBMDuality}		
	\end{enumerate}
\end{proposition}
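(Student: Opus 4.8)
The plan is to reduce the assertion to a single perfect pairing on $M:=H^1_{\dR}(X/k)$ together with a short computation involving the operators $F$ and $V$. By part $(\ref{OdaIsom})$ we identify $M$ with $\D(G)_k$ as a left $A$-module; the crux of the reduction is that under this identification the cup-product autoduality of $M$ is carried to the autoduality of $\D(G)_k$ induced by the principal polarization of $J$. Granting this compatibility---which is precisely the compatibility of $\D(\cdot)_k$ with duality together with the autoduality of $J$ recorded in \cite[Remark 2.3.3]{CaisHida1}---the two pairings named in the statement coincide with one perfect, alternating pairing $\langle\cdot,\cdot\rangle\colon M\times M\to k$, and it remains to understand how it meets the Fitting decompositions of $M$.

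First I would record the adjunction between $F$ and $V$. Since $p=0$ in $k$ we have $FV=VF=0$ on $M$, and because $V=F_*$ is defined in $(\ref{CartierOndR})$ as the transpose of $F$ for the cup-product pairing, $F$ and $V$ are adjoint up to Frobenius twist: $\langle Fx,y\rangle=\varphi\langle x,Vy\rangle$. Writing $M=M^{F_{\ord}}\oplus M^{V_{\ord}}\oplus \D(G^{\loc})_k$ for the decomposition of Fitting's lemma (with $V=0$ on $M^{F_{\ord}}$ and $F=0$ on $M^{V_{\ord}}$, as $FV=VF=0$), I would use surjectivity of $F$ on $M^{F_{\ord}}$ to check that $M^{F_{\ord}}$ is isotropic and orthogonal to $\D(G^{\loc})_k$, the latter by iterating the adjunction and invoking nilpotence of $V$ on the local-local part; the symmetric argument applies to $M^{V_{\ord}}$. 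Perfectness of $\langle\cdot,\cdot\rangle$ and a dimension count then force $(M^{F_{\ord}})^{\perp}=M^{F_{\ord}}\oplus \D(G^{\loc})_k$, so that the restriction $M^{F_{\ord}}\times M^{V_{\ord}}\to k$ is a perfect pairing; that is, $\D(G^{\et})_k$ and $\D(G^{\mult})_k$ are canonically dual. This is the expected outcome, since the polarization identifies $G^{\mult}$ with $(G^{\et})^t$ through the Cartier-duality interchange of multiplicative and \'etale parts.

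It then remains to transport this pairing to the Serre-duality pairing between $H^0(X,\Omega^1_{X/k})$ and $H^1(X,\O_X)$ underlying $(\ref{GetaleGmult})$. Here I would use that $\Fil^1:=H^0(X,\Omega^1_{X/k})\subseteq M$ is Lagrangian, i.e. isotropic and self-annihilating, so that $\langle\cdot,\cdot\rangle$ induces the perfect pairing between $\Fil^1$ and $M/\Fil^1=H^1(X,\O_X)$. The content of $(\ref{GetaleGmult})$ is that $\D(G^{\mult})_k=M^{V_{\ord}}$ lies in $\Fil^1$ while $\D(G^{\et})_k=M^{F_{\ord}}$ maps isomorphically onto $H^1(X,\O_X)^{F_{\ord}}$ under $M\twoheadrightarrow M/\Fil^1$; since $\Fil^1$ is isotropic, the resulting pairing $M^{V_{\ord}}\times H^1(X,\O_X)^{F_{\ord}}\to k$ is computed by the restriction $M^{V_{\ord}}\times M^{F_{\ord}}\to k$ of the previous paragraph, which is perfect. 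Hence $H^0(X,\Omega^1_{X/k})^{V_{\ord}}\simeq \D(G^{\mult})_k$ and $H^1(X,\O_X)^{F_{\ord}}\simeq \D(G^{\et})_k$ are mutually dual, as claimed.

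The main obstacle is not the linear algebra but the input underlying the reduction: that the comparison isomorphism of part $(\ref{OdaIsom})$ intertwines the polarization-induced autoduality of $\D(G)_k$ with the cup-product autoduality of $H^1_{\dR}(X/k)$. This is the one genuinely nontrivial compatibility, resting on the behavior of the contravariant Dieudonn\'e functor under Cartier duality and the autoduality of $J$; once it is in hand, the remaining steps are bookkeeping with the adjunction $(\ref{CartierOndR})$, the isotropy of $\Fil^1$, and the inclusions $M^{V_{\ord}}\subseteq\Fil^1$ and $M^{F_{\ord}}\cap\Fil^1=0$ implicit in $(\ref{GetaleGmult})$.
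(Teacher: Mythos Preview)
Your proposal addresses only part~(\ref{BBMDuality}); parts (\ref{OdaIsom})--(\ref{GetaleGmult}) are invoked as established inputs rather than proved. The paper's proof, by contrast, devotes most of its effort to (\ref{OdaIsom})--(\ref{GetaleGmult}): it assembles the comparison $H^1_{\dR}(X/k)\simeq H^1_{\dR}(J/k)\simeq \D(J)_k$ from Oda's work and \cite{BBM}, checks functoriality in finite morphisms of curves, and then deduces (\ref{GetaleGmult}) from the vanishing of $F$ on $H^0(X,\Omega^1_{X/k})$ and of $V$ on $H^1(X,\O_X)$ together with the Dieudonn\'e-theoretic characterization of $\D(G^{\mult})_W$ and $\D(G^{\et})_W$ as the $V$- and $F$-ordinary parts of $\D(G)_W$. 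If you intended to prove the full proposition, those arguments are missing from your write-up.

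For part~(\ref{BBMDuality}) itself, your approach and the paper's diverge. The paper disposes of it in one sentence by citing the compatibility of $\D(\cdot)_k$ with duality from \cite{BBM} (Proposition~5.3.13 and the proof of Theorem~5.1.8, via Proposition~2.5.8) together with the compatibility of (\ref{dRIdenJac}) with duality from \cite{colemanduality} and \cite{CaisNeron}. You instead give a hands-on argument: granted the single compatibility that the comparison carries cup product to the polarization pairing, you use the adjunction $\langle Fx,y\rangle=\varphi\langle x,Vy\rangle$ and Fitting's decomposition to show that $M^{F_{\ord}}$ and $M^{V_{\ord}}$ are each isotropic and orthogonal to the local-local summand, hence perfectly paired with one another by a dimension count; you then transport this through the Lagrangian $\Fil^1$ to the Serre-duality pairing. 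This is correct and more self-contained than the paper's citation, though it rests on exactly the same nontrivial input (your ``main obstacle''), which the paper likewise takes from the literature rather than reproving. What your route buys is an explicit, elementary verification that the two dualities in~(\ref{BBMDuality}) match at the level of the slope decomposition, without appealing to the general duality theory of Dieudonn\'e crystals; what the paper's route buys is brevity and a pointer to where the compatibility is established in full generality.
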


\begin{proof}
	Using the characterizing properties of the Cartier operator
	defined by Oda \cite[Definition 5.5]{Oda} and the explicit
	description of the autoduality of $H^1_{\dR}(X/k)$ in terms
	of cup-product and residues, one checks that the endomorphism
	of $H^1_{\dR}(X/k)$ in \cite[Definition 5.6]{Oda} is adjoint
	to $F^*$, and therefore coincides with the endomorphism
	$V:={F}_*$ in (\ref{CartierOndR}); {\em cf.}
	the proof of \cite[Proposition 9]{SerreTopology}.

	We recall that one has a canonical isomorphism 
	\begin{equation}
		H^1_{\dR}(X/k)\simeq H^1_{\dR}(J/k)\label{dRIdenJac}
	\end{equation}
	which is compatible with Hodge filtrations and duality (using the canonical
	principal polarization to identify $J$ with its dual) and which, for	
	any finite morphism of smooth curves $\rho:Y\rightarrow X$ over $k$,
	intertwines $\rho_*$ with $\Pic^0(\rho)^*$ and $\rho^*$ with $\Alb(\rho)^*$; see  
	\cite[Proposition 5.4]{CaisNeron}, noting that the proof given there works over any field $k$,
	and {\em cf.}~Proposition \ref{intcompare}.  It follows from these compatibilities
	and the fact that the Cartier operator as defined in \cite[Definition 5.5]{Oda} is functorial
	that the identification (\ref{dRIdenJac}) is moreover an isomorphism of left $A$-modules,
	with the $A$-structure on $H^1_{\dR}(J/k)$ defined as in \cite[Definition 5.8]{Oda}.
	
	Now by \cite[Corollary 5.11]{Oda} and \cite[Theorem 4.2.14]{BBM}, 
	for any abelian variety $B$ over $k$, 
	there is a canonical isomorphism of left $A$-modules
	\begin{equation}
		H^1_{\dR}(B/k)\simeq \D(B)_k\label{AbVarDieuMod}
	\end{equation} 
	Using the definition of this isomorphism in Proposition 4.2 and Theorem 5.10 of \cite{Oda}, 
	it is straightforward (albeit tedious\footnote{Alternately, 
	one could appeal to 
	\cite{MM}, specifically to Chapter I, 4.1.7, 4.2.1, 3.2.3, 2.6.7
	and to Chapter II, \S13 and \S15 (see especially Chapter II, 13.4 and 1.6).  
	See also \S2.5 and \S4 of \cite{BBM}.
	}) 
	to check that for any homomorphism $h:B'\rightarrow B$
	of abelian varieties over $k$, the identification (\ref{AbVarDieuMod}) intertwines $h^*$ and $\D(h)$.  
	Combining (\ref{dRIdenJac}) and (\ref{AbVarDieuMod})
	yields (\ref{OdaIsom}) and (\ref{OdaIsomFunctoriality}).

	Now since $V={F}_*$ (respectively $F=F^*$) is the zero endomorphism of $H^1(X,\O_X)$ 
	(respectively $H^0(X,\O_X)$), the canonical mapping
	\begin{equation*}
		\xymatrix{
			{H^0(X,\Omega^1_{X/k})} \ar@{^{(}->}[r] & {H^1_{\dR}(X/k)\simeq \D(G)_k}
		}
		\quad\text{respectively}\quad
		\xymatrix{
			{\D(G)_k\simeq H^1_{\dR}(X/k)} \ar@{->>}[r] & {H^1(X,\O_X)}
		}
	\end{equation*}
	induces an isomorphism on $V$-ordinary (respectively $F$-ordinary) subspaces.  
	On the other hand, by Dieudonn\'e theory one knows that
	for {\em any} $p$-divisible group $H$, the semilinear endomorphism $V$ 
	(respectively $F$) of $\D(H)_W$
	is bijective if and only if $H$ is of multiplicative type (respectively \'etale).
	The (functorial) decomposition $G=G^{\et}\times G^{\mult}\times G^{\loc}$
	yields a natural isomorphism of left $A$-modules
	\begin{equation*}
		\D(G)_W\simeq \D(G^{\et})_W\oplus \D(G^{\mult})_W\oplus \D(G^{\loc})_W,
	\end{equation*}
	and it follows that the natural maps $\D(G^{\mult})_W\rightarrow \D(G)_W$,
	$\D(G)_W\rightarrow \D(G^{\et})_W$ induce isomorphisms 
	\begin{equation}
		\D(G^{\mult})_W \simeq \D(G)^{V_{\ord}}_W\quad\text{and}\quad
		\D(G)^{F_{\ord}}_W\simeq \D(G^{\et})_W,\label{VordMultFordEt}
	\end{equation}
	respectively,  which gives (\ref{GetaleGmult}).  Finally, (\ref{BBMDuality})
	follows from Proposition 5.3.13 and the proof of Theorem 5.1.8 in \cite{BBM},
	using Proposition 2.5.8 of {\em op.~cit.}~and the compatibility of the isomorphism
	(\ref{dRIdenJac}) with duality (for which see 
	\cite[Theorem 5.1]{colemanduality} and {\em cf.} \cite[Lemma 5.5]{CaisNeron}). 	
\end{proof}

\subsection{Universal vectorial extensions}\label{Universal}

We now study the mixed characteristic analogue of the situation
considered in \S\ref{DDR}.
Fix a discrete valuation ring $R$ with field of fractions $K$
of characteristic zero and perfect residue field $k$ of characteristic $p$.
Recall \cite[\S2.1]{CaisHida1} that by a {\em curve} over $S:=\Spec R$ we mean a
flat finitly presented local complete intersection $f:X\rightarrow S$
of relative dimension one with geometrically reduced fibers.
Let $f:X\rightarrow S$ be a normal and proper curve over $S$
with smooth and geometrically connected generic fiber $X_K$, and write
$\omega_{X/S}$
for the relative dualizing sheaf of $f$. 
The hypercohomology $H^i(X/R)$ of the 
two-term complex $\O_X\rightarrow \omega_{X/S}$
provides a canonical integral structure on the algebraic de Rham cohomology of 
the generic fiber $X_K$:

\begin{proposition}[{\cite[2.1.11]{CaisHida1}}]\label{HodgeIntEx}
	Let $f:X\rightarrow S$ be a normal curve that is proper over $S=\Spec(R)$.  
	There is a canonical short exact sequence of finite free $R$-modules, 
	which we denote $H(X/R)$,
		\begin{equation*}
			\xymatrix{
					0\ar[r] & {H^0(X,\omega_{X/S})} \ar[r] & {H^1(X/R)} \ar[r] & {H^1(X,\O_X)} \ar[r] & 0
			}
		\end{equation*}		
	that recovers the Hodge filtration of $H^1_{\dR}(X_K/K)$ after
	extending scalars to $K$ and is canonically $R$-linearly self-dual
	via the cup-product pairing on $H^1_{\dR}(X_K/K)$. 
	The exact sequence $H(X/R)$ is functorial in finite morphisms $\rho:Y\rightarrow X$
	of normal and proper $S$-curves via pullback $\rho^*$ and trace $\rho_*$; these 
	morphisms recover the usual pullback and trace mappings on Hodge filtrations after extending scalars 
	to $K$ and are adjoint with respect to the cup-product autoduality of $H(X/R)$.	
\end{proposition}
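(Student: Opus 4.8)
The plan is to construct $H(X/R)$ as the degree $0$--$1$ part of the hypercohomology long exact sequence attached to the naive (``stupid'') filtration of the two-term de Rham complex, and to pin down the relevant connecting maps by descent to the generic fibre together with Grothendieck--Serre duality for $f$. Write $C^\bullet := [\O_X \xrightarrow{d} \omega_{X/S}]$ for the complex in degrees $0,1$, with $d$ the composite of the exterior derivative with the canonical map $\Omega^1_{X/S}\to\omega_{X/S}$, so that $H^i(X/R)$ is its $i$-th hypercohomology. Filtering $C^\bullet$ by its subcomplex $\omega_{X/S}[-1]$, with quotient $\O_X$, gives a short exact sequence of complexes whose hypercohomology long exact sequence reads
\[ \cdots \to H^{i-1}(X,\omega_{X/S}) \to H^i(X/R) \to H^i(X,\O_X) \xrightarrow{\delta_i} H^i(X,\omega_{X/S}) \to \cdots, \]
in which the $\delta_i$ are exactly the $d_1$-differentials of the associated Hodge--de Rham spectral sequence. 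The asserted sequence is the segment in degrees $0$ and $1$, and I must show $\delta_0=\delta_1=0$ together with finiteness and freeness of the three terms.

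The vanishing $\delta_0=0$ is immediate: $X$ is normal and proper over the normal ring $R$ with geometrically connected generic fibre, so $f_*\O_X=\O_S$ and $H^0(X,\O_X)=R$ consists of constants, which $d$ annihilates. For freeness, $\omega_{X/S}$ is an $R$-flat line bundle, so $H^0(X,\omega_{X/S})$ is $R$-torsion-free and hence finite free; Grothendieck--Serre duality for the proper lci (hence Gorenstein) morphism $f$ then supplies canonical $R$-linear isomorphisms $H^1(X,\O_X)\simeq H^0(X,\omega_{X/S})^{\vee}$ and $H^1(X,\omega_{X/S})\simeq H^0(X,\O_X)^{\vee}=R$, yielding freeness of the remaining terms and, after $\otimes_R K$, the common rank $g=\dim_K H^0(X_K,\Omega^1_{X_K/K})$.

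The crux of the existence statement is $\delta_1=0$, which I would obtain by descent from the generic fibre. Since $K$ is flat over $R$, hypercohomology commutes with $\otimes_R K$, and $C^\bullet\otimes_R K$ is the genuine de Rham complex of the smooth proper curve $X_K/K$ (as $\Omega^1_{X_K/K}\simeq\omega_{X_K/K}$); thus $\delta_1\otimes_R K$ is the differential $d_1^{0,1}$ of the Hodge--de Rham spectral sequence of $X_K$, which vanishes because that spectral sequence degenerates at $E_1$ in characteristic zero. As $H^1(X,\omega_{X/S})\simeq R$ is torsion-free, a homomorphism into it that dies after $\otimes_R K$ is zero, so $\delta_1=0$. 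This simultaneously identifies $H^2(X/R)\simeq H^1(X,\omega_{X/S})\simeq R$ and produces the short exact sequence; being an extension of finite free modules over a discrete valuation ring, $H^1(X/R)$ is itself finite free, of rank $2g$. Compatibility with the Hodge filtration over $K$ is then automatic from the base-change identification $H^1(X/R)\otimes_R K\simeq H^1_{\dR}(X_K/K)$.

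For self-duality I would use the multiplication pairing $C^\bullet\otimes C^\bullet\to C^\bullet$ to define a cup product $H^1(X/R)\times H^1(X/R)\to H^2(X/R)\simeq R$, the last map being the Grothendieck trace; over $K$ this recovers the perfect Poincar\'e pairing. To see that it is perfect already over $R$, observe that $H^0(X,\omega_{X/S})$ is isotropic (its self-cup factors through $\omega_{X/S}\otimes\omega_{X/S}\to (C^\bullet)^2=0$) and that the induced pairing between the sub $H^0(X,\omega_{X/S})$ and the quotient $H^1(X,\O_X)$ is precisely the perfect Grothendieck--Serre duality $H^1(X,\O_X)\simeq H^0(X,\omega_{X/S})^{\vee}$; a skew pairing on a finite free module possessing a Lagrangian that is paired perfectly with the quotient is itself perfect (five lemma applied to $H^1(X/R)\to H^1(X/R)^{\vee}$). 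Functoriality for a finite $\rho:Y\to X$ comes from the pullback $\O_X\to\rho_*\O_Y$ and the trace $\rho_*\omega_{Y/S}\to\omega_{X/S}$, which induce $\rho^*$ and $\rho_*$ on $H(\cdot/R)$; that these recover the usual maps over $K$, act as the expected pullback and trace on the sub and quotient, and are cup-product adjoint follows from the projection formula and the compatibility of the trace with duality. I expect the \emph{main obstacle} to be exactly this passage from generic to integral perfectness of the pairing: it forces one to invoke Grothendieck--Serre duality for the (possibly non-smooth) proper curve $f:X\to S$ and to know that the trace realizes $H^1(X,\omega_{X/S})\simeq R$ as a perfect $R$-duality, the same input that underlies the freeness of $H^1(X,\O_X)$ and the identification of $H^2(X/R)$.
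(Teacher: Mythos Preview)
Your argument is correct and is essentially the standard one. Note, however, that the paper does not give its own proof of this proposition: it is quoted verbatim from the companion paper \cite[2.1.11]{CaisHida1} (with underlying details in \cite{CaisDualizing}), so there is no in-paper proof to compare against. What you have written is precisely the approach taken in those references: construct $H(X/R)$ from the stupid filtration of the two-term complex $\O_X\to\omega_{X/S}$, kill the boundary maps by combining $f_*\O_X=\O_S$ with degeneration of Hodge--de~Rham over $K$ and torsion-freeness of $H^1(X,\omega_{X/S})$, and deduce integral perfectness of the cup-product pairing from Grothendieck--Serre duality for the proper Gorenstein morphism $f$ via the Lagrangian $H^0(X,\omega_{X/S})$.

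One small point worth making explicit in your write-up: the equality $f_*\O_X=\O_S$ uses not only properness and geometric connectedness of $X_K$ but also flatness of $X$ over $R$ (so that $H^0(X,\O_X)$ is $R$-torsion-free, hence free of rank one, hence equal to $R$ by normality of the DVR). This is part of the standing definition of ``curve'' in the paper (flat, finitely presented, lci), so it is available, but it is doing real work in your first step.
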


There is an alternate description of the short exact sequence $H(X/R)$ of Proposition
\ref{HodgeIntEx} in terms of 
Lie algebras and N\'eron models of Jacobians that will allow us to relate
this cohomology to Dieudonn\'e modules.  To explain this description and its connection
with crystals, we first recall some facts from \cite{MM} and \cite{CaisNeron}.

Fix a base scheme $T$, and let $G$ be an fppf sheaf of abelian groups over $T$.
A {\em vectorial extension} of $G$ is a short exact sequence (of fppf sheaves of abelian
groups)
\begin{equation}
	\xymatrix{
		0 \ar[r] & {V} \ar[r] & {E} \ar[r] & {G} \ar[r] & 0.
		}\label{extension}
\end{equation}
with $V$ a vector group (i.e. an fppf abelian sheaf which is locally represented by a product of $\Ga$'s).  
Assuming that $\Hom(G,V)=0$ for all vector groups $V$, we say that a vectorial extension 
(\ref{extension}) is {\em universal} if, for any vector group $V'$ over $T$, 
the pushout map $\Hom_T(V,V')\rightarrow \Ext^1_T(G,V')$
is an isomorphism.  When a universal vectorial extension of $G$ exists, it is
unique up to canonical isomorphism and covariantly functorial in morphisms $G'\rightarrow G$
with $G'$ admitting a universal extension.  

\begin{theorem}\label{UniExtCompat}
	Let $T$ be an arbitrary base scheme.
	\begin{enumerate}
		\item If $A$ is an abelian scheme over $T$, then a universal vectorial
		extension $\E(A)$ of $A$ exists, with $V=\omega_{\Dual{A}}$,
		and is compatible with arbitrary base change on $T$.
		\label{UniExtCompat1}
		
		\item If $p$ is locally nilpotent on $T$ and $G$ is a $p$-divisible group over
		$T$, then a universal vectorial extension $\E(G)$ of $G$ extsis, with $V=\omega_{\Dual{G}}$,
		and is compatible with arbitrary base change on $T$.\label{UniExtCompat2}
		
		\item If $p$ is locally nilpotent on $T$ and $A$ is an abelian scheme over $T$ with
		associated $p$-divisible group $G:=A[p^{\infty}]$, then the canonical map of fppf sheaves
		$G\rightarrow A$ extends to a natural map
		\begin{equation*}
			\xymatrix{
				0 \ar[r] & {\omega_{\Dual{G}}} \ar[r]\ar[d] & {\E(G)} \ar[r]\ar[d] & {G}\ar[d] \ar[r] & 0\\
				0 \ar[r] & {\omega_{\Dual{A}}} \ar[r] & {\E(A)} \ar[r] & {A} \ar[r] & 0
			}
		\end{equation*}
		which induces an isomorphism of the corresponding short exact sequences of Lie algebras.
		\label{UniExtCompat3}
	\end{enumerate} 
\end{theorem}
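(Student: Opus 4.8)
The plan is to establish the three assertions in turn, following the foundational theory of universal vectorial extensions of Mazur and Messing \cite{MM}. The crux of both (\ref{UniExtCompat1}) and (\ref{UniExtCompat2}) is the representability of the contravariant functor $V' \mapsto \Ext^1_T(G, V')$ on the category of vector groups: once this functor is represented by a vector group $V$, the universal extension is precisely the extension of $G$ by $V$ classified by the identity endomorphism, and the required universal property is then immediate. Assertion (\ref{UniExtCompat3}) will follow formally from this universal property together with an identification of Lie algebras.

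For (\ref{UniExtCompat1}), recall first that $\Hom_T(A, V') = 0$ for every vector group $V'$, since an abelian scheme admits no nonconstant homomorphism to an affine group; thus the hypothesis needed to speak of a universal extension is satisfied. The computation $\Ext^1_T(A, \Ga) \cong H^1(A, \O_A) \cong \Lie(\Dual{A})$, where $\Dual{A} = \Pic^0_{A/T}$ is the dual abelian scheme, shows that the functor $V' \mapsto \Ext^1_T(A, V')$ is represented by the vector group attached to $\omega_{\Dual{A}} = \Lie(\Dual{A})^{\vee}$, whence the existence of $\E(A)$ with vector part $V = \omega_{\Dual{A}}$. This is classical over a field and is carried out in families in \cite{MM}. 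Compatibility with arbitrary base change on $T$ is inherited from the base-change compatibility of $H^1(A, \O_A) = R^1 f_* \O_A$ for the smooth proper structure map $f: A \to T$ (the relevant sheaf being locally free), together with the base-change-compatible formation of the classifying extension.

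Assertion (\ref{UniExtCompat2}) proceeds along identical lines once $p$ is locally nilpotent on $T$: the inputs are again $\Hom_T(G, V') = 0$ for a $p$-divisible group $G$ and the analogous computation $\Ext^1_T(G, \Ga) \cong \Lie(\Dual{G})$ with $\Dual{G}$ the Serre dual, so that the representing vector group is $\omega_{\Dual{G}} = \Lie(\Dual{G})^{\vee}$. I expect this $p$-divisible case to be the main obstacle: the local nilpotence of $p$ is exactly what renders these $\Ext$ groups and their formation under base change tractable for $p$-divisible groups, and the cleanest route is to invoke the direct construction of \cite{MM}, where the universal extension of a $p$-divisible group and its compatibility with base change are established.

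Finally, for (\ref{UniExtCompat3}), I would produce the vertical map of extensions by pullback. Pulling back $\E(A)$ along the canonical map $G = A[p^{\infty}] \to A$ of fppf abelian sheaves yields an extension $\E(A) \times_A G$ of $G$ by $\omega_{\Dual{A}}$; by the universal property of $\E(G)$ its class in $\Ext^1_T(G, \omega_{\Dual{A}})$ is the pushout of $\E(G)$ along a unique homomorphism $\phi: \omega_{\Dual{G}} \to \omega_{\Dual{A}}$, and the resulting map of extensions $\E(G) \to \E(A) \times_A G$ over $\mathrm{id}_G$, composed with the projection $\E(A) \times_A G \to \E(A)$, is the desired natural map (this is also what realizes the covariant functoriality recorded above). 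It remains to see that the induced map on Lie algebras is an isomorphism. Since $\Lie(G) = \Lie(A)$ and, dually, $\Dual{G} = \Dual{A}[p^{\infty}]$ gives $\Lie(\Dual{G}) = \Lie(\Dual{A})$ and hence $\omega_{\Dual{G}} = \omega_{\Dual{A}}$, the map $\phi$ is an isomorphism; applying $\Lie(\cdot)$ to the two rows and comparing the three-term exact sequences $0 \to \omega_{\Dual{G}} \to \Lie(\E(G)) \to \Lie(G) \to 0$ and $0 \to \omega_{\Dual{A}} \to \Lie(\E(A)) \to \Lie(A) \to 0$, the five lemma yields the asserted isomorphism of Lie-algebra sequences.
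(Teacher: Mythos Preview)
Your proposal is correct and follows essentially the same approach as the paper: for (\ref{UniExtCompat1}) and (\ref{UniExtCompat2}) the paper simply cites \cite[\Rmnum{1}, \S1.8--1.9]{MM}, while you spell out the representability argument underlying that reference; for (\ref{UniExtCompat3}) your pullback/pushout construction of the map of extensions is identical to the paper's, and your five-lemma argument for the Lie-algebra isomorphism is a more explicit version of what the paper defers to \cite[\Rmnum{2}, \S13]{MM}. One small point to tighten: you assert that $\phi$ is an isomorphism because $\omega_{\Dual{G}}$ and $\omega_{\Dual{A}}$ are canonically identified, but strictly speaking you should check that the particular map $\phi$ produced by the universal property agrees with (or at least is) this canonical identification---this is straightforward, but as written the inference ``same source and target, hence isomorphism'' is not quite a proof.
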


\begin{proof}
	For the proofs of (\ref{UniExtCompat1}) and (\ref{UniExtCompat2}), see
	 \cite[\Rmnum{1}, \S1.8 and \S1.9]{MM}.  To prove (\ref{UniExtCompat3}), note that
	 pulling back the universal vectorial extension of $A$ along $G\rightarrow A$
	 gives a vectorial extension $\E'$ of $G$ by $\omega_{\Dual{A}}$.  By universality, there then exists
	 a unique map $\psi:\omega_{\Dual{G}}\rightarrow \omega_{\Dual{A}}$ with the property
	 that the pushout of $\E(G)$ along $\psi$ is $\E'$, and this gives the map on universal extensions.  
	 That the induced map on Lie algebras is an isomorphism follows from \cite[\Rmnum{2}, \S 13]{MM}.
\end{proof}

For our applications, we will need a generalization of the universal extension 
of an abelian scheme to the setting of N\'eron models; in order to describe this
generalization, we first recall the explicit description of the universal
extension of an abelian scheme in terms of rigidified extensions.

For any commutative $T$-group scheme $F$, 
a {\em rigidified extension of $F$ by $\Gm$ over $T$} is a pair $(E,\sigma)$
consisting of an extension (of fppf abelian sheaves)
\begin{equation}
	\xymatrix{
		0 \ar[r] & {\Gm} \ar[r] & {E} \ar[r] & {F} \ar[r] & 0
		}\label{ExtRigDef}
\end{equation}
and a splitting $\sigma: \Inf^1(F)\rightarrow E$ of the pullback of (\ref{ExtRigDef})
along the canonical closed immersion $\Inf^1(F)\rightarrow F$.  Two rigidified 
extensions $(E,\sigma)$ and $(E',\sigma')$ are equivalent if there 
is a group homomorphism $E\rightarrow E'$ carrying $\sigma$ to $\sigma'$
and inducing the identity on $\Gm$ and on $F$.
The set $\Extrig_T(F,\Gm)$ of equivalence classes of rigidified extensions over $T$ is naturally a group
via Baer sum of rigidified extensions\cite[\Rmnum{1}, \S2.1]{MM}, so the functor on $T$-schemes
$T'\rightsquigarrow \Extrig_{T'}(F_{T'},\Gm)$ is naturally a group functor that is
contravariant in $F$ via pullback (fibered product).
We write $\scrExtrig_T(F,\Gm)$ for the fppf sheaf of abelian groups associated to this functor.

\begin{proposition}[Mazur-Messing]\label{MMrep}
	Let $A$ be an abelian scheme over an arbitrary base scheme $T$. 
	The fppf sheaf $\scrExtrig_T(A,\Gm)$ is represented by a smooth and separated $T$-group scheme, 
	and there is a canonical short exact sequence of smooth group schemes over $T$
	\begin{equation}
		\xymatrix{
			0\ar[r] & {\omega_A} \ar[r] & {\scrExtrig_T(A,\Gm)} \ar[r] & {\Dual{A}} \ar[r] & 0
		}.\label{univextabelian}
	\end{equation}
	Furthermore, $(\ref{univextabelian})$ is naturally isomorphic to the universal extension of $\Dual{A}$ by a vector
	group.
\end{proposition}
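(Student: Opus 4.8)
The plan is to realize $\scrExtrig_T(A,\Gm)$ as an extension of $\Dual{A}$ by the vector group $\omega_A$, and then to identify that extension with the universal one whose existence is guaranteed by Theorem \ref{UniExtCompat}. First I would analyze the forgetful morphism $(E,\sigma)\mapsto E$ from $\scrExtrig_T(A,\Gm)$ to the sheaf $\scrExt^1_T(A,\Gm)$ of (unrigidified) extensions. By the Barsotti--Weil formula $\scrExt^1_T(A,\Gm)\cong \Dual{A}$, while $\scrHom_T(A,\Gm)=0$ because $A$ is proper with geometrically connected fibres and $\Gm$ is affine; thus the target is $\Dual{A}$. The forgetful map is an fppf epimorphism, since the rigidifications of a fixed extension form a torsor under $\omega_A$ that is fppf-locally trivial, so rigidifications exist locally. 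Its kernel consists of the rigidifications of the \emph{split} extension $\Gm\times A$: such a rigidification is a pointed map $\Inf^1(A)\to\Gm$, i.e.\ a unit on $\Inf^1(A)$ reducing to $1$ along the identity section, and writing $\Gamma(\Inf^1(A))=\O_T\oplus\omega_A$ (with $\omega_A=e^*\Omega^1_{A/T}$ of square zero) identifies this group with $\omega_A$. This produces the canonical exact sequence $0\to\omega_A\to\scrExtrig_T(A,\Gm)\to\Dual{A}\to 0$ of $(\ref{univextabelian})$.

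Next I would address representability and smoothness. The sequence just obtained exhibits $\scrExtrig_T(A,\Gm)$ as a torsor under the vector group $\omega_A$ over the smooth separated scheme $\Dual{A}$. Torsors under the vector group attached to a locally free sheaf are classified by quasi-coherent $H^1$ of the base, hence are representable and even Zariski-locally trivial over $\Dual{A}$; consequently the total space is represented by a scheme. It is smooth and separated over $T$ because both $\Dual{A}\to T$ and the affine-space fibres contributed by $\omega_A$ are.

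The substantive step is to prove that $(\ref{univextabelian})$ is the universal vectorial extension of $\Dual{A}$. Since $\Dual{A}$ is proper over $T$ one has $\Hom_T(\Dual{A},V')=0$ for every vector group $V'$, so the universal-extension criterion applies; by Theorem \ref{UniExtCompat}(\ref{UniExtCompat1}) a universal extension $\E(\Dual{A})$ exists, with vector group $\omega_{(\Dual{A})^t}=\omega_A$ by biduality $(\Dual{A})^t\cong A$, consistent with the vector group found above. To conclude I would verify directly that the pushout map $\Hom_T(\omega_A,V')\to\Ext^1_T(\Dual{A},V')$ attached to $(\ref{univextabelian})$ is bijective for every $V'$; by additivity in $V'$ and fppf descent it suffices to treat $V'=\Ga$. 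There the source $\Hom_T(\omega_A,\Ga)$ and the target $\Ext^1_T(\Dual{A},\Ga)\cong H^1(\Dual{A},\O_{\Dual{A}})$ are both canonically $\Lie(A)$, and the assertion is that the rigidification identifies the pushout map with this canonical isomorphism. \textbf{This compatibility is the crux of the argument}: one must trace a rigidified extension through the two incarnations of $\Lie(A)$ --- invariant differentials on $A$ on the one hand, and $H^1$ of the structure sheaf of $\Dual{A}$ on the other --- which is precisely the point at which the rigidification data is seen to encode the universal extension. Granting it, $(\ref{univextabelian})$ satisfies the universal property and is therefore canonically isomorphic to $\E(\Dual{A})$, completing the proof.
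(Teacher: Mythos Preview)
The paper does not give its own proof of this proposition: it simply records the reference ``See \cite{MM}, \Rmnum{1}, \S2.6 and Proposition 2.6.7.'' So there is no argument in the paper to compare your sketch against; what you have written is essentially an outline of the Mazur--Messing argument itself rather than an alternative to anything the author does.

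Your outline is accurate as far as it goes. The construction of the exact sequence via the forgetful map to $\scrExt^1_T(A,\Gm)\simeq\Dual{A}$, the identification of its kernel with $\omega_A$ through the description $\Gamma(\Inf^1(A))=\O_T\oplus\omega_A$, and the representability argument via the $\omega_A$-torsor structure over $\Dual{A}$ are all correct and are precisely how \cite{MM} proceeds in \Rmnum{1}, \S2.6. You are also right to flag the verification of universality as the substantive point: your reduction to $V'=\Ga$ and the resulting statement that the pushout map $\Hom_T(\omega_A,\Ga)\to\Ext^1_T(\Dual{A},\Ga)$ is the canonical identification of both sides with $\Lie(A)$ is exactly what has to be checked, and you correctly identify it as ``the crux'' without actually carrying it out. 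In \cite{MM} this compatibility is Proposition 2.6.7, and its proof there requires unwinding the $\scrExtrig$ construction against the description of $\Ext^1_T(\Dual{A},\Ga)$ in terms of primitive cohomology classes; it is not a tautology, so your honest ``granting it'' is appropriate but does leave the argument incomplete.

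In short: your sketch is a faithful expansion of the cited reference rather than a different route, and the one step you concede is precisely the step where the real work in \cite{MM} lies.
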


\begin{proof}
	See \cite{MM}, $\Rmnum{1}, \S2.6$ and Proposition 2.6.7.
\end{proof}

In the case that $T=\Spec R$ for $R$ a discrete valuation ring of mixed
characteristic $(0,p)$ with fraction field $K$,
we have the following genaralization of Proposition \ref{MMrep}:

\begin{proposition}	
	Let $A$ be an abelian variety over $K$, with dual abelian variety $\Dual{A}$, and
	write $\A$ and $\Dual{\A}$ for the N\'eron models of $A$ and $\Dual{A}$ over $T=\Spec(R)$.
	Then the fppf abelian sheaf $\scrExtrig_T(\A,\Gm)$ on the category of smooth $T$-schemes
	is represented by a smooth and separated $T$-group scheme.  Moreover, there
	is a canonical short exact sequence of smooth group schemes over $T$
	\begin{equation}
		\xymatrix{
			0\ar[r] & {\omega_{\A}} \ar[r] & {\scrExtrig_T(\A,\Gm)} \ar[r] & {\Dual{\A}^0} \ar[r] & 0
		}\label{NeronCanExt}
	\end{equation}
	which is contravariantly functorial in $A$ via homomorphisms of abelian varieties over $K$.
	The formation of $(\ref{NeronCanExt})$ is compatible with smooth base change on $T$; in particular,
	the generic fiber of $(\ref{NeronCanExt})$ is the universal extension of $\Dual{A}$ by a vector group.
\end{proposition}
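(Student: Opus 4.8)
The plan is to deduce this from the Mazur--Messing description (Proposition \ref{MMrep}) by systematically replacing the dual abelian scheme with the identity component of the dual N\'eron model. The organizing device is the tautological \emph{forget-the-rigidification} exact sequence of fppf abelian sheaves on the smooth site of $T$,
\begin{equation*}
	\xymatrix{
		0 \ar[r] & {\omega_{\A}} \ar[r] & {\scrExtrig_T(\A,\Gm)} \ar[r] & {\scrExt^1_T(\A,\Gm)} \ar[r] & 0,
	}
\end{equation*}
whose existence for any smooth separated $T$-group scheme is established exactly as in \cite[\Rmnum{1}, \S2]{MM}. Granting this sequence, the whole proposition reduces to the single identification $\scrExt^1_T(\A,\Gm)\simeq \Dual{\A}^0$ together with standard representability facts. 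I would work throughout on the smooth site, so that the N\'eron mapping property is available: every smooth $T$-scheme $T'$ satisfies $\A(T')=A(T'_K)$, and a homomorphism of abelian varieties over $K$ extends uniquely to a homomorphism of N\'eron models.

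First I would construct and analyze the displayed sequence. The right-hand map forgets the splitting $\sigma$; its kernel consists of rigidifications of the \emph{trivial} extension, and these are parametrized by $\Hom(\Inf^1(\A),\Gm)\simeq\omega_{\A}$. Since $\Hom_T(\A,\Gm)=0$ (any such homomorphism is generically trivial, hence trivial by flatness of $\A$ over $R$ and separatedness of $\Gm$), this kernel is canonically the vector group $\omega_{\A}$, which is representable and smooth because $\A$ is. Surjectivity as fppf sheaves follows because any extension may be rigidified fppf-locally, the obstruction living in a sheaf supported on the infinitesimal scheme $\Inf^1(\A)$ that vanishes locally; this is verbatim the Mazur--Messing argument. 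Granting the identification of the cokernel below, $\scrExtrig_T(\A,\Gm)$ is then an extension of the smooth separated group scheme $\Dual{\A}^0$ by the vector group $\omega_{\A}$, hence an $\omega_{\A}$-torsor over $\Dual{\A}^0$; as such it is represented by a smooth separated $T$-group scheme, yielding both the representability claim and the exact sequence (\ref{NeronCanExt}).

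The hard part will be the identification $\scrExt^1_T(\A,\Gm)\simeq \Dual{\A}^0$ on the smooth site, which is a duality statement for N\'eron models rather than a formal manipulation. Here I would appeal to the duality theory developed in \cite{CaisNeron} (resting on Grothendieck's biextension formalism): extensions of $\A$ by $\Gm$ correspond to primitive line bundles on $\A$ (fiberwise degree zero, rigidified along the unit section), and the relative Picard theory of the N\'eron model identifies the resulting sheaf with $\Pic^0_{\A/R}\simeq \Dual{\A}^0$. The two subtle points are (i) why one obtains exactly the \emph{identity component} of the dual N\'eron model, and not a larger sheaf involving the component groups---this is precisely where restricting to the smooth site, together with the fiberwise connectedness of $\scrExt^1$, is essential---and (ii) checking that the resulting isomorphism restricts on the generic fiber to the classical isomorphism $\scrExt^1_K(A,\Gm)\simeq A^t$ underlying Proposition \ref{MMrep}. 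I expect this identification, and the careful bookkeeping with component groups, to be the principal obstacle.

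Finally I would record functoriality and base change. Contravariance in $A$ follows by transporting a homomorphism $A'\to A$ to the N\'eron models via the mapping property and pulling back rigidified extensions; the induced maps are compatible with (\ref{NeronCanExt}) since they act through $\omega$ on the left and through the extended dual homomorphism on $\Dual{\A}^0$ on the right. Compatibility with smooth base change is then immediate: every term---$\omega_{\A}$, $\Dual{\A}^0$, and hence the $\omega_{\A}$-torsor representing $\scrExtrig_T(\A,\Gm)$---commutes with smooth base change on $T$, because N\'eron models commute with smooth (in particular \'etale) base change and both $\omega$ and $\Pic^0$ are base-change compatible. Specializing along $\Spec K\to T$ gives $\A_K=A$, $\Dual{\A}^0_K=A^t$, and $\omega_{\A,K}=\omega_A$, so the generic fiber of (\ref{NeronCanExt}) is the Mazur--Messing sequence (\ref{univextabelian}), i.e.\ the universal extension of $A^t$ by a vector group in the sense of Theorem \ref{UniExtCompat} and Proposition \ref{MMrep}.
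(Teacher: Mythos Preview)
Your sketch is essentially correct and follows the natural line of argument. The paper's own proof, however, is simply a direct citation: it invokes Proposition 2.6 and the discussion after Remark 2.9 of \cite{CaisNeron}, where the result is established in the stated generality (using that $R$ has mixed characteristic $(0,p)$ with perfect residue field). So there is nothing to compare at the level of mathematical content---you have written out a plausible outline of what that external reference contains, while the paper chooses not to reproduce it.

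Your identification of the crux is accurate: the forget-the-rigidification sequence and the torsor argument are formal, and the substantive input is the isomorphism $\scrExt^1_T(\A,\Gm)\simeq \Dual{\A}^0$ on the smooth site, which genuinely requires the duality theory for N\'eron models developed in \cite{CaisNeron}. Your remarks about why one lands in the identity component (restriction to the smooth site, fiberwise connectedness) and about compatibility with the generic-fiber Mazur--Messing picture are the right checkpoints. One small caution: the vanishing $\Hom_T(\A,\Gm)=0$ is better argued via the N\'eron mapping property (sections over smooth $T$-schemes are determined by their generic fibers) than by a bare flatness/separatedness claim, since $\A$ need not be proper over $R$.
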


\begin{proof}
	Since $R$ is of mixed characteristic $(0,p)$ with perfect residue field,
    this follows from Proposition 2.6 and the discussion following Remark 2.9 in \cite{CaisNeron}.
\end{proof}

In the particular case that $A$ is the Jacobian of a smooth, proper and geometrically
connected curve $X$ over $K$ which is the generic fiber of a normal proper curve $\X$
over $R$, we can relate the exact sequence of Lie algebras attached to (\ref{NeronCanExt})
to the exact sequence $H(X/R)$ of Proposition \ref{HodgeIntEx}:

\begin{proposition}	\label{intcompare}
	Let $\X$ be a proper relative curve over $T=\Spec(R)$ with smooth generic fiber $X$ over $K$.
	Write $J:=\Pic^0_{X/K}$ for the Jacobian of $X$ and $\Dual{J}$ for its dual, 
	and let $\J$, $\Dual{\J}$ be the corresponding N\'eron models over $R$.
		There is a canonical homomorphism of exact sequences of finite free $R$-modules
			\begin{equation}
			\begin{gathered}
				\xymatrix{
					0 \ar[r] & {\Lie\omega_{\J}} \ar[r]\ar[d] & {\Lie\scrExtrig_T(\J,\Gm)} \ar[r]\ar[d]
					& {\Lie \Dual{\J}^0}  \ar[r]\ar[d] & 0\\
					0 \ar[r] & {H^0(\X,\omega_{\X/T})} \ar[r] & {H^1(\X/R)} \ar[r] & {H^1(\X,\O_{\X})} 
					\ar[r] & 0
			}
			\end{gathered}\label{IntegralComparisonMap}
			\end{equation} 
		that is an isomorphism when $\X$ has rational singularities.\footnote{Recall that $\X$ is 
		said to have {\em rational singularities} if it admits a resolution of singularities 
		$\rho:\X'\rightarrow \X$ with the natural map $R^1\rho_*\O_{{\X'}}=0$.  Trivially, any 
		regular $\X$ has rational singularities.}	
		For any finite morphism $\rho:\Y \rightarrow \X$ of $S$-curves satisfying the above hypotheses,
		the map $(\ref{IntegralComparisonMap})$ intertwines $\rho_*$ 
		$($respectively $\rho^*$$)$ on the bottom row with $\Pic(\rho)^*$ 
		$($respectively $\Alb(\rho)^*$$)$ on the top.
\end{proposition}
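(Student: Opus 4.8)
The plan is to take the construction of the comparison map (\ref{IntegralComparisonMap}), and the assertion that it is an isomorphism when $\X$ has rational singularities, from the local theory of \cite{CaisNeron}, and then to establish the functoriality by reducing it to the generic fibre. Let me first recall how the map arises. Applying $\Lie$ to the short exact sequence (\ref{NeronCanExt}) of the preceding proposition gives the top row of (\ref{IntegralComparisonMap}) as a short exact sequence of finite free $R$-modules, whose generic fibre is, by the Mazur--Messing description of the universal extension, canonically the Hodge filtration of $H^1_{\dR}(X/K)$; by Proposition \ref{HodgeIntEx} the generic fibre of the bottom row $H(X/R)$ is the same Hodge filtration. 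Thus both rows are lattices in one common three-term sequence of $K$-vector spaces, and \cite{CaisNeron} produces the canonical $R$-integral comparison homomorphism between them, together with the fact that it is an isomorphism precisely when the two smooth $R$-models $\Dual{\J}^0$ and $\Pic^0_{\X/R}$ of $\Dual{J}=\Pic^0_{X/K}$ agree, which holds when $\X$ has rational singularities by Raynaud's theory. For $R$ of mixed characteristic $(0,p)$ with perfect residue field, all of this is contained in the material cited in the proof of the preceding proposition (Proposition 2.6 and the discussion following Remark 2.9 in \cite{CaisNeron}); so the first two assertions follow.

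It remains to prove the functoriality, and the essential point is a lattice principle: every map appearing in the two squares to be verified is a homomorphism of finite free $R$-modules, and each is compatible with extension of scalars to $K$. Indeed the comparison map (\ref{IntegralComparisonMap}) restricts on the generic fibre to the canonical identification of the Hodge filtrations; the trace $\rho_*$ and pullback $\rho^*$ on $H(X/R)$ recover the usual trace and pullback on Hodge filtrations after $\otimes_R K$, by Proposition \ref{HodgeIntEx}; and the maps $\Pic(\rho)^*$, $\Alb(\rho)^*$ on the top rows --- obtained by extending $\Pic^0(\rho)$ and $\Alb(\rho)$ to N\'eron models via the N\'eron mapping property and applying the contravariant functoriality of $\scrExtrig_T(-,\Gm)$ in its first argument --- restrict on the generic fibre to the corresponding maps on universal extensions. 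Since a homomorphism of finite free $R$-modules is determined by its base change to $K$, it therefore suffices to check that the two squares commute after $\otimes_R K$.

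On the generic fibre both rows are identified with the Hodge filtration of $H^1_{\dR}(X/K)$ (resp.\ of $H^1_{\dR}(Y/K)$), and the required commutativities are exactly the statements that the canonical isomorphism $H^1_{\dR}(X/K)\cong H^1_{\dR}(J/K)$ intertwines $\rho_*$ with $\Pic^0(\rho)^*$ and $\rho^*$ with $\Alb(\rho)^*$, together with the elementary naturality of the Mazur--Messing identification of the Lie algebra of a universal extension with de~Rham cohomology. The first of these is \cite[Proposition 5.4]{CaisNeron}, already invoked in the proof of Proposition \ref{OdaDieudonne}. This yields the two commuting squares over $K$, and hence over $R$, completing the argument. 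I expect the only genuine work to lie not in any one of these steps but in the bookkeeping of the second paragraph: namely, confirming that the integral trace $\rho_*$ of Proposition \ref{HodgeIntEx} and the map induced on $\scrExtrig_T(-,\Gm)$ by a finite morphism of N\'eron models really are integral models of the classical operations, so that the lattice principle may legitimately transport the generic-fibre identities to integral ones.
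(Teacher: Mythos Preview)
Your proposal is correct and follows essentially the same approach as the paper, which simply cites Theorem~1.2 and (the proof of) Corollary~5.6 of \cite{CaisNeron}; you have unpacked what those references contain, and your reduction of the functoriality to the generic fibre via the lattice principle is exactly the mechanism used there.
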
	
	
\begin{proof}
	 	See Theorem 1.2 and (the proof of) Corollary 5.6 in \cite{CaisNeron}.
\end{proof}

\begin{remark}\label{canonicalproperty}
	Let $X$ be a smooth and geometrically connected curve over $K$
	admitting a normal proper model $\X$ over $R$ that is a curve
	having rational singularities.
	It follows from Proposition \ref{intcompare}
	and the N\'eron mapping property
	that $H(\X/R)$ is a {\em canonical integral structure}
	on the Hodge filtration of $H^1_{\dR}(X/K)$: it is
	independent of the choice of proper model $\X$ that is normal with rational singularities, 
	and
	is contravariantly (respectively covariantly) functorial by pullback
	(respectively trace) in finite morphisms $\rho:Y\rightarrow X$
	of proper smooth curves over $K$ which admit models over $R$ satisfying these hypotheses.
	These facts can be proved in greater generality by appealing to resolution of singularities
	for excellent surfaces and the flattening techniques of Raynaud--Gruson \cite{RayGrus};
	see \cite[Theorem 5.11]{CaisDualizing} for details.
\end{remark}

Finally, we will need to relate the universal extension of a $p$-divisible group
as in Theorem \ref{UniExtCompat} (\ref{UniExtCompat2}) to 
its Dieudonn\'e crystal.
In order to explain how this goes, we begin by recalling some basic facts
from crystalline Dieudonn\'e theory, as discussed in \cite{BBM}.

Fix a perfect field $k$ and set $\Sigma:=\Spec(W(k))$, considered as a PD-scheme via the canonical divided powers on the ideal $pW(k)$. Let $T$ be 
a $\Sigma$-scheme on which $p$ is locally nilpotent (so $T$ is naturally a PD-scheme over $\Sigma$), and 
denote by $\Cris(T/\Sigma)$ the big crystalline site of $T$ over $\Sigma$,
endowed with the {\em fppf} topology (see \cite[\S 2.2]{BBM1}). 
If $\scrF$ is a sheaf on $\Cris(T/\Sigma)$ and $T'$ is any PD-thickening of $T$, 
we write $\scrF_{T'}$ for the associated {\em fppf} sheaf on $T'$.
As usual, we denote by $i_{T/\Sigma}:T_{fppf}\rightarrow (T/\Sigma)_{\Cris}$
the canonical morphism of topoi, and 
we abbreviate $\underline{G}:={i_{T/\Sigma}}_{*}G$ for any fppf sheaf $G$ on $T$.

Let $G$ be a $p$-divisible group over $T$, considered as an fppf abelian sheaf on $T$.
As in \cite{BBM}, we define the (contravariant) {\em Dieudonn\'e crystal of $G$ over $T$} to be
\begin{equation}
	\D(G) := \scrExt^1_{T/\Sigma}(\underline{G},\O_{T/\Sigma}).\label{DieudonneDef}
\end{equation}
It is a locally free crystal in $\O_{T/\Sigma}$-modules, which is contravariantly functorial
in $G$ and of formation compatible with base change along PD-morphisms $T'\rightarrow T$ of $\Sigma$-schemes
thanks to 2.3.6.2 and Proposition 2.4.5 $(\rmnum{2})$ of \cite{BBM}.
If $T'=\Spec(A)$ is affine, we will simply write $\D(G)_A$ for the finite locally free $A$-module 
associated to $\D(G)_{T'}$.

The structure sheaf $\O_{T/\Sigma}$ is canonically an extension of $\u{\mathbf{G}}_a$
by the PD-ideal $\J_{T/\Sigma}\subseteq \O_{T/\Sigma}$, and by applying $\scrHom_{T/\Sigma}(\underline{G},\cdot)$
to this extension one obtains (see Propositions 3.3.2 and 3.3.4 as well as
Corollaire 3.3.5 of \cite{BBM})
a short exact sequence (the {\em Hodge filtration})
\begin{equation}
	\xymatrix{
		0\ar[r] & {\scrExt^1_{T/\Sigma}(\underline{G},\J_{T/\Sigma})}\ar[r] &		
		{\D(G)}\ar[r] &		
		{\scrExt^1_{T/\Sigma}(\underline{G},\u{\mathbf{G}}_a)}\ar[r] & 0		
	}\label{HodgeFilCrys}
\end{equation}
that is contravariantly functorial
in $G$ and of formation compatible with base change along PD-morphisms
$T'\rightarrow T$ of $\Sigma$-schemes.
The following ``geometric" description of the value of (\ref{HodgeFilCrys}) on a PD-thickening of the
base will be essential for our purposes:

\begin{proposition}\label{BTgroupUnivExt}
	Let $G$ be a fixed $p$-divisible group over $T$ and let $T'$ be any 
	$\Sigma$-PD thickening of $T$. If $G'$ is any lifting of $G$ to a $p$-divisible
	group on $T'$, then there is a natural isomorphism 
	\begin{equation*}
		\xymatrix{
			0 \ar[r] & {\omega_{G'}} \ar[r]\ar[d]^-{\simeq} & {\scrLie(\E(\Dual{G'}))} \ar[r]\ar[d]^-{\simeq} & 
			{\scrLie (\Dual{G'})}\ar[r]\ar[d]^-{\simeq} & 0\\		
			0\ar[r] & {\scrExt^1_{T/\Sigma}(\underline{G},\J_{T/\Sigma})_{T'}}\ar[r] & {\D(G)_{T'}}\ar[r] &		
			{\scrExt^1_{T/\Sigma}(\underline{G},\underline{\mathbf{G}}_a)_{T'}}\ar[r] & 0		
		}
	\end{equation*} 
	that is moreover compatible with base change in the evident manner.
\end{proposition}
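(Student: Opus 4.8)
The plan is to evaluate the crystalline Hodge filtration $(\ref{HodgeFilCrys})$ on the PD-thickening $T\hookrightarrow T'$ and to identify each of its three terms, for the chosen lift $G'$, with the corresponding term of the Lie-algebra sequence of the universal extension $\E(\Dual{G'})$; since both rows are short exact, it suffices to produce the middle isomorphism together with its compatibility with the sub-objects (equivalently, one of the two outer isomorphisms). First I would recall that, by Mazur--Messing theory (\cite{MM}; compare Proposition~\ref{MMrep}), the universal vectorial extension of $\Dual{G'}$ over $T'$ is canonically the sheaf of rigidified extensions, $\E(\Dual{G'})\simeq \scrExtrig_{T'}(G',\Gm)$, whose vector part is $\omega_{G'}$ (in accordance with Theorem~\ref{UniExtCompat}~(\ref{UniExtCompat2}), since $\Dual{(\Dual{G'})}=G'$). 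Taking Lie algebras of the defining extension $0\to\omega_{G'}\to\E(\Dual{G'})\to\Dual{G'}\to0$ then yields exactly the top row of the diagram.

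The heart of the argument is the middle column, $\D(G)_{T'}\simeq\scrLie\E(\Dual{G'})$. Here I would invoke the crystalline construction of the universal extension: passing to Lie algebras turns $\Gm$-rigidified extensions into additive, $\u{\mathbf{G}}_a$-valued infinitesimal data, and Mazur--Messing identify $\scrLie\scrExtrig_{T'}(G',\Gm)$ with the value on $T'$ of the contravariant Dieudonn\'e crystal $\scrExt^1_{T/\Sigma}(\underline{G},\O_{T/\Sigma})=\D(G)$. Because $\D(G)$ is a crystal, $\D(G)_{T'}$ depends only on $G$ and the thickening, not on the lift; the role of $G'$ is precisely to realize this intrinsic module as the Lie algebra of the universal extension living on $T'$, thereby equipping it with the Hodge filtration.

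For the outer columns I would use that the bottom row is the image of the fundamental sequence $0\to\J_{T/\Sigma}\to\O_{T/\Sigma}\to\u{\mathbf{G}}_a\to0$ under $\scrExt^1_{T/\Sigma}(\underline{G},-)$, with exactness coming from $\scrHom(\underline{G},\u{\mathbf{G}}_a)=0$. The quotient is pinned down by $\scrExt^1_{T/\Sigma}(\underline{G},\u{\mathbf{G}}_a)_{T'}\simeq\scrExt^1_{T'}(G',\Ga)\simeq\scrLie\Dual{G'}$, the $p$-divisible analogue of $\Ext^1(A,\Ga)\simeq H^1(A,\O_A)\simeq\scrLie\Dual A$; the sub is then the Hodge piece $\scrExt^1_{T/\Sigma}(\underline{G},\J_{T/\Sigma})_{T'}\simeq\omega_{G'}$, forced by exactness and agreeing with the assertion that the crystalline Hodge filtration recovers that of the lift. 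Naturality in $G$ and compatibility with base change along PD-morphisms follow from the corresponding properties of $\D(\cdot)$, of $(\ref{HodgeFilCrys})$, and of $\E(\cdot)$ (Theorem~\ref{UniExtCompat}), so the diagram is natural on the nose. The hard part will be the middle identification: reconciling the \emph{multiplicative} ($\Gm$-based) rigidified/universal extension with the \emph{additive} crystalline $\scrExt$-sheaf $\scrExt^1_{T/\Sigma}(\underline{G},\O_{T/\Sigma})$ and checking that the resulting isomorphism is canonical and carries the multiplicative Hodge filtration to the crystalline one; this is precisely the comparison packaged by Mazur--Messing and reworked crystalline-theoretically in \cite[\S3]{BBM} (Propositions 3.3.2, 3.3.4 and Corollaire 3.3.5), which I would ultimately cite, recording only the compatibility of the three columns.
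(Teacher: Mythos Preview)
Your proposal is correct and takes essentially the same approach as the paper: the paper's proof consists solely of the citations ``See \cite[Corollaire 3.3.5]{BBM} and \cite[\Rmnum{2}, Corollary 7.13]{MM},'' which are precisely the Mazur--Messing comparison and the crystalline identification you unpack. Your write-up is a faithful expansion of what those references contain, with the same key inputs (the rigidified-extension description of $\E(\Dual{G'})$, the crystal property of $\D(G)$, and the BBM Hodge filtration sequence).
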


\begin{proof}
	See \cite[Corollaire 3.3.5]{BBM} and \cite[\Rmnum{2}, Corollary 7.13]{MM}.
\end{proof}

\begin{remark}\label{MessingRem}
	In his thesis \cite{Messing}, Messing showed that the Lie algebra of the universal extension
	of $\Dual{G}$ is ``crystalline in nature" and used this as the {\em definition}\footnote{Noting 
	that it suffices to define the crystal $\D(G)$ on $\Sigma$-PD thickenings $T'$ 
	of $T$ to which $G$ admits a lift.} of $\D(G)$.
(See chapter $\Rmnum{4}$, \S2.5 of \cite{Messing} and especially 2.5.2).  Although we
	prefer the more intrinsic description (\ref{DieudonneDef}) of 
	\cite{MM} and \cite{BBM}, it is ultimately Messing's original
	definition that will be important for us.  
\end{remark}

\subsection{Dieudonn\'e crystals and \texorpdfstring{$(\varphi,\Gamma)$}{(phi,Gamma)}-modules}\label{PhiGammaCrystals}

In this section, we summarize the main results of \cite{CaisLau},
which provides a classification of $p$-divisible groups 
by certain semi-linear algebra structures.
These structures---which arise naturally via the Dieudonn\'e crystal functor---
are cyclotomic analogues of Breuil and Kisin modules, and are closely
related to Wach modules.\footnote{See \cite{CaisLau} for the precise relationship.}


Fix a perfect field $k$ of characteristic $p$.
Write $W:=W(k)$ for the Witt vectors of $k$ and $K$ for its fraction field,
and denote by $\varphi$ the unique automorphism of $W(k)$ lifting the $p$-power map
on $k$.  Fix an algebraic closure $\overline{K}$ of $K$, as well
as a compatible sequence $\{\varepsilon^{(r)}\}_{r\ge 1}$ of primitive $p$-power roots of unity in $\o{K}$,
and set $\scrG_K:=\Gal(\o{K}/K)$.
For $r\ge 0$, we put $K_r:=K(\mu_{p^r})$ and $R_r:=W[\mu_{p^r}]$,
and we set $\Gamma_r:=\Gal(K_{\infty}/K_r)$, and $\Gamma:=\Gamma_0$.

Let $\s_r:=W[\![u_r]\!]$ be the power series ring in one variable $u_r$ over $W$,
viewed as a topological ring via the $(p,u_r)$-adic topology. 
We equip $\s_r$ with the unique continuous action of $\Gamma$ and extension of $\varphi$ 
determined by
\begin{align}
	&\gamma u_r := (1+u_r)^{\chi(\gamma)} -1\quad \text{for $\gamma\in \Gamma$} && \text{and} &&
	\varphi(u_r) := (1+u_r)^p -1.\label{gamphiact}
\end{align}
We denote by $\O_{\E_r}:=\widehat{\s_r[\frac{1}{u_r}]}$ the $p$-adic completion of the localization
${\s_r}_{(p)}$, which is a complete discrete valuation ring with uniformizer $p$ and 
residue field $k(\!(u_r)\!)$.  One checks that the actions of $\varphi$ and $\Gamma$
on $\s_r$ uniquely extend to $\O_{\E_r}$.

For $r>0$, we write $\theta: \s_r\twoheadrightarrow R_r$ for the continuous and $\Gamma$-equivariant $W$-algebra 
surjection
sending $u_r$ to $\varepsilon^{(r)}-1$, whose kernel is the principal ideal generated by
the Eisenstein polynomial $E_r:=\varphi^r(u_r)/\varphi^{r-1}(u_r)$,
and we denote by $\tau:\s_r\twoheadrightarrow W$ the continuous and $\varphi$-equivariant surjection of $W$-algebras 
determined by $\tau(u_r)=0$.
We lift the canonical inclusion $R_r\hookrightarrow R_{r+1}$
to a $\Gamma$- and $\varphi$-equivariant $W$-algebra injection
${\s_r} \hookrightarrow {\s_{r+1}}$
determined by $u_r\mapsto \varphi(u_{r+1})$;
this map uniquely extends to a continuous injection
$\O_{\E_r}\hookrightarrow \O_{\E_{r+1}}$, compatibly with $\varphi$ and $\Gamma$.
We will frequently identify $\s_r$ (respectively $\O_{\E_r}$) with its image in $\s_{r+1}$ 
(respectively $\O_{\E_{r+1}}$),
which coincides with the image of $\varphi$ on $\s_{r+1}$ (respectively $\O_{\E_{r+1}})$.
Under this convention, we have 
$E_{r}(u_r) = E_1(u_1) = u_0/u_1$ for all $r>0$, so we will simply write $\omega:=E_r(u_r)$
for this common element of $\s_r$ for $r>0$.

\begin{definition}	
	We write $\BT_{\s_r}^{\varphi}$ for the category of {\em Barsotti-Tate modules over $\s_r$},
	{\em i.e.} the category whose objects are pairs $(\m,\varphi_{\m})$ where
	\begin{itemize}
		\item $\m$ is a free $\s_r$-module of finite rank.
		\item $\varphi_{\m}:\m\rightarrow \m$ is a $\varphi$-semilinear
		map whose linearization has cokernel killed by $\omega$,
	\end{itemize}
	and whose morphisms are $\varphi$-equivariant $\s_r$-module homomorphisms. 
	We write $\BT_{\s_r}^{\varphi,\Gamma}$ for the subcategory of $\BT_{\s_r}^{\varphi}$
	consisting of objects $(\m,\varphi_{\m})$ which admit a semilinear $\Gamma$-action 
	(in the category $\BT_{\s_r}^{\varphi}$) with the property that $\Gamma_r$ acts trivially on
	$\m/u_r\m$.  Morphisms in $\BT_{\s_r}^{\varphi,\Gamma}$ are $\varphi$ and $\Gamma$-equivariant
	morphisms of $\s_r$-modules.
	We often abuse notation by writing $\m$ for the pair 
	$(\m,\varphi_{\m})$ and $\varphi$ for $\varphi_{\m}$. 
\end{definition}

If $(\m,\varphi_{\m})$ is any object of $\BT_{\s_r}^{\varphi,\Gamma}$, then 
$1\otimes\varphi_{\m}:\varphi^*\m\rightarrow \m$
is injective with cokernel killed by $\omega$, so there is a unique
$\s_r$-linear homomorphism $\psi_{\m}:\m\rightarrow \varphi^*\m$
with the property that the composition of $1\otimes\varphi_{\m}$ and $\psi_{\m}$ (in either order)
is multiplication by $\omega$.  Clearly, $\varphi_{\m}$ and $\psi_{\m}$ determine eachother.
We warn the reader that 
the action of $\Gamma$ 
does {\em not}
commute with $\psi_{\m}$: instead, for any $\gamma\in \Gamma$, one has 
\begin{equation}
	(\gamma\otimes\gamma)\circ\psi_{\m}=(\gamma\omega/\omega)\cdot\psi_{\m}\circ\gamma.
	\label{psiGammarel}
\end{equation}

\begin{definition}\label{DualBTDef}
	Let $\m$ be an object of $\BT_{\s_r}^{\varphi,\Gamma}$.  The {\em dual
	of $\m$} is the object $(\m^{t},\varphi_{\m^{t}})$ of $\BT_{\s_r}^{\varphi,\Gamma}$
	whose underlying $\s_r$-module is $\m^{t}:=\Hom_{\s_r}(\m,\s_r)$, equipped with
	the $\varphi$-semilinear endomorphism
	\begin{equation*}
		\xymatrix@C=32pt{
			{\varphi_{\m^{t}}: \m^{t}} \ar[r]^-{1\otimes \id_{\m^{t}}} & {\varphi^*\m^{t} \simeq (\varphi^*\m)^{t}}
			\ar[r]^-{\psi_{\m}^{t}} & {\m^{t}}
		}
	\end{equation*}
	and the commuting\footnote{As one checks
	using the intertwining relation (\ref{psiGammarel}).} action of $\Gamma$ given for $\gamma\in \Gamma$ by
	\begin{equation*}
		(\gamma f)(m) := \chi(\gamma)^{-1}\varphi^{r-1}(\gamma u_r/u_r)\cdot\gamma (f(\gamma^{-1} m )).
	\end{equation*}
\end{definition}

There is a natural notion of base change for Barsotti--Tate modules.
Let $k'/k$ be an algebraic extension (so $k'$ is automatically perfect), and write $W':=W(k')$,
$R_r':=W'[\mu_{p^r}]$, $\s_r':=W'[\![u_r]\!]$, and so on.  
The canonical
inclusion $W\hookrightarrow W'$ extends to a $\varphi$ and $\Gamma$-compatible
$W$-algebra injection $\iota_r:\s_r\hookrightarrow \s_{r+1}'$, and extension
of scalars along $\iota_r$ yields a canonical 
canonical base change functor ${\iota_r}_*: \BT_{\s_r}^{\varphi,\Gamma}\rightarrow \BT_{\s_{r+1}}^{\varphi,\Gamma}$
which one checks is compatible with duality.

Let us write $\pdiv_{R_r}^{\Gamma}$ for the subcategory of $p$-divisible groups over $R_r$
consisting of those objects and morphisms which descend (necessarily uniquely) to $K=K_0$ on generic fibers.
By Tate's Theorem, this is of course equivalent to the full subcategory of $p$-divisible
groups over $K_0$ which have good reduction over $K_r$.   Note that for any algebraic extension $k'/k$, 
base change along the inclusion $\iota_r:R_r\hookrightarrow R_{r+1}'$ gives a covariant functor
${\iota_r}_*:\pdiv_{R_r}^{\Gamma}\rightarrow \pdiv_{R_{r+1}'}^{\Gamma}$.

The main result of \cite{CaisLau} is the following:

\begin{theorem}\label{CaisLauMain}
	For each $r>0$, there is a contravariant functor 
	$\m_r:\pdiv_{R_r}^{\Gamma}\rightarrow \BT_{\s_r}^{\varphi,\Gamma}$ such that:
	\begin{enumerate}
		\item The functor $\m_r$ is an exact antiequivalence of categories, compatible with duality.
		\label{exequiv}
		\item The functor $\m_r$ is of formation compatible with base change: 
		for any algebraic extension $k'/k$, there is a natural isomorphism
		of composite functors ${\iota_r}_*\circ \m_r \simeq \m_{r+1}\circ {\iota_{r}}_*$ on $\pdiv_{R_r}^{\Gamma}$.
		\label{BaseChangeIsom}
		\item For $G\in \pdiv_{R_r}^{\Gamma}$, put $\o{G}:=G\times_{R_r} k$ and $G_0:=G\times_{R_r} R_r/pR_r$.
		\begin{enumerate}		
			\item There is a functorial and $\Gamma$-equivariant isomorphism of $W$-modules
			\begin{equation*}
				\m_r(G)\tens_{\s_r,\varphi\circ \tau} W \simeq \D(\o{G})_W,
			\end{equation*}		 
			carrying 
			$\varphi_{\m}\otimes \varphi$ to $F:\D(\o{G})_W\rightarrow \D(\o{G})_W$ 
			and $\psi_{\m}\otimes 1$ to $V\otimes 1: \D(\o{G})_W \rightarrow  \varphi^*\D(\o{G})_W$.
			\label{EvaluationONW}
		\item There is a functorial and $\Gamma$-equivariant isomorphism of $R_r$-modules
			\begin{equation*}
				\m_r(G)\tens_{\s_r,\theta\circ\varphi} R_{r} \simeq \D(G_0)_{R_r}.
			\end{equation*}\label{EvaluationONR}
		\end{enumerate}
	\end{enumerate}
\end{theorem}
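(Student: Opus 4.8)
The plan is to deduce Theorem \ref{CaisLauMain} from crystalline Dieudonn\'e theory together with the theory of frames and windows of Lau and Zink, the decisive feature being that the cyclotomic Frobenius lift (\ref{gamphiact}) on $\s_r$ renders the whole construction equivariant for $\varphi$ and $\Gamma$ and compatible with the transition maps up the tower.

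First I would set up the relevant frame. Since $\theta:\s_r\twoheadrightarrow R_r$ has kernel the principal ideal $\omega\s_r$, the datum $\underline{\s}_r=(\s_r,\omega\s_r,R_r,\varphi,\varphi_1)$---with $\varphi$ the cyclotomic Frobenius and $\varphi_1:\omega\s_r\to\s_r$ the $\varphi$-semilinear map normalized by $\varphi_1(\omega x)=\varphi(x)$---is a frame of Breuil--Kisin type: the congruence $\varphi(u_r)\equiv u_r^p\bmod p$ supplies the Frobenius axiom, $\omega$ plays the role of the Eisenstein element, and $\varphi_1(\omega)=1$ gives the requisite unit. A direct linear-algebra translation then identifies windows over $\underline{\s}_r$ with the category $\BT_{\s_r}^{\varphi}$: the filtration condition $\Fil^1\m\supseteq\omega\m$ of a window, together with the unit property of $\varphi_1$, is exactly the requirement that the linearization of $\varphi_{\m}$ have cokernel killed by $\omega$. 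The deep input is the frame-window classification of Lau and Zink: for the complete discrete valuation ring $R_r=\mathcal{O}_{K_r}$, windows over $\underline{\s}_r$ are, via the Dieudonn\'e crystal, contravariantly equivalent to $p$-divisible groups over $R_r$; that we use the cyclotomic Frobenius in place of $u_r\mapsto u_r^p$ is harmless, as any Frobenius lift yields a frame computing the same crystal. This produces the antiequivalence of part (\ref{exequiv}), and exactness and duality-compatibility are inherited from the corresponding structures on windows and from Serre duality of $p$-divisible groups, matching Definition \ref{DualBTDef} with Cartier duality once the perfect pairing on windows is identified.

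Next I would make the construction explicit enough to read off the evaluation isomorphisms (\ref{EvaluationONW})--(\ref{EvaluationONR}). Concretely, $\m_r(G)$ is obtained from the Dieudonn\'e crystal of $G_0=G\times_{R_r}R_r/pR_r$ by evaluating on the PD-thickening $S_r\twoheadrightarrow R_r/pR_r$ and then descending along $\s_r\hookrightarrow S_r$, the descent being precisely what the window formalism provides; by Messing's description (Remark \ref{MessingRem}) and Proposition \ref{BTgroupUnivExt} this value is the Lie algebra of the universal extension of a lift, which makes both specializations transparent. Reducing the frame along $\tau$ (that is, $u_r\mapsto0$, after a $\varphi$-twist) recovers the evaluation of the crystal on $W\twoheadrightarrow k$, yielding $\m_r(G)\otimes_{\s_r,\varphi\circ\tau}W\simeq\D(\o{G})_W$ and carrying $\varphi_{\m}\otimes\varphi$ to the Frobenius $F$ and $\psi_{\m}$ to the Verschiebung $V$; reducing along $\theta$ (after the $\varphi$-twist) recovers the evaluation on $R_r\twoheadrightarrow R_r/pR_r$, yielding $\m_r(G)\otimes_{\s_r,\theta\circ\varphi}R_r\simeq\D(G_0)_{R_r}$ compatibly with the crystalline Hodge filtration (\ref{HodgeFilCrys}). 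Both isomorphisms are automatically $\Gamma$-equivariant because $\tau$ and $\theta$ are.

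Finally, the $\Gamma$-action and the base-change compatibility (\ref{BaseChangeIsom}) carry the real content, and I expect (\ref{BaseChangeIsom}) to be the main obstacle. For $G\in\pdiv_{R_r}^{\Gamma}$ the generic fibre descends to $K_0$, so $G_{K_\infty}$ acquires a semilinear action of $\Gamma=\Gal(K_\infty/K_0)$; since $\theta$ intertwines the $\Gamma$-action on $\s_r$ with the $\Gal(K_r/K_0)$-action on $R_r$, transporting this action through the crystal-theoretic construction endows $\m_r(G)$ with a semilinear $\Gamma$-action, and the normalization $\gamma u_r=(1+u_r)^{\chi(\gamma)}-1$---under which $\Gamma$ acts trivially on $\s_r/u_r\s_r=W$---forces $\Gamma_r$ to act trivially modulo $u_r$, placing $\m_r(G)$ in $\BT_{\s_r}^{\varphi,\Gamma}$. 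For (\ref{BaseChangeIsom}) one must show that the $\varphi$-twisted inclusion $\iota_r:\s_r\hookrightarrow\s_{r+1}'$ intertwines $\m_r$ with $\m_{r+1}$; this is exactly the compatibility that Breuil--Kisin and Berger--Wach modules are not known to enjoy, and it is here that the functoriality of the frame-window formalism under change of frame must be combined with the base-change compatibility of $\D(\cdot)$ along the PD-morphisms induced by $R_r\hookrightarrow R_{r+1}'$ (Proposition \ref{BTgroupUnivExt}) and checked to respect the $\varphi$-semilinear transition maps. Establishing this base-change isomorphism simultaneously compatibly with $\varphi$, $\Gamma$, and duality is the crux of the argument.
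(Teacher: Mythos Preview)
Your proposal is correct and matches the paper's treatment. Note, however, that the paper does not itself prove this theorem: it is stated as the main result of \cite{CaisLau}, and the paper explicitly defers the proof there, providing only a sketch of the construction of $\m_r$---evaluate the Dieudonn\'e crystal of $G_0$ on the PD-thickening $S_r\twoheadrightarrow R_r/pR_r$ to get $\scrM_r(G)\in\BT_{S_r}^{\varphi,\Gamma}$, then descend to $\s_r$ via Theorem~\ref{Lau}---which is exactly the construction you outline, including your correct identification of the base-change compatibility (\ref{BaseChangeIsom}) as the distinguishing technical point.
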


We wish to explain how to functorially recover the $\scrG_K$-representation afforded 
by the $p$-adic Tate module $T_pG_K$ from $\m_r(G)$.  In order to do so, we must first recall 
the necessary period rings; for a more detailed synopsis of these rings and their properties,
we refer the reader to \cite[\S6--\S8]{Colmez}.

As usual, we put\footnote{Here we use the notation introduced by Berger and Colmez; in Fontaine's
original notation, this ring is denoted $\R$.} 
$$\wt{\e}^+:=\varprojlim_{x\mapsto x^p} \O_{\c_K}/(p),$$
equipped with its canonical $\scrG_K$-action via ``coordinates"
and $p$-power Frobenius map $\varphi$.
This is a perfect ({\em i.e.} $\varphi$ is an automorphism) valuation ring  
of charteristic $p$ with residue field $\overline{k}$
and fraction field $\wt{\e}:=\Frac(\wt{\e}^+)$ that is algebraically closed.
We view $\wt{\e}$ as a topological field via its valuation topology, with respect to which 
it is complete.
Our fixed choice of $p$-power compatible
sequence $\{\varepsilon^{(r)}\}_{r\ge 0}$ 
induces an element $\u{\varepsilon}:=(\varepsilon^{(r)}\bmod p)_{r\ge 0}$ of $\wt{\e}^+$
and we set $\e_{K}:=k(\!(\u{\varepsilon} - 1)\!)$, viewed as a 
topological\footnote{The valuation $v_{\e}$ on $\wt{\e}$ induces the usual discrete valuation 
on $\e_{K,r}$, with the unusual 
normalization $1/p^{r-1}(p-1)$.} subring of $\wt{\e}$; note that this is 
a $\varphi$- and $\scrG_K$-stable subfield of $\wt{\e}$ that is independent
of our choice of $\u{\varepsilon}$.  We write $\e:=\e_K^{\sep}$
for the separable closure of $\e_K$ in the algebraically closed field $\wt{\e}$.
The natural $\scrG_K$-action on $\wt{\e}$ induces a canonical identification
$\Gal(\e/\e_{K}) = \scrH:=\ker(\chi)\subseteq \scrG_K$, so
$\e^{\scrH}=\e_{K}$.  
If $E$ is any subring of $\wt{\e}$, we write $E^+:=E\cap \wt{\e}^+$
for the intersection (taken inside $\wt{\e}$).

We now construct Cohen rings for each of the above subrings of $\wt{\e}$.
To begin with, we put
\begin{equation*}
 \wt{\a}^+:=W(\wt{\e}^+),\qquad\text{and}\qquad \wt{\a}:=W(\wt{\e});
\end{equation*} 
each of these rings is equipped with a canonical Frobenius automorphism $\varphi$
and action of $\scrG_K$ via Witt functoriality.  
Set-theoretically identifying $W(\wt{\e})$ with $\prod_{m=0}^{\infty} \wt{\e}$ in the usual way, we 
endow each factor with its valuation topology and give $\wt{\a}$ the product topology.\footnote{This 
is what is called the {\em weak topology} on $\wt{\a}$.
If each factor of $\wt{\e}$ is instead given the discrete topology, then the
product topology on $\wt{\a}=W(\wt{\e})$ is the familiar $p$-adic 
topology, called the {\em strong} topology.}
The $\scrG_K$ action on $\wt{\a}$ is then continuous
and the canonical $\scrG_K$-equivariant $W$-algebra surjection 
$\theta:\wt{\a}^+\rightarrow \O_{\c_K}$ is continuous when
$\O_{\c_K}$ is given its usual $p$-adic topology.
For each $r\ge 0$, there is a unique continuous $W$-algebra map $j_r:\O_{\E_r}\hookrightarrow \wt{\a}$ 
determined by $j_r(u_r):=\varphi^{-r}([\u{\varepsilon}] - 1)$.  These maps are 
moreover $\varphi$ and $\scrG_K$-equivariant, with $\scrG_K$ acting on $\O_{\E_r}$ through the quotient
$\scrG_K\twoheadrightarrow \Gamma$, and compatible with change in $r$.
We define $\a_{K,r}:=\im(j_r:\O_{\E_r}\rightarrow \wt{\a}),$
which is
naturally a $\varphi$ and $\scrG_K$-stable subring of $\wt{\a}$ that is independent of our choice
of $\u{\varepsilon}$.  
We again omit the subscript when $r=0$. 
Note that $\a_{K,r}=\varphi^{-r}(\a_K)$ inside $\wt{\a}$, and that $\a_{K,r}$
is a discrete valuation ring with uniformizer $p$ and residue field $\varphi^{-r}(\e_K)$
that is purely inseparable over $\e_K$.
We define $\a_{K,\infty}:=\bigcup_{r\ge 0} \a_{K,r}$
and write $\wt{\a}_K$ (respectively $\wh{\a}_K$) 
for the closure of $\a_{K,\infty}$ in $\wt{\a}$ with respect to the weak
(respectively strong) topology.  

Let $\a_{K,r}^{\sh}$ be the strict Henselization of $\a_{K,r}$
with respect to the separable closure of its residue field inside $\wt{\e}$.
Since $\wt{\a}$ is strictly Henselian, there is a unique local morphism 
$\a_{K,r}^{\sh}\rightarrow \wt{\a}$ recovering the given inclusion on residue fields,
and we henceforth view $\a_{K,r}^{\sh}$ as a subring of $\wt{\a}$.
We denote by $\a_r$ the topological closure
of $\a_{K,r}^{\sh}$ inside $\wt{\a}$ with respect to the strong topology,
which
is a $\varphi$ and $\scrG_K$-stable subring of $\wt{\a}$,
and we note that $\a_r = \varphi^{-r}(\a)$ and $\a_r^{\scrH}= \a_{K,r}$
inside $\wt{\a}$. 
We note also that the canonical map $\Z_p\hookrightarrow \wt{\a}^{\varphi=1}$
is an isomorphism, from which it immediately follows that the same is true
if we replace $\wt{\a}$ by any of its subrings constructed above.
If $A$ is any subring of $\wt{\a}$, we define $A^+:=A\cap \wt{\a}^+$,
with the intersection taken inside $\wt{\a}$.

\begin{remark}\label{Slimits}
	We will identify $\s_r$ and $\O_{\E_r}$ with their respective images
	$\a_{K,r}^+$ and $\a_{K,r}$ in $\wt{\a}$ under $j_r$.
	Writing $\s_{\infty}:=\varinjlim \s_r$
	and $\O_{\E_{\infty}}:=\varinjlim \s_r$, we likewise 
	identify $\s_{\infty}$ with $\a_{K,\infty}^+$ and $\O_{\E_{\infty}}$
	with $\a_{K,\infty}$.  
	Denoting by $\wh{\s}_{\infty}$ (respectively $\wt{\s}_{\infty}$) the $p$-adic (respectively 
	$(p,u_0)$-adic) completions, one has
	\begin{equation*}
		\wh{\s}_{\infty} = \wh{\a}_K^+ = W(\e_K^{\rad,+})\quad\text{and}\quad
		\wt{\s}_{\infty} = \wt{\a}_K^+ = W(\wt{\e}_K^{+}),
	\end{equation*}
	for $\e_K^{\rad}:=\cup_{r\ge 0} \varphi^{-r}(\e_K)$ the radiciel ($=$perfect) closure of 
	$\e_K$ in $\wt{\e}$ and $\wt{\e}_K$ its topological completion.
	Via these identifications, $\omega :=u_0/u_1\in \a_{K,1}^+$ is
	a principal generator
	of $\ker(\theta:\wt{\a}^+\twoheadrightarrow \O_{\C_K})$.
\end{remark}

We can now explain the functorial relation between $\m_r(G)$ and $T_pG_K$:

\begin{theorem}\label{comparison}
	Let $G\in \pdiv_{R_r}^{\Gamma}$, and write $H^1_{\et}(G_K):=(T_pG_K)^{\vee}$
	for the $\Z_p$-linear dual of $T_pG_K$.
	There is a canonical mapping
	of finite free $\a_r^+$-modules with semilinear Frobenius and $\scrG_K$-actions
	\begin{equation}
		\xymatrix{
			{\m_r(G)\tens_{\s_r,\varphi} \a_r^+} \ar[r] & {H^1_{\et}(G_K)\otimes_{\Z_p} \a_r^+}
		}
	\end{equation}
	that is injective with cokernel killed by $u_1$.  
	Here, $\varphi$ acts as $\varphi_{\m_r(G)}\otimes \varphi$ on source
	and as $1\otimes\varphi$ on target, while $\scrG_K$ acts diagonally 
	on source and target through the quotient $\scrG_K\twoheadrightarrow \Gamma$
	on $\m_r(G)$.
	  In particular, there is a natural $\varphi$ and $\scrG_K$-equivariant
	isomorphism
	\begin{equation}
				{\m_r(G)\tens_{\s_r,\varphi} \a_r} \simeq  {H^1_{\et}(G_K)\otimes_{\Z_p} \a_r}.
				\label{comparisonb}
	\end{equation}
	These mappings are compatible with duality and with change in $r$ in the obvious manner.
\end{theorem}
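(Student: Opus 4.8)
The plan is to realize the asserted comparison map as a crystalline period morphism and then to descend it down the cyclotomic tower via Fontaine's equivalence. The input is the explicit description of $\m_r(G)$ through the Dieudonn\'e crystal furnished by Theorem \ref{CaisLauMain}(\ref{EvaluationONW}),(\ref{EvaluationONR}), together with Messing's universal-extension description of that crystal (Proposition \ref{BTgroupUnivExt}, Remark \ref{MessingRem}, and Theorem \ref{UniExtCompat}(\ref{UniExtCompat2})). Recall from Remark \ref{Slimits} that $\wt{\a}^+=W(\wt{\e}^+)$ is a $\Sigma$-PD thickening of $\O_{\c_K}/(p)$ via the reduction of $\theta$, that $\s_r$ embeds $\varphi$- and $\scrG_K$-equivariantly into $\wt{\a}^+$ through $j_r$, and that $\omega$ generates $\ker(\theta\colon\wt{\a}^+\twoheadrightarrow\O_{\c_K})$. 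First I would evaluate the Dieudonn\'e crystal $\D(\o{G})$ on this thickening and, using the evaluation isomorphisms of Theorem \ref{CaisLauMain} together with the base-change (crystalline) compatibility of $\D$, identify $\m_r(G)\tens_{\s_r,\varphi}\wt{\a}^+$ with $\D(\o{G})_{\wt{\a}^+}$, carrying $\varphi_{\m}\otimes\varphi$ to the crystalline Frobenius.

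Second, I would produce the comparison with the Tate module. Realizing $T_pG_K$ (the Tate module of $G_{\o{K}}$) via the universal vectorial extension of $G$ over $\O_{\c_K}$ and its logarithm, one obtains the crystalline period pairing between $T_pG_K$ and $\D(\o{G})_{\wt{\a}^+}$; this is the classical Fontaine--Messing comparison in the Dieudonn\'e setting (\emph{cf.}\ \cite{MM}, \cite{BBM}). Combined with the first step, this yields a canonical map
\[
	\m_r(G)\tens_{\s_r,\varphi}\wt{\a}^+\longrightarrow H^1_{\et}(G_K)\otimes_{\Z_p}\wt{\a}^+,
\]
with $\varphi$ acting as $\varphi_{\m}\otimes\varphi$ on the source and $1\otimes\varphi$ on the target, and with $\scrG_K$ acting diagonally through $\scrG_K\twoheadrightarrow\Gamma$ on $\m_r(G)$.

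Third come the injectivity and the cokernel estimate. Reducing modulo $p$ brings the comparison over the perfect valuation ring $\wt{\e}^+$, where Dieudonn\'e theory over $\wt{\e}$ is classical and the map becomes transparent; injectivity and the fact that the cokernel is killed by $\omega$ are then forced by the defining property of $\BT_{\s_r}^{\varphi}$ (that the linearization of $\varphi_{\m}$ has cokernel annihilated by $\omega$) together with the fact that $\omega$ generates $\ker\theta$. Tracking the Frobenius twist $\tens_{\s_r,\varphi}$ and the relation $\omega=u_0/u_1$ with $u_0=\varphi(u_1)$ then refines the annihilator to $u_1$. To obtain the integral statement over $\a_r^+$ and the isomorphism (\ref{comparisonb}), I would descend from $\wt{\a}^+$ to $\a_r^+$: since $u_1$ is a unit in the discrete valuation ring $\a_r$ while $\a_r^{\scrH}=\a_{K,r}$, the $\scrH$-equivariance of the period map (with $\scrH=\ker\chi$ acting trivially on $\m_r(G)$ through $\Gamma$) identifies $\m_r(G)\tens_{\s_r,\varphi}\O_{\E_r}$, as an \'etale $\varphi$-module over $\O_{\E_r}=\a_{K,r}$, with the one attached to $H^1_{\et}(G_K)$ by Fontaine's equivalence; extending scalars to $\a_r$ gives (\ref{comparisonb}), and the $u_1$-cokernel bound upgrades this to the integral map over $\a_r^+$.

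Finally, compatibility with duality follows from the duality-compatibility of $\m_r$ (Theorem \ref{CaisLauMain}(\ref{exequiv})), the description of the dual Barsotti--Tate module in Definition \ref{DualBTDef}, and the self-duality of the crystalline period pairing; compatibility with change in $r$ follows from the base-change isomorphism ${\iota_r}_*\circ\m_r\simeq\m_{r+1}\circ{\iota_r}_*$ of Theorem \ref{CaisLauMain}(\ref{BaseChangeIsom}) and the compatibility of the embeddings $j_r$ with change in $r$. I expect the main obstacle to be the precise cokernel estimate together with the integral descent: showing that the period map, a priori only defined over $\wt{\a}^+$, carries the $\a_r^+$-lattice into $H^1_{\et}(G_K)\otimes_{\Z_p}\a_r^+$ with cokernel killed by exactly $u_1$ requires careful bookkeeping of the Frobenius twists relating $\s_r$, $\a_r^+$, and $\wt{\a}^+$, and tight control of the integral structure that the frame-and-window formalism of \cite{CaisLau} attaches to $G$.
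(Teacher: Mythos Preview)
The paper does not actually prove this theorem: immediately after stating Corollary~\ref{GaloisComparison}, it writes ``For details, including the proofs of Theorems~\ref{CaisLauMain}--\ref{comparison} and Corollary~\ref{GaloisComparison}, we refer the reader to \cite{CaisLau}.'' So there is no in-paper proof to compare your proposal against; the result is imported wholesale from the companion paper, whose approach (as the present paper repeatedly emphasizes) is through the theory of frames and windows \`a la Lau and Zink, not through a direct crystalline period-pairing argument.

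That said, your outline is a plausible alternative route, but it is really a plan rather than a proof, and you yourself flag the genuine obstacle at the end. Two concrete points deserve attention. First, your ``Step~1'' identification $\m_r(G)\tens_{\s_r,\varphi}\wt{\a}^+ \simeq \D(\o{G})_{\wt{\a}^+}$ does not follow from Theorem~\ref{CaisLauMain}(\ref{EvaluationONW}),(\ref{EvaluationONR}) as stated: those parts only give the evaluations over $W$ and $R_r$, not over $\wt{\a}^+$. To get what you want you must go through the $S_r$-module $\scrM_r(G)={\alpha_r}_*\m_r(G)$ and then base-change along a PD-morphism $S_r\to\wt{\a}^+$; this is where the frame/window formalism of \cite{CaisLau} does real work, and bypassing it requires care. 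Second, the descent from $\wt{\a}^+$ to $\a_r^+$ with the sharp $u_1$-cokernel bound is not a formal consequence of $\scrH$-invariance and Fontaine's equivalence: $\a_r^+$ is not $(\wt{\a}^+)^{\scrH}$, and the integral lattice comparison over $\a_r^+$ (as opposed to the isomorphism over $\a_r$) is exactly the delicate point that the window-theoretic arguments in \cite{CaisLau} are designed to handle. Your sketch would likely produce the isomorphism~(\ref{comparisonb}) over $\a_r$, but the integral statement with cokernel killed by $u_1$ needs a tighter argument than what you have written.
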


\begin{corollary}\label{GaloisComparison}
	For $G\in \pdiv_{R_r}^{\Gamma}$, there are functorial isomorphisms of $\Z_p[\scrG_K]$-modules
	\begin{subequations}
		\begin{align}
			T_pG_K &\simeq \Hom_{\s_r,\varphi}(\m_r(G),\a_r^+)\\
			H^1_{\et}(G_K) &\simeq (\m_r(G) \tens_{\s_r,\varphi} \a_r)^{\varphi_{\m_r(G)}\otimes \varphi=1}.
			\label{FontaineModule}
		\end{align}
	\end{subequations}
	which are compatible with duality and change in $r$.  In the first isomorphism,
	we view $\a_r^+$ as a $\s_r$-algebra via the composite of the usual structure map with 
	$\varphi$. 
\end{corollary}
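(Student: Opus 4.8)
The plan is to deduce both isomorphisms from Theorem~\ref{comparison}: the second directly by taking Frobenius invariants, and the first by an adjunction argument that plays off the difference between $\a_r^+$ and $\a_r$. For (\ref{FontaineModule}) I would apply $(\cdot)^{\varphi=1}$ to the isomorphism (\ref{comparisonb}). Since that isomorphism intertwines $\varphi_{\m_r(G)}\otimes\varphi$ on the source with $1\otimes\varphi$ on the target, and $H^1_{\et}(G_K)$ is finite free over $\Z_p$, choosing a $\Z_p$-basis reduces the computation of $(H^1_{\et}(G_K)\otimes_{\Z_p}\a_r)^{1\otimes\varphi=1}$ to the equality $\a_r^{\varphi=1}=\Z_p$ recorded just before Remark~\ref{Slimits}. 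This yields $(H^1_{\et}(G_K)\otimes_{\Z_p}\a_r)^{1\otimes\varphi=1}=H^1_{\et}(G_K)$ and hence the identification (\ref{FontaineModule}); its $\scrG_K$-equivariance is inherited from that of (\ref{comparisonb}).

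For the Tate module, I would first invoke the tensor--hom adjunction: viewing $\a_r^+$ as an $\s_r$-algebra via $\varphi$, a $\varphi$-equivariant $\s_r$-linear map $\m_r(G)\to\a_r^+$ is the same datum as a $\varphi$-equivariant $\a_r^+$-linear functional on $M:=\m_r(G)\tens_{\s_r,\varphi}\a_r^+$ (with Frobenius $\varphi_{\m_r(G)}\otimes\varphi$), giving $\Hom_{\s_r,\varphi}(\m_r(G),\a_r^+)\simeq\Hom_{\a_r^+,\varphi}(M,\a_r^+)$. Writing $N:=H^1_{\et}(G_K)\otimes_{\Z_p}\a_r^+$ with Frobenius $1\otimes\varphi$, Theorem~\ref{comparison} supplies a $\varphi$- and $\scrG_K$-equivariant injection $M\hookrightarrow N$ whose cokernel is killed by $u_1$. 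I would then compute $\Hom_{\a_r^+,\varphi}(N,\a_r^+)$ exactly as in the previous paragraph: such a functional is determined by its values on the $\varphi$-fixed $\a_r^+$-basis of $N$ coming from a $\Z_p$-basis of $H^1_{\et}(G_K)$, and $\varphi$-equivariance forces those values into $(\a_r^+)^{\varphi=1}=\Z_p$, whence $\Hom_{\a_r^+,\varphi}(N,\a_r^+)\simeq\Hom_{\Z_p}(H^1_{\et}(G_K),\Z_p)=T_pG_K$.

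The crux is then to show that restriction along $M\hookrightarrow N$ induces an isomorphism $\Hom_{\a_r^+,\varphi}(N,\a_r^+)\to\Hom_{\a_r^+,\varphi}(M,\a_r^+)$. Injectivity is immediate: a functional on $N$ annihilating $M$ takes values killed by $u_1$, hence vanishes because $\a_r^+$ is $u_1$-torsion free. Surjectivity is where I expect the main obstacle to lie, and the key point is that $u_1$ becomes a \emph{unit} in $\a_r$ (a discrete valuation ring with uniformizer $p$ in which $u_1$ has nonzero residue), so that $M\tens_{\a_r^+}\a_r$ and $N\tens_{\a_r^+}\a_r$ both coincide with $\m_r(G)\tens_{\s_r,\varphi}\a_r\simeq H^1_{\et}(G_K)\otimes_{\Z_p}\a_r$ via (\ref{comparisonb}). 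Given a $\varphi$-equivariant $g\colon M\to\a_r^+$, I would extend it $\a_r$-linearly over $\a_r$ and restrict to $N$; evaluating the resulting $\varphi$-equivariant map on the $\varphi$-fixed basis of $N$ shows its values lie in $\a_r^{\varphi=1}=\Z_p\subseteq\a_r^+$, so the extension is genuinely $\a_r^+$-valued and restricts to $g$ on $M$, producing the required preimage. This completes the identification $T_pG_K\simeq\Hom_{\s_r,\varphi}(\m_r(G),\a_r^+)$.

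Finally, $\scrG_K$-equivariance of both isomorphisms, together with their compatibility with duality and with change in $r$, I would read off from the corresponding assertions of Theorem~\ref{comparison} and the naturality of the tensor--hom adjunction and of the functor $(\cdot)^{\varphi=1}$; the only care needed is to match the $\scrG_K$-action on $\Hom_{\s_r,\varphi}(\m_r(G),\a_r^+)$, given by $(\sigma\cdot g)(m)=\sigma\,g(\sigma^{-1}m)$, with the natural action on $\Hom_{\Z_p}(H^1_{\et}(G_K),\Z_p)=T_pG_K$ under the above identifications.
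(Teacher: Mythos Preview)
Your argument is correct. The paper itself does not give a proof of this corollary: immediately after stating Theorem~\ref{comparison} and Corollary~\ref{GaloisComparison}, it refers the reader to \cite{CaisLau} for all details, so there is no in-paper proof to compare against directly.

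Your derivation of (\ref{FontaineModule}) by taking $\varphi$-invariants of (\ref{comparisonb}) is the standard one-line argument, and your treatment of the Tate module identification is a clean self-contained deduction from Theorem~\ref{comparison}. The key step---showing that a $\varphi$-equivariant functional on $M$ extends to one on $N$---is handled correctly: after inverting $u_1$ (legitimate since $u_1$ has nonzero image in the residue field $\e$ of the discrete valuation ring $\a_r$, hence is a unit there), the extension to $N\otimes_{\a_r^+}\a_r$ is automatic, and then the existence of a $\varphi$-fixed $\a_r^+$-basis of $N$ forces the values on $N$ itself into $(\a_r)^{\varphi=1}=\Z_p\subseteq\a_r^+$. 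This last observation is exactly what makes the argument work and is the only nontrivial point; you have identified it clearly. The compatibility assertions follow, as you say, from the naturality of the constructions and the corresponding statements in Theorem~\ref{comparison}.
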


\begin{remark}
	By definition, the map $\varphi^r$ on $\O_{\E_r}$ is injective with image $\O_{\E}:=\O_{\E_0}$,
	and so induces a $\varphi$-semilinear isomorphism of $W$-algebras 
	$\xymatrix@C=15pt{{\varphi^{r}:\O_{\E_r}} \ar[r]^-{\simeq}&{\O_{\E}} }$.
	It follows from (\ref{FontaineModule}) of Corollary \ref{GaloisComparison} and Fontaine's theory of 
	$(\varphi,\Gamma)$-modules
	over $\O_{\E}$ that $\m_r(G)\otimes_{\s_r,\varphi^r} \O_{\E}$ {\em is} the $(\varphi,\Gamma)$-module
	functorially associated to the $\Z_p[\scrG_K]$-module $H^1_{\et}(G_K)$.
\end{remark}

For the remainder of this section, we recall the construction of the functor $\m_r$, both because we shall need
to reference it in what follows, and because we feel it is enlightening.  
For details, including the proofs of Theorems \ref{CaisLauMain}--\ref{comparison}
and Corollary \ref{GaloisComparison}, we refer the reader to \cite{CaisLau}.

Fix $G\in \pdiv_{R_r}^{\Gamma}$ and set $G_0:=G\times_{R_r}{R_r/pR_r}.$  
The $\s_r$-module $\m_r(G)$ is a functorial descent of the evaluation of
the Dieudonn\'e crystal $\D(G_0)$ on a certain ``universal" PD-thickening of $R_r/pR_r$, 
which we now describe.
Let $S_r$ be the $p$-adic completion of the PD-envelope of $\s_r$ with respect to the 
ideal $\ker\theta$, viewed as a (separated and complete) topological ring via the $p$-adic topology.
We give $S_r$ its PD-filtration: for $q\in \Z$ the ideal $\Fil^q S_r$ is the 
topological closure of the ideal generated by the divided powers
$\{\alpha^{[n]}\}$ for $\alpha\in \ker\theta$ and $n\ge q$.
By construction, the map $\theta:\s_r\twoheadrightarrow R_r$
uniquely extends to a continuous surjection of $\s_r$-algebras $S_r\twoheadrightarrow R_r$ 
(which we again denote by $\theta$) whose kernel $\Fil^1 S_r$ is equipped with topologically 
PD-nilpotent\footnote{Here we use our assumption that $p>2$.} divided powers.
Similarly, the continuous $W$-algebra map $\tau:\s_r\twoheadrightarrow W$ determined by
$\tau(u_r)=0$ uniquely extends to a continuous, PD-compatible $W$-algebra surjection $\tau:S_r\twoheadrightarrow W$
whose kernel we denote by $I:=\ker(\tau)$.
One shows that there is a unique continuous endomorphism $\varphi$ of $S_r$ extending $\varphi$ on $\s_r$,
and that $\varphi(\Fil^1 S_r)\subseteq pS_r$; in particular, we may define
$\varphi_1: \Fil^1 S_r\rightarrow S_r$ by $\varphi_1:=\varphi/p$,
which is a $\varphi$-semilinear homomorphism of $S_r$-modules. Note that
$v_r:=\varphi_1(E_r)$ is a unit of $S_r$, so the image of $\varphi_1$ generates
$S_r$ as an $S_r$-module.

Since the action of $\Gamma$ on $\s_r$ preserves $\ker\theta$, it follows from the universal mapping property of
divided power envelopes and $p$-adic continuity considerations that this action uniquely extends to
a continuous and $\varphi$-equivariant action of $\Gamma$ on $S_r$ which
is compatible with the PD-structure and the filtration.
Similarly, the transition map $\s_r\hookrightarrow \s_{r+1}$ uniquely extends to a 
continuous $\s_r$-algebra homomorphism $S_r\rightarrow S_{r+1}$ which is moreover compatible with filtrations
(because $E_r(u_r)=E_{r+1}(u_{r+1})$ under our identifications),
and for nonnegative integers $s < r$ we view $S_r$ as an $S_s$-algebra  
via these maps.  

Put $\lambda := \log(1+u_0)/{u_0},$
where $\log(1+X):\Fil^1 S_r\rightarrow S_r$ is the usual (convergent for the $p$-adic topology) power series
and $u_0:=\varphi^r(u_r)\in S_r$.
One checks that $\lambda$ admits the convergent product expansion 
$\lambda=\prod_{i\ge 0} \varphi^i(v_r)$, so $\lambda\in S_r^{\times}$ and
	\begin{equation}
		\frac{\lambda}{\varphi(\lambda)}  = \varphi(E_r)/p= v_r\qquad\text{and}\qquad
		\frac{\lambda}{\gamma\lambda} = \chi(\gamma)^{-1}\varphi^r(\gamma u_r/u_r) \quad\text{for}\ 
		\gamma\in 		\Gamma.\label{lambdaTransformation}
	\end{equation}

\begin{definition}
	Let $\BT_{S_r}^{\varphi}$ be the category of triples $(\scrM,\Fil^1\scrM, \varphi_{\scrM,1})$ where
	\begin{itemize}
		\item $\scrM$ is a finite free $S_r$-module and $\Fil^1\scrM\subseteq \scrM$ is an $S_r$-submodule.
		\item $\Fil^1\scrM$ contains $(\Fil^1 S_r)\scrM$ and the quotient $\scrM/\Fil^1\scrM$ is a free 
		$S_r/\Fil^1S_r=R_r$-module.
		\item $\varphi_{\scrM,1}:\Fil^1\scrM_r\rightarrow \scrM$ is a $\varphi$-semilinear map whose image
		generates $\scrM$ as an $S_r$-module.
	\end{itemize}
	Morphisms in $\BT_{S_r}^{\varphi}$ are $S_r$-module homomorphisms which are compatible with the
	extra structures.  As per our convention, we will often write $\scrM$ for a triple 
	$(\scrM,\Fil^1\scrM,\varphi_{\scrM,1})$, and $\varphi_1$ for $\varphi_{\scrM,1}$ when it can cause
	no confusion.  We denote by $\BT_{S_r}^{\varphi,\Gamma}$ the subcategory of $\BT_{S_r}^{\varphi}$
	consisting of objects $\scrM$ that are equipped
	with a semilinear action of $\Gamma$ which preserves $\Fil^1\scrM$, commutes with $\varphi_{\scrM,1}$,
	and whose restriction to $\Gamma_r$ is trivial on $\scrM/I\scrM$; morphisms in $\BT_{S_r}^{\varphi,\Gamma}$
	are $\Gamma$-equivariant morphisms in $\BT_{S_r}^{\varphi}$.
\end{definition}

The kernel of the surjection $S_r/p^nS_r\twoheadrightarrow R_r/pR_r$ is the image of the ideal 
$\Fil^1 S_r + pS_r$, which by construction is equipped topologically PD-nilpotent divided powers.
We may therefore define
\begin{equation}
	\scrM_r(G)=\D(G_0)_{S_r}:=\varprojlim_n \D(G_0)_{S_r/p^nS_r},
\end{equation}
which is a finite free $S_r$-module that depends contravariantly functorially on $G_0$.
We promote $\scrM_r(G)$ to an object of $\BT_{S_r}^{\varphi,\Gamma}$ as follows.
As the quotient map $S_r\twoheadrightarrow R_r$ induces a PD-morphism of PD-theckenings
of $R_r/pR_r$, there is a natural isomorphism of free $R_r$-modules
\begin{equation}
	\scrM_r(G)\otimes_{S_r} R_r \simeq \D(G_0)_{R_r}.\label{surjR}
\end{equation}
By Proposition \ref{BTgroupUnivExt}, there is a canonical ``Hodge" filtration $\omega_G \subseteq \D(G_0)_{R_r}$,
which reflects the fact that $G$ is a $p$-divisible group over $R_r$ lifting $G_0$,
and we define $\Fil^1\scrM_r(G)$ to be the preimage of $\omega_G$ under the 
composite of the isomorphism (\ref{surjR}) with the natural surjection 
$\scrM_r(G)\twoheadrightarrow \scrM_r(G)\otimes_{S_r} R_r$; note that this depends on $G$ and not just on 
$G_0$.  The Dieudonn\'e crystal is compatible with base change (see, {\em e.g.} \cite[2.4]{BBM1}), so the 
relative Frobenius $F_{G_0}:G_0\rightarrow G_0^{(p)}$ induces an canonical morphism of $S_r$-modules
\begin{equation*}
	\xymatrix{
		{\varphi^*(\D(G_0)_{S_r}) \simeq \D(G_0^{(p)})_{S_r}} \ar[r]^-{\D(F_{G_0})} & {\D(G_0)_{S_r}}
	},
\end{equation*}
which we may view as a $\varphi$-semilinear map $\varphi_{\scrM_r(G)}:\scrM_r(G)\rightarrow \scrM_r(G)$.
As the relative Frobenius map $\omega_{G_0^{(p)}}\rightarrow \omega_{G_0}$ is zero, 
it follows that the restriction of $\varphi_{\scrM_r(G)}$ to $\Fil^1 \scrM_r(G)$ has image contained in
$p\scrM_r(G)$, so we may define $\varphi_{\scrM_r(G),1}:=\varphi_{\scrM_r(G)}/p$, and one proves as in 
\cite[Lemma A.2]{KisinFCrystal}
that the image of $\varphi_{\scrM_r(G),1}$ generates $\scrM_r(G)$ as an $S_r$-module.

It remains to equip $\scrM_r(G)$ with a canonical semilinear action of $\Gamma$.
Let us write $G_{K_r}$ for the generic fiber of $G$ and $G_{K}$ for its unique
descent to $K=K_0$.  The existence of this descent is reflected by the 
existence of a commutative diagram with cartesian square
\begin{equation}
\begin{gathered}
	\xymatrix{
{G_{K}\fiber_K K_r} \ar@/^/[rrd]^-{1\times \gamma} \ar@/_/[ddr] \ar@{.>}[dr]|-{\gamma} 	&         &					\\
	&{\big(G_{K}\fiber_K K_r\big)_{\gamma}} \ar[r]_-{\pr_1} \ar[d]^-{\pr_2}\ar@{} [dr] |{\square} & 
	{G_{K}\fiber_K K_r} \ar[d]\\
	&{\Spec(K_r)} \ar[r]_-{\gamma} &{\Spec(K_r)}
	}
\end{gathered}
\label{GammaAction}
\end{equation}
for each $\gamma\in \Gamma$, compatibly with change in $\gamma$; here, the subscript of $\gamma$ denotes base change
along the map of schemes induced by $\gamma$.
Since $G$ has generic fiber $G_{K_r}=G_K\times_K K_r$, Tate's Theorem ensures that the
dotted arrow above uniquely extends to an isomorphism
of $p$-divisible groups over $R_r$
\begin{equation}
	\xymatrix{
		{G}\ar[r]^-{\gamma} & {G_{\gamma}} 
	},\label{TateExt}
\end{equation}
compatibly with change in $\gamma$.

By assumption, the action of $\Gamma$ on $S_r$ commutes with the divided powers
on $\Fil^1 S_r$ and induces the given action on the quotient $S_r\twoheadrightarrow R_r$;
in other words, $\Gamma$ acts by automorphisms on the object
$(\Spec(R_r/pR_r)\hookrightarrow \Spec(S_r/p^nS_r))$ of $\Cris((R_r/pR_r)/W)$.
Again using the compatibility of $\D(G_0)$ with base change, 
we therefore see that each $\gamma\in \Gamma$ gives an $S_r$-linear map
\begin{equation*}
	\xymatrix{
		{\gamma^*(\D(G_0)_{S_r}) \simeq \D((G_0)_{\gamma})_{S_r}} \ar[r] & {D(G_0)_{S_r}}
	}
\end{equation*}
and hence an $S_r$-semilinear (over $\gamma$) endomorphism $\gamma$ of $\scrM_r(G)$.
One easily checks that the resulting action of $\Gamma$ on $\scrM_r(G)$
commutes with $\varphi_{\scrM,1}$ and preserves $\Fil^1\scrM_r(G)$.
By the compatibility of $\D(G_0)$ with base change and the obvious fact that
the $W$-algebra surjection $\tau:S_r\twoheadrightarrow W$
is a PD-morphism over the canonical surjection $R_r/pR_r\twoheadrightarrow k$,
there is a natural isomorphism
\begin{equation}
	\scrM_r(G)\otimes_{S_r} W \simeq \D(\o{G})_W. 
\end{equation}
It follows easily from this and the diagram (\ref{GammaAction})
that the action of $\Gamma_r$ on $\scrM_r(G)/I\scrM_r(G)$
is trivial.

To define $\m_r(G)$, we functorially descend the $S_r$-module $\scrM_r(G)$
along the structure morphism $\alpha_r:\s_r\rightarrow S_r$.  More precisely, 
for $\m\in \BT_{\s_r}^{\varphi,\Gamma}$, we define 
${\alpha_r}_*(\m):=(M,\Fil^1M,\Phi_1)\in \BT_{S_r}^{\varphi,\Gamma}$ via:

\begin{equation}
	\begin{gathered}
		M:=\m\tens_{\s_r,\varphi} S_r\qquad\text{with diagonal $\Gamma$-action}\\
		\Fil^1 M :=\left\{ m\in M\ :\ (\varphi_{\m}\otimes\id)(m) \in  \m\otimes_{\s_r} \Fil^1 S_r
		\subseteq \m\otimes_{\s_r} S_r \right\}  \\
		\xymatrix{
			{\Phi_1: \Fil^1 M} \ar[r]^-{\varphi_{\m}\otimes\id} & { \m\tens_{\s_r} \Fil^1 S_r}
			\ar[r]^-{\id\otimes\varphi_1} & {\m\tens_{\s_r,\varphi} S_r = M} 
		}.
\end{gathered}
\label{BreuilSrDef}	
\end{equation}
The following is the key technical point of \cite{CaisLau}, and is proved using 
the theory of windows:
\begin{theorem}\label{Lau}
	For each $r$, the functor ${\alpha_r}_*:\BT_{\s_r}^{\varphi,\Gamma}\rightarrow \BT_{S_r}^{\varphi,\Gamma}$
	is an equivalence of categories, compatible with change in $r$.
\end{theorem}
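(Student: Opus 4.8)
The plan is to realize both categories as categories of windows over suitable frames, in the spirit of Lau and Zink, and to deduce the equivalence from the general machinery of crystalline frame morphisms. First I would identify $\BT_{\s_r}^{\varphi}$ with the category of windows over the frame $\underline{\s_r}=(\s_r,\,\omega\s_r,\,R_r,\,\varphi,\,\varphi_1)$, where $\omega=E_r$ generates $\ker\theta$ and $\varphi_1\colon\omega\s_r\to\s_r$ is the $\varphi$-semilinear map determined by $\varphi_1(\omega x)=\varphi(x)$; under this identification the window filtration of an object $(\m,\varphi_{\m})$ is $\{m\in\m:\varphi_{\m}(m)\in\omega\m\}$, and the cokernel-by-$\omega$ condition is exactly the window axiom. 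Note that this is not a display frame: the defining relation forces the structural element to be $\varphi(\omega)$ rather than $p$ (indeed $\varphi(\omega)=\varphi(u_0)/u_0\equiv u_0^{p-1}\not\equiv 0 \bmod p$). On the other side, $\BT_{S_r}^{\varphi}$ is by construction the category of windows over the frame $\underline{S_r}=(S_r,\,\Fil^1 S_r,\,R_r,\,\varphi,\,\varphi_1)$, using that $\varphi(\Fil^1 S_r)\subseteq pS_r$ makes $\varphi_1=\varphi/p$ legitimate and that $v_r=\varphi_1(\omega)\in S_r^{\times}$ supplies the generation axiom; here the structural element is $p$.

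Second, I would exhibit $\alpha_r\colon\s_r\to S_r$ as a morphism of frames. Since $\alpha_r$ is compatible with the two maps $\theta$, it induces the identity on the common quotient $R_r$; since the structural elements differ, it is a $u$-homomorphism with unit factor $u=v_r=\varphi(\omega)/p$, and this is precisely the discrepancy absorbed in the definition $(\ref{BreuilSrDef})$ of ${\alpha_r}_*$, which is the associated base-change-of-windows functor. I would then invoke the theorem of Lau and Zink that base change along a \emph{crystalline} frame morphism is an equivalence of window categories, and verify crystallinity for $\alpha_r$. The relevant input is available: $S_r$ is the $p$-adically completed PD-envelope of $\s_r$ along $\ker\theta$, the Frobenius $\varphi$ extends compatibly, and $\Fil^1 S_r$ carries topologically PD-nilpotent divided powers (this is where $p>2$ enters). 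The standard d\'evissage is to prove the equivalence first for objects annihilated by a power of $p$, by induction on the length using the deformation-theoretic rigidity of $\varphi$-modules along nilpotent PD-thickenings, and then to pass to the $p$-adic limit, all modules being finite free and $p$-adically complete.

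The step I expect to be the main obstacle is essential surjectivity: constructing the quasi-inverse that descends an $S_r$-window $\scrM$ to a Kisin module $(\m,\varphi_{\m})$ over $\s_r$ whose linearized Frobenius has cokernel killed by $\omega$. Full faithfulness is comparatively soft---a $\varphi$-compatible $S_r$-linear map between base changes $\m\otimes_{\s_r,\varphi}S_r$ and $\m'\otimes_{\s_r,\varphi}S_r$ is rigid and descends uniquely to a $\varphi$-compatible $\s_r$-linear map $\m\to\m'$, a claim one checks modulo $p^n$ and propagates to the limit. For the descent one must carve out, inside $\scrM$, the $\s_r$-lattice cut out by the Frobenius structure; this is the heart of Lau's construction, and the delicate verification is that the resulting $\varphi_{\m}$ meets the cokernel-by-$\omega$ bound. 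Equivalently, one must match the two frames over their common quotient $S_r/\Fil^1 S_r=\s_r/\omega\s_r=R_r$, compatibly with filtrations and divided Frobenii; this compatibility over $R_r$ is exactly the crystallinity input that drives the window equivalence.

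Finally, the $\Gamma$-equivariant refinement and compatibility in $r$ are formal. Since ${\alpha_r}_*$ carries the diagonal $\Gamma$-action to the diagonal $\Gamma$-action and is an equivalence on the underlying $\varphi$-categories, any semilinear $\Gamma$-action on $\m$ transports to ${\alpha_r}_*\m$ and, by full faithfulness, conversely descends uniquely; the triviality conditions match because $\m/u_r\m$ and $\scrM/I\scrM$ are identified up to a Frobenius twist on $W$ on which $\Gamma$ acts trivially, so $\Gamma_r$ acts trivially on one exactly when it does on the other. Compatibility with change in $r$ follows because the transition maps $\s_r\to\s_{r+1}$ and $S_r\to S_{r+1}$ (which the construction arranges to preserve filtrations and Frobenii) form a commuting square with the $\alpha_r$; functoriality of window base change then shows the two equivalences commute with these transition functors up to a canonical natural isomorphism.
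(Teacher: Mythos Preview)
Your proposal is essentially correct and aligns with the paper's approach: the paper does not prove this theorem in detail but states that it ``is the key technical point of \cite{CaisLau}, and is proved using the theory of windows,'' deferring entirely to that reference. Your outline---realizing both $\BT_{\s_r}^{\varphi}$ and $\BT_{S_r}^{\varphi}$ as window categories over appropriate frames, exhibiting $\alpha_r$ as a frame morphism with unit factor $v_r$, and invoking the Lau--Zink equivalence for base change along such morphisms---is precisely the strategy carried out in \cite{CaisLau}, and your handling of the $\Gamma$-structure and compatibility in $r$ as formal consequences of full faithfulness is also in line with that treatment.
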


\begin{definition}
	For $G\in \pdiv_{R_r}^{\Gamma}$, we write $\m_r(G)$
	for the functorial descent of $\scrM_r(G)$ to an object of $\BT_{\s_r}^{\varphi,\Gamma}$
	as guaranteed by Theorem \ref{Lau}.
	By construction, we have a natural isomorphism
of functors ${\alpha_r}_*\circ \m_r\simeq \scrM_r$ on $\pdiv_{R_r}^{\Gamma}$.
\end{definition}

\begin{example}\label{GmQpZpExamples}
Using Messing's description of the Dieudonn\'e crystal of a $p$-divisible group
in terms of the Lie algebra of its universal extension (cf. remark \ref{MessingRem}),
one calculates that for $r\ge 1$
	\begin{subequations}
	\begin{equation}
		\m_r(\Q_p/\Z_p)  = \s_r,\qquad \varphi_{\m_r(\Q_p/\Z_p)}:= \varphi,\qquad \gamma:=\gamma
		\label{MrQpZp}
	\end{equation}
	\begin{equation}
		\m_r(\mu_{p^{\infty}})  = \s_r,\qquad \varphi_{\m_r(\mu_{p^{\infty}})}:= \omega\cdot\varphi,
		\qquad \gamma:=\chi(\gamma)^{-1}\varphi^{r-1}(\gamma u_r/u_r)\cdot \gamma 
	\label{MrMu}
	\end{equation}
\end{subequations}
with $\gamma\in \Gamma$ acting as indicated.  
Note that both $\m_r(\Q_p/\Z_p)$ and $\m_r(\Gm[p^{\infty}])$
arise by base change from their incarnations when $r=1$,
as follows from the fact that $\omega = \varphi(u_1)/u_1$ and
$\varphi^{r-1}(\gamma u_r/u_r)=\gamma u_1/u_1$ via our identifications.
\end{example}

\subsection{The case of ordinary \texorpdfstring{$p$}{p}-divisible groups}\label{pDivOrdSection}

When $G\in \pdiv_{R_r}^{\Gamma}$ is ordinary,
one can say significantly more about the structure of the $\s_r$-module $\m_r(G)$.
To begin with, we observe that for arbitrary $G\in \pdiv_{R_r}^{\Gamma}$,
the formation of the maximal \'etale quotient of $G$ and of the maximal 
connected and multiplicative-type sub $p$-divisible groups of $G$ are functorial in $G$, 
so each of $G^{\et}$, $G^0$, and $G^{\mult}$ is naturally an object of $\pdiv_{R_r}^{\Gamma}$
as well.   We thus (functorially) obtain objects 
$\m_r(G^{\star})$ of $\BT_{\s_r}^{\varphi, \Gamma}$ which admit particularly simple 
descriptions when $\star=\et$ or $\mult$, as we now explain.

As usual, we write $\o{G}^{\star}$ for the special fiber of $G^{\star}$ and $\D(\o{G}^{\star})_W$
for its Dieudonn\'e module.
Twisting the $W$-algebra structure on $\s_r$ by the automorphism $\varphi^{r-1}$
of $W$, 
we define objects of $\BT_{\s_r}^{\varphi,\Gamma}$
\begin{subequations}
	\begin{equation}
		\m_r^{\et}(G) : = \D(\o{G}^{\et})_W\tens_{W,\varphi^{r-1}} \s_r,
		\qquad \varphi_{\m_r^{\et}}:= F\otimes \varphi,
		\qquad \gamma:=\gamma \otimes \gamma 
		\label{MrEtDef}
	\end{equation}
	\begin{equation}
		\m_r^{\mult}(G) : = \D(\o{G}^{\mult})_W\tens_{W,\varphi^{r-1}} \s_r,
		\qquad \varphi_{\m_r^{\mult}}:= V^{-1}\otimes E_r\cdot\varphi,
		\qquad \gamma:=\gamma \otimes \chi(\gamma)^{-1}\varphi^{r-1}(\gamma u_r/u_r)\cdot \gamma 
	\label{MrMultDef}
	\end{equation}
\end{subequations}
with $\gamma\in \Gamma$ acting as indicated.
Note that these formulae make sense and do indeed give objects of $\BT_{\s_r}^{\varphi,\Gamma}$
as $V$ is 
invertible\footnote{A $\varphi^{-1}$-semilinear map of $W$-modules $V:D\rightarrow D$ 
is {\em invertible} if there exists a $\varphi$-semilinear endomorphism $V^{-1}$ whose composition
with $V$ in either order is the identity.  This is easily seen to be equivalent to 
the invertibility of the linear map $V\otimes 1: D\rightarrow \varphi^* D$, with $V^{-1}$
the composite of  $(V\otimes 1)^{-1}$ and the $\varphi$-semilinear map $\id\otimes 1:D\rightarrow \varphi^*D$.
}
 on $\D(\o{G}^{\mult})_W$ and $\gamma u_r/u_r \in \s_r^{\times}$.
It follows easily from these definitions that $\varphi_{\m_r^{\star}}$
linearizes to an isomorphism when $\star=\et$ and has image
contained in $\omega\cdot \m_r^{\mult}(G)$ when $\star=\mult$. 
Of course, $\m_r^{\star}(G)$ 
is contravariantly functorial in---and depends only on---the closed fiber $\o{G}^{\star}$ of $G^{\star}$.

\begin{proposition}\label{EtaleMultDescription}
	Let $G$ be an object of $\pdiv_{R_r}^{\Gamma}$ and let $\m_r^{\et}(G)$ and $\m_r^{\mult}(G)$
	be as in $(\ref{MrEtDef})$ and $(\ref{MrMultDef})$, respectively.  The map $F^r:G_0 \rightarrow G_0^{(p^r)}$ 
	$($respectively $V^r:G_0^{(p^r)}\rightarrow G_0$$)$ induces a natural isomorphism
	in $\BT_{\s_r}^{\Gamma}$
	\begin{equation}
			\m_r(G^{\et}) \simeq \m_r^{\et}(G)\qquad\text{respectively}\qquad
			\m_r(G^{\mult}) \simeq \m_r^{\mult}(G).\label{EtMultSpecialIsoms}
	\end{equation}
	These identifications are compatible with change in $r$
	in the sense that for $\star=\et$ $($respectively $\star=\mult$$)$ there is a canonical
	commutative diagram in $\BT_{\s_{r+1}}^{\Gamma}$
	\begin{equation}
	\begin{gathered}
		\xymatrix{
			{\m_{r+1}(G^{\star}\times_{R_r} R_{r+1})} 
			\ar[r]_-{\simeq}^-{(\ref{EtMultSpecialIsoms})}\ar[d]_-{\simeq} & 
			{\m_{r+1}^{\star}(G\times_{R_r} R_{r+1})} \ar@{=}[r] &
			 {\D(\o{G}^{\star})_W\tens_{W,\varphi^r} \s_{r+1}}  
			 \ar[d]^-{F\otimes\id\ (\text{respectively}\ V^{-1}\otimes\id)}_-{\simeq} \\
			{\m_r(G^{\star})\tens_{\s_r} \s_{r+1}} \ar[r]^-{\simeq}_-{(\ref{EtMultSpecialIsoms})} & 
			{\m_r^{\star}(G)\tens_{\s_r} \s_{r+1}} \ar@{=}[r] & 
			 {\D(\o{G}^{\star})_W\tens_{W,\varphi^{r-1}} \s_{r+1}}
		}
	\end{gathered}
	\label{EtMultSpecialIsomsBC}
	\end{equation}
	where the left vertical isomorphism is deduced from Theorem $\ref{CaisLauMain}$ $(\ref{BaseChangeIsom}).$
\end{proposition}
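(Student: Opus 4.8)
The plan is to establish everything after applying the equivalence ${\alpha_r}_*:\BT_{\s_r}^{\varphi,\Gamma}\to\BT_{S_r}^{\varphi,\Gamma}$ of Theorem \ref{Lau}. Since ${\alpha_r}_*\circ\m_r\simeq\scrM_r$ by construction, to produce (\ref{EtMultSpecialIsoms}) it suffices to give natural isomorphisms $\scrM_r(G^{\et})\simeq{\alpha_r}_*\m_r^{\et}(G)$ and $\scrM_r(G^{\mult})\simeq{\alpha_r}_*\m_r^{\mult}(G)$ compatible with $\Fil^1$, $\varphi_1$ and $\Gamma$; these then descend uniquely to (\ref{EtMultSpecialIsoms}). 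Unwinding (\ref{BreuilSrDef}) against (\ref{MrEtDef}) and (\ref{MrMultDef}), the underlying $S_r$-module of ${\alpha_r}_*\m_r^{\star}(G)$ is $\D(\o{G}^{\star})_W\otimes_{W,\varphi^{r}}S_r$, the $\varphi^{r-1}$-twist becoming a $\varphi^{r}$-twist after tensoring along $\varphi:\s_r\to S_r$. The two cases are interchanged by the duality in Theorem \ref{CaisLauMain}(\ref{exequiv}), so I would treat the étale case in full and deduce the multiplicative one by dualizing.

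The crux is the evaluation of the Dieudonn\'e crystal on the large thickening $S_r$. Because $\o{G}^{\et}$ is étale, rigidity of étale $p$-divisible groups under the nilpotent surjection $R_r/pR_r\twoheadrightarrow k$, together with the canonical coefficient-field section $k\hookrightarrow R_r/pR_r$, identifies $(G^{\et})_0$ with the constant lift $\o{G}^{\et}\times_k (R_r/pR_r)$; hence compatibility of $\D$ with base change gives a canonical untwisted identification $\scrM_r(G^{\et})=\D((G^{\et})_0)_{S_r}\simeq\D(\o{G}^{\et})_W\otimes_{W}S_r$. On the other hand the Frobenius twist $(G^{\et})_0^{(p^{r})}$ has crystal $\D(\o{G}^{\et})_W\otimes_{W,\varphi^{r}}S_r$, which is exactly the underlying module of ${\alpha_r}_*\m_r^{\et}(G)$ computed above. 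Applying the contravariant crystal to the isomorphism $F^{r}:(G^{\et})_0\to (G^{\et})_0^{(p^{r})}$ therefore yields the desired isomorphism ${\alpha_r}_*\m_r^{\et}(G)\xrightarrow{\sim}\scrM_r(G^{\et})$, which is an isomorphism precisely because $F^{r}$ is. One then checks that it carries the linearized Frobenius to $F\otimes\varphi$, that the (vanishing) Hodge filtration of an étale group matches $\Fil^1 M=(\Fil^1 S_r)M$ as forced by (\ref{BreuilSrDef}), and that the $\Gamma$-action coming from the descent datum (\ref{GammaAction})--(\ref{TateExt}) is the diagonal one $\gamma\otimes\gamma$ by functoriality. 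Dually, $\o{G}^{\mult}$ is of multiplicative type, so $(G^{\mult})_0$ is again constant, $\omega_{G^{\mult}}=\D((G^{\mult})_0)_{R_r}$ is the full filtration, $V$ is the unit-root structure, and $V^{r}:(G^{\mult})_0^{(p^{r})}\to (G^{\mult})_0$ supplies the identification with $\D(\o{G}^{\mult})_W\otimes_{W,\varphi^{r}}S_r$; matching the $\Gamma$-action against the $\mu_{p^{\infty}}$-computation (\ref{MrMu}) and the transformation law (\ref{lambdaTransformation}) for $\lambda$ produces exactly the twist $\chi(\gamma)^{-1}\varphi^{r-1}(\gamma u_r/u_r)$ of (\ref{MrMultDef}).

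For the compatibility with change in $r$ recorded in (\ref{EtMultSpecialIsomsBC}), I would read off the left vertical isomorphism from the base-change identification ${\iota_r}_*\circ\m_r\simeq\m_{r+1}\circ{\iota_r}_*$ of Theorem \ref{CaisLauMain}(\ref{BaseChangeIsom}), and observe that the right-hand column simply records the change of twist from $\varphi^{r}$ to $\varphi^{r-1}$: since $F:\D(\o{G}^{\et})_W\to\D(\o{G}^{\et})_W$ is $\varphi$-semilinear and bijective, the assignment $d\otimes s\mapsto F(d)\otimes s$ is a well-defined isomorphism $\D(\o{G}^{\et})_W\otimes_{W,\varphi^{r}}\s_{r+1}\xrightarrow{\sim}\D(\o{G}^{\et})_W\otimes_{W,\varphi^{r-1}}\s_{r+1}$ (and dually $V^{-1}\otimes\id$ in the multiplicative case). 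Commutativity of the square then reduces to the relation $F^{r+1}=F\circ F^{r}$ between the relative-Frobenius iterates defining the horizontal maps at levels $r$ and $r+1$, together with the compatibility of $\D$ with the transition map $S_r\to S_{r+1}$ (which preserves the relevant structure because $E_r(u_r)=E_{r+1}(u_{r+1})$).

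I expect the main obstacle to be the crystalline heart of the second paragraph: correctly evaluating the constant (unit-root) crystal on $S_r$ and pinning down, via the iterate $F^{r}$ (respectively $V^{r}$), the exact Frobenius twist $\varphi^{r}$ and the semilinear $\Gamma$-action. Once these are normalized against the conventions $u_0=\varphi^{r}(u_r)$ and (\ref{lambdaTransformation}), the verifications of $\Fil^1$, $\varphi_1$, naturality in $G$, and the change-in-$r$ square are formal.
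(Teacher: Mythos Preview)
Your proposal is correct and follows essentially the same route as the paper: both arguments pass through the equivalence ${\alpha_r}_*$ of Theorem~\ref{Lau}, compute ${\alpha_r}_*\m_r^{\star}(G)$ explicitly as $\D(\o{G}^{\star})_W\otimes_{W,\varphi^r}S_r$ with the appropriate filtration and $\varphi_1$, and use the isomorphisms $F^r$ (respectively $V^r$) together with the factorization of the $p^r$-power map on $R_r/pR_r$ through $k$ to identify $\scrM_r(G^{\star})$ with this model. The paper handles the multiplicative case directly rather than by duality, using exactly the $\lambda$-renormalization via (\ref{lambdaTransformation}) that you anticipate, so your alternative sketch of that case is the one that actually matches.
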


\begin{proof}
	For ease of notation, we will write $\m_r^{\star}$ and
	and $\D^{\star}$ for  $\m_r^{\star}(G)$ and $\D(\o{G}^{\star})_W$, respectively. 
	Using (\ref{BreuilSrDef}), one finds that $\scrM_r^{\et}:={\alpha_r}_*(\m_r^{\et})\in \BT_{S_r}^{\varphi,\Gamma}$
	is given by the triple
	\begin{subequations}
	\begin{equation}
		\scrM_r^{\et}:=(\D^{\et}\otimes_{W,\varphi^r} S_r,\ \D^{\et}\otimes_{W,\varphi^r} \Fil^1 S_r,\  
		F\otimes \varphi_1)
	\end{equation}	
	with $\Gamma$ acting diagonally on the tensor product.  Similarly,
	${\alpha_r}_*(\m_r^{\mult})$ is given by the triple
	\begin{equation}
		(\D^{\mult}\otimes_{W,\varphi^r} S_r,\ \D^{\mult}\otimes_{W,\varphi^r} S_r,\ 
		V^{-1} \otimes v_r\cdot\varphi)\label{WindowMultCase}
	\end{equation}
	\end{subequations}
	where $v_r=\varphi(E_r)/p$ and $\gamma\in \Gamma$ acts on $\D^{\mult}\otimes_{W,\varphi^r} S_r$
	as $\gamma \otimes \chi(\gamma)^{-1} \varphi^r(\gamma u_r/u_r)\cdot \gamma$.	
	It follows from (\ref{lambdaTransformation})
	that the $S_r$-module automorphism of $\D^{\mult}\otimes_{W,\varphi^r} S_r$
	given by multiplication by $\lambda$
	carries (\ref{WindowMultCase}) isomorphically onto the object of $\BT_{S_r}^{\varphi,\Gamma}$
	given by the triple
	\begin{equation}
		\scrM_r^{\mult}:=(\D^{\mult}\otimes_{W,\varphi^r} S_r,\ \D^{\mult}\otimes_{W,\varphi^r} S_r,\  
		V^{-1}\otimes\varphi)
	\end{equation}
	with $\Gamma$ acting {\em diagonally} on the tensor product.

	On the other hand, since $G_0^{\et}$ (respectively $G_0^{\mult}$) is \'etale 
	(respectively of multiplicative type) over $R_r/pR_r$, the relative Frobenius 
	(respectively Verscheibung) morphism of $G_0$ induces isomorphisms
	\begin{subequations}
	\begin{equation}
		\xymatrix{
			{G_0^{\et}} \ar[r]_-{\simeq}^-{F^r} &  {(G_0^{\et})^{(p^r)} \simeq 
			{\varphi^r}^*\o{G}^{\et} \times_k R_r/pR_r} 
		}\label{Ftrick}
	\end{equation}
	respectively
	\begin{equation}
		\xymatrix{
			{G_0^{\mult}} & \ar[l]^-{\simeq}_-{V^r}   {(G_0^{\mult})^{(p^r)} 
			\simeq {\varphi^r}^*\o{G}^{\mult} \times_k R_r/pR_r} 
		}\label{Vtrick}
	\end{equation} 
	\end{subequations}
	of $p$-divisible groups over $R_r/pR_r$, where we have used the fact that the map $x\mapsto x^{p^r}$
	of $R_r/pR_r$ factors as $R_r/pR_r \twoheadrightarrow k \hookrightarrow R_r/pR_r$
	in the final isomorphisms of both lines above.  Since the Dieudonn\'e crystal is compatible
	with base change and the canonical map $W\rightarrow S_r$ extends to a PD-morphism 
	$(W,p)\rightarrow (S_r, pS_r+\Fil^1 S_r)$ over $k\rightarrow R_r/pR_r$, 
	applying $\D(\cdot)_{S_r}$ to (\ref{Ftrick})--(\ref{Vtrick}) yields natural isomorphisms
	$\D(G_0^{\star})_{S_r} \simeq \D^{\star}\otimes_{W,\varphi^r} S_r$ for $\star=\et,\mult$
	which carry $F$ to $F\otimes \varphi$.  It is a straightforward exercise using the construction
	of $\scrM_r(G^{\star})$ explained in \S\ref{PhiGammaCrystals} to check
	that these isomorphisms extend to give isomorphisms $\scrM_r(G^{\et}) \simeq \scrM_r^{\et}$
		and $\scrM_r(G^{\mult}) \simeq \scrM_r^{\mult}$ in $\BT_{S_r}^{\varphi,\Gamma}$.
	By Theorem \ref{Lau},
	we conclude that we have natural isomorphisms in $\BT_{\s_r}^{\varphi,\Gamma}$
	as in (\ref{EtMultSpecialIsoms}).  The commutativity of (\ref{EtMultSpecialIsomsBC}) 
	is straightforward, using the definitions of the base change isomorphisms.
\end{proof}

Now suppose that $G$ is ordinary.
As $\m_r$ is exact by Theorem \ref{CaisLauMain} (\ref{exequiv}),
applying $\m_r$ to the connected-\'etale sequence of $G$ gives 
a short exact sequence in $\BT_{\s_r}^{\varphi,\Gamma}$
\begin{equation}
	\xymatrix{
		0\ar[r] & {\m_r(G^{\et})} \ar[r] & {\m_r(G)} \ar[r] & {\m_r(G^{\mult})} \ar[r] & 0
	}\label{ConEtOrdinary}
\end{equation}
which is contravariantly functorial and exact in $G$.
Since $\varphi_{\m_r}$ linearizes to an isomorphism on $\m_r(G^{\et})$
and is topologically nilpotent on $\m_r(G^{\mult})$, we think of
(\ref{ConEtOrdinary}) as the ``slope flitration" for Frobenius acting on $\m_r(G)$.
On the other hand, Proposition \ref{BTgroupUnivExt} and Theorem \ref{CaisLauMain} (\ref{EvaluationONR})
provide 
a canonical ``Hodge filtration" of $\m_r(G)\tens_{\s_r,\varphi} R_r\simeq \D(G_0)_{R_r}$:
\begin{equation}
	\xymatrix{
		0\ar[r] & {\omega_{G}} \ar[r] & {\D(G_0)_{R_r}} \ar[r] & {\Lie(G^t)} \ar[r] & 0
	}\label{HodgeFilOrd}
\end{equation}
which is contravariant and exact in $G$.  
Our assumption that $G$ is ordinary yields 
({\em cf.} \cite{KatzSerreTate}):

\begin{lemma}\label{HodgeFilOrdProps}
	With notation as above, there are natural and $\Gamma$-equivariant 
	isomorphisms 
\begin{equation}
	 \Lie(G^t)\simeq \D(G_0^{\et})_{R_r}  \qquad\text{and} \qquad \D(G_0^{\mult})_{R_r}\simeq \omega_G.
	 \label{FlankingIdens}
\end{equation}
	Composing these isomorphisms with the canonical maps obtained by applying $\D(\cdot)_{R_r}$ 
	to the connected-\'etale sequence of $G_0$
	yield functorial $R_r$-linear splittings of the Hodge filtration $(\ref{HodgeFilOrd})$.
	Furthermore, there is a canonical and $\Gamma$-equivariant isomorphism of split exact
	sequences of $R_r$-modules	
	\begin{equation}
	\begin{gathered}
			\xymatrix{
			0\ar[r] & {\omega_{G}} \ar[r]\ar[d]^-{\simeq} & {\D(G_0)_{R_r}} \ar[r]\ar[d]^-{\simeq} & 
			{\Lie(G^t)} \ar[r]\ar[d]^-{\simeq} & 0\\
				0 \ar[r] & {\D(\o{G}^{\mult})_W\tens_{W,\varphi^r} R_r} \ar[r]_-{i} & 
			{\D(\o{G})_W\tens_{W,\varphi^r} R_r} \ar[r]_-{j} & 
			{\D(\o{G}^{\et})_W\tens_{W,\varphi^r} R_r}\ar[r] & 0
				}
	\end{gathered}
	\label{DescentToWIsom}
	\end{equation}
	with $i,j$ the inclusion and projection mappings
	obtained from the canonical direct sum decomposition 
	$\D(\o{G})_W\simeq \D(\o{G}^{\mult})_W\oplus \D(\o{G}^{\et})_W$.
\end{lemma}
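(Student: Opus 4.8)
The plan is to exploit the transversality between two natural two-step filtrations on $\D(G_0)_{R_r}$: the Hodge filtration $(\ref{HodgeFilOrd})$, and the short exact sequence obtained by applying the exact contravariant functor $\D(\cdot)_{R_r}$ to the connected--\'etale sequence of $G_0$. Since $G$ is ordinary its connected part is of multiplicative type, so $G^0 = G^{\mult}$ and the connected--\'etale sequence reads $0 \to G^{\mult} \to G \to G^{\et} \to 0$; dualizing and using that the dual of an \'etale (respectively multiplicative) group is multiplicative (respectively \'etale) identifies this with the connected--\'etale sequence of $G^t$, whence $(G^t)^0 = (G^{\et})^t$ and $(G^t)^{\et} = (G^{\mult})^t$. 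All of these formations are functorial in $\pdiv_{R_r}^{\Gamma}$, hence carry a semilinear $\Gamma$-action coming from the descent datum $(\ref{TateExt})$, which is what will ultimately furnish $\Gamma$-equivariance of every map below.

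First I would apply the Hodge filtration of Proposition $\ref{BTgroupUnivExt}$ functorially to the three groups $G^{\et}$, $G$, and $G^{\mult}$, producing a commutative $3\times 3$ diagram whose middle row is $(\ref{HodgeFilOrd})$, whose top row is the Hodge filtration of $G^{\et}$, and whose bottom row is that of $G^{\mult}$. Because $G^{\et}$ is \'etale we have $\omega_{G^{\et}} = 0$, so the top row degenerates to an isomorphism $\D(G_0^{\et})_{R_r} \simeq \Lie((G^{\et})^t)$; because $(G^{\mult})^t$ is \'etale we have $\Lie((G^{\mult})^t) = 0$, so the bottom row degenerates to $\omega_{G^{\mult}} \simeq \D(G_0^{\mult})_{R_r}$. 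On the other hand, the connected--\'etale sequences of $G$ and of $G^t$ give, on $\omega$- and $\Lie$-terms, canonical isomorphisms $\omega_G \simeq \omega_{G^{\mult}}$ (the $\omega$ of the \'etale quotient vanishes) and $\Lie((G^{\et})^t) \simeq \Lie(G^t)$ (the $\Lie$ of the \'etale quotient of $G^t$ vanishes). Composing these with the two degenerate rows yields the isomorphisms $(\ref{FlankingIdens})$.

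Next I would read the splitting directly off the diagram. The vertical maps assemble into the connected--\'etale sequence $0 \to \D(G_0^{\et})_{R_r} \to \D(G_0)_{R_r} \to \D(G_0^{\mult})_{R_r} \to 0$. Chasing the upper-right square, the composite $\D(G_0^{\et})_{R_r} \hookrightarrow \D(G_0)_{R_r} \twoheadrightarrow \Lie(G^t)$ agrees with the isomorphism $\D(G_0^{\et})_{R_r} \simeq \Lie((G^{\et})^t) \simeq \Lie(G^t)$ and is therefore itself an isomorphism; dually, the composite $\omega_G \hookrightarrow \D(G_0)_{R_r} \twoheadrightarrow \D(G_0^{\mult})_{R_r}$ is an isomorphism as well. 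Thus the Hodge and connected--\'etale filtrations are transverse, and the inclusion of $\D(G_0^{\et})_{R_r}$ (respectively the projection onto $\D(G_0^{\mult})_{R_r}$) provides the asserted functorial $R_r$-linear splitting; functoriality and $\Gamma$-equivariance are inherited from those of Proposition $\ref{BTgroupUnivExt}$ and of the connected--\'etale sequence. This is the content of the ``cf. $\cite{KatzSerreTate}$'' remark: for ordinary groups the slope and Hodge filtrations are complementary.

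Finally, for $(\ref{DescentToWIsom})$ I would descend along $\varphi^r$. The relative Frobenius and Verschiebung isomorphisms $(\ref{Ftrick})$--$(\ref{Vtrick})$ from the proof of Proposition $\ref{EtaleMultDescription}$ give, after applying $\D(\cdot)_{R_r}$, canonical identifications $\D(G_0^{\et})_{R_r} \simeq \D(\o{G}^{\et})_W \otimes_{W,\varphi^r} R_r$ and $\D(G_0^{\mult})_{R_r} \simeq \D(\o{G}^{\mult})_W \otimes_{W,\varphi^r} R_r$; over the perfect residue field the ordinary group $\o{G}$ splits canonically as $\o{G}^{\mult} \times \o{G}^{\et}$, yielding the decomposition $\D(\o{G})_W \simeq \D(\o{G}^{\mult})_W \oplus \D(\o{G}^{\et})_W$ with its maps $i,j$. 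Combining these with the two splittings produced above gives the claimed isomorphism of split exact sequences, and the same diagram chase shows $i$ and $j$ match the Hodge inclusion and projection. The main obstacle is bookkeeping rather than conceptual: one must check that all of these identifications are simultaneously $\Gamma$-equivariant and compatible with the base-change twist $\varphi^r$, which amounts to tracking the semilinear $\Gamma$-action through the Frobenius/Verschiebung trick and the duality isomorphisms.
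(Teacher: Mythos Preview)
Your proposal is correct and follows essentially the same approach as the paper: the $3\times 3$ diagram you describe is precisely the paper's diagram $(\ref{OrdinaryDiagram})$, obtained by applying Proposition $\ref{BTgroupUnivExt}$ to the connected--\'etale sequence and using the vanishing of $\omega$ and $\Lie$ for \'etale groups, and your descent step via $(\ref{Ftrick})$--$(\ref{Vtrick})$ matches the paper's use of $(\ref{TwistyDieuIsoms})$. The only difference is expository: you frame the splitting as transversality of filtrations, while the paper reads it directly off the diagram.
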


\begin{proof}
Applying $\D(\cdot)_{R_r}$ to the connected-\'etale sequence
of $G_0$ and using Proposition \ref{BTgroupUnivExt} yields a commutative diagram
with exact columns and rows
\begin{equation}
\begin{gathered}
	\xymatrix{
						&     & 0\ar[d] & 0 \ar[d] & \\
		 & 0 \ar[r]\ar[d] & {\omega_{G}}\ar[r]\ar[d] & 
		{\omega_{G^{\mult}}} \ar[r]\ar[d] & 0\\
		0\ar[r] & {\D(G_0^{\et})_{R_r}} \ar[r]\ar[d] & {\D(G_0)_{R_r}}\ar[r]\ar[d] & 
		{\D(G_0^{\mult})_{R_r}} \ar[r]\ar[d] & 0\\
		0 \ar[r] & {\Lie({G^{\et}}^t)} \ar[r]\ar[d] & {\Lie(G^t)}\ar[r]\ar[d] & 
		0 & \\
		& 0 & 0 &  &
		}
\end{gathered}
\label{OrdinaryDiagram}
\end{equation}
where we have used the fact that that the invariant differentials
and Lie algebra of an \'etale $p$-divisible group
(such as $G^{\et}$ and ${G^{\mult}}^t\simeq {G^t}^{\et}$)
are both zero.  The isomorphisms (\ref{FlankingIdens})
follow at once.  We likewise immediately see that the short exact sequence 
in the center column of (\ref{OrdinaryDiagram}) is functorially and $R_r$-linearly
split.  Thus, to prove the claimed identification in (\ref{DescentToWIsom}),
it suffices to exhibit natural isomorphisms of free $R_r$-modules with $\Gamma$-action
\begin{equation}
	\D(G_0^{\et})_{R_r} \simeq \D(\o{G}^{\et})_W\tens_{W,\varphi^r} R_r
	\qquad\text{and}\qquad
	\D(G_0^{\mult})_{R_r} \simeq \D(\o{G}^{\mult})_W\tens_{W,\varphi^r} R_r,
	\label{TwistyDieuIsoms}
\end{equation} 
both of which follow easily by applying $\D(\cdot)_{R_r}$ to
(\ref{Ftrick}) and (\ref{Vtrick}) and using the compatibility 
of the Dieudonn\'e crystal with base change as in the proof of Proposition (\ref{EtaleMultDescription}).
\end{proof}

From the slope filtration (\ref{ConEtOrdinary}) of $\m_r(G)$
we can recover both the (split) slope filtration of $\D(\o{G})_W$
and the (split) Hodge filtration (\ref{HodgeFilOrd}) of $\D(G_0)_{R_r}$:

\begin{proposition}\label{MrToHodge}
	There are canonical and $\Gamma$-equivariant isomorphisms of short exact sequences
	\begin{subequations}
	\begin{equation}
	\begin{gathered}
		\xymatrix{
			0\ar[r] & {\m_r(G^{\et})\tens_{\s_r,\varphi\circ\tau} W} \ar[r]\ar[d]^-{\simeq} & 
			{\m_r(G)\tens_{\s_r,\varphi\circ\tau} W} \ar[r]\ar[d]^-{\simeq} & 
			{\m_r(G^{\mult})\tens_{\s_r,\varphi\circ\tau} W} \ar[r]\ar[d]^-{\simeq} & 0 \\
			0 \ar[r] & {\D(\o{G}^{\et})_W} \ar[r] & {\D(\o{G})_W} \ar[r] & {\D(\o{G}^{\mult})_W}
			\ar[r] & 0
		}\label{MrToDieudonneMap}
	\end{gathered}
	\end{equation}
	\begin{equation}
	\begin{gathered}
		\xymatrix{
			0\ar[r] & {\m_r(G^{\et})\tens_{\s_r,\theta\circ\varphi} R_r} \ar[r]\ar[d]^-{\simeq} & 
			{\m_r(G)\tens_{\s_r,\theta\circ\varphi} R_r} \ar[r]\ar[d]^-{\simeq} & 
			{\m_r(G^{\mult})\tens_{\s_r,\theta\circ\varphi} R_r} \ar[r]\ar[d]^-{\simeq} & 0 \\
			0 \ar[r] &  {\Lie(G^t)} \ar[r]_-{i} & {\D(G_0)_{R_r}} \ar[r]_-{j} & {\omega_{G}} \ar[r] & 0\\
		}
	\end{gathered}\label{MrToHodgeMap}
	\end{equation}
	\end{subequations}
	Here, $i:\Lie(G^t)\hookrightarrow \D(G_0)_{R_r}$
	and $j:\D(G_0)_{R_r}\twoheadrightarrow \omega_{G}$
	are the canonical splittings of Lemma $\ref{HodgeFilOrdProps}$,
	the top row of $(\ref{MrToHodgeMap})$ is obtained from $(\ref{ConEtOrdinary})$ by extension of scalars,
	and the isomorphism $(\ref{MrToDieudonneMap})$ intertwines $\varphi_{\m_r(\cdot)}\otimes \varphi$ with $F$.
\end{proposition}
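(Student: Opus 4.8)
The plan is to obtain both isomorphisms by applying to the slope filtration~(\ref{ConEtOrdinary}) the two ``specialization'' functors furnished by Theorem~\ref{CaisLauMain}~(\ref{EvaluationONW}) and~(\ref{EvaluationONR}). The essential preliminary observation is that, since the underlying $\s_r$-modules of the objects appearing in~(\ref{ConEtOrdinary}) are finite free and the quotient term $\m_r(G^{\mult})$ is free, the sequence~(\ref{ConEtOrdinary}) splits as a sequence of $\s_r$-modules and therefore remains short exact after any base change. In particular both top rows of~(\ref{MrToDieudonneMap}) and~(\ref{MrToHodgeMap}) are exact, and it suffices to identify the base-changed sequences with the asserted bottom rows.

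For~(\ref{MrToDieudonneMap}) I would base change~(\ref{ConEtOrdinary}) along $\varphi\circ\tau\colon\s_r\to W$ and invoke the functorial, $\Gamma$-equivariant isomorphism of Theorem~\ref{CaisLauMain}~(\ref{EvaluationONW}) for each of $G$, $G^{\et}$, and $G^{\mult}$. Naturality of this isomorphism in the maps of the connected-\'etale sequence assembles the three identifications into an isomorphism of short exact sequences whose bottom row is exactly $\D(\cdot)_W$ applied to the connected-\'etale sequence $0\to\o{G}^{\mult}\to\o{G}\to\o{G}^{\et}\to 0$ of $\o{G}$, i.e.\ the displayed slope filtration of $\D(\o{G})_W$. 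The compatibility of the middle vertical arrow with $\varphi_{\m_r}\otimes\varphi$ and $F$, as well as the $\Gamma$-equivariance, are already part of~(\ref{EvaluationONW}), so this case needs no further input.

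The isomorphism~(\ref{MrToHodgeMap}) is the subtler of the two, because its bottom row is not the image of~(\ref{ConEtOrdinary}) but rather the Hodge filtration~(\ref{HodgeFilOrd}) traversed in the \emph{opposite} direction. Here I would base change~(\ref{ConEtOrdinary}) along $\theta\circ\varphi\colon\s_r\to R_r$ and apply Theorem~\ref{CaisLauMain}~(\ref{EvaluationONR}) to $G$, $G^{\et}$, and $G^{\mult}$; by the same naturality argument the resulting sequence is identified with $\D(\cdot)_{R_r}$ applied to the connected-\'etale sequence of $G_0$, namely $0\to\D(G_0^{\et})_{R_r}\to\D(G_0)_{R_r}\to\D(G_0^{\mult})_{R_r}\to 0$. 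It then remains to match this with the bottom row of~(\ref{MrToHodgeMap}), and this is precisely what Lemma~\ref{HodgeFilOrdProps} delivers: the flanking isomorphisms~(\ref{FlankingIdens}) supply $\Lie(G^t)\simeq\D(G_0^{\et})_{R_r}$ and $\D(G_0^{\mult})_{R_r}\simeq\omega_G$, and the splittings $i$ and $j$ are \emph{defined} as the composites of these with the connected-\'etale maps. Consequently the two outer squares of~(\ref{MrToHodgeMap}) commute essentially tautologically, with $\Gamma$-equivariance inherited from~(\ref{EvaluationONR}) and from the isomorphisms of Lemma~\ref{HodgeFilOrdProps}.

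I expect the main obstacle to be bookkeeping rather than anything conceptual: one must check, square by square, that the identification of the base-changed slope filtration with the $\D(\cdot)_{R_r}$-image of the connected-\'etale sequence is compatible with the definitions of $i$ and $j$, keeping careful track of the reversal that interchanges sub and quotient. Since the splittings of Lemma~\ref{HodgeFilOrdProps} were built from exactly the connected-\'etale maps that~(\ref{EvaluationONR}) intertwines, this verification should reduce to unwinding definitions, with no genuinely new ingredient beyond functoriality of the crystalline specialization isomorphisms and the ordinariness of $G$ (which is what guarantees $G^0=G^{\mult}$ and hence the clean splitting diagram underlying Lemma~\ref{HodgeFilOrdProps}).
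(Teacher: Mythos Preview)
Your proposal is correct and follows essentially the same route as the paper, which dispatches the proposition in a single line by citing Theorem~\ref{CaisLauMain}~(\ref{EvaluationONW}) and Lemma~\ref{HodgeFilOrdProps}. Your write-up simply unpacks these references (together with the companion specialization~(\ref{EvaluationONR}) for the second diagram) and makes explicit the easy observation that~(\ref{ConEtOrdinary}) remains exact after base change because it is $\s_r$-split.
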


\begin{proof}
	This follows immediately from Theorem \ref{CaisLauMain} (\ref{EvaluationONW}) and Lemma \ref{HodgeFilOrdProps}.
\end{proof}

\section{Ordinary \texorpdfstring{$\Lambda$}{Lambda}-adic Dieudonn\'e and \texorpdfstring{$(\varphi,\Gamma)$}{(phi,Gamma)}-modules}\label{results}

In this section, we will state and prove our main results as described in \S\ref{resultsintro}.
Throughout, we will use the notation of \S\ref{resultsintro} and of \cite[\S2.2]{CaisHida1},
which we now briefly recall.  

For $r\ge 1$, we write $X_r:=X_1(Np^r)$ 
for the canonical model over $\Q$ with rational cusp at $i\infty$
of the modular curve arising as the quotient of the extended upper-halfplane
by the congruence subgroup $\Upgamma_1(Np^r)$ ({\em cf.} \cite[Remark 2.2.4]{CaisHida1}).
There are two natural degeneracy mappings $\rho,\sigma:X_{r+1}\rightrightarrows X_r$
of curves over $\Q$ induced by the self-maps of the upper-halfplane $\rho:\tau\mapsto \tau$ and 
$\sigma:\tau\mapsto p\tau$; see \cite[Remark 2.2.5]{CaisHida1}. 
Denote by $J_r:=\Pic^0_{X_r/\Q}$ the Jacobian of $X_r$ over $\Q$
and write $\H_r(\Z)$
for the $\Z$-subalgebra of $\End_{\Q}(J_r)$ generated by the
Hecke operators $\{T_{\ell}\}_{\ell\nmid Np}$, $\{U_{\ell}\}_{\ell|Np}$
and the Diamond operators $\{\langle u\rangle\}_{u\in \Z_p^{\times}}$.
We define $\H_r(\Z)^{*}$ similarly, using instead the ``transpose"
Hecke and diamond operators, and set $\H_r:=\H_r(\Z)\otimes_{\Z}\Z_p$
and $\H_r^*:=\H_r(\Z)^*\otimes_{\Z}\Z_p$; see \cite[2.2.21--2.2.23]{CaisHida1}.
As usual, we write $e_r\in \H_r$ and $e_r^*\in \H_r^*$ for the idempotents of these
semi-local $\Z_p$-algebras corresponding to the Atkin operators $U_p$ and $U_p^*$,
respectively, and we put $e:=(e_r)_r$ and $e^*:=(e_r^*)_r$
for the induced idempotents of the ``big" $p$-adic Hecke 
algebras $\H:=\varprojlim_r \H_r$ and $\H^*:=\varprojlim_r \H_r^*$;
here, the maps in these projective limits are 
induced by the natural transition mappings on Jacobians $J_r\rightrightarrows J_{r'}$
for $r'\ge r$ arising (via Picard functoriality) from $\sigma$ and $\rho$, respectively.
Let $w_r$ be the Atkin--Lehner 
``involution" of $X_r$ over $\Q(\mu_{Np^r})$ corresponding to a choice
of primitive $Np^r$-th root of unity as in the discussion preceding \cite[Proposition 2.2.6]{CaisHida1};
following the conventions of \cite[\S2.2]{CaisHida1}, we simply write $w_r$
for the automorphism $\Alb(w_r)$ of $J_r$ over $\Q(\mu_{Np^r})$ induced by Albanese
functoriality.
We note that for any Hecke operator $T\in \H_r(\Z)$, one has the relation 
$w_rT=T^*w_r$ as endomorphisms of $J_r$ over $\Q(\mu_{Np^r})$
\cite[Proposition 2.2.24]{CaisHida1}.

\subsection{\texorpdfstring{$\Lambda$}{Lambda}-adic Barsotti-Tate groups}\label{BTfamily}

In order to construct a crystalline analogue of Hida's ordinary $\Lambda$-adic \'etale cohomology,
we will apply the theory of \S\ref{PhiGammaCrystals} to a certain ``tower" 
$\{\G_r\}_{r\ge 1}$ of $p$-divisible groups (a $\Lambda$-adic Barsotti Tate group
in the sense of Hida \cite{HidaLambdaBT}, \cite{HidaNotes}, \cite{HidaNotes2}) whose construction involves 
artfully cutting out certain $p$-divisible 
subgroups of $J_r[p^{\infty}]$ over $\Q$ and the ``good reduction'' theorems of Langlands-Carayol-Saito. 
The construction of $\{\G_r\}_{r\ge 1}$ is certainly well-known (e.g. \cite[\S1]{MW-Hida}, 
\cite[Chapter 3, \S1]{MW-Iwasawa}, \cite[Definition 1.2]{Tilouine}  
and \cite[\S 3.2]{OhtaEichler}), but as we shall need substantially finer information about the $\G_r$
than is available in the literature, we devote this section to recalling their construction and properties.

As in \cite[\S 3.3]{CaisHida1}, for a ring $A$, a nonegative integer $k$, and a congruence subgroup $\Upgamma$
of $\SL_2(\Z)$, we write
$S_k(\Upgamma;A)$ for the space of weight $k$ cuspforms for $\Gamma$ over $A$, and
for ease of notation we put $S_k(\Upgamma):=S_k(\Upgamma;\Qbar)$.
If $\Upgamma',$ $\Upgamma$ are congruence subgroups, then associated to any 
$\gamma\in \GL_2(\Q)$ with $\gamma^{-1}\Upgamma'\gamma\subseteq \Upgamma$ is an injective pullback mapping
$\xymatrix@1{{\iota_{\gamma}:S_k(\Upgamma)} \ar@{^{(}->}[r] & {S_k(\Upgamma')}}$ 
given by $\iota_{\gamma}(f):=f\big|_{\gamma^{-1}}$,
as well as a surjective ``trace" mapping
\begin{equation}
	\xymatrix{
		{\tr_{\gamma}:S_k(\Upgamma')} \ar@{->>}[r] & {S_k(\Upgamma)}
		}
		\qquad\text{given by}\qquad
		\tr_{\gamma}(f):=\sum_{\delta\in \gamma^{-1}\Upgamma'\gamma\backslash\Upgamma} (f\big|_{\gamma})\big|_{\delta}
		\label{MFtrace}
\end{equation}
with $\tr_{\gamma}\circ\iota_{\gamma}$ multiplication by $[\Upgamma: \gamma^{-1}\Upgamma'\gamma]$
on $S_k(\Upgamma)$.  If $\Upgamma'\subseteq \Upgamma$, then
{\em unless specified to the contrary}, we will
always view $S_k(\Upgamma)$ as a subspace of $S_k(\Upgamma')$ via $\iota_{\id}$.

For nonnegative integers $i\le r$ we set
$\Upgamma_r^i:=\Upgamma_1(Np^i)\cap \Upgamma_0(p^r)$ for the intersection $($taken inside $\SL_2(\Z)$$)$,
and put $\Upgamma_r:=\Upgamma_r^r$.
We will need the following fact 
({\em cf.} \cite[pg. 339]{Tilouine}, \cite[2.3.3]{OhtaEichler})
concerning the trace mapping $(\ref{MFtrace})$ attached to the canonical inclusion 
$\Upgamma_{r}\subseteq \Upgamma_i$ for $r\ge i$; for notational clarity, 
we will write $\tr_{r,i}:S_k(\Upgamma_r)\rightarrow S_k(\Upgamma_i)$ for this map.

\begin{lemma}\label{MFtraceLem}
	Fix integers $i\le r$ and let $\tr_{r,i}:S_k(\Upgamma_r)\rightarrow S_k(\Upgamma_i)$ 
	be the trace mapping $(\ref{MFtrace})$ attached to the inclusion 
	$\Upgamma_r\subseteq \Upgamma_i$.  For $\alpha:=\left(\begin{smallmatrix} 1 & 0 \\ 0 & p\end{smallmatrix}\right)$, 
	we 	have an equality
	of $\o{\Q}$-endomorphisms of $S_k(\Upgamma_{r})$
	\begin{equation}
		\iota_{\alpha^{r-i}}\circ \tr_{r,i} = (U_p^*)^{r-i} 
		\sum_{\delta\in \Delta_i/\Delta_{r}} \langle \delta \rangle.
		\label{DualityIdentity}
	\end{equation}
\end{lemma}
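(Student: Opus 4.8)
The plan is to reduce both sides to explicit sums of weight-$k$ slash operators and to match coset representatives, exploiting the intermediate congruence group $\Upgamma_r^i = \Upgamma_1(Np^i)\cap\Upgamma_0(p^r)$, which fits into a chain $\Upgamma_r\subseteq\Upgamma_r^i\subseteq\Upgamma_i$ (here $\Upgamma_r = \Upgamma_1(Np^r)$ and $\Upgamma_i = \Upgamma_1(Np^i)$, since $\Upgamma_1(Np^s)\subseteq\Upgamma_0(p^s)$). Writing $m := r-i$, a quick index computation gives $[\Upgamma_i:\Upgamma_r^i] = [\Upgamma_r^i:\Upgamma_r] = p^m$, so $[\Upgamma_i:\Upgamma_r] = p^{2m}$. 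This already matches the term count on the right, since $(U_p^*)^m$ is a sum of $p^m$ slash operators and $\#(\Delta_i/\Delta_r) = p^m$, suggesting a direct bijective matching of representatives.

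First I would use transitivity of the trace along the chain to factor $\tr_{r,i} = \tr^{(2)}\circ\tr^{(1)}$, where $\tr^{(1)}\colon S_k(\Upgamma_r)\to S_k(\Upgamma_r^i)$ and $\tr^{(2)}\colon S_k(\Upgamma_r^i)\to S_k(\Upgamma_i)$ are the traces (\ref{MFtrace}) attached to $\id$. The coset space $\Upgamma_r\backslash\Upgamma_r^i$ is represented by diamond matrices $\sigma_\delta\in\Upgamma_r^i$ whose diagonal reduces to a lift of $\delta\in(1+p^i\Z_p)/(1+p^r\Z_p) = \Delta_i/\Delta_r$: the lower-left entry of $\Upgamma_r^i$ is already $\equiv 0 \bmod Np^r$, so these cosets are detected exactly by the diagonal mod $p^r$. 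Hence $\tr^{(1)} = \sum_{\delta\in\Delta_i/\Delta_r}\langle\delta\rangle$ as maps $S_k(\Upgamma_r)\to S_k(\Upgamma_r)$ with image in $S_k(\Upgamma_r^i)$, which identifies the diamond factor on the right-hand side.

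Next I would compute $\iota_{\alpha^m}\circ\tr^{(2)}$ and identify it with $(U_p^*)^m$ on $S_k(\Upgamma_r^i)$. Choosing the representatives $\{\left(\begin{smallmatrix}1&0\\Np^i c&1\end{smallmatrix}\right): c\in\Z/p^m\}$ for $\Upgamma_r^i\backslash\Upgamma_i$ and applying $\iota_{\alpha^m}(g)=g|_{\alpha^{-m}}$ gives $\iota_{\alpha^m}(\tr^{(2)}(g)) = \sum_c g|_{\gamma_c}$ with $\gamma_c = \left(\begin{smallmatrix}1&0\\Np^i c&1\end{smallmatrix}\right)\alpha^{-m}$; since $r=i+m$, this $\gamma_c$ is the scalar $p^{-m}$ times $\left(\begin{smallmatrix}p^m&0\\Np^r c&1\end{smallmatrix}\right)$. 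On the other hand, in the conventions of \cite[\S2.2]{CaisHida1} the operator $U_p^*$ is realized by the transpose double coset $\Upgamma_r\left(\begin{smallmatrix}p&0\\0&1\end{smallmatrix}\right)\Upgamma_r = \coprod_{c=0}^{p-1}\Upgamma_r\left(\begin{smallmatrix}p&0\\Np^r c&1\end{smallmatrix}\right)$, and multiplying $m$ such representatives telescopes (the lower-left entries combine $p$-adically) to $\left(\begin{smallmatrix}p^m&0\\Np^r c&1\end{smallmatrix}\right)$ with $c$ ranging bijectively over $\Z/p^m$; thus $(U_p^*)^m g = \sum_{c\in\Z/p^m} g|_{\left(\begin{smallmatrix}p^m&0\\Np^r c&1\end{smallmatrix}\right)}$. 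It is reassuring that the computation of $\iota_{\alpha^m}\circ\tr^{(2)}$ produces exactly the transpose-type representatives, forcing the right-hand operator to be $U_p^*$ rather than $U_p$. Composing with $\tr^{(1)}$ then yields the lemma, whose right-hand side is precisely $(U_p^*)^m\circ\tr^{(1)}$.

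The main obstacle is bookkeeping the determinant normalizations so that no spurious power of $p$ survives: one must check that, in the weight-$k$ slash conventions of \cite{CaisHida1}, the scalar matrix $p^m I$ acts trivially (equivalently, that the normalizing determinant factor built into $U_p^*$ cancels the factor arising from $\alpha^{-m}$), so that $g|_{\gamma_c} = g|_{\left(\begin{smallmatrix}p^m&0\\Np^r c&1\end{smallmatrix}\right)}$ on the nose. The combinatorial heart---the factorization through $\Upgamma_r^i$ and the telescoping of $U_p^*$-representatives---is robust; it is only this constant that requires care, and it is pinned down by the fixed conventions. As a consistency check, and as an alternative route, I would verify the one-step case $m=1$ directly and then induct using transitivity of traces together with $\iota_{\alpha^m} = \iota_{\alpha^{m-1}}\circ\iota_\alpha$, reducing everything to the single identity $\iota_\alpha\circ\tr_{i+1,i} = U_p^*\sum_{\delta\in\Delta_i/\Delta_{i+1}}\langle\delta\rangle$.
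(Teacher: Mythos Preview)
Your proof is correct and follows essentially the same structure as the paper's: both factor the trace through the intermediate group $\Upgamma_r^i$, identify the $\Upgamma_r\backslash\Upgamma_r^i$ part with the diamond sum, and use the representatives $\varrho_c=\left(\begin{smallmatrix}1&0\\Np^ic&1\end{smallmatrix}\right)$ for $\Upgamma_r^i\backslash\Upgamma_i$.

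The one genuine difference is in how $U_p^*$ enters. The paper establishes the matrix identity
\[
p^{r-i}\varrho_j\alpha^{-(r-i)}=\tau_r\begin{pmatrix}1&-j\\0&p^{r-i}\end{pmatrix}\tau_r^{-1},
\qquad \tau_r=\begin{pmatrix}0&-1\\Np^r&0\end{pmatrix},
\]
and then invokes $U_p^*=w_rU_pw_r^{-1}$ to recognize the right-hand side as $(U_p^*)^{r-i}$. You instead write down the $U_p^*$ coset representatives $\left(\begin{smallmatrix}p&0\\Np^rc&1\end{smallmatrix}\right)$ directly and telescope them. These are equivalent (your representatives are exactly $\tau_r\left(\begin{smallmatrix}1&-c\\0&p\end{smallmatrix}\right)\tau_r^{-1}$), so your route simply absorbs the Atkin--Lehner conjugation into the choice of representatives. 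This is slightly more direct but relies on having the explicit $U_p^*$ representatives in hand; the paper's route makes the dependence on the relation $U_p^*=w_rU_pw_r^{-1}$ explicit. Your caution about the scalar normalization is well placed and is precisely the content of the factor $p^{r-i}$ in the paper's matrix identity.
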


\begin{proof}
	We have
	index $p^{r-i}$ inclusions of groups $\Upgamma_{r} \subseteq \Upgamma_{r}^i \subseteq \Upgamma_i$
	with $\Upgamma_{r}$ normal in $\Upgamma_{r}^i$, as it is the kernel of the canonical
	surjection $\Upgamma_{r}^i\twoheadrightarrow \Delta_i/\Delta_{r}$.    
	For each $\delta\in \Delta_i/\Delta_{r}$, we
	fix a choice of $\sigma_{\delta}\in \Upgamma_{r}^i$ mapping to $\delta$
	and calculate that	
	\begin{equation}
		\Upgamma_i = \coprod_{\delta\in \Delta_i/\Delta_{r}} \coprod_{j=0}^{p^{r-i}-1}   		
		\Upgamma_{r}\sigma_{\delta} \varrho_j
		\qquad\text{where}\qquad
		\varrho_j:=\begin{pmatrix} 1 & 0 \\ jNp^i & 1\end{pmatrix}.\label{CosetDecomp}
	\end{equation}  
	On the other hand, for each $0\le j < p^{r-i}$ one has the equality of matrices in $\GL_2(\Q)$
	\begin{equation}
		p^{r-i}\varrho_j \alpha^{-(r-i)} = \tau_{r} \begin{pmatrix} 1 & -j \\ 0 & p^{r-i} \end{pmatrix} \tau_{r}^{-1}
		\qquad\text{for}\qquad \tau_{r} := \begin{pmatrix} 0 & -1 \\ Np^{r} & 0 \end{pmatrix}. 
		\label{EasyMatCalc}
	\end{equation}
	The claimed equality (\ref{DualityIdentity}) follows easily from (\ref{CosetDecomp}) and (\ref{EasyMatCalc}), using
	the equalities of operators $(\cdot)\big|_{\sigma_{\delta}}=\langle \delta\rangle $ 
	and $U_p^* = w_{r} U_p w_{r}^{-1}$ on $S_k(\Upgamma_{r})$ \cite[Proposition 2.2.24]{CaisHida1}.
\end{proof}

Perhaps the most essential ``classical" fact for our purposes is that 
the Hecke operator $U_p$ acting on spaces of  modular forms ``contracts" the $p$-level, as is made precise by the following:  

\begin{lemma}\label{UpContract}
	If $f\in S_k(\Upgamma_r^i)$ then $U_p^{d}f$ is in the image of the canonical map
	$\iota_{\id}:S_k(\Upgamma_{r-d}^i)\hookrightarrow S_k(\Upgamma_r^i)$ for each integer $d\le r-i$.  In particular,
	$U_p^{r-i}f$ is in the image of $S_k(\Upgamma_i)\hookrightarrow S_k(\Upgamma_r^i)$.	
\end{lemma}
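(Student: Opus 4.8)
The plan is to descend one $p$-level at a time. First I would reduce the general statement to the case $d=1$: granting the standard compatibility $U_p\circ\iota_{\id}=\iota_{\id}\circ U_p$ (the Atkin operator commutes with the level-raising inclusions), if $U_pf=\iota_{\id}(h)$ with $h\in S_k(\Upgamma_{r-1}^i)$, then $U_p^df=U_p^{d-1}(\iota_{\id}h)=\iota_{\id}(U_p^{d-1}h)$, and an induction on $d$ (legitimate since $d\le r-i$ gives $d-1\le (r-1)-i$) together with the transitivity of the inclusions $S_k(\Upgamma_{r-d}^i)\hookrightarrow S_k(\Upgamma_{r-1}^i)\hookrightarrow S_k(\Upgamma_r^i)$ yields the full claim; taking $d=r-i$ gives the ``in particular''. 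So everything reduces to showing, for $r>i$, that $U_pf\in\iota_{\id}(S_k(\Upgamma_{r-1}^i))$; since that image is precisely the subspace of $\Upgamma_{r-1}^i$-invariants inside $S_k(\Upgamma_r^i)$, the goal is to prove $U_pf$ is invariant under all of $\Upgamma_{r-1}^i$.

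Next I would cut this down to a single group element. As $U_pf$ is already $\Upgamma_r^i$-invariant, it suffices to test invariance under generators of $\Upgamma_{r-1}^i$ modulo $\Upgamma_r^i$, and I claim $\Upgamma_{r-1}^i=\langle\Upgamma_r^i,\,g\rangle$ for the single matrix $g:=\begin{pmatrix}1&0\\Np^{r-1}&1\end{pmatrix}$. Indeed, reducing modulo $p$ and using $\gcd(N,p)=1$, the powers $g^{t}$ realize every lower unipotent in $\SL_2(\F_p)$ while the upper unipotents already lie in $\Upgamma_r^i$, so the two together surject onto $\SL_2(\F_p)$; for $r\ge2$ the index $[\Upgamma_{r-1}^i:\Upgamma_r^i]$ is simply $p$ and this is immediate. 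Thus I am reduced to verifying $(U_pf)\big|_{g}=U_pf$.

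For the core computation I would write $U_pf=\sum_{j=0}^{p-1}f\big|_{\beta_j}$ with $\beta_j:=\begin{pmatrix}1&j\\0&p\end{pmatrix}$, compute $\beta_j g=\begin{pmatrix}1+jNp^{r-1}&j\\Np^r&p\end{pmatrix}$, and seek $\gamma_j\in\Upgamma_r^i$ and an index $\pi(j)$ with $\beta_j g=\gamma_j\beta_{\pi(j)}$; granting that $\pi$ is a permutation of $\{0,\dots,p-1\}$ and using $f\big|_{\gamma_j}=f$ gives $(U_pf)\big|_{g}=\sum_j f\big|_{\gamma_j\beta_{\pi(j)}}=\sum_j f\big|_{\beta_{\pi(j)}}=U_pf$. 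The matrix $\gamma_j=\beta_j g\beta_{\pi(j)}^{-1}$ is integral exactly when $j-\pi(j)-j\pi(j)Np^{r-1}\equiv0\pmod p$; for $r\ge2$ this forces $\pi(j)=j$, and one checks directly that the resulting $\gamma_j=\begin{pmatrix}1+jNp^{r-1}&-j^2Np^{r-2}\\Np^r&1-jNp^{r-1}\end{pmatrix}$ lies in $\Upgamma_r^i=\Upgamma_1(Np^i)\cap\Upgamma_0(p^r)$ (the diagonal and lower-left congruences use $r-1\ge i$, and the upper-right entry is integral because $r\ge2$), completing the generic single-step descent.

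The main obstacle is the bottom of the tower. When $r\ge2$ there is no exceptional index and $\pi$ is the identity, so every single-step descent from level $r'\ge2$ is handled by the computation above, and hence the entire statement holds whenever $i\ge1$ (as then every required step starts from level $\ge i+1\ge2$). The genuinely delicate case is the final descent when $i=0$, namely $\Upgamma_1(N)\cap\Upgamma_0(p)\to\Upgamma_1(N)$: here the integrality condition becomes $\pi(j)(1+jN)\equiv j\pmod p$, which has no solution for the single index $j_0$ with $1+j_0N\equiv0\pmod p$, so $\beta_{j_0}g$ escapes the $p$ left cosets defining $U_p$ and the naive rearrangement fails. I expect to treat this borderline case not by the coset shuffle but by transporting it to the already-available dual statement: using the Atkin--Lehner relation $U_p=w_rU_p^{*}w_r^{-1}$ (a consequence of $w_rT=T^{*}w_r$) to recast $g$-invariance of $U_pf$ as an assertion about $U_p^{*}$, which is exactly the trace identity of Lemma \ref{MFtraceLem}. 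This transfer, rather than the elementary matrix bookkeeping, is where the real content of the boundary case lies.
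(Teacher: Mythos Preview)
Your reduction to the single step $d=1$ and the explicit coset shuffle for $r\ge 2$ are correct, and this already covers every case with $i\ge 1$ (which, incidentally, is the only range in which the paper ever invokes the lemma). The approach is genuinely different from the paper's: rather than testing invariance under a generator of $\Upgamma_{r-1}^i$ modulo $\Upgamma_r^i$, the paper proves directly that the two subgroups
\[
H:=\Upgamma_{r-d}^i\cap \alpha^{-d}\Upgamma_r^i\alpha^d
\qquad\text{and}\qquad
H':=\Upgamma_{r-d}^i\cap \alpha^{-d}\Upgamma_{r-d}^i\alpha^d
\]
coincide (a one-line congruence check on the lower-left entry), whence the double cosets $\Upgamma_r^i\alpha^d\Upgamma_{r-d}^i$ and $\Upgamma_{r-d}^i\alpha^d\Upgamma_{r-d}^i$ share the same right-coset decomposition. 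This yields a Hecke operator $U:S_k(\Upgamma_r^i)\to S_k(\Upgamma_{r-d}^i)$ whose composite with $\iota_{\id}$ equals $U_p^d$ on $q$-expansions. The double-coset identity works uniformly in $d$ and in $i\ge 0$, with no boundary case to isolate.

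By contrast, your treatment of the boundary case $r=1$, $i=0$ is a genuine gap. You correctly diagnose that the shuffle fails for the exceptional index $j_0$, but the proposed repair via ``$U_p=w_rU_p^*w_r^{-1}$ and Lemma~\ref{MFtraceLem}'' is not a proof. Lemma~\ref{MFtraceLem} is a statement about the trace map $\tr_{r,i}:S_k(\Upgamma_r)\to S_k(\Upgamma_i)$ on the full $\Upgamma_r=\Upgamma_1(Np^r)$ level, not about $U_p$ acting on $S_k(\Upgamma_1^0)=S_k(\Upgamma_1(N)\cap\Upgamma_0(p))$; no amount of conjugation by $w_r$ turns that identity into the $\Upgamma_1(N)$-invariance of $U_pf$ that you need. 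If you want to salvage your approach at this step, the clean fix is precisely the paper's: show $\Upgamma_1(N)\alpha\Upgamma_1(N)=\Upgamma_1^0\alpha\Upgamma_1(N)$ via the $H=H'$ argument, which supplies the missing $(p+1)$-st coset and absorbs the exceptional $j_0$ automatically.
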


Certainly Lemma \ref{UpContract} is well-known 
(e.g. \cite{Tilouine}, \cite{Ohta1}, \cite{HidaLambdaBT});
because of its importance in our subsequent applications, we sketch a proof (following
the proof of \cite[Lemma 1.2.10]{Ohta1}; see also \cite{HidaLambdaBT} and \cite[\S 2]{HidaNotes}).
We note that $\Upgamma_r\subseteq \Upgamma_r^i$ for all $i\le r$, and the resulting inclusion
$S_k(\Upgamma_r^i)\hookrightarrow S_k(\Upgamma_r)$ has image 
consisting of forms on $\Upgamma_r$ which are eigenvectors for the diamond operators
and whose associated character has conductor with $p$-part dividing $p^{i}$.

\begin{proof}[Proof of Lemma $\ref{UpContract}$]
	Fix $d$ with $0\le d\le r-i$ and let $\alpha:=\left(\begin{smallmatrix} 1 & 0 \\ 0 & p\end{smallmatrix}\right)$
	be as in Lemma \ref{MFtraceLem};
	then $\alpha^d$ is an element of the commeasurator of $\Upgamma_{r}^i$ in $\SL_2(\Q)$.  Consider the following
	subgroups of $\Upgamma_{r-d}^i$:
	\begin{align*}
		H&:= \Upgamma_{r-d}^i \cap \alpha^{-d}\Upgamma_{r}^i\alpha^d\\
		H'&:= \Upgamma_{r-d}^i \cap \alpha^{-d}\Upgamma_{r-d}^i\alpha^d,
	\end{align*}
	with each intersection taken inside of $\SL_2(\Q)$.  We claim that $H=H'$ inside $\Upgamma_{r-d}^i$.
	Indeed, as $\Upgamma_{r}^i\subseteq \Upgamma_{r-d}^i$, the inclusion $H\subseteq H'$ is clear.
	For the reverse inclusion, if $\gamma:=\left(\begin{smallmatrix} * & * \\ x & *\end{smallmatrix}\right)\in \Upgamma_{r-d}^i$,
	then we have $\alpha^{-d}\gamma\alpha^d = \left(\begin{smallmatrix} * & * \\ p^{-d}x & *\end{smallmatrix}\right)$,
	so if this lies in $\Upgamma_{r-d}^i$ we must have $x\equiv 0\bmod p^r$ and hence $\gamma\in \Upgamma_r^i$.
	We conclude that the coset spaces $H\backslash\Upgamma_{r-d}^i$ and $H'\backslash\Upgamma_{r-d}^i$
	are equal.  On the other hand, for {\em any} commeasurable subgroups $\Upgamma,\Upgamma'$ of a group $G$
	and any $g$ in the commeasurator of $\Upgamma$ in $G$,
	an elementary computation shows that we have a bijection of coset spaces
	\begin{align*}
		(\Upgamma'\cap g^{-1}\Upgamma g )\backslash \Upgamma' \simeq \Upgamma\backslash\Upgamma g\Upgamma'
	\end{align*}
	via $(\Upgamma'\cap g^{-1}\Upgamma g)\gamma\mapsto \Upgamma g\gamma$.
	Applying this with $g=\alpha^d$ in our situation and using the 
	decomposition
	\begin{equation*}
				\Upgamma_{r-d}^i \alpha^d \Upgamma_{r-d}^i = \coprod_{j=0}^{p^{d}-1} \Upgamma_{r-d}^i\begin{pmatrix} 1 & j \\ 0 & p^{d}\end{pmatrix}
	\end{equation*}
	(see, e.g. \cite[proposition 3.36]{Shimura}), we deduce that we also have
	\begin{equation}\label{disjointHecke} 
		\Upgamma_r^i \alpha^d \Upgamma_{r-d}^i = \coprod_{j=0}^{p^{d}-1} \Upgamma_r^i\begin{pmatrix} 1 & j \\ 0 & p^{d}\end{pmatrix}.
	\end{equation}
	Writing $U:S_k(\Upgamma_r^i)\rightarrow S_k(\Upgamma_{r-d}^i)$ for the ``Hecke operator" given by
	(e.g. \cite[\S3.4]{Ohta1})
	$\Upgamma_r^i \alpha^d \Upgamma_{r-d}^i$, an easy computation using \ref{disjointHecke} shows that the composite
	\begin{equation*}
		\xymatrix{
			S_k(\Upgamma_r^i) \ar[r]^-{U} & S_{k}(\Upgamma_{r-d}^i) \ar@{^{(}->}[r] & S_k(\Upgamma_r^i) 
		}
	\end{equation*}
	coincides with $U_p^d$ on $q$-expansions.  By the $q$-expansion principle, we deduce that $U_p^d$ on $S_k(\Upgamma_r^i)$
	indeed factors through the subspace $S_k(\Upgamma_{r-d}^i)$, as desired.	
\end{proof}

For each integer $i$ and any character $\varepsilon:(\Z/Np^i\Z)^{\times}\rightarrow \Qbar^{\times}$, we denote by
$S_2(\Upgamma_i,\varepsilon)$
the $\H_i$-stable subspace of weight 2 cusp forms for $\Upgamma_i$ over $\Qbar$ 
on which the diamond operators act through $\varepsilon(\cdot)$. 
Define
\begin{equation}
	\o{V}_r := \bigoplus_{i=1}^r\bigoplus_{\varepsilon } S_2(\Upgamma_i,\varepsilon)
	\label{VrDef}
\end{equation}
where the inner sum is over all Dirichlet characters defined modulo $Np^i$ whose $p$-parts are {\em primitive}
({\em i.e.} whose conductor has $p$-part exactly $p^i$).
We view $\o{V}_r$ as a $\Qbar$-subspace of $S_2(\Upgamma_r)$ in the usual way
({\em i.e.} via the embeddings $\iota_{\id}$).  
We define $\o{V}_r^*$ as the direct sum (\ref{VrDef}), but viewed
as a subspace of $S_2(\Upgamma_r)$ via the ``nonstandard" embeddings
$\iota_{\alpha^{r-i}}:S_2(\Upgamma_i)\rightarrow S_2(\Upgamma_r)$.

As in \cite[2.5.17]{CaisHida1}, we write $f'$
for the idempotent of $\Z_{(p)}[\F_p^{\times}]$ corresponding to ``projection away from the trivial $\F_p^{\times}$-eigenspace;"
explicitly, we have
\begin{equation}
	f':=1 - \frac{1}{p-1}\sum_{g\in \F_p^{\times}} g.\label{projaway}
\end{equation} 
We set $h':=(p-1)f'$, so that $h'^2 = (p-1)h'$ and define endomorphisms of $S_2(\Upgamma_r)$:
\begin{equation}
	U_r^*:=h'\circ (U_p^*)^{r+1} = (U_p^*)^{r+1}\circ h'\quad\text{and}\quad
	U_r:=h'\circ (U_p)^{r+1} = (U_p)^{r+1}\circ h'.
	\label{UrDefinition}
\end{equation}

\begin{corollary}\label{UpProjection}
	As subspaces of $S_2(\Upgamma_r)$ we have $w_r(\o{V}_r^*)=\o{V}_r$.
	The space $\o{V}_r$ $($respectively $\o{V}_r^*$$)$ is naturally an $\H_r$ $($resp. $\H_r^*$$)$-stable 
	subspace of $S_2(\Upgamma_r)$, and admits a canonical descent to $\Q$.
	Furthermore, the endomorphisms 
	$U_r$ and $U_r^*$ of $S_2(\Upgamma_r)$ 
	factor through $\o{V}_r$ and $\o{V}_r^*$, respectively.
\end{corollary}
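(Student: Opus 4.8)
The plan is to treat the four assertions in turn, reducing everything to the two basic inputs already in hand: the level-contraction Lemma~\ref{UpContract} for $U_p$, and the Atkin--Lehner intertwining $w_rT=T^*w_r$ recorded just before this subsection. Throughout I will use that the standard embedding $\iota_{\id}\colon S_2(\Upgamma_i)\hookrightarrow S_2(\Upgamma_r)$ is the identity on $q$-expansions at $\infty$, so that (by the $q$-expansion principle on the geometrically connected curve $X_r$) every operator in $\H_r$ commutes with $\iota_{\id}$ and the Atkin operator $U_p$ preserves each $S_2(\Upgamma_i)$ for $i\ge 1$. Consequently each summand $\iota_{\id}(S_2(\Upgamma_i,\varepsilon))$ is a single diamond-eigenspace inside $S_2(\Upgamma_r)$ whose $p$-part of nebentypus has conductor exactly $p^i$; since distinct pairs $(i,\varepsilon)$ give distinct characters of $(\Z/Np^r\Z)^\times$, the sum defining $\o{V}_r$ is direct, and it is $\H_r$-stable because $T_\ell,U_\ell,U_p$ preserve level and nebentypus while the diamonds act by scalars. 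This settles the $\H_r$-stability of $\o{V}_r$.

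Next I would establish $w_r(\o{V}_r^*)=\o{V}_r$. The key computation is the matrix identity $\tau_r=\alpha^{r-i}\tau_i$ for $\tau_r=\left(\begin{smallmatrix}0&-1\\ Np^r&0\end{smallmatrix}\right)$, together with $\tau_i\alpha^{-(r-i)}=p^{-(r-i)}\tau_r$; since scalar matrices act trivially in weight $2$, these give the clean relations $w_r\circ\iota_{\alpha^{r-i}}=\iota_{\id}\circ w_i$ and $w_r\circ\iota_{\id}=\iota_{\alpha^{r-i}}\circ w_i$ on $S_2(\Upgamma_i)$, where $w_i=(\cdot)|_{\tau_i}$ is the level-$Np^i$ Atkin--Lehner operator. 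As $w_i$ sends $S_2(\Upgamma_i,\varepsilon)$ to $S_2(\Upgamma_i,\varepsilon^{-1})$ and the condition ``$p$-part of conductor exactly $p^i$'' is invariant under $\varepsilon\mapsto\varepsilon^{-1}$, the operator $w_i$ permutes the summands of the $i$-th layer, whence $w_r(\o{V}_r^*)=\bigoplus_i\iota_{\id}(w_i(\bigoplus_\varepsilon S_2(\Upgamma_i,\varepsilon)))=\o{V}_r$. Applying $w_r$ (invertible, with $w_r^2=\langle-1\rangle$) then gives $w_r(\o{V}_r)=\o{V}_r^*$, and conjugating the $\H_r$-stability of $\o{V}_r$ by $w_r$ via $w_rT=T^*w_r$ yields the $\H_r^*$-stability of $\o{V}_r^*$; this also shows the sum defining $\o{V}_r^*$ is direct.

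For the descent to $\Q$, I would note that $\Gal(\Qbar/\Q)$ acts semilinearly on $S_2(\Upgamma_i)=S_2(\Upgamma_i;\Q)\otimes_\Q\Qbar$, permuting diamond-eigenspaces by $S_2(\Upgamma_i,\varepsilon)\mapsto S_2(\Upgamma_i,\varepsilon^\sigma)$. As conductor is a Galois invariant, the sum over all $\varepsilon$ with $p$-part of conductor exactly $p^i$ is Galois-stable, hence defined over $\Q$; since $\iota_{\id}$ and $\iota_{\alpha^{r-i}}$ are induced by the $\Q$-rational degeneracy maps among the $X_\bullet$, both $\o{V}_r$ and $\o{V}_r^*$ descend to $\Q$.

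Finally, for the factorization of $U_r$ and $U_r^*$, decompose $S_2(\Upgamma_r)=\bigoplus_{i=0}^r V^{[i]}$ by the exact $p$-conductor of the nebentypus, so that $V^{[i]}\subseteq\iota_{\id}(S_2(\Upgamma_r^i))$. For $i\ge 1$, Lemma~\ref{UpContract} gives $U_p^{\,r-i}(V^{[i]})\subseteq\iota_{\id}(S_2(\Upgamma_i))$; since $U_p$ commutes with the diamonds it preserves the exact conductor, so applying the remaining level-preserving powers yields $(U_p)^{r+1}(V^{[i]})\subseteq\bigoplus_{\mathrm{cond}_p(\varepsilon)=p^i}\iota_{\id}(S_2(\Upgamma_i,\varepsilon))\subseteq\o{V}_r$. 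For $i=0$ the image stays in $V^{[0]}$ (forms with trivial $p$-nebentypus), which is annihilated by $f'$. As $f'$ is built from the diamonds $\langle g\rangle$, $g\in\F_p^\times$, it preserves $\o{V}_r$, so $\im(U_r)=(p-1)f'\big((U_p)^{r+1}S_2(\Upgamma_r)\big)\subseteq\o{V}_r$. The statement for $U_r^*$ follows by conjugating with $w_r$: since $w_r\langle g\rangle w_r^{-1}=\langle g^{-1}\rangle$ and $g\mapsto g^{-1}$ permutes $\F_p^\times$ we have $w_rf'w_r^{-1}=f'$, while $w_rU_pw_r^{-1}=U_p^*$, so $U_r^*=w_rU_rw_r^{-1}$ and $\im(U_r^*)=w_r(\im U_r)\subseteq w_r(\o{V}_r)=\o{V}_r^*$. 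I expect this last step to be the main obstacle: one must track the nebentypus carefully through the level contraction so as to land in the exact-conductor pieces constituting $\o{V}_r$ (rather than the larger conductor-dividing spaces $\iota_{\id}(S_2(\Upgamma_r^i))$), and verify that $f'$ simultaneously kills the $i=0$ layer and stabilizes $\o{V}_r$.
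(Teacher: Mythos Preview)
Your proof is correct and follows essentially the same approach as the paper: the relation $w_r\circ\iota_{\alpha^{r-i}}=\iota_{\id}\circ w_i$ (together with $w_i$ interchanging $\varepsilon$ and $\varepsilon^{-1}$) gives the first assertion, $\H_r$-stability of each $S_2(\Upgamma_i,\varepsilon)$ gives the second (with the $\H_r^*$-stability of $\o{V}_r^*$ deduced via $w_rT=T^*w_r$), Galois-invariance of conductors gives the descent, and the factorization of $U_r$, $U_r^*$ follows from Lemma~\ref{UpContract} combined with the fact that $h'$ has image in $\bigoplus_{i\ge 1}S_2(\Upgamma_r^i)$. The only cosmetic difference is that the paper applies $h'$ first and then $U_p^{r+1}$, whereas you decompose by exact $p$-conductor and handle the $i=0$ piece separately, but these are the same argument.
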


\begin{proof}
	The first assertion follows from the relation $w_r\circ \iota_{\alpha^{r-i}}=\iota_{\id}\circ w_i$
	as maps $S_2(\Upgamma_i)\rightarrow S_2(\Upgamma_r)$, together with the fact that $w_i$ on $S_2(\Upgamma_i)$
	carries $S_2(\Upgamma_i,\varepsilon)$ isomorphically onto $S_2(\Upgamma_i,\varepsilon^{-1})$.
	The $\H_r$-stability of $\o{V}_r$ is clear as each of $S_2(\Upgamma_i,\varepsilon)$ is an $\H_r$-stable subspace
	of $S_2(\Upgamma_r)$; that $\o{V}_r^*$ is $\H_r^*$-stable follows from this and the comutation
	relation $T^* w_r = w_r T$  \cite[Proposition 2.2.4]{CaisHida1}.  That $\o{V}_r$  and $\o{V}_r^*$
	admit canonical descents to $\Q$ is clear, as $\scrG_{\Q}$-conjugate Dirichlet characters have equal
	conductors.  The final assertion concerning the endomorphisms $U_r$ and $U_r^*$
	follows easily from Lemma \ref{UpContract}, 
	using the fact that $h':S_2(\Upgamma_r)\rightarrow S_2(\Upgamma_r)$
	has image contained in $\bigoplus_{i=1}^r S_k(\Upgamma_r^i)$.
\end{proof}

\begin{definition}
	We denote by $V_r$ and $V_r^*$ the canonical descents to $\Q$ of $\o{V}_{r}$
	and $\o{V}_r^*$, respectively.  
\end{definition}

Following \cite[Chapter \Rmnum{3}, \S1]{MW-Iwasawa} and \cite[\S2]{Tilouine},
we recall the construction of certain ``good" quotient abelian varieties of $J_r$ whose
cotangent spaces are naturally identified with $V_r$ and $V_r^*$.  In what follows,
we will make frequent use of the following elementary result:  

\begin{lemma}\label{LieFactorization}
	Let $f:A\rightarrow B$ be a homomorphism of commutative group varieties 
	over a field $K$ of characteristic $0$.  Then:
\begin{enumerate}
	\item The formation of $\Lie$ and $\Cot$ commutes with the formation of kernels and images: the kernel $($respectively image$)$ of $\Lie(f)$ is canonically
	isomorphic to the Lie algebra of the kernel $($respectively image$)$ of $f$, and 
	similarly for cotangent spaces at the identity.	
	In particular, if $A$ is connected and $\Lie(f)=0$ $($respectively $\Cot(f)=0$$)$ 
	then $f=0$.\label{ExactnessOfLie}
	
	\item Let $i:B'\hookrightarrow B $ be a closed immersion of commutative
	group varieties over $K$ with $B'$ connected.  If $\Lie(f)$ factors through $\Lie(i)$
	then $f$ factors $($necessarily uniquely$)$ through $i$. 
	\label{InclOnLie}
	
	\item Let $j:A\twoheadrightarrow A''$ be a surjection of commutative 
	group varieties over $K$ with connected kernel.  If $\Cot(f)$
	factors through $\Cot(j)$ then $f$ factors $($necessarily uniquely$)$ through $j$.
	\label{LieFactorizationSurj} 
\end{enumerate}
\end{lemma}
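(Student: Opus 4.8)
The plan is to establish (1) as the substantive input and then deduce (2) and (3) as formal consequences of its vanishing corollary. For (1), the key point is that over a field $K$ of characteristic zero every group scheme of finite type is smooth (Cartier's theorem), so $A$, $B$, $\ker f$, and $\im f$ are all smooth group varieties. First I would record that $\Lie$ is left exact: writing $\Lie G=\ker(G(K[\epsilon])\to G(K))$ shows that $\Lie$ commutes with finite limits, whence $\Lie(\ker f)=\ker(\Lie f)$ since $\ker f=A\times_B \{e\}$. For images I would factor $f$ as $A\twoheadrightarrow \im f\hookrightarrow B$ and use that a surjective homomorphism of smooth group varieties is faithfully flat --- hence smooth, by homogeneity and generic flatness --- and therefore surjective on tangent spaces at the identity; combined with left exactness applied to $0\to \ker f\to A\to \im f\to 0$ this gives exactness of $0\to \Lie(\ker f)\to \Lie(A)\to \Lie(\im f)\to 0$, i.e. $\im(\Lie f)=\Lie(\im f)$. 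Passing to $K$-linear duals (an exact, contravariant operation) and using $\Cot=\Lie^{\vee}$ yields the corresponding identifications for $\Cot$. The ``in particular'' clause then follows: if $A$ is connected and $\Lie(f)=0$ then $\Lie(\im f)=0$, so $\im f$ is smooth of dimension $0$; being the image of the connected $A$ it is connected, and a connected, $0$-dimensional smooth group variety with a rational identity point is trivial, so $f=0$. The case $\Cot(f)=0$ is identical, since $\Cot(f)=0\iff \Lie(f)=0$.

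For (2), I would form the quotient $q\colon B\to B/B'$ (which exists as a smooth group variety, $B'$ being a closed subgroup), so that $\ker q=B'$ and $f$ factors through $i$ exactly when $q\circ f=0$. By (1) applied to $q$ we have $\Lie(B')=\Lie(\ker q)=\ker(\Lie q)$, and by hypothesis $\Lie(f)$ lands in $\Lie(i)=\Lie(B')$; hence $\Lie(q)\circ\Lie(f)=\Lie(q\circ f)=0$. Invoking the vanishing criterion of (1) for $q\circ f$ (whose source is connected) forces $q\circ f=0$, so $f$ factors through $\ker q=B'$, and the factorization is unique because $i$ is a monomorphism.

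For (3), I would argue dually through $\Cot$. Let $\kappa\colon \ker j\hookrightarrow A$ denote the inclusion; since $j$ is a surjection, $A''\simeq A/\ker j$, and $f$ factors through $j$ precisely when $f\circ\kappa=0$. By (1), $\Lie(j)$ is surjective with kernel $\Lie(\ker j)$, so $\Cot(j)=(\Lie j)^{\vee}$ is injective with image $\Ann(\Lie(\ker j))$. The hypothesis that $\Cot(f)$ factors through the injection $\Cot(j)$ thus reads $\im(\Cot f)\subseteq \Ann(\Lie(\ker j))$, which by the annihilator identity $\Ann(\im \phi^{\vee})=\ker\phi$ (applied to $\phi=\Lie f$) is equivalent to $\Lie(\ker j)\subseteq \ker(\Lie f)=\Lie(\ker f)$. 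Consequently $\Lie(f\circ\kappa)=0$, and since $\ker j$ is connected the vanishing criterion of (1) gives $f\circ\kappa=0$; thus $f$ kills $\ker j$ and descends uniquely to $A''$, the uniqueness coming from $j$ being an epimorphism.

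The main obstacle is entirely concentrated in (1): the only nonformal ingredient is the right-exactness of $\Lie$, i.e. the surjectivity of $\Lie(f)$ onto $\Lie(\im f)$, which rests on the characteristic-zero fact that surjective homomorphisms of group varieties are smooth. Once that is in hand, parts (2) and (3) are bookkeeping, the only genuine subtlety being the careful translation in (3) between ``$\Cot(f)$ factors through $\Cot(j)$'' and ``$\Lie(\ker j)\subseteq \ker(\Lie f)$'' via linear duality, together with the consistent use of the connectedness hypotheses needed to invoke the vanishing criterion.
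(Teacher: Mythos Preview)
Your argument is correct and follows the same approach as the paper: for (1), use that in characteristic zero every quotient map of group varieties is smooth (its kernel being automatically smooth), giving right-exactness of $\Lie$ and hence exactness of both $\Lie$ and $\Cot$; the vanishing criterion follows since $\im f$ is then smooth, zero-dimensional, and (when $A$ is connected) connected, hence a point. The paper's treatment of (2) and (3) is a single sentence (``follow immediately by using universal mapping properties''), so your spelled-out versions---forming the quotient $q:B\to B/B'$ for (2) and restricting along $\kappa:\ker j\hookrightarrow A$ for (3), then applying the vanishing criterion---are exactly the intended content.

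One small point worth flagging: in your argument for (2) you invoke the vanishing criterion for $q\circ f:A\to B/B'$ by asserting parenthetically that its source is connected, but the stated hypothesis in (2) is only that $B'$ is connected, not $A$. Connectedness of $A$ is genuinely needed here (e.g.\ take $A$ a nontrivial finite \'etale group with $B'=\{e\}\subset B=\mathbf{G}_m$), so this is either implicit in the paper's use of the word ``variety'' or a slight imprecision in the hypotheses of the lemma itself. In every application within the paper $A$ is an abelian variety, so the point is harmless in context; your argument for (3), by contrast, uses exactly the stated connectedness of $\ker j$ and is clean as written.
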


\begin{proof}
	The key point is that because $K$ has characteristic zero, the functors $\Lie(\cdot)$
	and $\Cot(\cdot)$ on the category of commutative group schemes are {\em exact}.
	Indeed, since $\Lie(\cdot)$ is always left 
	exact, the exactness of $\Lie(\cdot)$ follows easily from the fact that any 
	quotient mapping 
	$G\twoheadrightarrow H$
	of group varieties in characteristic zero is smooth (as the kernel is
	a group variety over a field of characteristic zero and hence automatically smooth),
	so the induced map on Lie algebras is a surjection.
	By similar reasoning 
	one shows that the right exact $\Cot(\cdot)$ is likewise exact,
	and the first part of (\ref{ExactnessOfLie}) follows easily.  In particular,
	if $\Lie(f)$ is the zero map then $\Lie(\im(f))=0$, so $\im(f)$ is zero-dimensional.
	Since it is  also smooth, it must be \'etale.  Thus, if $A$ is connected,
	then $\im(f)$ is both connected and \'etale, whence it is a single point; by 
	evaluation of $f$ at the identity of $A$ we conclude that $f=0$.
	The assertions (\ref{InclOnLie}) and (\ref{LieFactorizationSurj})
	now follow immediately by using universal mapping properties.
\end{proof}

To proceed with the construction of good quotients of $J_r$, we write
$Y_r:=X_1(Np^r; Np^{r-1})$ for the canonical model over $\Q$ with rational cusp at $i\infty$
of the modular curve corresponding to the congruence subgroup $\Upgamma_{r+1}^r$ 
({\em cf.} \cite[Remark 2.2.18]{CaisHida1}),
and consider the diagrams of ``degeneracy mappings" of curves over $\Q$ for $i=1,2$
\begin{equation}
\addtocounter{equation}{1}
	\xymatrix{
		{X_r} \ar[r]^-{\pi} & {Y_{r-1}} \ar[r]^-{\pi_i} & {X_{r-1}}
	}
	\tag{$\arabic{section}.\arabic{subsection}.\arabic{equation}_i$}
	\label{DegeneracyDiag}
\end{equation}
where $\pi$ and $\pi_2$ are induced by the canonical inclusions 
of subgroups $\Upgamma_{r} \subseteq \Upgamma_r^{r-1}\subseteq \Upgamma_{r-1}$
via the upper-halfplane self map $\tau\mapsto \tau$, and $\pi_1$ is induced
by the inclusion $\alpha^{-1} \Upgamma_r^{r-1} \alpha \subseteq \Upgamma_{r-1}$
via the mapping $\tau\mapsto p\tau$ where $\alpha$ is as in Lemma \ref{MFtraceLem};
see \cite[2.2.9]{CaisHida1} for a moduli-theoretic description of these maps.
We note that the compositions $\pi\circ \pi_2$ and $\pi\circ\pi_1$
coincide with the degeneracy maps $\rho$ and $\sigma$, respectively \cite[Remark 2.2.18]{CaisHida1}.

These mappings covariantly (respectively contravariantly) induce
mappings on the associated Jacobians via Albanese (respectively Picard) functoriality.
Writing $JY_r:=\Pic^0_{Y_r/\Q}$ and setting $K_1^i:=JY_1$ for $i=1,2$
we inductively define abelian subvarieties $\iota_r^i:K_r^i\hookrightarrow JY_r$ and abelian variety quotients
$\alpha_r^i:J_r\twoheadrightarrow B_r^i$ as follows:
\begin{equation}
	\addtocounter{equation}{1}
	B_{r-1}^i:= J_{r-1}/\Pic^0(\pi)(K_{r-1}^i) 
	\qquad\text{and}\qquad
	K_{r}^i:=\ker(JY_r \xrightarrow{\alpha_{r-1}^i\circ\Alb(\pi_i)}  B_{r-1}^i)^0
	\tag{$\arabic{section}.\arabic{subsection}.\arabic{equation}_i$}
	\label{BrDef}
\end{equation}
for $r\ge 2$, $i=1,2$, with $\alpha_{r-1}^i$ and $\iota_r^i$ the obvious mappings;
here, $(\cdot)^0$ denotes the connected component of the identity of $(\cdot)$.
As in \cite[\S 3.2]{OhtaEichler}, we have modified Tilouine's construction
\cite[\S2]{Tilouine} so that the kernel of $\alpha_r^i$ is connected; {\em i.e.} is an abelian
subvariety of $J_r$ ({\em cf.} Remark \ref{TilouineReln}).
Note that we have a commutative diagram of abelian varieties over $\Q$
for $i=1,2$
\begin{equation}
\addtocounter{equation}{1}
\begin{gathered}
	\xymatrix@C=50pt@R=26pt{
		& {J_{r-1}}\ar@{->>}[r]^-{\alpha_{r-1}^i}  & 
		{B_{r-1}^i} \ar@{=}[d] \\
		{K_r^i} \ar@{^{(}->}[r]^-{\iota_r^i}\ar@{=}[d] & 
		{JY_r} \ar[r]^-{\alpha_{r-1}^i\circ \Alb(\pi_i)} \ar[d]|-{\Pic^0(\pi)}\ar[u]|-{\Alb(\pi_i)} & 
		B_{r-1}^i \\
		K_r^i \ar[r]_-{\Pic^0(\pi)\circ \iota_r} & {J_r} \ar@{->>}[r]_-{\alpha_r^i} & {B_r^i}
	}
\end{gathered}
\tag{$\arabic{section}.\arabic{subsection}.\arabic{equation}_i$}
\label{BrDefiningDiag}
\end{equation}
with bottom two horizontal rows that are complexes.  

\begin{warning}\label{GoodQuoWarning}
While the bottom row of (\ref{BrDefiningDiag}) is exact in the middle {\em by definition} of $\alpha_r^i$,
the central row is {\em not} exact in the middle: it follows from
the fact that $\Alb(\pi_i)\circ\Pic^0(\pi_i)$ is multiplication by $p$
on $J_{r-1}$ that the component group of the kernel of
$\alpha_{r-1}^i\circ\Alb(\pi_i):JY_r\rightarrow  B_{r-1}^i$ is nontrivial
with order divisible by $p$.  
Moreover, there is no mapping $B_{r-1}^i\rightarrow B_r^i$
which makes the diagram (\ref{BrDefiningDiag}) commute.
\end{warning}

In order to be consistent with the literature, we adopt the following convention:
\begin{definition}\label{BalphDef}
	We set $B_r:=B_r^2$ and $B_r^*:=B_r^1$, with $B_r^i$ defined inductively by (\ref{BrDef}).
 	We likewise set $\alpha_r:=\alpha_r^2$ and $\alpha_r^*:=\alpha_r^1$.
\end{definition} 

\begin{remark}\label{TilouineReln}
	We briefly comment on the relation between our quotient $B_r$
	and the ``good" quotients of $J_r$ considered by Ohta \cite{OhtaEichler},
	by Mazur-Wiles \cite{MW-Iwasawa}, and by Tilouine \cite{Tilouine}.
	Recall \cite[\S2]{Tilouine} that Tilouine constructs\footnote{The notation Tilouine 
	uses for his quotient is the same as the notation we have used for our (slightly modified)
	quotient.  To avoid conflict, we have therefore chosen to 
	alter his notation.} an abelian variety quotient
	$\alpha_r':J_r\twoheadrightarrow B_r'$ via an inductive
	procedure nearly identical to the one used to define $B_r=B_r^1$:
	one sets $K_1':=JY_1$, and for $r\ge 2$ defines
\begin{equation*}
		B_{r-1}':= J_{r-1}/\Pic^0(\pi)(K_{r-1}') 
		\qquad\text{and}\qquad
		K_{r}':=\ker(JY_r \xrightarrow{\alpha_{r-1}'\circ\Alb(\pi_2)}  B_{r-1}').
\end{equation*}
	Using the fact that the 
	formation of images and identity components commutes, 
	one shows via a straightforward induction argument that 
	$\alpha_r:J_r\twoheadrightarrow B_r$
	identifies $B_r$ with $J_r/(\ker\alpha_r')^0$; in particular,
	our $B_r$ is the same as Ohta's \cite[\S3.2]{OhtaEichler}
	and Tilouine's quotient $\alpha_r':J_r\rightarrow B_r'$ uniquely
	factors through $\alpha_r$ via an isogeny $B_r\twoheadrightarrow B_r'$
	which has degree divisible by $p$ by Warning \ref{GoodQuoWarning}.
	Due to this fact, it is {\em essential} for our purposes to work with $B_r$ rather than $B_r'$.
	Of course, following \cite[3.2.1]{OhtaEichler}, we could have simply 
	{\em defined} $B_r$ as $J_r/(\ker\alpha_r')^0$,
	but we feel that the construction we have given is more natural.
	On the other hand, we remark that $B_r$
	is naturally a quotient of the ``good" quotient $J_r\twoheadrightarrow A_r$ constructed
	by Mazur-Wiles in \cite[Chapter \Rmnum{3}, \S1]{MW-Iwasawa},
	and the kernel of the corresponding surjective homomorphism 
	$A_r\twoheadrightarrow B_r$
	is isogenous to $J_0\times J_0$.
\end{remark}

\begin{proposition}\label{BrCotIden}
	Over $F:=\Q(\mu_{Np^r})$, the automorphism $w_r$ of ${J_r}_{F}$ induces
	an isomorphism of quotients ${B_r}_{F}\simeq {B_r^*}_{F}$.	
	The abelian variety $B_r$ $($respectively $B_r^*$$)$ is the unique quotient of $J_r$ by a $\Q$-rational 
	abelian subvariety with the property that
	the induced map on cotangent spaces 
	\begin{equation*}
		\xymatrix{
			{\Cot(B_r)} \ar@{^{(}->}[r]_-{\Cot(\alpha_r)} &  
			{\Cot(J_r)\simeq S_2(\Upgamma_r;\Q)}
			}
			\quad\text{respectively}\quad
					\xymatrix{
			{\Cot(B_r^*)} \ar@{^{(}->}[r]_-{\Cot(\alpha_r^*)} &  
			{\Cot(J_r)\simeq S_2(\Upgamma_r;\Q)}
			}
	\end{equation*}
	has image precisely $V_r$ $($respectively $V_r^*$$)$. 
	In particular, there are canonical actions
	of the Hecke algebras\footnote{We must warn the reader that
	Tilouine \cite{Tilouine} writes $\H_r(\Z)$ for the
	$\Z$-subalgebra of $\End(J_r)$ generated by the Hecke operators
	acting via the $(\cdot)^*$-action ({\em i.e.} by ``Picard" functoriality)
	whereas our $\H_r(\Z)$ is defined using the $(\cdot)_*$-action.
	This discrepancy is due primarily to the fact that Tilouine 
	identifies {\em tangent} spaces of modular abelian varieties
	with spaces of modular forms, rather than cotangent spaces as is our convention.  
	Our notation for regarding Hecke algebras
	as sub-algebras of $\End(J_r)$ agrees with that of Mazur-Wiles
	\cite[Chapter \Rmnum{2}, \S5]{MW-Iwasawa}, \cite[\S7]{MW-Hida} and 
	Ohta \cite[3.1.5]{OhtaEichler}.
	}
	$\H_r(\Z)$ on $B_r$ and $\H_r^*(\Z)$ on $B_r^*$ for which $\alpha_r$ and $\alpha_r^*$
	are equivariant.  
\end{proposition}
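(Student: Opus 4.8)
The plan is to reduce all three assertions to a single computation of a cotangent space, since everything else is formal linear algebra in characteristic zero. By Lemma \ref{LieFactorization} the functor $\Cot(\cdot)$ is exact on commutative group varieties over a field of characteristic zero, so for any quotient $\beta:J_r\twoheadrightarrow C$ by an abelian subvariety the map $\Cot(\beta)$ is injective with image $\ker(\Cot(J_r)\to\Cot(\ker\beta))$, and this image determines $C$ together with $\beta$ up to unique isomorphism: if two such quotients have the same cotangent image, then by Lemma \ref{LieFactorization}(3) each projection factors through the other, and the resulting maps are mutually inverse because a self-map of an abelian variety inducing the identity on the Lie algebra is the identity (Lemma \ref{LieFactorization}(1) applied to $f-\id$). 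This simultaneously gives the uniqueness assertion and shows that any endomorphism of $J_r$ preserving the cotangent image $V_r$ (resp. $V_r^*$) descends canonically to $B_r$ (resp. $B_r^*$). Thus the substance of the proposition is the claim that $\im\Cot(\alpha_r)=V_r$ and $\im\Cot(\alpha_r^*)=V_r^*$ inside $\Cot(J_r)\simeq S_2(\Upgamma_r;\Q)$.

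I would prove this by induction on $r$ using the defining diagram (\ref{BrDefiningDiag}). First I would translate the geometric maps into Hecke-theoretic operations on cusp forms: under the identifications $\Cot(J_\bullet)\simeq S_2$ and $\Cot(JY_\bullet)\simeq S_2$ of the relevant levels, Picard functoriality $\Pic^0(\pi)$ induces the trace $\tr_\pi$ on forms, while Albanese functoriality $\Alb(\pi_i)$ induces the pullback embeddings ($\iota_{\id}$ for $\pi_2$ and $\iota_\alpha$ for $\pi_1$, with $\alpha$ as in Lemma \ref{MFtraceLem}); these are compatibilities of the type recorded in Proposition \ref{intcompare}. Since the bottom row of (\ref{BrDefiningDiag}) is exact in the middle, $\ker\alpha_r=\Pic^0(\pi)(K_r)$, and using that $\Cot(\cdot)$ is insensitive to the finite quotient $\ker(\cdots)/\ker(\cdots)^0$ flagged in Warning \ref{GoodQuoWarning}, the kernel description of $K_r$ in (\ref{BrDef}) leads to
\begin{equation*}
	\im\Cot(\alpha_r)=\{\,\eta\in S_2(\Upgamma_r)\ :\ \tr_\pi(\eta)\in\iota_{\id}(\im\Cot(\alpha_{r-1}))\,\},
\end{equation*}
with $\iota_\alpha$ replacing $\iota_{\id}$ in the $B_r^*$ case. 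Feeding in the inductive hypothesis $\im\Cot(\alpha_{r-1})=V_{r-1}$ then reduces everything to the purely modular statement that the right-hand side equals $V_r$ (and its $\iota_{\alpha^{r-i}}$-analogue equals $V_r^*$), the base case $r=1$ being checked directly from the construction of $B_1^i$.

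The main obstacle is precisely this last combinatorial identification. The inclusion $V_r\subseteq\im\Cot(\alpha_r)$ is the easy direction: forms in the new-at-$p$-level-$r$ summand of $V_r$ have character of exact $p$-conductor $p^r$, so their trace to the intermediate level (whose nebentypus is defined modulo $Np^{r-1}$) vanishes, while the oldform summand $\iota_{\id}(V_{r-1})$ is carried back into itself by $\tr_\pi$ via the relation $\tr\circ\iota=[\Upgamma_r^{r-1}:\Upgamma_r]\cdot\id$. The reverse inclusion is where the real work lies: one must show that no genuine oldform, and no form whose character is unramified at $p$, can satisfy the trace condition unless it already lies in $V_r$. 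Here I would argue character by character, using the $U_p$-level-lowering of Lemma \ref{UpContract} together with the explicit trace identity of Lemma \ref{MFtraceLem} (which is adapted to the $\iota_{\alpha^{r-i}}$-embeddings and to $U_p^*$ needed for $B_r^*$), supplemented by a dimension count against $\dim V_r$ from (\ref{VrDef}) to force equality.

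Finally, the remaining assertions follow formally from the cotangent identification. The Atkin--Lehner isomorphism comes from the relation $w_r(\o{V}_r^*)=\o{V}_r$ established in the proof of Corollary \ref{UpProjection}: over $F=\Q(\mu_{Np^r})$ the automorphism $w_r$ of $J_{r,F}$ carries $\im\Cot(\alpha_{r,F}^*)=V_r^*\otimes F$ onto $\im\Cot(\alpha_{r,F})=V_r\otimes F$, so by the factorization principle of the first paragraph it descends to the desired isomorphism $B_{r,F}\simeq B_{r,F}^*$. Likewise, since $V_r$ is $\H_r$-stable and $V_r^*$ is $\H_r^*$-stable by Corollary \ref{UpProjection}, every Hecke operator preserves the relevant cotangent image and therefore descends, yielding the canonical actions of $\H_r(\Z)$ on $B_r$ and of $\H_r^*(\Z)$ on $B_r^*$ for which $\alpha_r$ and $\alpha_r^*$ are equivariant.
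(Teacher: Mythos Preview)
Your overall strategy is sound, and the formal parts—uniqueness via Lemma~\ref{LieFactorization}, the descent of Hecke operators, and the deduction of the $w_r$-isomorphism from the cotangent characterization together with Corollary~\ref{UpProjection}—are correct and essentially match the paper's treatment of those points.

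The substantive difference lies in how the identity $\im\Cot(\alpha_r)=V_r$ is established. The paper does not carry out the induction you sketch; instead it observes that the isogeny $B_r\twoheadrightarrow B_r'$ of Remark~\ref{TilouineReln} induces an isomorphism on cotangent spaces (compatibly with the inclusions into $\Cot(J_r)$), and then simply invokes \cite[Proposition~2.1]{Tilouine}, where the computation $\im\Cot(\alpha_r')=V_r$ is already done. Your inductive formula for $\im\Cot(\alpha_r)$ in terms of $\tr_\pi$ and $\iota_{\id}$ is correct (using that $\Pic^0(\pi)$ has finite kernel, so passing from $K_r$ to its image in $J_r$ does not change cotangent spaces), and your plan for the two inclusions is the right shape—indeed it amounts to reproving Tilouine's result. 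What you call the ``real work'' for the reverse inclusion is genuinely nontrivial; your sketch via character decomposition, Lemma~\ref{UpContract}, and a dimension count is plausible but would need to be written out carefully to be a complete proof.

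There is also a minor difference in the order of arguments for the $w_r$-statement: the paper proves $w_r(\ker\alpha_r)=\ker\alpha_r^*$ directly from the inductive construction (\ref{BrDef}), using the relation $\rho w_r=w_{r-1}\sigma$ of \cite[Proposition~2.2.6]{CaisHida1}, \emph{before} identifying cotangent images. Your route—deduce it from $w_r(V_r^*)=V_r$ and uniqueness—is equally valid and perhaps more conceptual, but it requires the cotangent identification as input rather than yielding it independently.
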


\begin{proof}
	By the construction of $B_r^i$ and the fact that $\pr w_{r} = w_{r-1}\ps$
	as maps ${X_{r}}_F \rightarrow {X_{r-1}}_F$ \cite[Proposition 2.2.6]{CaisHida1}
	the automorphism $w_{r}$ of ${J_r}_F$ carries $\ker(\alpha_r)$
	to $\ker(\alpha_r^*)$ and induces an isomorphsm ${B_r}_F \simeq {B_r^*}_F$ over $F$
	that intertwines the action of $\H_r$ on $B_r$ with $\H_r^*$ on $B_r^*$.
	The isogeny $B_r\twoheadrightarrow B_r'$ of Remark \ref{TilouineReln} induces
	an isomorphism on cotangent spaces, compatibly with the inclusions into
	$\Cot(J_r)$.  Thus, the claimed identification of the image of
	$\Cot(B_r)$ with $V_r$ follows from \cite[Proposition 2.1]{Tilouine}
	(using \cite[Definition 2.1]{Tilouine}).  The claimed uniqueness
	of $J_r\twoheadrightarrow B_r$ follows easily from Lemma \ref{LieFactorization} 
	(\ref{LieFactorizationSurj}). Similarly, since the subspace $V_r$ 
	of $S_2(\Upgamma_r)$ is stable under $\H_r$, we conclude from
	Lemma \ref{LieFactorization} (\ref{LieFactorizationSurj}) that for any 
	$T\in \H_r(\Z)$, the induced morphism $J_r\xrightarrow{T} J_r\twoheadrightarrow B_r$
	factors through $\alpha_r$, and hence that $\H_r(\Z)$ acts on $B_r$
	compatibly (via $\alpha_r$) with its action on $J_r$.
\end{proof}

\begin{lemma}\label{Btower}
	There exist unique morphisms $B_r^*\leftrightarrows B_{r-1}^*$
	of abelian varieties over $\Q$ making
		\begin{equation*}
			\xymatrix{
				{J_{r}} \ar[r]^-{\alpha_r^*} \ar[d]_-{\Alb(\ps)} &{B_r^*} \ar[d] \\
				{J_{r-1}} \ar[r]_-{\alpha_{r-1}^*} & {B_{r-1}^*}
			}\qquad\raisebox{-18pt}{and}\qquad
			\xymatrix{
				{J_{r}} \ar[r]^-{\alpha_r^*} &{B_r^*}  \\
				{J_{r-1}} \ar[u]^-{\Pic^0(\pr)} \ar[r]_-{\alpha_{r-1}^*} & {B_{r-1}^*}\ar[u]
			}
		\end{equation*}
	commute; these maps are moreover $\H_r^*(\Z)$-equivariant.
	By a slight abuse of notation, we will simply write $\Alb(\ps)$ and $\Pic^0(\pr)$
	for the induced maps on $B_r^*$ and $B_{r-1}^*$, respectively.
\end{lemma}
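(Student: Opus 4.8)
The plan is to obtain each arrow by factoring a composite of degeneracy and quotient maps through the appropriate surjection, checking the factorization criterion on cotangent spaces. \emph{Uniqueness} is immediate: as $\alpha_r^*$ and $\alpha_{r-1}^*$ are surjective, a map $B_r^*\to B_{r-1}^*$ (resp. $B_{r-1}^*\to B_r^*$) is determined by its precomposition with $\alpha_r^*$ (resp. $\alpha_{r-1}^*$). For \emph{existence} I would invoke Lemma \ref{LieFactorization} (\ref{LieFactorizationSurj}): since our construction arranges $\ker\alpha_r^*$ to be a connected abelian subvariety of $J_r$, a homomorphism out of $J_r$ (resp. $J_{r-1}$) factors through $\alpha_r^*$ (resp. $\alpha_{r-1}^*$) as soon as the induced map on cotangent spaces factors through $\Cot(\alpha_r^*)$ (resp. $\Cot(\alpha_{r-1}^*)$). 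By Proposition \ref{BrCotIden}, $\Cot(\alpha_r^*)$ is injective with image the subspace $V_r^*\subseteq S_2(\Upgamma_r;\Q)\simeq\Cot(J_r)$, so the entire problem reduces to two inclusions of spaces of cusp forms.

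For the first (Albanese) square, consider $f:=\alpha_{r-1}^*\circ\Alb(\sigma)\colon J_r\to B_{r-1}^*$. Then $\Cot(f)$ has image $\sigma^*(V_{r-1}^*)$, where $\sigma^*=\Cot(\Alb(\sigma))$ is the level-raising pullback attached to the degeneracy map $\sigma\colon\tau\mapsto p\tau$. Concretely $\sigma^*$ is the operator $\iota_\alpha$ for $\alpha$ as in Lemma \ref{MFtraceLem}, and since $\iota_\gamma\circ\iota_\delta=\iota_{\gamma\delta}$ one has $\sigma^*\circ\iota_{\alpha^{(r-1)-i}}=\iota_{\alpha^{r-i}}$. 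Hence $\sigma^*$ carries the $i$-th summand $\iota_{\alpha^{(r-1)-i}}\bigl(\bigoplus_\varepsilon S_2(\Upgamma_i,\varepsilon)\bigr)$ of $V_{r-1}^*$ into the $i$-th summand of $V_r^*$, using the nonstandard embeddings defining $\o{V}_r^*$; thus $\sigma^*(V_{r-1}^*)\subseteq V_r^*$ and the arrow $B_r^*\to B_{r-1}^*$ exists.

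For the second (Picard) square, consider $f:=\alpha_r^*\circ\Pic^0(\rho)\colon J_{r-1}\to B_r^*$, whose cotangent map has image $\rho_*(V_r^*)$ for $\rho_*=\Cot(\Pic^0(\rho))$ the trace $\tr_{r,r-1}$ on differentials. The required inclusion $\tr_{r,r-1}(V_r^*)\subseteq V_{r-1}^*$ is the genuine technical point, and I expect it to be the main obstacle. I would argue summand by summand: on the top piece ($i=r$) the nebentypus $\varepsilon$ has $p$-conductor exactly $p^r$, so its restriction to $\Delta_{r-1}/\Delta_r$ is nontrivial and the averaging built into $\tr_{r,r-1}$ annihilates $S_2(\Upgamma_r,\varepsilon)$; on the lower pieces ($i<r$) one checks that $\tr_{r,r-1}\circ\iota_{\alpha^{r-i}}$ lands in $\iota_{\alpha^{(r-1)-i}}(S_2(\Upgamma_i))$. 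The cleanest route to the latter is to transport it to the already-established pullback statement via Atkin--Lehner duality, using $w_r(\o{V}_r^*)=\o{V}_r$ (Corollary \ref{UpProjection}) together with the relation $\rho w_r=w_{r-1}\sigma$ of \cite[Proposition 2.2.6]{CaisHida1}, with Lemma \ref{MFtraceLem} supplying the interaction of $\tr_{r,r-1}$ with the level-raising operators; this reduction is where the real work lies.

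Finally, \emph{Hecke-equivariance} follows formally from uniqueness. The degeneracy maps $\Alb(\sigma)$ and $\Pic^0(\rho)$ intertwine the $(\cdot)^*$-Hecke actions across levels, and $\alpha_r^*$, $\alpha_{r-1}^*$ are equivariant for $\H_r^*(\Z)$, $\H_{r-1}^*(\Z)$ by Proposition \ref{BrCotIden}. Hence for $T\in\H_r^*(\Z)$ the two maps $B_r^*\to B_{r-1}^*$ obtained by inserting $T$ on either side satisfy the same relation after composing with the surjection $\alpha_r^*$, and so coincide; the same argument applies to the Picard arrow after composing with $\alpha_{r-1}^*$.
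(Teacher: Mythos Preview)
Your proposal is correct and follows the paper's approach almost exactly: reduce via Lemma~\ref{LieFactorization}(\ref{LieFactorizationSurj}) and Proposition~\ref{BrCotIden} to the inclusions $\iota_\alpha(V_{r-1}^*)\subseteq V_r^*$ and $\tr_{r,r-1}(V_r^*)\subseteq V_{r-1}^*$, handle the first by composing nonstandard embeddings, and deduce Hecke-equivariance from surjectivity of $\alpha_r^*$. The only divergence is in the trace step. The paper does not go through Atkin--Lehner; it applies Lemma~\ref{MFtraceLem} directly to compute
\[
\iota_\alpha\circ\tr_{r,r-1}\circ\iota_{\alpha^{r-i}}=
\begin{cases}
\iota_{\alpha^{r-i}}\, pU_p^* & i<r,\\
0 & i=r,
\end{cases}
\]
on $S_2(\Upgamma_i,\varepsilon)$, and then uses injectivity of $\iota_\alpha$ to conclude that $\tr_{r,r-1}(V_r^*)\subseteq V_{r-1}^*$. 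This is quicker than your proposed detour: the relation $\rho w_r=w_{r-1}\sigma$ intertwines trace-along-$\rho$ with trace-along-$\sigma$ (not with a pullback), so reducing to the ``already-established pullback statement'' would still require an extra polarization/duality step you have not spelled out. Your summand-by-summand description (nontrivial character on $\Delta_{r-1}/\Delta_r$ killing the $i=r$ piece, etc.) is exactly what the displayed identity encodes, so you already have the ingredients for the direct route.
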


\begin{proof}
	Under the canonical identification of $\Cot(J_r)\otimes_{\Q}\o{\Q}$ with $S_2(\Upgamma_r)$,
	the mapping on cotangent spaces induced by $\Alb(\ps)$ (respectively
	$\Pic^0(\pr)$) coincides with $\iota_{\alpha}:S_2(\Upgamma_{r-1})\rightarrow S_2(\Upgamma_r)$
	(respectively $\tr_{r,r-1}:S_2(\Upgamma_r)\rightarrow S_2(\Upgamma_{r-1})$).
	As the kernel of $\alpha_r^*:J_r\twoheadrightarrow B_r^*$ is connected by definition,
	thanks to Lemma \ref{LieFactorization} (\ref{LieFactorizationSurj}) it suffices
	to prove that $\iota_{\alpha}$ (respectively $\tr_{r,r-1}$)
	carries $V_{r-1}^*$ to $V_{r}^*$ (respectively $V_{r}^*$ to $V_{r-1}^*$).
	On one hand, the composite 
	$\iota_{\alpha}\circ \iota_{\alpha^{r-1-i}}:S_2(\Upgamma_i,\varepsilon)\rightarrow S_2(\Upgamma_r)$
	coincides with the embedding $\iota_{\alpha^{r-i}}$, and it follows immediately from the
	definition of $V_r^*$ that $\iota_{\alpha}$ carries $V_{r-1}^*$ into $V_r^*$.	
	On the other hand, an easy calculation using (\ref{DualityIdentity}) shows that 
	one has equalities of maps $S_2(\Upgamma_i,\varepsilon)\rightarrow S_2(\Upgamma_r)$
	\begin{equation*}
		\iota_{\alpha}\circ \tr_{r,r-1}\circ \iota_{\alpha^{(r-i)}} = \begin{cases}
			\iota_{\alpha^{(r-i)}}pU_p^* & \text{if}\ i< r \\
			0 & \text{if}\ i=r
		\end{cases}.
	\end{equation*}
	Thus,  
	the image of $\iota_{\alpha}\circ\tr_{r,r-1}:V_r^*\rightarrow S_2(\Upgamma_r)$ 
	is contained in the image of $\iota_{\alpha}:V_{r-1}^*\rightarrow S_2(\Upgamma_r)$;
	since $\iota_{\alpha}$ is injective, we conclude that 
	the image of $\tr_{r,r-1}:V_r^*\rightarrow S_2(\Upgamma_{r-1})$ is contained in $V_{r-1}^*$
	as desired.
\end{proof}

For $f'$ as in (\ref{projaway}), we write ${e^*}':=f'e^*\in \H^*$ and $e':=f'e\in \H$ 
the sub-idempotents of $e^*$ and $e$, respectively, 
corresponding to projection away from the trivial eigenspace of $\mu_{p-1}$.

\begin{proposition}\label{GoodRednProp}	
	The maps $\alpha_r$ and $\alpha_r^*$
	induce isomorphisms of $p$-divisible groups over $\Q$
	\begin{equation}
			{e^*}'J_r[p^{\infty}] \simeq {e^*}'B_r^*[p^{\infty}]\quad\text{and}\quad
			{e}'J_r[p^{\infty}] \simeq {e}'B_r[p^{\infty}],
	\label{OrdBTisoms}
	\end{equation}
	respectively, that are $\H^*$ $($respectively $\H$$)$ equivariant 
	and compatible with change in $r$ via $\Alb(\ps)$ and $\Pic^0(\pr)$
	$($respectively $\Alb(\pr)$ and $\Pic^0(\ps)$$)$.
\end{proposition}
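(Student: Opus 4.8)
The plan is to deduce both isomorphisms from the vanishing of the ordinary part of the $p$-divisible group of the (connected!) kernel of each quotient map. By construction the kernel $A_r^*:=\ker(\alpha_r^*)$ is an $\H_r^*$-stable abelian subvariety of $J_r$ over $\Q$ (this is exactly the point of the modification producing $\alpha_r^*$, cf.\ the discussion around Definition~\ref{BalphDef}), so we have a short exact sequence of abelian varieties $0\to A_r^*\to J_r\xrightarrow{\alpha_r^*} B_r^*\to 0$. Since abelian varieties are $p$-divisible as fppf abelian sheaves, the snake lemma applied to multiplication by $p^n$ yields a short exact sequence of $p$-divisible groups over $\Q$, and passing to ordinary parts (the idempotent ${e^*}'\in\H^*$ being exact) reduces the first claimed isomorphism to the assertion that ${e^*}'A_r^*[p^{\infty}]=0$. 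As $\Q$ has characteristic zero, every $p$-divisible group over $\Q$ is \'etale and $G\rightsquigarrow V_p(G):=T_p(G)\otimes\Q_p$ is faithful and exact, so it is equivalent to show that ${e^*}'$ acts as zero on $V_pA_r^*$, i.e.\ that $U_p^*$ is topologically nilpotent there.

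First I would pass to the Hodge realization: by the Artin and de~Rham comparison isomorphisms $V_pA_r^*\otimes_{\Q_p}\C$ is $\H_r^*$-equivariantly isomorphic to $H^1_{\dR}(A_r^*/\C)=\Cot(A_r^*)\oplus H^1(A_r^*,\O_{A_r^*})$, so it suffices to annihilate each of these two pieces by ${e^*}'$. For the holomorphic piece, Proposition~\ref{BrCotIden} identifies $\Cot(A_r^*)$ with the $\H_r^*$-module $S_2(\Upgamma_r;\Q)/V_r^*$; the factorization of $U_r^*=h'(U_p^*)^{r+1}$ through $\o{V}_r^*$ supplied by Corollary~\ref{UpProjection}, together with the description of $e^*$ as the $p$-adic limit of the $(U_p^*)^{n!}$, shows that ${e^*}'=f'e^*$ carries $S_2(\Upgamma_r;\Q)$ into $V_r^*$, and hence kills the quotient $\Cot(A_r^*)$.

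The crux is the anti-holomorphic piece $H^1(A_r^*,\O_{A_r^*})$, and here I would exploit the cup-product autoduality of $J_r$ coming from its canonical principal polarization. Under the induced perfect pairing $\Cot(J_r)\times H^1(J_r,\O_{J_r})\to\C$ the operators $T\in\H_r$ and $T^*\in\H_r^*$ are adjoint, so that ${e^*}'$ on $H^1(J_r,\O_{J_r})$ is the transpose of $e'$ on $\Cot(J_r)$; combined with the Gysin description of $H^1(A_r^*,\O_{A_r^*})$ in terms of $H^1(J_r,\O_{J_r})$, the vanishing ${e^*}'H^1(A_r^*,\O_{A_r^*})=0$ is converted into a companion cotangent statement. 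To close this I would invoke the Atkin--Lehner relation $w_rT=T^*w_r$ (and the isomorphism ${B_r}_F\simeq{B_r^*}_F$ over $F:=\Q(\mu_{Np^r})$ of Proposition~\ref{BrCotIden}), which exchanges the pair $(U_p^*,V_r^*)$ with $(U_p,V_r)$ and thereby reduces the companion statement to the \emph{unstarred} form of Corollary~\ref{UpProjection}. Getting this adjunction exactly right --- in particular tracking the diamond-operator twist in the Rosati adjoint and matching the two embeddings $\iota_{\mathrm{id}}$ and $\iota_{\alpha^{r-i}}$ that distinguish $V_r$ from $V_r^*$ --- is the delicate point, and is where I expect the real work to lie.

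Finally, the second isomorphism is handled symmetrically. Over $F$ the automorphism $w_r$ conjugates $(e',\alpha_r,B_r,A_r)$ into $({e^*}',\alpha_r^*,B_r^*,A_r^*)$ --- here one uses $w_re'w_r^{-1}={e^*}'$, which holds because $w_rU_pw_r^{-1}=U_p^*$ and $f'$ is invariant under $u\mapsto u^{-1}$ --- so $e'A_r[p^{\infty}]=0$ over $F$; as the vanishing (hence the isomorphy) of a $p$-divisible group map defined over $\Q$ may be checked over $F$, since the two algebraic closures coincide, we obtain $e'J_r[p^{\infty}]\simeq e'B_r[p^{\infty}]$ over $\Q$. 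The $\H^*$- (respectively $\H$-) equivariance is automatic from that of $\alpha_r^*$ (respectively $\alpha_r$), and compatibility with change in $r$ along $\Alb(\ps),\Pic^0(\pr)$ (respectively $\Alb(\pr),\Pic^0(\ps)$) follows from the tower maps of Lemma~\ref{Btower} by functoriality of $(\cdot)[p^{\infty}]$.
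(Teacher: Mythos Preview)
Your approach is correct in outline and would eventually work, but it is a genuinely different---and considerably more elaborate---route than the paper's. The paper does not touch comparison isomorphisms or Hodge theory at all: it simply passes the factorization diagram of Lemma~\ref{UFactorDiagLem} to $p$-divisible groups and applies ${e^*}'$. Since ${e^*}'$ makes $U_p^*$ a unit and $h'=(p-1)f'$, the vertical arrows $U_r^*$ in
\[
\xymatrix@C=30pt@R=30pt{
{e^*}'J_r[p^{\infty}]\ar[d]_-{U_r^*}^-{\simeq}\ar[r]^-{\alpha_r^*} & {e^*}'B_r^*[p^{\infty}]\ar[dl]|-{W_r^*}\ar[d]^-{U_r^*}_-{\simeq}\\
{e^*}'J_r[p^{\infty}]\ar[r]_-{\alpha_r^*} & {e^*}'B_r^*[p^{\infty}]
}
\]
become isomorphisms, and then the two triangles $U_r^*=W_r^*\circ\alpha_r^*$ and $U_r^*=\alpha_r^*\circ W_r^*$ force $\alpha_r^*$ to be an isomorphism by an immediate diagram chase. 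Both arguments ultimately rest on Corollary~\ref{UpProjection}, but the paper exploits it at the level of morphisms of abelian varieties (via Lemma~\ref{UFactorDiagLem}) rather than at the level of eigenvalues.

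Two remarks on your version. First, your phrasing ``$V_pA_r^*\otimes_{\Q_p}\C$'' is not quite right; what you want is to compare Hecke eigenvalues on the common $\Q$-structure $H_1^{\mathrm{sing}}(A_r^*(\C),\Q)$, which base-changes to $V_pA_r^*$ over $\Q_p$ and to $H^1_{\dR}(A_r^*/\C)^{\vee}$ over $\C$. Second, your handling of the anti-holomorphic piece via Serre duality and Atkin--Lehner is unnecessarily delicate (as you yourself flag): since $U_p^*$ and $f'$ are defined over $\Q\subset\R$, they commute with complex conjugation on $H^1_{\dR}(A_r^*/\C)$, and conjugation swaps $H^{1,0}$ with $H^{0,1}$. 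Thus the nilpotence of $U_p^*$ on $f'\Cot(A_r^*)_{\C}$---which you correctly extract from Corollary~\ref{UpProjection}---immediately gives nilpotence on $f'H^1(A_r^*,\O_{A_r^*})$ as well, with no need to unwind the annihilator of $H^1(B_r^*,\O)$ under the polarization.
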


We view the maps (\ref{UrDefinition})
as endomorphisms of $J_r$ in the obvious way, and again write
$U_r^*$ and $U_r$ for the induced endomorphism of
$B_r^*$ and $B_r$, respectively.  To prove Proposition \ref{GoodRednProp},
we need the following geometric incarnation of
Corollary \ref{UpProjection}:

\begin{lemma}\label{UFactorDiagLem}
		There exists a unique $\H_r^*(\Z)$ $($respectively $\H_r(\Z)$$)$-equivariant 
		map $W_r^*:B_r^*\rightarrow J_r$ $($respectively $W_r:B_r\rightarrow J_r$$)$
		of abelian varieties over $\Q$ such that the diagram
	\begin{equation}
	\begin{gathered}
		\xymatrix@C=30pt@R=35pt{
			 {J_r}\ar[d]_-{U_r^*} 
			\ar@{->>}[r]^-{\alpha_r^*} & {B_r^*} \ar[dl]|-{W_r^*} \ar[d]^-{U_r^*} \\
		 {J_r}\ar@{->>}[r]_-{\alpha_r^*} & {B_r^*}
		}
		\quad\raisebox{-24pt}{respectively}\quad
		\xymatrix@C=30pt@R=35pt{
			 {J_r}\ar[d]_-{U_r} 
			\ar@{->>}[r]^-{\alpha_r} & {B_r} \ar[dl]|-{W_r} \ar[d]^-{U_r} \\
		 {J_r}\ar@{->>}[r]_-{\alpha_r} & {B_r}
		}\label{UFactorDiag}
	\end{gathered}
	\end{equation}
	commutes.  
\end{lemma}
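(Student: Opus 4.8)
The plan is to reduce the whole statement to the cotangent-space factorization criterion of Lemma~\ref{LieFactorization}(\ref{LieFactorizationSurj}), applied to the surjection $\alpha_r^*$ (respectively $\alpha_r$), whose kernel is connected by construction (Definition via (\ref{BrDef})). I will carry out the $(\cdot)^*$-case; the unstarred case is verbatim with $V_r^*,B_r^*,\H_r^*(\Z),\alpha_r^*$ replaced by $V_r,B_r,\H_r(\Z),\alpha_r$. The key preliminary observation is that $U_r^*=h'(U_p^*)^{r+1}$ lies in the commutative ring $\H_r^*(\Z)$: indeed $h'=(p-1)-\sum_{g\in\F_p^\times}\langle g\rangle$ is an integral combination of diamond operators, so $U_r^*$ is a genuine $\Q$-endomorphism of $J_r$ that is central in $\H_r^*(\Z)$.

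First I would produce the homomorphism $W_r^*\colon B_r^*\to J_r$ with $W_r^*\circ\alpha_r^*=U_r^*$. By Corollary~\ref{UpProjection}, the endomorphism $U_r^*$ of $\Cot(J_r)\otimes_{\Q}\Qbar=S_2(\Upgamma_r)$ factors through $\o{V}_r^*$; since both $U_r^*$ and the subspace $V_r^*$ descend to $\Q$, this shows that the image of $\Cot(U_r^*)\colon\Cot(J_r)\to\Cot(J_r)$ is contained in $V_r^*$. By Proposition~\ref{BrCotIden}, $V_r^*$ is precisely the image of the injection $\Cot(\alpha_r^*)\colon\Cot(B_r^*)\hookrightarrow\Cot(J_r)$, so $\Cot(U_r^*)$ factors through $\Cot(\alpha_r^*)$. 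Lemma~\ref{LieFactorization}(\ref{LieFactorizationSurj}) then yields the desired factorization $U_r^*=W_r^*\circ\alpha_r^*$, and uniqueness of $W_r^*$ is immediate because $\alpha_r^*$, being a surjection of abelian varieties, is an epimorphism.

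It remains to check the right-hand triangle $\alpha_r^*\circ W_r^*=U_r^*$ on $B_r^*$ and the $\H_r^*(\Z)$-equivariance, and both follow by cancelling $\alpha_r^*$ on the right. For the triangle, I would use that $U_r^*$ descends to $B_r^*$ (the discussion following Proposition~\ref{BrCotIden}), so $\alpha_r^*\circ U_r^*=U_r^*\circ\alpha_r^*$; then $\alpha_r^*\circ W_r^*\circ\alpha_r^*=\alpha_r^*\circ U_r^*=U_r^*\circ\alpha_r^*$, and epimorphism-cancellation gives $\alpha_r^*\circ W_r^*=U_r^*$. For equivariance, given $T\in\H_r^*(\Z)$, the equivariance of $\alpha_r^*$ (Proposition~\ref{BrCotIden}) together with the centrality of $U_r^*$ gives $W_r^*\circ T\circ\alpha_r^*=W_r^*\circ\alpha_r^*\circ T=U_r^*\circ T=T\circ U_r^*=T\circ W_r^*\circ\alpha_r^*$, and cancelling $\alpha_r^*$ yields $W_r^*\circ T=T\circ W_r^*$.

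The only genuine content lies in the cotangent-space image computation feeding Lemma~\ref{LieFactorization}(\ref{LieFactorizationSurj}), and this is already packaged in Corollary~\ref{UpProjection}; everything else is formal once one notes that $\alpha_r^*$ may be cancelled on the right. I expect the sole point requiring care to be the descent of the factorization from $\Qbar$ to $\Q$, which is harmless since both $U_r^*$ and $V_r^*=\Cot(B_r^*)$ arise from $\Q$-structures.
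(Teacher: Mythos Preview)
Your proof is correct and follows essentially the same approach as the paper: reduce to the cotangent-space factorization via Corollary~\ref{UpProjection} and Proposition~\ref{BrCotIden}, then invoke Lemma~\ref{LieFactorization}(\ref{LieFactorizationSurj}) using the connectedness of $\ker\alpha_r^*$. You have in fact supplied more detail than the paper, which omits the explicit verification of the right-hand triangle and of $\H_r^*(\Z)$-equivariance.
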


\begin{proof}
	Consider the endomorphism of $J_r$ given by $U_r$.
	Due to Corollary \ref{UpProjection},
	the induced mapping on cotangent spaces factors through the inclusion
	$\Cot(B_r)\hookrightarrow \Cot(J_r)$. Since the kernel of the quotient
	mapping $\alpha_r:J_r\twoheadrightarrow B_r$ giving rise to this inclusion is connected,
	we conclude from Lemma \ref{LieFactorization} (\ref{LieFactorizationSurj})
	that $U_r$ factors uniquely through $\alpha_r$
	via an $\H_r$-equivariant morphism $W_r:B_r\rightarrow J_r$.  
	The corresponding statements for $B_r^*$ are proved similarly.
\end{proof}

\begin{proof}[Proof of Proposition $\ref{GoodRednProp}$]
	From (\ref{UFactorDiag}) we get commutative diagrams of $p$-divisible groups 
	over $\Q$
	\begin{equation}
	\begin{gathered}
		\xymatrix{
			{e^*}'{J_r}[p^{\infty}]\ar[d]_-{U_r^*}^-{\simeq} \ar[r]^-{\alpha_r^*} & 
			{e^*}'{B_r^*}[p^{\infty}] \ar[dl]|-{W_r^*} \ar[d]^-{U_r^*}_-{\simeq} \\
			{e^*}'{J_r}[p^{\infty}]\ar[r]_-{\alpha_r^*} & 
			{e^*}'{B_r^*}[p^{\infty}]
		}
		\quad\raisebox{-24pt}{and}\quad
		\xymatrix{
			e'{J_r}[p^{\infty}]\ar[d]_-{U_r}^-{\simeq} \ar[r]^-{\alpha_r} & 
			e'{B_r}[p^{\infty}] \ar[dl]|-{W_r} \ar[d]^-{U_r}_-{\simeq} \\
			e'{J_r}[p^{\infty}]\ar[r]_-{\alpha_r} & 
			e'{B_r}[p^{\infty}]
		}
		\label{UFactorDiagpDiv}
	\end{gathered}
	\end{equation}
	in which all vertical arrows are isomorphisms due to the very definition of the
	idempotents ${e^*}'$ and $e'$.  An easy diagram chase then shows that {\em all}
	arrows must be isomorphisms.
\end{proof}

As in the introduction, 
we put $K_r=\Q_p(\mu_{p^r})$, $K_r':=K_r(\mu_N)$ and
write $R_r$ and $R_r'$ for the valuation rings of $K_r$
and $K_r'$, respectively.  We set $\Gamma:=\Gal(K_{\infty}/K_0)$,
and write $a:\Gal(K_0'/K_0)\rightarrow (\Z/N\Z)^{\times}$
the character giving the tautological action of $\Gal(K_0'/K_0)$
on $\mu_N$.


\begin{proposition}\label{GoodRedn}
	The abelian varieties $B_r$ and $B_r^*$ acquire good reduction over $K_r$.
\end{proposition}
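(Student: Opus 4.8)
The plan is to verify the N\'eron--Ogg--Shafarevich criterion: for an auxiliary prime $\ell\neq p$, the abelian variety $B_r$ (resp.\ $B_r^*$) acquires good reduction over $K_r$ if and only if the inertia subgroup $\I_{K_r}:=\I\cap \Gal(\Qbar_p/K_r)$ acts trivially on the $\ell$-adic Tate module $V_\ell B_r:=T_\ell B_r\otimes_{\Z_\ell}\Q_\ell$. First I would reduce the two cases to one. By Proposition \ref{BrCotIden} the Atkin--Lehner automorphism $w_r$ induces an isomorphism ${B_r}_F\simeq {B_r^*}_F$ over $F=\Q(\mu_{Np^r})$, and since $K_r'=K_r(\mu_N)$ is unramified over $K_r$ (as $N$ is prime to $p$), good reduction of one of the two over $K_r$ forces it for the other; alternatively, the argument below applies verbatim to $V_r^*$, whose underlying newforms---and hence whose local components at $p$---agree with those of $V_r$. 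So it suffices to treat $B_r$.

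Next I would identify $V_\ell B_r$ as a $\scrG_{\Q_p}$-representation. Since $B_r$ is the quotient of $J_r$ by a $\Q$-rational abelian subvariety whose cotangent space is $V_r=\bigoplus_{i=1}^r\bigoplus_\varepsilon S_2(\Upgamma_i,\varepsilon)$ (Proposition \ref{BrCotIden}), and since $H^1_{\et}({X_r}_{\Qbar},\Q_\ell)\otimes_{\Q_\ell}\Qbar_\ell$ is semisimple and decomposes Hecke-equivariantly into the two-dimensional Galois representations $\rho_f$ attached to normalized newforms $f$, the representation $V_\ell B_r\otimes_{\Q_\ell}\Qbar_\ell$ is a direct sum of the restrictions $\rho_f|_{\scrG_{\Q_p}}$ as $f$ ranges over the newforms contributing to $V_r$ (the cyclotomic twist relating $T_\ell$ to $H^1_{\et}$ is unramified at $p$ because $\ell\neq p$, hence harmless for our purposes). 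Each such $f$ has nebentype whose $p$-part is primitive of conductor exactly $p^i$ for some $1\le i\le r$; as this character is undefined modulo $Np^{i-1}$, the form is necessarily new at $p$ of level exactly $p^i$, so its local automorphic component $\pi_p=\pi_{f,p}$ has conductor $a(\pi_p)=i$ equal to the conductor $a(\omega_{\pi_p})=i$ of its central character.

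The heart of the matter---and the step I expect to be the main obstacle---is the local analysis at $p$. By the local--global compatibility of Langlands--Carayol--Saito, the restriction $\rho_f|_{\scrG_{\Q_p}}$ corresponds, as a Weil--Deligne representation, to $\pi_p$ under the local Langlands correspondence. The equality $a(\pi_p)=a(\omega_{\pi_p})$ then forces $\pi_p$ to be an irreducible principal series $\pi(\mu_1,\mu_2)$ with, say, $\mu_2$ unramified and $\mu_1$ ramified of conductor $p^i$: for special or supercuspidal $\pi_p$ the conductor of the central character is \emph{strictly} smaller than $a(\pi_p)$, so these cases are excluded. In particular the monodromy operator vanishes, and the associated Galois representation is $\mu_1^{\mathrm{gal}}\oplus\mu_2^{\mathrm{gal}}$ with $\mu_2^{\mathrm{gal}}$ unramified and $\mu_1^{\mathrm{gal}}$ ramified of conductor $p^i$. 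Restricted to inertia, $\mu_1^{\mathrm{gal}}|_{\I}$ factors through $\Gal(K_i/\Q_p)$ and hence through $\Gal(K_r/\Q_p)$ since $i\le r$, so it is trivial on $\I_{K_r}$, as is $\mu_2^{\mathrm{gal}}$ trivially. Thus $\I_{K_r}$ acts trivially on every $\rho_f|_{\scrG_{\Q_p}}$, hence on all of $V_\ell B_r$, and N\'eron--Ogg--Shafarevich yields good reduction of $B_r$ (and so of $B_r^*$) over $K_r$. The vanishing of the monodromy, guaranteed here by the principal-series shape of $\pi_p$, is precisely what upgrades the conclusion from potentially semistable to genuine good reduction; and ruling out the supercuspidal case---where good reduction could otherwise demand an \emph{unramified} quadratic extension not contained in the totally ramified $K_r$---via the conductor comparison $a(\pi_p)=a(\omega_{\pi_p})$ is the crucial local input.
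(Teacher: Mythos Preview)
Your argument is correct and is precisely the approach underlying the references the paper cites for this result: the paper's own proof consists solely of pointers to \cite[Chap.~\Rmnum{3}, \S2, Proposition 2]{MW-Iwasawa} and \cite[\S9, Lemma 9]{HidaGalois}, whose method is the N\'eron--Ogg--Shafarevich criterion combined with the local analysis (via Carayol's local--global compatibility and the conductor comparison $a(\pi_p)=a(\omega_{\pi_p})$ forcing principal series) that you have spelled out.
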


\begin{proof}
	See \cite[Chap \Rmnum{3}, \S2, Proposition 2]{MW-Iwasawa} 
	and {\em cf.} \cite[\S9, Lemma 9]{HidaGalois}.	
\end{proof}

We will write $\B_r$, $\B^*_r$, and $\J_r$, respectively, for the N\'eron models of
the base changes $(B_r)_{K_r}$, $(B_r^*)_{K_r}$ and $(J_r)_{K_r}$
over $T_r:=\Spec(R_r)$;  due to Proposition \ref{GoodRednProp}, both $\B_r$ and $\B_r^*$ are abelian
schemes over $T_r$.  
By the N\'eron mapping property, there are canonical actions of $\H_r(\Z)$ on $\B_r$, $\J_r$
and of $\H_r^*(\Z)$ on $\B_r^*$, $\J_r$ over $R_r$ extending the actions on generic fibers
as well as ``semilinear" actions of $\Gamma$
over the $\Gamma$-action on $R_r$ ({\em cf.} (\ref{GammaAction})).
For each $r$, the N\'eron mapping property further provides diagrams 
\begin{equation}
	\begin{gathered}
	\xymatrix{
		 {\J_r \times_{T_r} T_{r+1}}\ar@<-1ex>[d]_{\Pic^0(\pr)} \ar[r]^-{\alpha_r^*} 
		& {\B_r^* \times_{T_r} T_{r+1}}\ar@<1ex>[d]^{\Pic^0(\pr)} \\
		{\J_{r+1}} \ar[r]_-{\alpha_{r+1}^*} \ar@<-1ex>[u]_-{\Alb(\ps)} & \ar@<1ex>[u]^-{\Alb(\ps)} {\B_{r+1}^*} 
	}
	\quad\raisebox{-24pt}{respectively}\quad
	\xymatrix{
		 {\J_r \times_{T_r} T_{r+1}}\ar@<-1ex>[d]_{\Pic^0(\ps)} \ar[r]^-{\alpha_r} 
		& {\B_r \times_{T_r} T_{r+1}}\ar@<1ex>[d]^{\Pic^0(\ps)} \\
		{\J_{r+1}} \ar[r]_-{\alpha_{r+1}}\ar@<-1ex>[u]_-{\Alb(\pr)} & \ar@<1ex>[u]^-{\Alb(\pr)} {\B_{r+1}} 
	}
	\label{Nermaps}
	\end{gathered}
\end{equation}
of smooth commutative group schemes over $T_{r+1}$ in which the inner and outer rectangles commute, and
all maps are $\H_{r+1}^*(\Z)$ (respectively $\H_{r+1}(\Z)$) and $\Gamma$ equivariant.

\begin{definition}\label{ordpdivdefn}
	We define $\G_r:={e^*}'\left(\B_r^*[p^{\infty}]\right)$ and 
	we write $\G_r':=\G_r^{\vee}$ for its Cartier dual,
	each of which is canonically an object of $\pdiv_{R_r}^{\Gamma}$.
	For each $r\ge s$, noting that $U_p^*$ is an automorphism of $\G_r$,
	we obtain from (\ref{Nermaps}) canonical morphisms
	\begin{equation}
		\xymatrix@C=45pt{
		{\rho_{r,s}:\G_{s}\times_{T_{s}} T_{r}} \ar[r]^-{\Pic^0(\pr)^{r-s}} &  {\G_{r}}
		}
		\qquad\text{and}\qquad
		\xymatrix@C=70pt{
		{\rho_{r,s}' : \G_{s}'\times_{T_{s}} T_{r}} \ar[r]^-{{({U_p^*}^{-1}\Alb(\ps))^{\vee}}^{r-s}} & {\G_{r}'}
		}\label{pdivTowers}
	\end{equation}
	in $\pdiv_{R_r}^{\Gamma}$, where $(\cdot)^{i}$ denotes the $i$-fold composition, formed in the obvious manner.
	In this way, we get towers of $p$-divisible groups 
	$\{\G_r,\rho_{r,s}\}$ and $\{\G_r',\rho_{r,s}'\}$;
	we will write $G_r$ and $G_r'$ for the unique descents of the generic fibers of $\G_r$
	and $\G_r'$ to $\Q_p$, respectively.\footnote{Of course, $G_r'=G_r^{\vee}$.  
	Our non-standard notation $\G_r'$ for the Cartier  
	dual of $\G_r$ is preferrable, due to the fact that $\rho_{r,s}'$ is 
	{\em not} simply the dual of $\rho_{r,s}$; indeed, these two mappings go in opposite
	directions!}
	We let $T^*\in \H_r^*$ act on $\G_r$ 
	through the action of $\H_r^*(\Z)$ on $\B_r^*$,
	and on $\G_r'=\G_r^{\vee}$ by duality ({\em i.e.} as $(T^*)^{\vee}$).  
	The maps (\ref{pdivTowers}) are then $\H_r^*$-equivariant.
\end{definition}

By Proposition \ref{GoodRednProp}, $G_r$ 
is canonically isomorphic to ${e^*}'J_r[p^{\infty}]$, compatibly with the action of $\H_r^*$.
Since $J_r$ is a Jacobian---hence principally polarized---one might expect that $\G_r$
is isomorphic to its dual in $\pdiv_{R_r}^{\Gamma}$.  However, this is {\em not quite} the case
as the canonical isomorphism $J_r\simeq J_r^{\vee}$ intertwines the actions of $\H_r$
and $\H_r^*$, thus interchanging the idempotents ${e^*}'$ and $e'$.  To describe
the precise relationship between $\G_r^{\vee}$ and $\G_r$, we proceed as follows.
For each $\gamma\in \Gal(K_{\infty}'/K_0)\simeq \Gamma\times \Gal(K_0'/K_0)$, 
let us write $\phi_{\gamma}: {G_r}_{K_r'}\xrightarrow{\simeq} \gamma^*({G_r}_{K_r'})$
for the descent data isomorphisms encoding the unique $\Q_p=K_0$-descent of ${G_r}_{K_r'}$ furnished by $G_r$.
We ``twist" this descent data by the $\Aut_{\Q_p}(G_r)$-valued character $\langle \chi\rangle\langle a\rangle_N$
of $\Gal(K_{\infty}'/K_0)$:
explicitly, for $\gamma\in \Gal(K_{r}'/K_0)$ we set 
$\psi_{\gamma}:= \phi_{\gamma}\circ \langle \chi(\gamma)\rangle\langle a(\gamma)\rangle_N$
and note that since $\langle \chi(\gamma)\rangle\langle a(\gamma)\rangle_N$ 
is defined over $\Q_p$, the map $\gamma\rightsquigarrow \psi_{\gamma}$
really does satisfy the cocycle condition.  
We denote by $G_r(\langle \chi\rangle\langle a\rangle_N)$ the unique $p$-divisible group over $\Q_p$
corresponding to this twisted descent datum.
Since the diamond operators commute with the Hecke operators, there is a canonical
induced action of $\H_r^*$ on $G_r(\langle \chi\rangle\langle a\rangle_N)$.  
By construction, there is a canonical 
$K_r'$-isomorphism 
$G_r(\langle \chi\rangle\langle a\rangle_N)_{K_r'}\simeq {G_r}_{K_r'}$. Since
$G_r$ acquires good reduction over $K_r$ and the $\scrG_{K_r}$-representation
afforded by the Tate module of $G_r(\langle \chi\rangle\langle a\rangle_N)$
is the twist of $T_pG_r$ by the {\em unramified} character $\langle a\rangle_N$,
we conclude that $G_r(\langle \chi\rangle\langle a\rangle_N)$ also acquires 
good reduction over $K_r$, and we denote the resulting object of $\pdiv_{R_r}^{\Gamma}$
by $\G_r(\langle \chi\rangle\langle a\rangle_N)$.

\begin{proposition}\label{GdualTwist}
	There is a natural $\H_r^*$-equivariant isomorphism of $p$-divisible groups over $\Q_p$
	\begin{equation}
		G_r' \simeq G_r(\langle \chi\rangle \langle a\rangle_N)
		\label{GrprimeGr}
	\end{equation}
	which uniquely extends to an isomorphism of the corresponding objects in $\pdiv_{R_r}^{\Gamma}$
	and is compatible with change in $r$ using $\rho_{r,s}'$ on $G_r'$ and $\rho_{r,s}$ on $G_r$.
\end{proposition}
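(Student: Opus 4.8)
The plan is to realize the isomorphism \eqref{GrprimeGr} as the composite of two canonical identifications: first, the principal polarization of $J_r$ identifies the Cartier dual $G_r'=G_r^\vee$ with the \emph{other} ordinary idempotent piece $e'J_r[p^\infty]$; second, the Atkin--Lehner involution $w_r$ carries $e'J_r[p^\infty]$ onto ${e^*}'J_r[p^\infty]=G_r$. The first identification is canonical and $\scrG_{\Q_p}$-equivariant over $\Q_p$; the second is only defined over $K_r'=\Q_p(\mu_{Np^r})$, and its failure to descend to $\Q_p$ is precisely the twist by $\langle\chi\rangle\langle a\rangle_N$.

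First I would treat the duality step. The canonical principal polarization $\lambda_r\colon J_r\xrightarrow{\sim} J_r^\vee$ is defined over $\Q$ and induces a $\scrG_{\Q_p}$-equivariant Cartier self-duality $\phi\colon J_r[p^\infty]\simeq J_r[p^\infty]^\vee$. Under the associated Rosati involution the Albanese action $(\cdot)_*$ of $\H_r$ is interchanged with the Picard action $(\cdot)^*$ of $\H_r^*$ (compare the autoduality conventions of \cite[\S2.2]{CaisHida1} and the relation $w_rT=T^*w_r$ of \cite[Proposition 2.2.24]{CaisHida1}), so $\phi$ carries ${e^*}'$ to $(e')^\vee$ and restricts to $G_r={e^*}'J_r[p^\infty]\simeq (e'J_r[p^\infty])^\vee$. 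Dualizing yields a canonical $\Q_p$-isomorphism $G_r'=G_r^\vee\simeq e'J_r[p^\infty]$ intertwining the $(T^*)^\vee$-action on the source with the $(\cdot)_*$-action of $\H_r$ on the target. At this stage there is no twist: everything is $\scrG_{\Q_p}$-equivariant.

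Next I would bring in $w_r$. By \cite[Proposition 2.2.24]{CaisHida1} one has $w_rT=T^*w_r$ on ${J_r}_{K_r'}$, so conjugation by $w_r$ sends $e'$ to ${e^*}'$ and furnishes a $K_r'$-isomorphism $w_r\colon e'J_r[p^\infty]\xrightarrow{\sim}{e^*}'J_r[p^\infty]=G_r$ converting the $(\cdot)_*$-action into the $(\cdot)^*$-action; composing with the duality isomorphism gives an $\H_r^*$-equivariant isomorphism $G_r'\simeq G_r$ over $K_r'$. The crux is to compute its descent datum. Since $w_r$ depends on a choice of primitive $Np^r$-th root of unity (the discussion preceding \cite[Proposition 2.2.6]{CaisHida1}), for $\gamma\in\Gal(K_\infty'/K_0)$ the conjugate $\gamma(w_r)$ differs from $w_r$ by the diamond automorphism attached to the action of $\gamma$ on that root of unity, whose $p$-part is governed by $\chi(\gamma)$ and whose tame part by $a(\gamma)$. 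Tracking this through the (Galois-equivariant) duality isomorphism of the previous paragraph shows that the descent datum on $G_r'$ transported to $G_r$ is the canonical datum $\phi_\gamma$ composed with $\langle\chi(\gamma)\rangle\langle a(\gamma)\rangle_N$, which is exactly the twisted descent datum defining $G_r(\langle\chi\rangle\langle a\rangle_N)$. This yields $G_r'\simeq G_r(\langle\chi\rangle\langle a\rangle_N)$ over $\Q_p$, $\H_r^*$-equivariantly.

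Finally, the extension to $\pdiv_{R_r}^\Gamma$ is automatic: both sides acquire good reduction over $K_r$ by Proposition \ref{GoodRedn}, so by Tate's theorem the $\Q_p$-isomorphism of generic fibers extends uniquely to $R_r$. Compatibility with change in $r$ reduces to checking that the squares relating $\rho_{r,s}$ and $\rho_{r,s}'$ commute; this follows from the definitions \eqref{pdivTowers} together with the intertwining $\pr w_r=w_{r-1}\ps$ of the degeneracy maps with the Atkin--Lehner involutions \cite[Proposition 2.2.6]{CaisHida1} and the compatibility of $\lambda_r$ with Picard and Albanese functoriality. I expect the main obstacle to be the descent-datum bookkeeping of the third step: one must pin down the precise (possibly inverse) conventions in the diamond operators $\langle\chi\rangle$ and $\langle a\rangle_N$ and verify that the cyclotomic and tame contributions of $\gamma(w_r)$ assemble into $\langle\chi(\gamma)\rangle\langle a(\gamma)\rangle_N$ rather than its inverse, while simultaneously confirming that $\H_r^*$-equivariance is preserved by the composite.
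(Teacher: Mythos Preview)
Your approach is essentially the paper's: both build the isomorphism over $K_r'$ as the composite of the (inverse) principal polarization $\varphi_r^{-1}$ with the Atkin--Lehner involution $w_r$, compute the descent datum via the Galois action on $w_r$ to identify the twist by $\langle\chi\rangle\langle a\rangle_N$, and invoke Tate's theorem for the extension to $R_r$. However, there is a genuine gap in your change-in-$r$ step, and it is not merely the bookkeeping worry you flag at the end.

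The transition map $\rho_{r,s}'$ on the dual tower is defined in \eqref{pdivTowers} with an extra factor of $({U_p^*}^{-1})^{\vee}$ at each stage: unwinding, one has $\rho_{r,s}'=(\Alb(\ps)^{\vee})^{r-s}\circ ({U_p^*}^{-(r-s)})^{\vee}$. Your composite $w_r\circ\varphi_r^{-1}$ does intertwine $(\Alb(\ps)^{\vee})^{r-s}$ with $\Pic^0(\pr)^{r-s}=\rho_{r,s}$, exactly via the relation $\pr w_r=w_{r-1}\ps$ and the compatibility of $\varphi_r$ with Picard/Albanese functoriality that you cite. But since $w_r\circ\varphi_r^{-1}$ is $\H_r^*$-equivariant, it commutes the extra $({U_p^*}^{-(r-s)})^{\vee}$ past to a factor of ${U_p^*}^{-(r-s)}$ on the target, and a direct computation gives
\[
(w_r\circ\varphi_r^{-1})\circ\rho_{r,s}'={U_p^*}^{\,s-r}\circ\rho_{r,s}\circ(w_s\circ\varphi_s^{-1}),
\]
which is \emph{not} the required compatibility. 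The paper remedies this by inserting a normalizing factor at the outset, defining $\psi_r:=w_r\circ\varphi_r^{-1}\circ({U_p^*}^{r})^{\vee}$; the extra $({U_p^*}^{r})^{\vee}$ absorbs the ${U_p^*}^{-1}$'s built into $\rho_{r,s}'$ and makes the tower commute (this is precisely the content of the first of the two diagrams in the paper's proof). You need to include this normalization---or some equivalent adjustment---in your construction; without it the family of isomorphisms is not compatible with change in $r$.
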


\begin{proof}
	Let $\varphi_r: J_r\rightarrow J_r^{\vee}$ be the canonical principal polarization over $\Q_p$;
	one then has the relation $\varphi_r\circ T = (T^*)^{\vee}\circ \varphi_r$
	for each $T\in \H_r(\Z)$.  On the other hand, the $K_r'$-automorphism 
	$w_r: {J_r}_{K_r'}\rightarrow {J_r}_{K_r'}$ intertwines $T\in \H_r(\Z)$ with $T^*\in \H_r^*(\Z)$.
	Thus, the $K_r'$-morphism 
	\begin{equation*}
		\xymatrix{
			{\psi_r:{J_r}_{K_r'}^{\vee}} \ar[r]^-{({U_p^*}^{r})^{\vee}} & 
			{{J_r}_{K_r'}^{\vee}} \ar[r]^-{\varphi_r^{-1}}_{\simeq} & {J_r}_{K_r'} 
			\ar[r]^-{w_r}_{\simeq} & {{J_r}_{K_r'}} 
		}
	\end{equation*}
	is $\H_r^*(\Z)$-equivariant. 
	Passing to the induced map on $p$-divisible groups and applying ${e^*}'$, we
	obtain from Proposition \ref{GoodRednProp} an $\H_r^*$-equivariant isomorphism 
	of $p$-divisible groups $\psi_r: {G_r'}_{K_r'} \simeq {G_r}_{K_r'}$. As
	\begin{equation*}
		\xymatrix@C=35pt{
			{{J_r}_{K_r'}} \ar[r]^-{\langle \chi(\gamma)\rangle \langle a\rangle_N w_r}\ar[d]_-{1\times \gamma} & 
			{{J_r}_{K_r'}}\ar[d]^-{1\times \gamma} \\
			{({J_r}_{K_r'})_{\gamma}} \ar[r]_-{\gamma^*(w_r)} & {({J_r}_{K_r'})_{\gamma}}	
		}
	\end{equation*}
	commutes for all $\gamma\in \Gal(K_r'/K_0)$ \cite[Proposition 2.2.6]{CaisHida1}, 
	the $K_r'$-isomorphism $\psi_r$ uniquely descends to an
	$\H_r^*$-equivariant isomorphism (\ref{GrprimeGr})
	of $p$-divisible groups over $\Q_p$.
	By Tate's Theorem, this identification 
	uniquely extends to an isomorphism of the corresponding objects in $\pdiv_{R_r}^{\Gamma}$.
	The asserted compatibility with change in $r$ boils down to the commutativity of the diagrams
	\begin{equation*}
	\begin{gathered}
		\xymatrix{
 {{e^*}'J_s[p^{\infty}]^{\vee}} \ar[r]^-{({U_p^*}^{s})^{\vee}} \ar[d]_-{{({U_p^*}^{-1}\Alb(\ps))^{\vee}}^{r-s}}&
 {{e^*}'J_s[p^{\infty}]^{\vee}} \ar[d]^-{{\Alb(\ps)^{\vee}}^{r-s}} \\
{{e^*}'J_r[p^{\infty}]^{\vee}} \ar[r]_-{({U_p^*}^{r})^{\vee}} & {{e^*}'J_r[p^{\infty}]^{\vee}} \\
		}
		\quad\raisebox{-22pt}{and}\quad
		\xymatrix{
			 {{J_s}_{K_r'}^{\vee}} \ar[r]^-{\varphi_s^{-1}} \ar[d]_-{{\Alb(\ps)^{\vee}}^{r-s}}
			  & {J_s}_{K_r'} \ar[r]^-{w_s} \ar[d]|-{\Pic^0(\ps)^{r-s}} & {{J_s}_{K_r'}}\ar[d]^-{\Pic^0(\pr)^{r-s}}\\
			{{J_r}_{K_r'}^{\vee}} \ar[r]_-{\varphi_r^{-1}}
			 & {J_r}_{K_r'} \ar[r]_-{w_r} & {{J_r}_{K_r'}} 
		}
	\end{gathered}
	\end{equation*}
	for all $s\le r$.  The commutativity of the first diagram is clear, while that of the second follows
	from \cite[Proposition 2.2.6]{CaisHida1} and the fact that 
	for {\em any} finite morphism $f:Y\rightarrow X$ of smooth curves over a field $K$, 
	one has $\varphi_Y\circ \Pic^0(f)=\Alb(f)^{\vee}\circ \varphi_X$, where 
	$\varphi_{\star}:J_{\star}\rightarrow J_{\star}^{\vee}$ is the canonical
	principal polarization on Jacobians for $\star=X,Y$ (see, for example, the proof of Lemma 5.5 in \cite{CaisNeron}).
\end{proof}

We now wish to study the special fiber of $\G_r$, and relate it to the
special fibers of the integral models of modular curves studied in
\cite[2.2]{CaisHida1}.  To that end, let $\X_r$ be 
the Katz--Mazur integral model of $X_r$ over $R_r$
defined in \cite[2.2.3]{CaisHida1}; it is a regular scheme that is proper
and flat of pure relative dimension 1 over $\Spec R_r$
with smooth generic fiber naturally isomorphic to ${X_r}_{K_r}$.
According to \cite[Proposition 2.2.10]{CaisHida1}, 
the special fiber $\o{\X}_r:=\X_r\times_{R_r} \F_p$
is the ``disjoint union with crossings at the supersingular points"
\cite[13.1.5]{KM} of smooth and proper Igusa curves 
$I_{(a,b,u)}:=\Ig_{\max(a,b)}$ indexed by triples $(a,b,u)$
with $a,b$ running over nonnegative integers that sum to $r$
and $u\in (\Z/p^{\min(a,b)}\Z)^{\times}$; in particular, 
$\o{\X}_r$ is geometrically {\em reduced}.  
We write $\nor{\o{\X}}_r$ for the normalization of $\o{\X}_r$,
which is a disjoint union of Igusa curves $I_{(a,b,u)}$.
The canonical semilinear action of $\Gamma$ on $\X_r$ that encodes
the descent data of the generic fiber to $\Q_p$ \cite[2.2.3]{CaisHida1} induces,
by base change, an $\F_p$-linear ``geometric inertia action" of $\Gamma$
on $\nor{\o{\X}}_r$; in this way the $p$-divisible group
$\Pic^0_{\nor{\o{\X}}_r/\F_p}[p^{\infty}]$ 
of the Jacobian of 
$\nor{\o{\X}}_r$ over $\F_p$ is equipped
with an action of $\Gamma$ over $\F_p$ and (via the Hecke
correspondences \cite[2.2.21]{CaisHida1})
canonical actions of 
$\H_r$ and $\H_r^*$.

\begin{definition}\label{pDivGpSpecial}
	Define $\Sigma_r:={e_r^*}'\Pic^0_{\nor{\o{\X}}_r/\F_p}[p^{\infty}]$,
	equipped with the induced actions of $\H_r^*$ and $\Gamma$. 
\end{definition}

Since $\X_r$ is regular, and proper flat over $R_r$ with (geometrically) reduced special fiber,
$\Pic^0_{\X_r/R_r}$ is a smooth $R_r$-scheme by \S8.4 Proposition 2 and \S9.4 Theorem 2 of \cite{BLR}.
By the N\'eron mapping property, we thus have a natural mapping $\Pic^0_{\X_r/R_r}\rightarrow \J_r^0$
that recovers the canonical identification on generic fibers, and is in fact an isomorphism
by \cite[\S9.7, Theorem 1]{BLR}.  Composing with the map $\alpha_r^*:\J_r\rightarrow \B_r^*$ 
and passing to special fibers
yields a homomorphism of smooth commutative algebraic groups over $\F_p$
\begin{equation}
	\xymatrix{
 		{\Pic^0_{\o{\X}_r/\F_p}} \ar[r]^-{\simeq} & {\o{\J}_r^0} \ar[r] & {\o{\B}^*_r}
	}\label{PicToB}
\end{equation}
Due to \cite[\S9.3, Corollary 11]{BLR},
the normalization map $\nor{\o{\X}}_r\rightarrow \o{\X}$ induces a surjective homomorphism 
$\Pic^0_{\o{\X}_r/\F_p}\rightarrow {\Pic^0_{\nor{\o{\X}}_r/\F_p}}$
with kernel that is a smooth, connected {\em linear} algebraic group over $\F_p$.
As any homomorphism from an affine group variety to an abelian variety is zero, 
we conclude that (\ref{PicToB}) uniquely factors through this quotient, and we obtain
a natural map of abelian varieties:
\begin{equation}
	\xymatrix{
		{\Pic^0_{\nor{\o{\X}}_r/\F_p}} \ar[r] & {\o{\B}_r^*}
		}\label{AbVarMaps}
\end{equation}
that is necessarily equivariant for the actions of $\H_r^*(\Z)$ and $\Gamma$.
The following Proposition relates the special fiber of $\G_r$ to the 
$p$-divisible group $\Sigma_r$ of Definition \ref{pDivGpSpecial}, and will allow us
in Corollary \ref{SpecialFiberOrdinary1}
to give an explicit description of the special fiber
of $\G_r$.

\begin{proposition}\label{SpecialFiberDescr}
	The mapping $(\ref{AbVarMaps})$ induces an isomorphism
	of $p$-divisible groups over $\F_p$
	\begin{equation}
		\o{\G}_r := {{e^*}'\o{\B}_r^*[p^{\infty}]} \simeq 
		{{e^*}'\Pic^0_{\nor{\o{\X}}_r/\F_p}[p^{\infty}]}=:\Sigma_r
		\label{OnTheNose}
	\end{equation}
	that is $\H_r^*$ and $\Gamma$-equivariant and compatible with change in $r$ 
	via the maps $\pr_{r,s}$ on $\o{\G}_r$ and the maps $\Pic^0(\pr)^{r-s}$ on $\Sigma_r$.
\end{proposition}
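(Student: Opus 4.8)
The plan is to lift the asserted identification to an isomorphism of $p$-divisible groups over $R_r$ and then reduce modulo $p$; the one genuine difficulty is controlling the toric part of the semiabelian reduction of $J_r$ under the ordinary projector. First I would observe, using the identification $\Pic^0_{\X_r/R_r}\simeq \J_r^0$ and the compatibility of $\Pic^0$ with base change, that the special fiber of $\alpha_r^*\colon \J_r^0 \to \B_r^*$ is precisely the map $\beta\colon \Pic^0_{\o{\X}_r/\F_p}\to \o{\B}_r^*$ of (\ref{PicToB}), and that $\beta$ factors as $\gamma\circ q$, where $q\colon \Pic^0_{\o{\X}_r/\F_p}\twoheadrightarrow \Pic^0_{\nor{\o{\X}}_r/\F_p}$ is the normalization surjection---whose kernel is the toric part $T_r$ attached to the supersingular crossing points---and $\gamma$ is the map (\ref{AbVarMaps}). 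Writing $(\cdot)_{*}$ for the functor ${e^*}'(\cdot)[p^{\infty}]$, we then have $\beta_{*}=\gamma_{*}\circ q_{*}$, so it suffices to show that both $q_{*}$ and $\beta_{*}$ are isomorphisms.

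The crux is the vanishing ${e^*}'T_r[p^{\infty}]=0$. Granting it, applying the exact functor ${e^*}'(\cdot)[p^{\infty}]$ to $0\to T_r\to \Pic^0_{\o{\X}_r/\F_p}\to \Pic^0_{\nor{\o{\X}}_r/\F_p}\to 0$ shows at once that $q_{*}$ is an isomorphism onto $\Sigma_r$. I would obtain this vanishing from good reduction: by Proposition \ref{GoodRednProp} the map $\alpha_r^*$ induces an injection on ${e^*}'(\cdot)[p^{\infty}]$, while by Proposition \ref{GoodRedn} the quotient $B_r^*$ has good reduction, so the toric part of the semiabelian reduction of $J_r$ is carried entirely by the kernel $C:=\ker(\alpha_r^*)$; hence ${e^*}'C[p^{\infty}]=0$ forces ${e^*}'T_r[p^{\infty}]=0$. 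Equivalently, ${e^*}'J_r[p^{\infty}]\simeq G_r$ has good reduction by Propositions \ref{GoodRednProp} and \ref{GoodRedn}, and good reduction of the ordinary part is exactly the assertion that the ${e^*}'$-part of the monodromy, i.e.\ of $T_r$, vanishes. (Alternatively, one can argue directly that $U_p^*$ acts topologically nilpotently on the supersingular-supported toric part via the Eichler--Shimura description of the $U_p$-correspondence on $\o{\X}_r$, as in \cite{MW-Hida}.)

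With the vanishing established, ${e^*}'\J_r^0[p^{\infty}]$ is a genuine $p$-divisible group over $R_r$: its generic fiber is ${e^*}'J_r[p^{\infty}]$ and, by the vanishing, its special fiber is ${e^*}'\Pic^0_{\nor{\o{\X}}_r/\F_p}[p^{\infty}]=\Sigma_r$, of the same height. Since $\alpha_r^*$ induces an isomorphism ${e^*}'\J_r^0[p^{\infty}]\to \G_r$ on generic fibers (Proposition \ref{GoodRednProp}) and both sides are now $p$-divisible groups over $R_r$ with $\G_r$ of good reduction, this map is an isomorphism over $R_r$; reducing modulo $p$ shows $\beta_{*}$ is an isomorphism, whence so is $\gamma_{*}=\beta_{*}\circ q_{*}^{-1}$. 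This $\gamma_{*}$ is the map (\ref{OnTheNose}).

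It remains to record the compatibilities. Every map used---$\alpha_r^*$, the N\'eron identification $\Pic^0_{\X_r/R_r}\simeq \J_r^0$, the normalization map, and the Hecke correspondences---is $\H_r^*(\Z)$- and $\Gamma$-equivariant, so the resulting isomorphism is as well; and compatibility with change in $r$ via $\pr_{r,s}$ on $\o{\G}_r$ and $\Pic^0(\pr)^{r-s}$ on $\Sigma_r$ follows from the N\'eron mapping property diagrams (\ref{Nermaps}) together with the definition (\ref{pdivTowers}) of $\pr_{r,s}$. The principal obstacle throughout is the toric-part vanishing: this is precisely what upgrades the Mazur--Wiles isogeny-level comparison to an honest isomorphism, and it is here that it matters that $B_r^*$ was constructed as the quotient of $J_r$ by a \emph{connected} kernel.
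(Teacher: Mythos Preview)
Your approach differs from the paper's, and as written it has a circular step. The problem is the sentence ``${e^*}'\J_r^0[p^{\infty}]$ is a genuine $p$-divisible group over $R_r$ \ldots\ of the same height.'' Granting the toric vanishing ${e^*}'T_r[p^{\infty}]=0$, the special fiber of ${e^*}'\J_r^0[p^{\infty}]$ is indeed $\Sigma_r$; but nothing you have said shows that $\Sigma_r$ and $G_r$ have the same height. Equality of heights is equivalent to ${e^*}'\J_r^0[p^n]$ being \emph{finite} (not merely quasi-finite) flat for each $n$, which in turn is equivalent to $\gamma_*:\Sigma_r\to\o{\G}_r$ being an isomorphism---precisely the statement you are trying to prove. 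So the lift-to-$R_r$ step, as stated, assumes its own conclusion. (Your justifications of the toric vanishing itself are also incomplete: argument (a) would require controlling the N\'eron model of $C=\ker\alpha_r^*$, which need not be semi-abelian, and argument (b) invokes a monodromy comparison that is not made precise.) One can close the gap by separately proving injectivity of $\gamma_*$ (via schematic closure of the generic fiber inside $\G_r[p^n]$) and surjectivity (via a quasi-section of $\alpha_r^*$), but the natural quasi-section is exactly the map $W_r^*$ of Lemma~\ref{UFactorDiagLem}, and at that point you have essentially reproduced the paper's argument by a longer route.

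The paper bypasses the toric part and the passage through $R_r$ altogether. The factorization $U_r^*=\alpha_r^*\circ W_r^*$ of Lemma~\ref{UFactorDiagLem} extends to N\'eron models and reduces modulo $p$ to a diagram involving $\o{\J}_r^0$ and $\o{\B}_r^*$; composing every $\o{\J}_r^0$-valued map with the quotient $\o{\J}_r^0\twoheadrightarrow \Pic^0_{\nor{\o{\X}}_r/\F_p}$ (and using that this quotient absorbs every map \emph{from} $\o{\J}_r^0$ to an abelian variety) yields a commutative square of abelian varieties over $\F_p$
\[
\xymatrix@C=30pt@R=30pt{
{\Pic^0_{\nor{\o{\X}}_r/\F_p}}\ar[d]_-{U_r^*}\ar[r]^-{\o{\alpha}^*_r} & {\o{\B}_r^*} \ar[dl]|-{W_r^*} \ar[d]^-{U_r^*} \\
{\Pic^0_{\nor{\o{\X}}_r/\F_p}}\ar[r]_-{\o{\alpha}^*_r} & {\o{\B}_r^*}
}
\]
Applying ${e^*}'(\cdot)[p^{\infty}]$ makes both vertical $U_r^*$-arrows isomorphisms, and the same two-line diagram chase as in the proof of Proposition~\ref{GoodRednProp} forces the horizontal arrows to be isomorphisms as well. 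This gives (\ref{OnTheNose}) directly, with no height comparison and no analysis of the toric part.
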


\begin{proof}
	The diagram (\ref{UFactorDiag}) induces a corresponding diagram of
	N\'eron models over $R_r$ and hence of special fibers over $\F_p$.
	Arguing as above, we obtain a commutative diagram of abelian
	varieties
	\begin{equation}
	\begin{gathered}
		\xymatrix@C=30pt@R=35pt{
			{\Pic^0_{\nor{\o{\X}}_r/\F_p}}\ar[d]_-{U_r^*} 
			\ar[r]^-{\o{\alpha}^*_r} & {\o{\B}_r^*} \ar[dl]^-{W_r^*} \ar[d]^-{U_r^*} \\
		 	{\Pic^0_{\nor{\o{\X}}_r/\F_p}}\ar[r]_-{\o{\alpha}^*_r} & {\o{\B}_r^*}
		}\label{UFactorDiagmodp}
	\end{gathered}
	\end{equation}
	 over $\F_p$.  The proof of \ref{GoodRednProp} now goes through {\em mutatis mutandis} to give the claimed
	 isomorphism (\ref{OnTheNose}).  
\end{proof}

In \cite[\S2.5]{CaisHida1}, we analyzed the the structure of the de Rham cohomology
of the smooth and proper curve $\nor{\o{\X}}_r$ over $\F_p$; we now apply this
analysis and Oda's description (Proposition \ref{OdaDieudonne}) of 
Dieudonn\'e modules in terms of de Rham cohomology to understand the structure of 
$\Sigma_r$.  For each $r$, as in \cite[Remark 2.2.12]{CaisHida1} 
we write $I_r^{\infty}:=I_{(r,0,1)}$ and $I_r^0:=I_{(0,r,1)}$
for the two ``good" irreducible components of $\o{\X}_r$; 
by \cite[Proposition 2.5.6]{CaisHida1}, the ordinary part of the
de Rham cohomology of $\nor{\o{\X}}_r$ is entirely captured
by the de Rham cohomology of these two good components. 
Writing $i_r^{\star}:I_r^{\star}\hookrightarrow \nor{\o{\X}}_r$
for the canonical closed immersions, we reinterpret this fact
in the language of Dieudonn\'e modules:

\begin{proposition}\label{GisOrdinary}
	For each $r$, 
	there is a natural isomorphism of $A:=\Z_p[F,V]$-modules
	\begin{equation}
		\D(\Sigma_r)_{\F_p} \simeq {e_r^*}'H^1_{\dR}(\nor{\o{\X}}_r/\F_p)\simeq 
		f'H^0(I_r^{\infty},\Omega^1)^{V_{\ord}}\oplus f'H^1(I_r^0,\O)^{F_{\ord}}.\label{DieudonneDesc}
	\end{equation}
	which is compatible with $\H_r^*$, $\Gamma$, and change in $r$ and which
	carries $\D(\Sigma_r^{\mult})_{\F_p}$ $($respectively $\D(\Sigma_r^{\et})_{\F_p}$$)$ isomorphically 
	onto $f'H^0(I_r^{0},\Omega^1)^{V_{\ord}}$ $($respectively
	$f'H^1(I_r^{\infty},\O)^{F_{\ord}}$$)$.  In particular, 
	$\Sigma_r$ is ordinary.
\end{proposition}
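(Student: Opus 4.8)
The plan is to apply Oda's description of the Dieudonn\'e module of a Jacobian in terms of de~Rham cohomology (Proposition~\ref{OdaDieudonne}) to the smooth proper curve $\nor{\o{\X}}_r$ over $\F_p$, and then to cut out the ordinary part using the idempotent ${e_r^*}'$ together with the structural analysis of the de~Rham cohomology of $\nor{\o{\X}}_r$ carried out in \cite[Proposition~2.5.6]{CaisHida1}. First I would note that $\nor{\o{\X}}_r$ is smooth and proper over $\F_p$, being a disjoint union of Igusa curves; applying Proposition~\ref{OdaDieudonne}(\ref{OdaIsom}) componentwise (equivalently, by additivity of both sides) yields a canonical isomorphism of left $A$-modules
\[
	H^1_{\dR}(\nor{\o{\X}}_r/\F_p) \simeq \D\!\left(\Pic^0_{\nor{\o{\X}}_r/\F_p}[p^{\infty}]\right)_{\F_p}.
\]
By the functoriality recorded in Proposition~\ref{OdaDieudonne}(\ref{OdaIsomFunctoriality}), applied to the Hecke correspondences and to the geometric inertia action, this isomorphism is $\H_r^*$- and $\Gamma$-equivariant and compatible with change in $r$ via $\Pic^0(\pr)$. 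Applying the idempotent ${e_r^*}'\in\H_r^*$ to both sides, and using that $\D(\cdot)_{\F_p}$ is additive and carries the Hecke action on $\Pic^0[p^{\infty}]$ to the action on cohomology, gives the first isomorphism $\D(\Sigma_r)_{\F_p}\simeq {e_r^*}'H^1_{\dR}(\nor{\o{\X}}_r/\F_p)$ of $(\ref{DieudonneDesc})$.

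For the second isomorphism I would decompose $H^1_{\dR}(\nor{\o{\X}}_r/\F_p)$ over the connected components $I_{(a,b,u)}$ of $\nor{\o{\X}}_r$ and invoke \cite[Proposition~2.5.6]{CaisHida1}, according to which the ${e_r^*}'$-ordinary part of this de~Rham cohomology is entirely captured by the two good components $I_r^{\infty}$ and $I_r^{0}$, and there sees only the $V$-ordinary part of $H^0(\Omega^1)$ and the $F$-ordinary part of $H^1(\O)$; this produces the displayed direct sum. To match the two summands with $\D(\Sigma_r^{\mult})_{\F_p}$ and $\D(\Sigma_r^{\et})_{\F_p}$ I would then apply Proposition~\ref{OdaDieudonne}(\ref{GetaleGmult}), which identifies $H^0(\Omega^1)^{V_{\ord}}$ with the multiplicative part and $H^1(\O)^{F_{\ord}}$ with the \'etale part of the relevant $p$-divisible group, all compatibly with $\H_r^*$, $\Gamma$, and the transition maps.

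Finally, the ordinarity of $\Sigma_r$ is a formal consequence: the target of $(\ref{DieudonneDesc})$ is a direct sum of a $V$-ordinary $A$-module and an $F$-ordinary $A$-module. On the former $V$ is bijective, so $F=pV^{-1}$ is topologically nilpotent, and dually $V$ is nilpotent on the latter; hence $\D(\Sigma_r)_{\F_p}$ has no summand on which both $F$ and $V$ are nilpotent. By Proposition~\ref{OdaDieudonne}(\ref{GetaleGmult}) this local-local part is exactly $\D(\Sigma_r^{\loc})_{\F_p}$, so $\Sigma_r^{\loc}=0$ and $\Sigma_r$ is ordinary, with its slope decomposition $\D(\Sigma_r)_{\F_p}\simeq \D(\Sigma_r^{\mult})_{\F_p}\oplus\D(\Sigma_r^{\et})_{\F_p}$ recovered from the displayed splitting.

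The main obstacle will be the careful bookkeeping of the second step: establishing $\H_r^*$-equivariantly that the ${e_r^*}'$-projector is supported on the two good components (so that the ``bad'' Igusa components with $0<a,b$ drop out of the ordinary part), and correctly tracking which good component's Hodge-filtration piece corresponds to the multiplicative versus the \'etale slope, while preserving all of the $\H_r^*$-, $\Gamma$-, and transition-map compatibilities. This rests entirely on the precise geometry of $\o{\X}_r$ and the $U_p^*$-action encoded in \cite[Proposition~2.5.6]{CaisHida1} and in Proposition~\ref{OdaDieudonne}(\ref{GetaleGmult}).
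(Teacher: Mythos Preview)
Your proposal is correct and follows essentially the same approach as the paper: apply Oda's identification (Proposition~\ref{OdaDieudonne}) to $\nor{\o{\X}}_r$, cut out the ordinary part via ${e_r^*}'$, and invoke \cite[Proposition~2.5.6]{CaisHida1} to reduce to the two good components, then read off the multiplicative/\'etale pieces from the $V_{\ord}$/$F_{\ord}$ decomposition. The paper's proof is just a terser version of exactly this, concluding ordinarity by noting that $\D(\Sigma_r^{\mult})_{\Z_p}\oplus \D(\Sigma_r^{\et})_{\Z_p}\hookrightarrow \D(\Sigma_r)_{\Z_p}$ becomes surjective mod $p$ (hence surjective by Nakayama), which is equivalent to your $\Sigma_r^{\loc}=0$ argument.
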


\begin{proof}
	The identifications of \cite[Proposition 2.5.6]{CaisHida1}
	are induced by the closed immersions $i_r^{\star}$
	and are therefore compatible with the natural actions of Frobenius and the Cartier operator.
	The isomorphism (\ref{DieudonneDesc}) is then an immediate consequence of
	Proposition \ref{OdaDieudonne} and \cite[Proposition 2.5.6]{CaisHida1}.  Since this isomorphism is
	compaible with $F$ and $V$, we have 
	\begin{subequations}
	\begin{equation}  
		\D(\Sigma_r^{\mult})_{\F_p}   
		\simeq \D(\Sigma_r)_{\F_p}^{V_{\ord}} 
		\simeq f'H^0(I_r^{0},\Omega^1)^{V_{\ord}}
	\end{equation}
	and
	\begin{equation}
		\D(\Sigma_r^{\et})\otimes_{\Z_p}\F_p 
		\simeq \D(\Sigma_r)_{\F_p}^{F_{\ord}} 
		\simeq f'H^1(I_r^{\infty},\O)^{F_{\ord}}
	\end{equation}
	\end{subequations}
	and we conclude that the canonical inclusion 
	$\D(\Sigma_r^{\mult})_{\Z_p}\oplus\D(\Sigma_r^{\et})_{\Z_p}\hookrightarrow \D(\Sigma_r)_{\Z_p}$
	is surjective, whence $\Sigma_r$ is ordinary by Dieudonn\'e theory.
\end{proof}

With Proposition \ref{GisOrdinary} as a starting point, 
we can now completely describe the structure of $\Sigma_r$
in terms of the two good components $I_r^{\star}$.
Since $\nor{\o{\X}}_r$
is the disjoint union of proper smooth and irreducible Igusa curves $I_{(a,b,u)}$,
we have
a canonical identification of abelian varieties over $\F_p$
\begin{equation}
	\Pic^0_{\nor{\o{\X}}_r/\F_p} = \prod_{(a,b,u)} \Pic^0_{I_{(a,b,u)}/\F_p}.
	\label{Pic0Iden}
\end{equation}
For $\star=0,\infty$ let us write $j_r^{\star}:=\Pic^0_{I_r^{\star}/\F_p}$
for the Jacobian of $I_r^{\star}$ over $\F_p$.
The canonical closed immersions 
$i_r^{\star}:I_r^{\star}\hookrightarrow \nor{\o{\X}}_r$ yield (by Picard and Albanese functoriality) 
homomorphisms of abelian varieties over $\F_p$
\begin{equation}
	\xymatrix{
		{\Alb(i_r^{\star}):j_r^{\star}} \ar[r] & {\Pic^0_{\nor{\o{\X}}_r/\F_p}}
		}
		\quad\text{and}\quad
		\xymatrix{
		{\Pic^0(i_r^{\star}):\Pic^0_{\nor{\o{\X}}_r/\F_p}} \ar[r] & {j_r^{\star}}
		}.\label{AlbPicIncl}
\end{equation}
Via the identification (\ref{Pic0Iden}), we know that $j_r^{\star}$ is a direct factor
of $\Pic^0_{\nor{\o{\X}}_r/\F_p}$; in these terms $\Alb(i_r^{\star})$ is the unique mapping
which projects to the identity on $j_r^{\star}$ and to the zero map on all other factors,
while $\Pic^0(i_r^{\star})$ is simply projection onto the factor $j_r^{\star}$.
As $\Sigma_r$ is a direct factor of ${f' \Pic^0_{\nor{\o{\X}}_r/\F_p}[p^{\infty}]}$,
these mappings induce homomorphisms of $p$-divisible groups over $\F_p$
\begin{subequations}
\begin{equation}
	\xymatrix@C=35pt{
		{f'j_r^{0}[p^{\infty}]^{\mult}} \ar[r]^-{\Alb(i_r^{0})} & 
		{f' \Pic^0_{\nor{\o{\X}}_r/\F_p}[p^{\infty}]^{\mult}} \ar[r]^-{\proj} & 
		{\Sigma_r^{\mult}}
		}\label{Alb0}
\end{equation}		
\begin{equation}
	\xymatrix@C=35pt{
		{\Sigma_r^{\et}} \ar[r]^-{\incl} &
		{f' \Pic^0_{\nor{\o{\X}}_r/\F_p}[p^{\infty}]^{\et}} \ar[r]^-{\Pic^0(i_r^{\infty})} & 
		{f'j_r^{\infty}[p^{\infty}]^{\et}}
		}\label{Picinfty}	
\end{equation}
\end{subequations}
which we (somewhat abusively) again denote by $\Alb(i_r^{0})$ and $\Pic^0(i_r^{\infty})$, respectively.
The following is a sharpening of \cite[Chapter 3, \S3, Proposition 3]{MW-Iwasawa} 
(see also \cite[Proposition 3.2]{Tilouine}):

\begin{proposition}\label{MWSharpening}
	The mappings $(\ref{Alb0})$ and $(\ref{Picinfty})$ are isomorphisms.  They induce 
	a canonical split short exact sequences of $p$-divisible groups over $\F_p$
	\begin{equation}
		\xymatrix@C=45pt{
			0 \ar[r] & {f'j^0_r[p^{\infty}]^{\mult}} \ar[r]^-{\Alb(i_r^0)\circ V^r} &
			{\Sigma_r} \ar[r]^-{\Pic^0(i_r^{\infty})} & {f'j^{\infty}_r[p^{\infty}]^{\et}} \ar[r] & 0
		}\label{pDivUpic}
	\end{equation}
	which is:
	\begin{enumerate}
		\item $\Gamma$-equivariant for the geometric inertia action on $\Sigma_r$, the trivial
		action on $f'j_r^{\infty}[p^{\infty}]^{\et}$, and the action via $\langle \chi(\cdot) \rangle^{-1}$
		on $f'j_r^{0}[p^{\infty}]^{\mult}$.  \label{GammaCompatProp}
		
		\item $\H_r^*$-equivariant with $U_p^*$ acting on $f'j_r^{\infty}[p^{\infty}]^{\et}$
		as $F$ and on $f'j_r^{0}[p^{\infty}]^{\mult}$ as $\langle p\rangle_N V$.
		
		\item	Compatible with change in $r$ via the mappings $\Pic^0(\pr)$ on $j_r^{\star}$ and $\Sigma_r$.
		\label{ChangerProp}
\end{enumerate}	
\end{proposition}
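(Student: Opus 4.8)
The plan is to reduce every assertion to the contravariant Dieudonn\'e functor $\D(\cdot)$, which is an anti-equivalence on $p$-divisible groups over the perfect field $\F_p$ and hence detects isomorphisms. First I would prove that $(\ref{Alb0})$ and $(\ref{Picinfty})$ are isomorphisms. Applying $\D$ to $(\ref{Alb0})$ and invoking Oda's description (Proposition \ref{OdaDieudonne}\,(\ref{GetaleGmult}), together with the identification $(\ref{VordMultFordEt})$ of $\D(G^{\mult})_{\F_p}$ with $\D(G)_{\F_p}^{V_{\ord}}$) applied to the good Igusa component $I_r^0$, one obtains $\D(f'j_r^0[p^\infty]^{\mult})_{\F_p}\simeq f'H^0(I_r^0,\Omega^1)^{V_{\ord}}$; similarly, for $I_r^\infty$ one has $\D(f'j_r^\infty[p^\infty]^{\et})_{\F_p}\simeq f'H^1(I_r^\infty,\O)^{F_{\ord}}$. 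By the functoriality in Proposition \ref{OdaDieudonne}\,(\ref{OdaIsomFunctoriality}), the maps $\Alb(i_r^0)$ and $\Pic^0(i_r^\infty)$ go over to the de~Rham pullback and pushforward induced by the immersions $i_r^\star$, which by Proposition \ref{GisOrdinary} are exactly the isomorphisms carrying $\D(\Sigma_r^{\mult})_{\F_p}$ and $\D(\Sigma_r^{\et})_{\F_p}$ onto these ordinary pieces. Since $\D$ is fully faithful, $(\ref{Alb0})$ and $(\ref{Picinfty})$ are isomorphisms.

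Next, since $\Sigma_r$ is ordinary by Proposition \ref{GisOrdinary}, its connected-\'etale sequence over the perfect field $\F_p$ splits canonically with multiplicative connected part, so $\Sigma_r\simeq \Sigma_r^{\mult}\times\Sigma_r^{\et}$. Transporting this decomposition through the isomorphisms of the previous step produces the split short exact sequence $(\ref{pDivUpic})$, with surjection $\Pic^0(i_r^\infty)$ and injection $\Alb(i_r^0)$ precomposed with $V^r$. On the multiplicative-type group $f'j_r^0[p^\infty]^{\mult}$ the Verschiebung $V$ is invertible, so $V^r$ is an automorphism; it is therefore harmless for exactness and splitting, and---commuting with $F$, $V$, the Hecke action, and the $\F_p$-linear $\Gamma$-action---does not affect parts $(\ref{GammaCompatProp})$ or (2). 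Its sole purpose is to normalize the transition maps in part $(\ref{ChangerProp})$.

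It then remains to check the three compatibilities. For the Hecke action (part (2)), the relations $U_p^*=F$ on the \'etale factor and $U_p^*=\langle p\rangle_N V$ on the multiplicative factor are the Eichler--Shimura congruences on the good components; I would transport them through the Dieudonn\'e identifications of the first step from their incarnation in the de~Rham cohomology of the Igusa curves established in \cite{CaisHida1}. Compatibility with change in $r$ (part $(\ref{ChangerProp})$) I would verify by applying $\D$ and tracking the degeneracy maps $\Pic^0(\pr)$ on the $I_r^\star$ through $(\ref{DieudonneDesc})$; the power $V^r$ is precisely the correction rendering the resulting squares commutative, since in characteristic $p$ these transition maps differ from the naive ones by the relevant power of Verschiebung.

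The main obstacle is the $\Gamma$-equivariance of part $(\ref{GammaCompatProp})$, and in particular pinning down the cyclotomic twist $\langle\chi(\cdot)\rangle^{-1}$ on the multiplicative factor. The geometric inertia action is the $\F_p$-linear action on $\Sigma_r$ induced by the semilinear descent-data action of $\Gamma$ on $\X_r$; I would first show that $\Gamma$ fixes each good component $I_r^\star$, and then compute the induced action from the explicit description of the geometric inertia action on the Igusa curves in \cite{CaisHida1}. The triviality on $I_r^\infty$ and the appearance of $\langle\chi(\cdot)\rangle^{-1}$ on $I_r^0$ reflect that inertia acts on the multiplicative component through its $\mu_{p^r}$-structure, i.e. through the cyclotomic character corrected by the diamond operators; transporting this through the identifications of the first step yields the stated equivariance and sharpens \cite[Chapter \Rmnum{3}, \S3, Proposition 3]{MW-Iwasawa}.
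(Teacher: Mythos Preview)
Your proposal is correct and follows essentially the same approach as the paper: reduce to Dieudonn\'e modules, invoke the de~Rham identification of Proposition~\ref{GisOrdinary} (together with \cite[Proposition~2.5.6]{CaisHida1}) to verify the maps are isomorphisms, and then cite the relevant results of \cite{CaisHida1} (specifically Propositions~2.2.13, 2.2.14, and 2.2.20) for the $\Gamma$-, $\H_r^*$-, and change-in-$r$ compatibilities. The only point you should make explicit is the passage from $\D(\cdot)_{\F_p}$ to $\D(\cdot)_{\Z_p}$: your invocation of Proposition~\ref{GisOrdinary} checks the maps only modulo $p$, and you need the (trivial) observation that a map of finite free $\Z_p$-modules which is an isomorphism modulo $p$ is itself an isomorphism before appealing to full faithfulness of $\D$.
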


\begin{proof}
	It is clearly enough to prove that the sequence (\ref{pDivUpic}) induced by $(\ref{Alb0})$ and $(\ref{Picinfty})$
	is exact.
	Since the contravariant Dieudonn\'e module functor from the category of $p$-divisible groups
	over $\F_p$ to the category of $A$-modules which are $\Z_p$ finite and free is an exact 
	anti-equivalence, it suffices to prove such exactness
	after applying $\D(\cdot)_{\Z_p}$.  
	As the resulting sequence consists of finite free $\Z_p$-modules, 
	exactness may be checked modulo $p$ where it follows immediately from 
	Proposition \ref{GisOrdinary} by using \cite[Proposition 2.5.6]{CaisHida1}.
	The claimed compatibility with $\Gamma$, $\H_r^*$, and change in $r$ follows easily
	from Propositions 2.2.14, 2.2.20 and 2.2.13 of \cite{CaisHida1}, respectively.
\end{proof}

\begin{remark}
	It is possible to give a short proof of Proposition \ref{MWSharpening} 
	along the lines of \cite{MW-Iwasawa} or \cite{Tilouine} by using
	\cite[Proposition 2.2.20]{CaisHida1} directly.
	We stress, however, that our approach via Dieudonn\'e modules gives more refined information,
	most notably that the Dieudonn\'e module of $\Sigma_r[p]$ is free as an $\F_p[\Delta/\Delta_r]$-module.
	This fact will be crucial in our later arguments. 
\end{remark}

Together, Proposotions \ref{SpecialFiberDescr} and \ref{MWSharpening} give the
desired description of the special fiber of $\G_r$
({\em cf}. \S3 and \S4, Proposition 1 of \cite{MW-Hida} and pgs. 267--274 of \cite{MW-Iwasawa}):

\begin{corollary}\label{SpecialFiberOrdinary1}
	For each $r$, the $p$-dividible group $\G_r/R_r$ is ordinary, and there is a canonical exact sequence,
	compatible with change in $r$ via $\pr_{r,s}$ on $\o{\G}_r$ and $\Pic^0(\pr)^{r-s}$ on $j_r^{\star}[p^{\infty}]$
	\begin{equation}
 		\xymatrix@C=45pt{
			0 \ar[r] & {f'j_r^{0}[p^{\infty}]^{\mult}} \ar[r]^-{\Alb(i_r^{0})\circ V^r} &
			{\o{\G}_r} \ar[r]^-{\Pic^0(i_r^{\infty})} & {f'j_r^{\infty}[p^{\infty}]^{\et}} \ar[r] & 0  
		}\label{GrSpecialExact}
	\end{equation}
	where $i_r^{\star}:I_r^{\star}\hookrightarrow \nor{\o{\X}}_r$ are the canonical closed immersions
	for $\star=0,\infty$.  Moreover, $(\ref{GrSpecialExact})$ is compatible with the actions of
	$\H^*$ and $\Gamma$, with $U_p^*$ $($respectively $\gamma\in \Gamma$$)$
	acting on $f'j_r^{0}[p^{\infty}]^{\mult}$ as $\langle p\rangle_N V$ 
	$($respectively $\langle \chi(\gamma)\rangle^{-1}$$)$ 
	and on $f'j_r^{\infty}[p^{\infty}]^{\et}$ as $F$ $($respectively $\id$$)$.

\end{corollary}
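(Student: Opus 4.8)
The plan is to derive the Corollary by transporting the structural results already established for $\Sigma_r$ across the canonical isomorphism $\o{\G}_r\simeq \Sigma_r$ furnished by Proposition \ref{SpecialFiberDescr}; no new geometric input is needed, so the work is entirely one of assembly and bookkeeping.

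First I would invoke Proposition \ref{SpecialFiberDescr}, which gives a canonical $\H_r^*$- and $\Gamma$-equivariant isomorphism $\o{\G}_r\simeq \Sigma_r$ compatible with change in $r$ via $\pr_{r,s}$ on $\o{\G}_r$ and $\Pic^0(\pr)^{r-s}$ on $\Sigma_r$. Since Proposition \ref{GisOrdinary} shows that $\Sigma_r$ is ordinary, and ordinariness of a $p$-divisible group over the complete discrete valuation ring $R_r$ is detected on its special fiber (the connected-\'etale sequence and the multiplicative part of the connected component lift uniquely from $\F_p$), this isomorphism at once yields that $\o{\G}_r$, and hence $\G_r/R_r$, is ordinary.

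Next I would transport the canonical split short exact sequence (\ref{pDivUpic}) of Proposition \ref{MWSharpening} along $\o{\G}_r\simeq \Sigma_r$: replacing the middle term $\Sigma_r$ by $\o{\G}_r$ produces exactly the sequence (\ref{GrSpecialExact}), with the same flanking terms $f'j_r^{0}[p^{\infty}]^{\mult}$ and $f'j_r^{\infty}[p^{\infty}]^{\et}$ and the same maps $\Alb(i_r^0)\circ V^r$ and $\Pic^0(i_r^{\infty})$. The asserted $\H^*$- and $\Gamma$-equivariance, together with the explicit actions of $U_p^*$ as $\langle p\rangle_N V$ (respectively $F$) and of $\gamma\in\Gamma$ as $\langle \chi(\gamma)\rangle^{-1}$ (respectively $\id$) on the multiplicative (respectively \'etale) flank, are read off directly from parts (1) and (2) of Proposition \ref{MWSharpening}, since the isomorphism of Proposition \ref{SpecialFiberDescr} is equivariant for precisely these structures.

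The one point I expect to require genuine care is the compatibility with change in $r$, where the two propositions describe the transition maps differently: on $\o{\G}_r$ these are the maps $\pr_{r,s}$ of Definition \ref{ordpdivdefn}, whereas on $\Sigma_r$ and on the Jacobian factors $j_r^{\star}$ they are the iterates $\Pic^0(\pr)^{r-s}$. I would reconcile these by splicing the final change-in-$r$ assertion of Proposition \ref{SpecialFiberDescr} (which matches $\pr_{r,s}$ on $\o{\G}_r$ with $\Pic^0(\pr)^{r-s}$ on $\Sigma_r$) with part (3) of Proposition \ref{MWSharpening} (which records the $\Pic^0(\pr)$-compatibility of (\ref{pDivUpic})); as both use $\Pic^0(\pr)$ and its iterates on the $\Sigma_r$-side, the two compatibilities compose to give the stated compatibility of (\ref{GrSpecialExact}) with $\pr_{r,s}$ on $\o{\G}_r$ and $\Pic^0(\pr)^{r-s}$ on the $j_r^{\star}[p^{\infty}]$.
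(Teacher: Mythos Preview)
Your proposal is correct and follows exactly the paper's approach: the paper simply states that ``Together, Propositions \ref{SpecialFiberDescr} and \ref{MWSharpening} give the desired description of the special fiber of $\G_r$'' and records the corollary without a separate proof. You have spelled out the assembly in more detail than the paper does, including the point about ordinariness lifting from the special fiber and the reconciliation of the change-in-$r$ maps, but the underlying argument is identical.
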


\subsection{Ordinary families of Dieudonn\'e modules}\label{OrdDieuSection}

Let $\{\G_r/R_r\}_{r\ge 1}$ be the tower of $p$-divisible groups given by Definition \ref{ordpdivdefn}.  
From the canonical morphisms $\rho_{r,s}: \G_{s}\times_{T_{s}} T_r\rightarrow \G_{r}$ we obtain
a map on special fibers $\o{\G}_{s}\rightarrow \o{\G}_r$ over $\F_p$
for each $r\ge s$; applying the contravariant Dieudonn\'e module functor
$\D(\cdot):=\D(\cdot)_{\Z_p}$ yields a projective
system of finite free $\Z_p$-modules $\{\D(\o{\G}_r)\}_r$ with compatible linear endomorphisms $F,V$
satisfying $FV=VF=p$.
 
\begin{definition}\label{DinftyDef}
	We write $\D_{\infty}:=\varprojlim_r \D(\o{\G}_r)$ for the projective limit
	of the system $\{\D(\o{\G}_r)\}_r$.  For $\star\in \{\et,\mult\}$
	we write $\D_{\infty}^{\star}:=\varprojlim_r \D(\o{\G}_r^{\star})$
	for the corresponding projective limit.
\end{definition}

Since $\H_r^*$ acts by endomorphisms on $\o{\G}_r$, compatibly with change in $r$,
we obtain an action of $\H^*$ on $\D_{\infty}$ and on $\D_{\infty}^{\star}$.
Likewise, the ``geometric inertia action" of $\Gamma$ on $\o{\G}_r$
gives an action of $\Gamma$ on $\D_{\infty}$ and $\D_{\infty}^{\star}$.
As $\o{\G}_r$ is ordinary thanks to Corollary \ref{SpecialFiberOrdinary1}, applying $\D(\cdot)$
to the (split) connected-\'etale squence of $\o{\G}_r$ gives, for each $r$, 
a functorially split exact sequence 
\begin{equation}
	\xymatrix{
		0 \ar[r] & {\D(\o{\G}_r^{\et})} \ar[r] & {\D(\o{\G}_r)} \ar[r] & 
		{\D(\o{\G}_r^{\mult})} \ar[r] & 0
	}\label{DieudonneFiniteExact}
\end{equation}
with $\Z_p$-linear actions of $\Gamma$, $F$, $V$, and $\H_r^*$.
Since projective limits commute with finite direct sums, we obtain
a split short {\em exact} sequence of $\Lambda$-modules with linear $\H^*$ and $\Gamma$-actions
and commuting linear endomorphisms $F,V$ satisfying $FV=VF=p$:
\begin{equation}
	\xymatrix{
		0 \ar[r] & {\D_{\infty}^{\et}} \ar[r] & {\D_{\infty}} \ar[r] & {\D_{\infty}^{\mult}} \ar[r] & 0
	}.\label{DieudonneInfiniteExact}
\end{equation}

\begin{theorem}\label{MainDieudonne}
	Let 
	$d':=\sum_{k=3}^p  d_k$ for $d_k:=\dim_{\F_p}S_k(\Upgamma_1(N);\F_p)^{\ord}$ the $\F_p$-dimension
	of the space of $p$-ordinary mod $p$ cuspforms of weight $k$ and level $N$.  Then:
	\begin{enumerate} 
		\item $\D_{\infty}$ is a free $\Lambda$-module of rank $2d'$, and $\D_{\infty}^{\star}$
		is free of rank $d'$ over $\Lambda$ for $\star\in \{\et,\mult\}$.  
		\label{MainDieudonne1}
		
		\item For each $r\ge 1$, applying $\otimes_{\Lambda} \Z_p[\Delta/\Delta_r]$ to 
		$(\ref{DieudonneInfiniteExact})$  yields the short exact sequence $(\ref{DieudonneFiniteExact})$, 
		compatibly with $\H^*$, $\Gamma$, $F$ and $V$.
		\label{MainDieudonne2}
		
		\item Under the canonical splitting of $(\ref{DieudonneInfiniteExact})$, $\D_{\infty}^{\et}$ 
		is the maximal subspace of $\D_{\infty}$ on which $F$ acts invertibly, while 
		$\D_{\infty}^{\mult}$ corresponds to the maximal subspace of $\D_{\infty}$ on which $V$ acts 
		invertibly.  
		\label{MainDieudonne3}
		
		\item The Hecke operator $U_p^*$ acts as $F$ on $\D_{\infty}^{\et}$ and as $\langle p\rangle_NV$ on $\D_{\infty}^{\mult}$.
		\label{MainDieudonne4}
	
		\item $\Gamma$ acts trivially on $\D_{\infty}^{\et}$ and via $\langle \chi\rangle^{-1}$
		on $\D_{\infty}^{\mult}$.
		\label{MainDieudonne5}
\end{enumerate}
\end{theorem}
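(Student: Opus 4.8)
The plan is to deduce the freeness and control assertions (\ref{MainDieudonne1}) and (\ref{MainDieudonne2}) simultaneously from the ``freeness and control'' formalism of \cite[\S3.1]{CaisHida1} by reduction to characteristic $p$, and then to obtain (\ref{MainDieudonne3})--(\ref{MainDieudonne5}) by passing to the limit in the finite-level statements recorded in Corollary~\ref{SpecialFiberOrdinary1}. First I would observe that $\{\D(\o{\G}_r)\}_r$ is a projective system of finite free $\Z_p$-modules, each a module over $\Z_p[\Delta/\Delta_r]$ via the diamond operators (equivalently, via the geometric inertia action of $\Gamma$ reduced modulo $\Delta_r$), with transition maps induced by $\rho_{r,s}$ that are semilinear over the projections $\Z_p[\Delta/\Delta_{r+1}]\twoheadrightarrow \Z_p[\Delta/\Delta_r]$. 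The formalism of \cite[\S3.1]{CaisHida1} then reduces the freeness of $\D_{\infty}=\varprojlim_r \D(\o{\G}_r)$ over $\Lambda$, together with the control isomorphism $\D_{\infty}\tens_{\Lambda}\Z_p[\Delta/\Delta_r]\simeq \D(\o{\G}_r)$, to the two characteristic-$p$ assertions: that $\D(\o{\G}_r)/p\simeq \D(\o{\G}_r[p])$ is free over $\F_p[\Delta/\Delta_r]$, and that the mod-$p$ reductions of the transition maps induce base-change isomorphisms $\D(\o{\G}_{r+1})[p]\tens_{\F_p[\Delta/\Delta_{r+1}]}\F_p[\Delta/\Delta_r]\xrightarrow{\sim}\D(\o{\G}_r)[p]$.

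Next I would establish these characteristic-$p$ inputs. By Proposition~\ref{SpecialFiberDescr} and Corollary~\ref{SpecialFiberOrdinary1}, $\o{\G}_r\simeq \Sigma_r$ is ordinary, with connected--\'etale (slope) filtration given by (\ref{GrSpecialExact}) whose graded pieces are $f'j_r^{0}[p^{\infty}]^{\mult}$ and $f'j_r^{\infty}[p^{\infty}]^{\et}$. Applying the contravariant Dieudonn\'e functor and invoking Proposition~\ref{GisOrdinary}, the mod-$p$ Dieudonn\'e module is identified with ${e_r^*}'H^1_{\dR}(\nor{\o{\X}}_r/\F_p)$, whose graded pieces are $f'H^0(I_r^{0},\Omega^1)^{V_{\ord}}$ and $f'H^1(I_r^{\infty},\O)^{F_{\ord}}$. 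The freeness over $\F_p[\Delta/\Delta_r]$ is exactly the content of the remark following Proposition~\ref{MWSharpening}, while the identification of the rank of each graded piece with $d'=\sum_{k=3}^{p}d_k$ (hence $2d'$ in total) comes from the weight decomposition of the ordinary cohomology of the Igusa curves under the diamond action carried out in \cite[\S2.5]{CaisHida1}. The required base-change compatibility of the transition maps follows from the change-in-$r$ statements in Corollary~\ref{SpecialFiberOrdinary1} and Proposition~\ref{GisOrdinary}, i.e.\ from the behaviour of the degeneracy maps $\Pic^0(\pr)$ on the good components. This yields (\ref{MainDieudonne1}); applying the control isomorphism to the split sequence (\ref{DieudonneFiniteExact}), whose formation commutes with the exact base change $\tens_{\Lambda}\Z_p[\Delta/\Delta_r]$, gives (\ref{MainDieudonne2}), and the identical argument applied to the subsystems $\{\D(\o{\G}_r^{\star})\}$ gives freeness of $\D_{\infty}^{\star}$ of rank $d'$.

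For the remaining parts I would pass to the limit. Under the identifications $\o{\G}_r^{\mult}\simeq f'j_r^{0}[p^{\infty}]^{\mult}$ and $\o{\G}_r^{\et}\simeq f'j_r^{\infty}[p^{\infty}]^{\et}$ furnished by (\ref{GrSpecialExact}), Corollary~\ref{SpecialFiberOrdinary1} records that $U_p^*$ acts as $\langle p\rangle_N V$ (respectively $F$) and $\Gamma$ acts via $\langle\chi\rangle^{-1}$ (respectively trivially) on the multiplicative (respectively \'etale) part; applying $\D(\cdot)$ and taking the limit over $r$ (using the compatibility already verified, and keeping track of the contravariance of $\D$ for the $\Gamma$-action) yields (\ref{MainDieudonne4}) and (\ref{MainDieudonne5}). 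For (\ref{MainDieudonne3}), standard Dieudonn\'e theory (cf.\ the proof of Proposition~\ref{OdaDieudonne}) shows that at each finite level $F$ acts bijectively on $\D(\o{\G}_r^{\et})$ while $V$ acts bijectively on $\D(\o{\G}_r^{\mult})$; since $FV=VF=p$, the relation $F=pV^{-1}$ shows $F$ is topologically nilpotent on $\D(\o{\G}_r^{\mult})$, and dually for $V$. Passing to the limit, $F$ is an automorphism of $\D_{\infty}^{\et}$ and topologically nilpotent on $\D_{\infty}^{\mult}$, and symmetrically for $V$, which together with the canonical splitting of (\ref{DieudonneInfiniteExact}) identifies $\D_{\infty}^{\et}$ and $\D_{\infty}^{\mult}$ as the maximal subspaces of $\D_{\infty}$ on which $F$ and $V$, respectively, act invertibly.

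The main obstacle I anticipate is the characteristic-$p$ control compatibility in the first paragraph: verifying that the mod-$p$ reductions of the transition maps $\rho_{r,s}$ induce precisely the base-change isomorphisms required by the formalism of \cite[\S3.1]{CaisHida1}. This rests on careful bookkeeping of how the degeneracy maps $\Pic^0(\pr)$ interact with both the slope filtration and the weight decomposition of the Igusa-curve cohomology, all of which must be supplied by the finer compatibility statements of \cite[\S2.5]{CaisHida1} as recalled in Corollary~\ref{SpecialFiberOrdinary1} and Proposition~\ref{GisOrdinary}.
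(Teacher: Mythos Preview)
Your proposal is correct and follows essentially the same route as the paper: apply the freeness-and-control formalism of \cite[Lemma~3.1.2]{CaisHida1} with $A_r=\Z_p$, $I_r=(p)$, $M_r=\D(\o{\G}_r^{\star})$, verify the two mod-$p$ hypotheses by identifying $\o{M}_r$ with Igusa-curve cohomology via Propositions~\ref{SpecialFiberDescr} and~\ref{GisOrdinary} and invoking \cite[Proposition~2.4.1, Lemma~2.5.5]{CaisHida1}, and read off (\ref{MainDieudonne3})--(\ref{MainDieudonne5}) from Corollary~\ref{SpecialFiberOrdinary1}. One small simplification: the formalism only requires \emph{surjectivity} of the mod-$p$ transition maps $\o{M}_r\to\o{M}_s$, not the full base-change isomorphism you state; the latter is then a consequence of surjectivity together with the freeness of both sides of the same rank, so your anticipated obstacle is slightly less severe than you suggest.
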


\begin{proof}
	In \cite[\S 3.1]{CaisHida1}, we established a general commutative algebra
	formalism for dealing with projective
	limits of modules and proving structural and control theorems
	as in (\ref{MainDieudonne1}) and (\ref{MainDieudonne2}), respectively.
	In order to apply the main result
	of our formalism to the present situation, we take (in the notation
	of \cite[Lemma 3.1.2]{CaisHida1}) $A_r=\Z_p$, $I_r=(p)$, and
	$M_r$ each one of the terms in (\ref{DieudonneFiniteExact}),
	and we must check that the hypotheses
	\begin{enumerate}
	\setcounter{equation}{3}
		\renewcommand{\theenumi}{\theequation{\rm\alph{enumi}}}
		{\setlength\itemindent{10pt} 
			\item $\o{M}_r:=M_r/pM_r$ is a free $\F_p[\Delta/\Delta_r]$-module of rank 
			$d'$\label{freehyp1}}
		{\setlength\itemindent{10pt} 
		\item For all $s\le r$ the induced transition maps 
		$\xymatrix@1{
				{\overline{\pr}_{r,s}: \o{M}_r}\ar[r] & 
				{\o{M}_{s}}
				}$\label{surjhyp1}}
		are surjective
	\end{enumerate}
	hold.  By Propositions \ref{SpecialFiberDescr} and \ref{GisOrdinary}, 
	there is a natural isomorphism of split short exact sequences
	\begin{equation*}
		\xymatrix{
			0 \ar[r] & {\D(\o{\G}_r^{\et})_{\F_p}} \ar[r]\ar[d]^-{\simeq} & 
			{\D(\o{\G}_r)_{\F_p}} \ar[r] \ar[d]^-{\simeq}& 
			{\D(\o{\G}_r^{\mult})_{\F_p}} \ar[r] \ar[d]^-{\simeq}& 0 \\
			0 \ar[r] & {f'H^1(I_r^0,\O)^{F_{\ord}}}\ar[r] &
			{f'H^0(I_r^{\infty},\Omega^1)^{V_{\ord}}\oplus f'H^1(I_r^0,\O)^{F_{\ord}}} \ar[r] &
			{f'H^0(I_r^{\infty},\Omega^1)^{V_{\ord}}} \ar[r] & 0
		}
	\end{equation*}
	that is compatible with change in $r$ using the trace mappings attached to 
	$\rho:I_r^{\star}\rightarrow I_{s}^{\star}$ and the maps on Dieudonn\'e modules 
	induced by $\o{\rho}_{r,s}:\o{\G}_{s} \rightarrow \o{\G}_r$.
	The hypotheses (\ref{freehyp1}) and (\ref{surjhyp1})
	are therefore satisfied
	thanks to Proposition 2.4.1 
	and Lemma 2.5.5 of \cite{CaisHida1}.  
	It follows that the conclusions of \cite[Lemma 3.1.2]{CaisHida1}
	hold in the present situation, which gives
	(\ref{MainDieudonne1}) and (\ref{MainDieudonne2}). 
	As $F$ (respectively $V$) acts invertibly on $\D(\o{\G}_r^{\et})$ (respectively 
	$\D(\o{\G}_r^{\mult})$) for all $r$, assertion (\ref{MainDieudonne3}) is clear, while
	(\ref{MainDieudonne4}) and (\ref{MainDieudonne5}) follow immediately from 
	Corollary \ref{SpecialFiberOrdinary1}.
\end{proof}

The short exact sequence (\ref{DieudonneInfiniteExact}) is very nearly ``auto dual":

\begin{proposition}\label{DieudonneDuality}		
		There is a canonical isomorphism of short exact sequences of $\Lambda_{R_0'}$-modules
		\begin{equation}
		\begin{gathered}
			\xymatrix{
	0 \ar[r] & {\D_{\infty}^{\et}(\langle \chi \rangle\langle a\rangle_N)_{\Lambda_{R_0'}}} \ar[r]\ar[d]^-{\simeq} & 
	{\D_{\infty}(\langle \chi \rangle\langle a\rangle_N)_{\Lambda_{R_0'}}}\ar[r]\ar[d]^-{\simeq} & 
	{\D_{\infty}^{\mult}(\langle \chi \rangle\langle a\rangle_N)_{\Lambda_{R_0'}}}\ar[r]\ar[d]^-{\simeq} & 0 \\		
		0\ar[r] & {(\D_{\infty}^{\mult})^{\vee}_{\Lambda_{R_0'}}} \ar[r] & 
				{(\D_{\infty})^{\vee}_{\Lambda_{R_0'}}} \ar[r] & 
				{(\D_{\infty}^{\et})^{\vee}_{\Lambda_{R_0'}}}\ar[r] & 0
			}
		\end{gathered}
		\label{DmoduleDuality}	
		\end{equation}
		that is $\H^*$ and $\Gamma\times \Gal(K_0'/K_0)$-equivariant, 
		and intertwines $F$
		$($respectively $V$$)$ on the top row with $V^{\vee}$
		$($respectively $F^{\vee}$$)$ on the bottom.
\end{proposition}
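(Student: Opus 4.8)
The plan is to deduce the $\Lambda_{R_0'}$-adic duality (\ref{DmoduleDuality}) by passing to the limit in a finite-level Cartier duality, using Proposition \ref{GdualTwist} to supply the twist and Lemma \ref{MFtraceLem} to reconcile the two transition towers $\{\rho_{r,s}\}$ and $\{\rho_{r,s}'\}$. Throughout, the freeness and control of Theorem \ref{MainDieudonne} let me interchange the relevant projective limits and scalar extensions.

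First I would record the statement at each finite level. Classical Dieudonn\'e theory over $\F_p$ (the compatibility of $\D(\cdot)$ with Cartier duality, as in \cite[\S5]{BBM} and Proposition \ref{OdaDieudonne}(\ref{BBMDuality})) provides a canonical perfect $\Z_p$-bilinear pairing $\D(\o{\G}_r)\otimes_{\Z_p}\D(\o{\G}_r^{\vee})\to \Z_p$ under which $F$ and $V$ are interchanged, i.e.\ a canonical isomorphism $\D(\o{\G}_r^{\vee})\simeq \D(\o{\G}_r)^{\vee}$ carrying $F$ to $V^{\vee}$ and $V$ to $F^{\vee}$. Since $\o{\G}_r^{\vee}=\o{\G}_r'$ is the special fibre of $\G_r'=\G_r^{\vee}$, Proposition \ref{GdualTwist} identifies $\D(\o{\G}_r^{\vee})$, $\H_r^*$-equivariantly and compatibly with the geometric action of $\Gamma\times\Gal(K_0'/K_0)$ after extending scalars to $R_0'$, with the twist $\D(\o{\G}_r)(\langle\chi\rangle\langle a\rangle_N)$. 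Combining these two identifications yields, for each $r$, an $\H_r^*$- and $\Gamma\times\Gal(K_0'/K_0)$-equivariant isomorphism
\[
	\D(\o{\G}_r)(\langle\chi\rangle\langle a\rangle_N)_{R_0'}\simeq \D(\o{\G}_r)^{\vee}_{R_0'}
\]
intertwining $F$ with $V^{\vee}$ and $V$ with $F^{\vee}$; the Frobenius $\varphi$ of $R_0'$ appearing in the footnote's definition of $F^{\vee},V^{\vee}$ enters precisely through the unramified character $\langle a\rangle_N$ and the Witt functoriality of duality once one works over $R_0'=\Z_p[\mu_N]$. Because $F$ (respectively $V$) is invertible on $\D(\o{\G}_r^{\et})$ (respectively $\D(\o{\G}_r^{\mult})$), this isomorphism carries the \'etale sub to the dual of the multiplicative quotient and vice versa, so it is an isomorphism of the split sequence (\ref{DieudonneFiniteExact}) with the $\Z_p$-dual of the same sequence read backwards.

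The crux is to promote this to an isomorphism of $\Lambda_{R_0'}$-linear duals in the limit, and here two facts must be married. On one hand, since $\Z_p[\Delta/\Delta_r]$ is a symmetric (Gorenstein) algebra, the functional picking off the coefficient of the identity gives a canonical isomorphism $\Hom_{\Z_p}(\D(\o{\G}_r),\Z_p)\simeq \Hom_{\Z_p[\Delta/\Delta_r]}(\D(\o{\G}_r),\Z_p[\Delta/\Delta_r])$ of $\Z_p[\Delta/\Delta_r]$-modules; by Theorem \ref{MainDieudonne} the projective limit of the right-hand sides computes $(\D_{\infty})^{\vee}=\Hom_{\Lambda}(\D_{\infty},\Lambda)$, and similarly for the \'etale and multiplicative pieces. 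On the other hand, $\D_{\infty}$ is formed along $\D(\o{\rho}_{r,s})$, whereas under the finite-level pairing the dual tower is governed by $\rho_{r,s}'$, which by the footnote to Definition \ref{ordpdivdefn} is \emph{not} the transpose of $\rho_{r,s}$. The reconciliation is exactly the duality identity (\ref{DualityIdentity}) of Lemma \ref{MFtraceLem}: the composite $\iota_{\alpha^{r-s}}\circ\tr_{r,s}$ equals $(U_p^*)^{r-s}\sum_{\delta\in\Delta_s/\Delta_r}\langle\delta\rangle$, and the partial sum $\sum_{\delta}\langle\delta\rangle$ is precisely the map implementing the symmetric-algebra duality isomorphism above, while the factor $(U_p^*)^{r-s}$ is harmless since $U_p^*$ acts invertibly on the ordinary modules $\D(\o{\G}_r)$. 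Thus the dual of the $\rho_{r,s}$-tower, after the group-ring twist, is identified with the $\rho_{r,s}'$-tower.

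Assembling, I would pass to $\varprojlim_r$ in the finite-level isomorphism above, using the compatibility with change in $r$ furnished by Propositions \ref{GdualTwist} and \ref{MWSharpening} together with the identification of the two transition systems just described, to obtain (\ref{DmoduleDuality}) as an isomorphism of short exact sequences of $\Lambda_{R_0'}$-modules. The $\H^*$- and $\Gamma\times\Gal(K_0'/K_0)$-equivariance, and the intertwining of $F,V$ with $V^{\vee},F^{\vee}$, are then inherited termwise from the finite level. I expect the main obstacle to be the bookkeeping in this last reconciliation: carefully matching the group-ring involution $\delta\mapsto\delta^{-1}$ (which inverts the diamond action and thereby produces the $\langle\chi\rangle$ twist) against the twist coming from Proposition \ref{GdualTwist}, and verifying that the power of $U_p^*$ produced by (\ref{DualityIdentity}) cancels correctly in the limit, so that the two projective systems are genuinely identified rather than merely isogenous.
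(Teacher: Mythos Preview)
Your proposal is correct and follows essentially the same route as the paper: finite-level Cartier duality for Dieudonn\'e modules, Proposition~\ref{GdualTwist} to supply the $\langle\chi\rangle\langle a\rangle_N$-twist, and the identity of Lemma~\ref{MFtraceLem} (in the geometric form $\Pic^0(\pr)\circ\Alb(\ps)=U_p^*\sum_{\delta}\langle\delta^{-1}\rangle$) to reconcile the $\rho_{r,s}$- and $\rho_{r,s}'$-towers. The only difference is packaging: the paper encodes the passage to the limit via the pairing formalism of \cite[Lemma~3.1.4]{CaisHida1}, checking the compatibility $\langle\pr_{r,s}x,\pr_{r,s}'y\rangle_s=\sum_{\delta\in\Delta_s/\Delta_r}\langle x,\delta^{-1}y\rangle_r$ directly, whereas you sketch the same content inline through the Gorenstein identification $\Hom_{\Z_p}(-,\Z_p)\simeq\Hom_{\Z_p[\Delta/\Delta_r]}(-,\Z_p[\Delta/\Delta_r])$.
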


\begin{proof}
	As in the proof of Theorem \ref{MainDieudonne},
	we apply the formalism of \cite[\S3.1]{CaisHida1}.	
	Let us write
	$\pr_{r,s}':\o{\G}_r'\rightarrow \o{\G}_s'$ for the maps on special fibers
	induced by (\ref{pdivTowers}). 
	Thanks to Proposition \ref{GdualTwist}, the definition \ref{ordpdivdefn}  of $\o{\G}_r':=\o{\G}_r^{\vee}$,
	the natural isomorphism $\G_r\times_{R_r} R_r' \simeq \G_r(\langle \chi\rangle\langle a\rangle_N)\times_{R_r} R_r'$,
	and the compatibility of the Dieudonn\'e module functor with duality, there are natural
	isomorphisms of $R_0'$-modules
	\begin{equation}
		\D(\o{\G}_r)(\langle \chi\rangle\langle a\rangle_N) \tens_{\Z_p} R_0' \simeq
		\D(\o{\G_r(\langle \chi\rangle\langle a\rangle_N)})\tens_{\Z_p} R_0' 
		\simeq \D(\o{\G}_r')\tens_{\Z_p} R_0' = \D(\o{\G}_r^{\vee})\tens_{\Z_p} R_0'\simeq 
		(\D(\o{\G}_r))_{R_0'}^{\vee}
		\label{evpairingDieudonne}
	\end{equation}
	that are $\H^*_r$-equivariant, $\Gal(K_r'/K_0)$-compatible 
	for the standard action $\sigma\cdot f (m):=\sigma f(\sigma^{-1}m)$
	on the $R_0'$-linear dual of $\D(\o{\G}_r)\otimes_{\Z_p} R_0'$, 
	and compatible with change in $r$ using $\pr_{r,s}$
	on $\D(\o{\G}_r)$ and $\pr_{r,s}'$ on $\D(\o{\G}_r')$.  We claim that the resulting 
	perfect ``evaluation" pairings 
	\begin{equation}
		\xymatrix{
			{\langle\cdot,\cdot\rangle_r : \D(\o{\G}_r)(\langle \chi\rangle\langle a\rangle_N)\tens_{\Z_p}{R_0'} 
			\times  \D(\o{\G}_r)\tens_{\Z_p}{R_0'}} \ar[r] & {R_0'}
			}\label{pdivSpecialTwistPai}
	\end{equation}
	are compatible with change in $r$ via the maps $\rho_{r,s}$
	and $\rho_{r,s}'$ in the sense of \cite[3.1.4]{CaisHida1}; {\em i.e.} that
	\begin{equation}
		\langle\pr_{r,s}x, \pr_{r,s}'y\rangle_{s} = 
		\sum_{\delta\in \Delta_{s}/\Delta_{r}} \langle x,\delta^{-1} y\rangle_{r}
		\label{pairingchangeinr1}
	\end{equation}
	holds for all $x,$ $y$.
	Indeed, the compatibility of (\ref{evpairingDieudonne}) with change in $r$ 
	and the very definition (\ref{pdivTowers}) of the transition maps $\pr_{r,s}'$
	implies that for $r\ge s$
	\begin{equation}
		\langle \D(\Pic^0(\pr)^{r-s}) x, y\rangle_s = \langle x , \D({U_p^*}^{s-r}\Alb(\ps)^{r-s}) y\rangle_r;
		\label{PairingCompatComp}
	\end{equation}
	on the other hand, it follows from 
	Lemma \ref{MFtraceLem} (using Lemma \ref{LieFactorization}) that 
	we have
	\begin{equation}
		\Pic(\pr)\circ \Alb(\ps) = U_p^* \sum_{\delta\in \Delta_r/\Delta_{r+1}} \langle \delta^{-1}\rangle,
		\label{PicAlbRelation}
	\end{equation}
	in $\End_{\Q_p}(J_{r+1})$, and together (\ref{PairingCompatComp})--(\ref{PicAlbRelation})
	imply the desired compatibility (\ref{pairingchangeinr1}).
	It follows that the hypotheses of \cite[Lemma 3.1.4]{CaisHida1} is verified, and we conclude
	that the pairings (\ref{pdivSpecialTwistPai}) give rise to
	a perfect $\Gal(K_{\infty}'/K_0)$-compatible duality pairing
	$\langle\cdot,\cdot \rangle: \D_{\infty}(\langle \chi\rangle\langle a\rangle_N)
	\otimes_{\Lambda} \Lambda_{R_0'} \times \D_{\infty}\otimes_{\Lambda} \Lambda_{R_0'} \rightarrow \Lambda_{R_0'}$
	with respect to which $T^*$ is self-adjoint for all $T^*\in \H^*$
	as this is true at each finite level $r$ thanks to 
	the $\H_r^*$-compatibility of (\ref{evpairingDieudonne}).
	That the resulting isomorphism (\ref{DmoduleDuality})
	intertwines 
	$F$ with $V^{\vee}$
	and $ V $ with $F^{\vee}$ is an immediate consequence of the compatibility of the Dieudonn\'e module
	functor with duality. 
\end{proof}

We can interpret $\D_{\infty}^{\star}$ in terms of the crystalline cohomology of the
Igusa tower as follows.  Let $I_r^0$ and $I_r^{\infty}$ be the two ``good" 
components of $\o{\X}_r$ as in the discussion preceding Proposition \ref{GisOrdinary}, 
and form the projective limits
\begin{equation*}
	H^1_{\cris}(I^{\star}) := \varprojlim_{r} H^1_{\cris}(I_r^{\star})
\end{equation*}
for $\star\in \{\infty,0\}$, taken with respect to the trace maps on crystalline
cohomology (see \cite[\Rmnum{7}, \S2.2]{crystal2}) induced by the canonical degeneracy mappings 
$\rho:I_{r}^{\star}\rightarrow I_{s}^{\star}$.  
Then $H^1_{\cris}(I^{\star})$ 
is naturally a $\Lambda$-module (via the diamond operators), equipped with 
a commuting action of $F$ (Frobenius) and $V$ (Verscheibung) satisfying $FV=VF=p$.  
Letting $U_p^*$ act as $F$ (respectively $\langle p\rangle_N V$) on $H^1_{\cris}(I^{\star})$ for $\star=\infty$
(respectively $\star=0$) and the Hecke operators outside $p$ (viewed as correspondences
on the Igusa curves) act via pullback and trace at each level $r$, we obtain 
an action of $\H^*$ on $H^1_{\cris}(I^{\star})$.  Finally, we 
let $\Gamma$ act trivially on $H^1_{\cris}(I^{\star})$ for $\star=\infty$
and via $\langle\chi^{-1}\rangle$ for $\star=0$.

\begin{theorem}\label{DieudonneCrystalIgusa}
	There is a canonical $\H^*$ and $\Gamma$-equivariant
	isomorphism of $\Lambda$-modules
	\begin{equation*}
		\D_{\infty} = \D_{\infty}^{\mult}\oplus \D_{\infty}^{\et} \simeq
		f'H^1_{\cris}(I^{0})^{V_{\ord}} \oplus 
		f'H^1_{\cris}(I^{\infty})^{F_{\ord}}
	\end{equation*}
	which respects the given direct sum decompositions and is compatible with $F$ and $V$.
\end{theorem}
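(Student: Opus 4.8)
The plan is to build the asserted isomorphism level by level and then pass to the projective limit, the theorem being the integral, crystalline refinement of the mod $p$ de Rham decomposition of Proposition \ref{GisOrdinary}. First I would record the integral crystalline incarnation of Oda's comparison: for a smooth proper curve $I$ over $\F_p$ with Jacobian $j:=\Pic^0_{I/\F_p}$ and $p$-divisible group $G:=j[p^\infty]$, the theory of \cite{BBM} supplies a canonical isomorphism of $A=\Z_p[F,V]$-modules $H^1_{\cris}(I/\Z_p)\simeq \D(G)_{\Z_p}$ that is compatible with the action of correspondences and, for a finite morphism $\rho:I'\to I$ of such curves, intertwines the trace $\rho_*$ with $\D(\Pic^0(\rho))$ and the pullback $\rho^*$ with $\D(\Alb(\rho))$; this is the integral lift of Proposition \ref{OdaDieudonne}, reducing modulo $p$ to the de Rham statement proved there. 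Feeding this into the integral Dieudonn\'e identities (\ref{VordMultFordEt}) gives $\D(G^{\mult})_{\Z_p}\simeq H^1_{\cris}(I/\Z_p)^{V_{\ord}}$ and $\D(G^{\et})_{\Z_p}\simeq H^1_{\cris}(I/\Z_p)^{F_{\ord}}$.

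Next, at each fixed level $r$ I would apply the contravariant functor $\D(\cdot)_{\Z_p}$ to the split short exact sequence (\ref{GrSpecialExact}) of Corollary \ref{SpecialFiberOrdinary1}. Because $f'$ is idempotent and $\o{\G}_r$ is ordinary, this identifies the summand $\D(\o{\G}_r^{\mult})_{\Z_p}$ (respectively $\D(\o{\G}_r^{\et})_{\Z_p}$) with $f'\D(j_r^0[p^\infty]^{\mult})_{\Z_p}$ (respectively $f'\D(j_r^\infty[p^\infty]^{\et})_{\Z_p}$), where $j_r^\star:=\Pic^0_{I_r^\star/\F_p}$. Combining this with the two displayed identities of the previous paragraph, applied to $I=I_r^0$ and $I=I_r^\infty$, produces canonical isomorphisms
\begin{equation*}
	\D(\o{\G}_r^{\mult})_{\Z_p}\simeq f'H^1_{\cris}(I_r^0/\Z_p)^{V_{\ord}}
	\qquad\text{and}\qquad
	\D(\o{\G}_r^{\et})_{\Z_p}\simeq f'H^1_{\cris}(I_r^\infty/\Z_p)^{F_{\ord}}.
\end{equation*}
Each is compatible with $F$ and $V$ by construction, and the $\H^*$- and $\Gamma$-equivariance recorded in Corollary \ref{SpecialFiberOrdinary1}---with $U_p^*$ acting as $\langle p\rangle_N V$ and $\Gamma$ through $\langle\chi\rangle^{-1}$ on the multiplicative part, and $U_p^*$ as $F$ and $\Gamma$ trivially on the \'etale part---matches exactly the conventions defining the $\H^*$- and $\Gamma$-actions on $H^1_{\cris}(I^0)$ and $H^1_{\cris}(I^\infty)$ stated before the theorem.

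The hard part will be checking that these finite-level isomorphisms are compatible with the transition maps, so that they assemble into an isomorphism of projective limits. The transition map $\D(\o{\G}_r^\star)_{\Z_p}\to\D(\o{\G}_s^\star)_{\Z_p}$ defining $\D_\infty^\star$ is $\D(\o{\rho}_{r,s})$, and Corollary \ref{SpecialFiberOrdinary1} identifies it, under the isomorphisms above, with $\D(\Pic^0(\rho)^{r-s})$ on the Igusa Jacobian; by the functoriality of the crystalline comparison recalled in the first paragraph, $\D(\Pic^0(\rho)^{r-s})$ corresponds to the iterated trace $(\rho_*)^{r-s}$ on $H^1_{\cris}(I_r^\star)$, which is precisely the transition map used to form $H^1_{\cris}(I^\star)$. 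The Verschiebung twist $V^r$ in the splitting (\ref{GrSpecialExact}) is harmless, as it commutes with the transition maps and is already absorbed into the $F$, $V$ and Hecke bookkeeping of Corollary \ref{SpecialFiberOrdinary1}. Since every module in sight is finite free over $\Z_p$ and all maps are $\Z_p$-linear, taking $\varprojlim_r$ turns the compatible system of finite-level isomorphisms into the desired isomorphism $\D_\infty^{\mult}\oplus\D_\infty^{\et}\simeq f'H^1_{\cris}(I^0)^{V_{\ord}}\oplus f'H^1_{\cris}(I^\infty)^{F_{\ord}}$, respecting the direct sum decompositions and compatible with $F$, $V$, $\H^*$, and $\Gamma$ by the finite-level compatibilities just verified.
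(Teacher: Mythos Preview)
Your proposal is correct and follows essentially the same route as the paper: both arguments apply $\D(\cdot)_{\Z_p}$ to the split exact sequence (\ref{GrSpecialExact}) of Corollary \ref{SpecialFiberOrdinary1} to obtain finite-level identifications of $\D(\o{\G}_r^{\mult})$ and $\D(\o{\G}_r^{\et})$ with the appropriate $V_{\ord}$ and $F_{\ord}$ pieces of the Dieudonn\'e modules of the Igusa Jacobians, invoke the integral crystalline comparison $\D(J_X[p^\infty])\simeq H^1_{\cris}(X/\Z_p)$ (the paper cites \cite{MM} and \cite{IllusiedR} for this, as in (\ref{MMIllusie})), check compatibility with change in $r$ via the transition maps recorded in Corollary \ref{SpecialFiberOrdinary1}, and pass to inverse limits. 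The only cosmetic difference is that the paper writes the finite-level isomorphism on the multiplicative side explicitly as $V^r\circ \D(\Alb(i_r^0))$ rather than absorbing the $V^r$ twist into the bookkeeping as you do.
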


\begin{proof}
	From the exact sequence (\ref{GrSpecialExact}), we obtain for each $r$ isomorphisms
	\begin{equation}
		\xymatrix@C=55pt{
			{\D(\o{\G}_r^{\mult})} \ar[r]^-{\simeq}_-{V^r \circ \D(\Alb(i_r^{0}))} &
			 {f'\D(j_r^{0}[p^{\infty}])^{V_{\ord}}}
		}\qquad\text{and}\qquad
		\xymatrix@C=55pt{
			{f'\D(j_r^{\infty}[p^{\infty}])^{F_{\ord}}} \ar[r]^-{\simeq}_-{\D(\Pic^0(i_r^{\infty}))} &
			{\D(\o{\G}_r^{\et})}
		}\label{IgusaInterpretation}
	\end{equation}
	that are $\H^*$ and $\Gamma$-equivariant (with respect to the actions
	specified in Corollary \ref{SpecialFiberOrdinary1}), and compatible with change in $r$
	via the mappings $\D(\pr_{r,s})$ on $\D(\o{\G}_r^{\star})$ and $\D(\pr)$
	on $\D(j_r^{\star}[p^{\infty}])$.  On the other hand, for {\em any} smooth and proper curve
	$X$ over a perfect field $k$ of characteristic $p$, 
	thanks to \cite{MM} and \cite[\Rmnum{2}, \S3 C Remarque 3.11.2]{IllusiedR}
	there are natural isomorphisms
	of $W(k)[F,V]$-modules  
	\begin{equation}
		\D(J_X[p^{\infty}]) \simeq H^1_{\cris}(J_X/W(k)) \simeq H^1_{\cris}(X/W(k))\label{MMIllusie}
	\end{equation}
	that for any finite map of smooth proper curves $f:Y\rightarrow X$ over $k$
	intertwine $\D(\Pic(f))$ and $\D(\Alb(f))$ with trace and pullback by $f$ on crystalline cohomology,
	respectively.  Applying this to $X=I_r^{\star}$ for $\star=0,\infty$, appealing to 
	the identifications (\ref{IgusaInterpretation}), and passing to inverse limits completes the proof.
\end{proof}

We now wish to relate the slope filtration (\ref{DieudonneInfiniteExact}) 
to the Hodge filtration (\ref{orddRseq})
of our ordinary $\Lambda$-adic de Rham cohomology studied in \cite{CaisHida1}.
Applying the idempotent $f'$ of (\ref{projaway}) to (\ref{orddRseq})
yields a short exact sequence of free $\Lambda_{R_{\infty}}$-modules with
semilinear $\Gamma$-action and commuting action of $\H^*$:
\begin{equation}
	\xymatrix{
		0 \ar[r] & {{e^*}'H^0(\omega)} \ar[r] & {{e^*}'H^1_{\dR}} \ar[r] & {{e^*}'H^1(\O)} \ar[r] & 0
	}.\label{LambdaHodgeFilnomup}
\end{equation}
The key to relating (\ref{LambdaHodgeFilnomup}) to the slope filtration (\ref{DieudonneInfiniteExact}) 
is the following comparison isomorphism:

\begin{proposition}\label{KeyComparison}
	For each positive integer $r$, there is a natural isomorphism of short exact sequences
	\begin{equation}
	\begin{gathered}
		\xymatrix{
			0\ar[r] & {\omega_{\G_r}} \ar[r]\ar[d]_-{\simeq} & {\D(\G_{r,0})_{R_r}} \ar[r]\ar[d]^-{\simeq} & 
			{\Lie(\Dual{\G}_r)} \ar[r]\ar[d]^-{\simeq} & 0 \\
			0\ar[r] & {{e^*}'H^0(\omega_r)} \ar[r] & {{e^*}'H^1_{\dR,r}} \ar[r] & {{e^*}'H^1(\O_r)} \ar[r] & 0
		}
	\end{gathered}\label{CollectedComparisonIsom}
	\end{equation}
	that is compatible with $\H_r^*$, $\Gamma$, and change in $r$ using
	the mappings $(\ref{pdivTowers})$ on the top row and the maps $\pr_*$ on the bottom.
	Here, the bottom row---with obvious abbreviated notation---is obtained from $(\ref{finiteleveldRseq})$ 
	by applying ${e^*}'$
	and the top row is the Hodge filtration of $\D(\G_{r,0})_{R_r}$ given by
	Proposition $\ref{BTgroupUnivExt}$.  
\end{proposition}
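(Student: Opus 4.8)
The plan is to obtain (\ref{CollectedComparisonIsom}) as the composite of three comparison isomorphisms already at our disposal, routing the de~Rham cohomology of $\X_r$ through the Lie algebra of the universal extension of the $p$-divisible group $\Dual{\G}_r$. The geometric facts that make this possible are that the Katz--Mazur model $\X_r$ is regular (hence has rational singularities) and that $\B_r^*$ acquires good reduction over $R_r$ by Proposition~\ref{GoodRedn}, so that its N\'eron model is an abelian scheme and the formalism of \S\ref{Universal} applies on the nose.

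First I would apply Proposition~\ref{intcompare} to $\X_r$. As $\X_r$ is regular, that proposition identifies the sequence $H(\X_r/R_r)$ with the Lie algebra sequence attached to $\scrExtrig_{T_r}(\J_r,\Gm)$, where $\J_r \simeq \Pic^0_{\X_r/R_r}$ is the N\'eron model of $J_r$ (the identification with $\Pic^0$ being \cite[\S9.7, Theorem~1]{BLR}). I would then transport the computation onto the good quotient via $\alpha_r^*\colon J_r \twoheadrightarrow B_r^*$. Because $\Cot(\alpha_r^*)$ is injective with image $V_r^*$, which contains the ${e^*}'$-eigenspace of $\Cot(J_r)$ by Proposition~\ref{BrCotIden}, and because the analogous statement on the full three-term cohomology sequence is supplied by the analysis of \cite{CaisHida1}, the $\H_r^*(\Z)$-equivariant map induced by $\alpha_r^*$ is an isomorphism on ${e^*}'$-parts. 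Since $\B_r^*$ is an abelian scheme, $\scrExtrig_{T_r}(\B_r^*,\Gm)$ is genuinely the universal extension $\E(\Dual{\B_r^*})$ by Proposition~\ref{MMrep} (and the connected-component subtlety in (\ref{NeronCanExt}) disappears in the good-reduction case). This rewrites ${e^*}'H(\X_r/R_r)$ as the ${e^*}'$-part of $\scrLie(\E(\Dual{\B_r^*}))$ with its Hodge filtration.

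The second comparison descends from the abelian scheme to its $p$-divisible group. Working over each Artinian quotient $R_r/p^nR_r$ and passing to the inverse limit, Theorem~\ref{UniExtCompat}~(\ref{UniExtCompat3}) shows that the natural map $\Dual{\B_r^*}[p^{\infty}] \to \Dual{\B_r^*}$ induces an isomorphism on the Lie algebras of universal extensions, compatibly with Hodge filtrations. Taking the direct factor cut out by ${e^*}'$ and invoking $\G_r = {e^*}'\B_r^*[p^{\infty}]$ (Definition~\ref{ordpdivdefn}) together with Proposition~\ref{GoodRednProp}, this identifies the ${e^*}'$-part of $\scrLie(\E(\Dual{\B_r^*}))$ with $\scrLie(\E(\Dual{\G}_r))$, respecting Hodge filtrations. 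The third and final comparison is Proposition~\ref{BTgroupUnivExt} itself, applied with $T = \Spec(R_r/pR_r)$, $T' = \Spec(R_r)$, and the lift $G' = \G_r$ of $\G_{r,0}$: it identifies $\scrLie(\E(\Dual{\G}_r))$ together with its Hodge filtration with $\D(\G_{r,0})_{R_r}$ and the Hodge filtration (\ref{HodgeFilOrd}). Composing the three isomorphisms produces (\ref{CollectedComparisonIsom}), with $H^0(\omega)$ matching $\omega_{\G_r}$ and $H^1(\O)$ matching $\Lie(\Dual{\G}_r)$.

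It remains to verify the compatibilities. All three comparison isomorphisms are functorial and of formation compatible with base change, so the resulting diagram is automatically $\H_r^*$-equivariant; and since the semilinear $\Gamma$-action on each object is induced functorially from the descent datum of $\X_r$, respectively $\G_r$, to $\Q_p$, it is $\Gamma$-equivariant as well. For compatibility with change in $r$ I would match the transition map $\rho_{r,s}$ of (\ref{pdivTowers}), built from $\Pic^0(\rho)$, with the trace map $\rho_*$ on $H(\X_r/R_r)$, using that Proposition~\ref{intcompare} intertwines $\rho_*$ with $\Pic(\rho)^*$ and that the two later steps are functorial in $\G_r$. The step demanding the most care---and the main obstacle---is the bookkeeping of idempotents and Cartier/Albanese duals: one must confirm that $\alpha_r^*$ is an isomorphism precisely on ${e^*}'$-parts and that the transpose idempotent governing the dual $p$-divisible group $\Dual{\B_r^*}[p^{\infty}]$ singles out exactly $\Dual{\G}_r$, so that the single idempotent ${e^*}'$ threads consistently through the entire chain and does not silently degenerate into the complementary $e'$-part.
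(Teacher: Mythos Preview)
Your three-step route---Proposition~\ref{intcompare} for $\X_r$, transport along $\alpha_r^*$ to the good-reduction quotient $\B_r^*$, then Theorem~\ref{UniExtCompat}~(\ref{UniExtCompat3}) and Proposition~\ref{BTgroupUnivExt} to pass to the $p$-divisible group and its Dieudonn\'e crystal---is exactly the architecture of the paper's proof. The paper even organizes it the same way: it first isolates the claim that $\alpha_r^*$ induces an isomorphism ${e^*}'H(\J_r)\simeq{e^*}'H(\B_r^*)$ of three-term sequences (your step~2), observes that Proposition~\ref{intcompare} then finishes the job, and reduces modulo $p^n$ using (\ref{AbSchpDiv}) to reach $H(\G_{r,n})$ before invoking Proposition~\ref{BTgroupUnivExt}.

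The one place your argument is weaker than the paper's is precisely the step you flagged as ``the main obstacle.'' You justify the isomorphism on ${e^*}'$-parts by saying that $V_r^*$ contains the ${e^*}'$-eigenspace of $\Cot(J_r)$ ``by Proposition~\ref{BrCotIden}'' and that the full three-term statement is ``supplied by the analysis of \cite{CaisHida1}.'' Neither reference actually delivers this: Proposition~\ref{BrCotIden} only identifies the image of $\Cot(\alpha_r^*)$ with $V_r^*$, not the containment of the ${e^*}'$-part, and the appeal to \cite{CaisHida1} is too vague to pin down. The paper instead runs the same diagram chase already used in Propositions~\ref{GoodRednProp} and~\ref{SpecialFiberDescr}: apply the contravariant functor ${e^*}'H(\cdot)$ to the factorization square (\ref{UFactorDiag}) of Lemma~\ref{UFactorDiagLem}, note that the vertical arrows $U_r^*$ are isomorphisms on ${e^*}'$-parts by the very definition of ${e^*}'$, and conclude that the horizontal arrow ${e^*}'H(\B_r^*)\to{e^*}'H(\J_r)$ is an isomorphism. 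This argument is self-contained, works uniformly on the full Hodge filtration (not just cotangent spaces), and makes the $\H_r^*$, $\Gamma$, and change-in-$r$ compatibilities transparent since they are inherited from the functoriality of $\alpha_r^*$ and the N\'eron mapping property. You should replace your cotangent-space sketch with this factorization argument.
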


\begin{proof}
	Let $\alpha_r^*: J_r\twoheadrightarrow B_r^*$ be the map of Definition \ref{BalphDef}.
	We claim that $\alpha_r^*$ induces a canonical isomorphism of short exact sequences of free 
	$R_r$-modules
	\begin{equation}
	\begin{gathered}
		\xymatrix{
			0 \ar[r] & {\omega_{\G_r}}\ar[d]_-{\simeq} \ar[r] & {\D(\G_{r,0})_{R_r}} \ar[d]_-{\simeq}\ar[r] & 
			{\Lie(\G_r^t)} \ar[d]^-{\simeq}\ar[r] & 0 \\
			0 \ar[r] & {{e^*}'\omega_{\J_r}} \ar[r] & {{e^*}'\Lie\scrExtrig(\J_r,\Gm)} \ar[r] & 
			{{e^*}'\Lie({\J_r^t}^0)} \ar[r] & 0
		}
	\end{gathered}\label{HodgeToExtrigMap}
	\end{equation}
	that is $\H_r^*$ and $\Gamma$-equivariant and compatible with change in $r$ using
	the map on N\'eron models induced by $\Pic^0(\pr)$ and the maps (\ref{pdivTowers})
	on $\G_r$.
	Granting this claim, the proposition then follows immediately from Proposition \ref{intcompare}.

	To prove our claim, we introduce the following notation:
	set $V:=\Spec(R_r)$, and for $n\ge 1$ put $V_n:=\Spec(R_r/p^nR_r)$. For any scheme 
	(or $p$-divisible group) $T$ over $V$, we put $T_n:=T\times_V V_n$.
	If $\A$ is a N\'eron model over $V$,
	we will write $H(\A)$ for the short exact sequence of free $R_r$-modules obtained by
	applying $\Lie$ to the canonical extension (\ref{NeronCanExt}) of $\Dual{\A}^0$.
	If $G$ is a $p$-divisible group over $V$, we similalry
	write $H(G_n)$ for the short exact sequence of Lie algebras associated to the universal extension
	of $G_n^t$ by a vector group over $V_n$ (see Theorem \ref{UniExtCompat}, (\ref{UniExtCompat2})).  
	If $\A$ is an abelian scheme over $V$ 
	then we have natural and compatible (with change in $n$) isomorphisms
	\begin{equation}
		H(\A_n[p^{\infty}])\simeq H(\A_n)\simeq H(\A)/p^n,\label{AbSchpDiv}
	\end{equation}
	thanks to Theorem \ref{UniExtCompat}, (\ref{UniExtCompat3}) and (\ref{UniExtCompat1}); in particular, this
	justifies our slight abuse of notation.  
	
	Applying the contravariant functor ${e^*}'H(\cdot)$ to the diagram of N\'eron models over $V$ 
	induced by (\ref{UFactorDiag}) yields a commutative diagram of short exact sequences of 
	free $R_r$-modules
	\begin{equation}
	\begin{gathered}
		\xymatrix{
			{{e^*}'H(\J_r)} & {{e^*}'H(\B_r^*)}\ar[l] \\
			{{e^*}'H(\J_r)} \ar[u]^-{U_r^*}\ar[ur] & {{e^*}'H(\B_r)}\ar[u]_-{U_r^*}\ar[l] 
			}
	\end{gathered}		
	\end{equation}
	in which both vertical arrows are isomorphisms by definition of ${e^*}'$.  As in the proofs of Propositions
	\ref{GoodRednProp} and \ref{SpecialFiberDescr}, it follows that
	the horizontal maps must be isomorphisms as well:
	\begin{equation}
		{e^*}'H(\J_r)\simeq {e^*}'H(\B_r^*)
		\label{alphaIdenOrd}
	\end{equation}
	Since these isomorphisms are induced via the N\'eron mapping property and the functoriality
	of $H(\cdot)$ by the $\H_r^*(\Z)$-equivariant map $\alpha_r^*:J_r\twoheadrightarrow B_r^*$,
	they are themselves $\H_r^*$-equivariant.  Similarly, since $\alpha_r^*$ is defined 
	over $\Q$ and compatible with change in $r$ as in Lemma \ref{Btower}, the isomorphism
	(\ref{alphaIdenOrd}) is compatible with the given actions of $\Gamma$ (arising via the N\'eron
	mapping property from the semilinear action of $\Gamma$ over $K_r$ giving the descent
	data of ${J_r}_{K_r}$ and ${B_r}_{K_r}$ to $\Q_p$) and change in $r$.
	Reducing (\ref{alphaIdenOrd}) modulo $p^n$ and using the canonical isomorphism (\ref{AbSchpDiv}) yields
	the identifications
	\begin{equation}
		{e^*}'H(\J_r)/p^n\simeq {e^*}'H(\B_r^*)/p^n \simeq {e^*}'H(\B_{r,n}^*[p^{\infty}]) 
		\simeq H({e^*}'\B_{r,n}^*[p^{\infty}]) =: H(\G_{r,n})\label{ModPowersIsom}
	\end{equation}
	which are clearly compatible with change in $n$, and which are easily checked
	(using the naturality of (\ref{AbSchpDiv}) and our remarks above) to be
	$\H_r^*$ and $\Gamma$-equivariant, and compatible with change in $r$.
	Since the surjection $R_r\twoheadrightarrow R_r/pR_r$ is a PD-thickening,
	passing to inverse limits (with respect to $n$) on (\ref{ModPowersIsom}) and using 
	Proposition \ref{BTgroupUnivExt} now completes the proof.	
\end{proof}

\begin{corollary}\label{RelationToHodgeCor}
	Let $r$ be a positive integer. Then the short exact sequence of free $R_r$-modules
	\begin{equation}
		\xymatrix{
			0\ar[r] & {{e^*}'H^0(\omega_r)} \ar[r] & {{e^*}'H^1_{\dR,r}} \ar[r] & {{e^*}'H^1(\O_r)} \ar[r] & 0
		}\label{TrivialEigenHodge}
	\end{equation}
	is functorially split; in particular, 
	it is split compatibly with the actions of $\Gamma$ and $\H_r^*$.
	Moreover, $(\ref{TrivialEigenHodge})$ admits a functorial descent to $\Z_p$:
	there is a natural isomorphism of split short exact sequences 
	\begin{equation}
	\begin{gathered}
			\xymatrix{
			0\ar[r] & {{e^*}'H^0(\omega_r)} \ar[r]\ar[d]_-{\simeq} & 
			{{e^*}'H^1_{\dR,r}} \ar[r]\ar[d]^-{\simeq} & {{e^*}'H^1(\O_r)} \ar[r]\ar[d]^-{\simeq} & 0\\
			0 \ar[r] & {\D(\o{\G}_r^{\mult})\tens_{\Z_p} R_r} \ar[r] &
			{\D(\o{\G}_r)\tens_{\Z_p} R_r}\ar[r] &
			{\D(\o{\G}_r^{et})\tens_{\Z_p} R_r} \ar[r] & 0
			}
	\end{gathered}\label{DescentZp}		
	\end{equation}
	that is $\H^*$ and $\Gamma$ equivariant, with
	$\Gamma$ acting trivially on $\o{\G}_r^{\et}$ and through $\langle \chi\rangle^{-1}$ on $\o{\G}_r^{\mult}$.
	The identification $\ref{DescentZp}$ is compatible with change in $r$ using the maps $\pr_*$ on the top
	row and the maps induced by 
	\begin{equation*}
			\xymatrix@C=35pt{
				{\o{\G}_r=\o{\G}_r^{\mult} \times \o{\G}_r^{\et}} \ar[r]^{V^{-1}\times F} &
				 {\o{\G}_r^{\mult} \times \o{\G}_r^{\et}=\o{\G}_r} \ar[r]^-{\o{\rho}} & 
				 {\o{\G}_{r+1}} 
			}
	\end{equation*}	
	on the bottom row.
\end{corollary}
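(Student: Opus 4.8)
The plan is to obtain both the splitting and its descent by transporting, through the comparison isomorphism of Proposition \ref{KeyComparison}, the structure theory for ordinary $p$-divisible groups recorded in Lemma \ref{HodgeFilOrdProps}, exploiting throughout the fact that our residue field is $\F_p$. First I would invoke Proposition \ref{KeyComparison} to identify the sequence $(\ref{TrivialEigenHodge})$ with the Hodge filtration
$$0 \to \omega_{\G_r} \to \D(\G_{r,0})_{R_r} \to \Lie(\Dual{\G}_r) \to 0$$
of $\D(\G_{r,0})_{R_r}$, compatibly with $\H_r^*$ and $\Gamma$. Since $\G_r$ is ordinary by Corollary \ref{SpecialFiberOrdinary1}, Lemma \ref{HodgeFilOrdProps} applies with $G=\G_r$ and furnishes a canonical, $\Gamma$-equivariant splitting of this Hodge filtration via the maps $i,j$ arising from the connected-\'etale decomposition. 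Pulling this splitting back along the $\H_r^*$- and $\Gamma$-equivariant isomorphism of Proposition \ref{KeyComparison} yields a functorial splitting of $(\ref{TrivialEigenHodge})$ compatible with $\Gamma$ and $\H_r^*$, which is the first assertion.

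For the descent to $\Z_p$, I would combine the above with the isomorphism $(\ref{DescentToWIsom})$ of Lemma \ref{HodgeFilOrdProps}, which identifies the Hodge filtration of $\D(\G_{r,0})_{R_r}$ $\Gamma$-equivariantly with the split sequence
$$0 \to \D(\o{\G}_r^{\mult})\tens_{W,\varphi^r} R_r \to \D(\o{\G}_r)\tens_{W,\varphi^r} R_r \to \D(\o{\G}_r^{\et})\tens_{W,\varphi^r} R_r \to 0.$$
The \emph{decisive} simplification is that our residue field is $\F_p$, so $W=W(\F_p)=\Z_p$ and $\varphi=\id$; hence $\tens_{W,\varphi^r} R_r$ is simply $\tens_{\Z_p} R_r$ and the twisted base change becomes the ordinary one, exhibiting precisely the bottom row of $(\ref{DescentZp})$. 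Composing the two displayed identifications gives the isomorphism $(\ref{DescentZp})$; its $\H_r^*$-equivariance is inherited from both inputs, while the asserted $\Gamma$-action on the flanking terms---trivial on $\o{\G}_r^{\et}$ and through $\langle\chi\rangle^{-1}$ on $\o{\G}_r^{\mult}$---is exactly the one recorded in Corollary \ref{SpecialFiberOrdinary1}.

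The main obstacle is the compatibility with change in $r$, and in particular the appearance of the twist $V^{-1}\times F$. On the de Rham side the transition maps are the trace maps $\pr_*$; by Proposition \ref{KeyComparison} these correspond on $\D(\G_{r,0})_{R_r}$ to the maps induced by $(\ref{pdivTowers})$, hence to $\D(\o{\rho})$ on special fibres. What remains is to check that, under the descent isomorphism $(\ref{DescentToWIsom})$, the map $\D(\o{\rho})$ goes over to $\o{\rho}\circ(V^{-1}\times F)$ on the direct sum $\D(\o{\G}_r^{\mult})\oplus\D(\o{\G}_r^{\et})$. The twist is forced because the flanking identifications $(\ref{TwistyDieuIsoms})$ underlying $(\ref{DescentToWIsom})$ are constructed from the relative Frobenius $F^r$ of $(\ref{Ftrick})$ and Verschiebung $V^r$ of $(\ref{Vtrick})$; their behaviour under $r\mapsto r+1$, made explicit in the diagram $(\ref{EtMultSpecialIsomsBC})$ of Proposition \ref{EtaleMultDescription}, inserts an extra $F$ on the \'etale factor and an extra $V^{-1}$ on the multiplicative factor.

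I expect the genuinely delicate point to be the bookkeeping: one must verify that these $F$ and $V^{-1}$ twists assemble consistently with the direct-sum decomposition $\o{\G}_r=\o{\G}_r^{\mult}\times\o{\G}_r^{\et}$ and with $\o{\rho}$---the $\varphi^r$-versus-$\varphi^{r+1}$ shift causing no trouble here since $\varphi=\id$ on $\Z_p$---so as to produce exactly the stated transition map. This is a direct if somewhat intricate diagram chase, carried out by feeding the change-in-$r$ compatibilities of Proposition \ref{KeyComparison} and Proposition \ref{EtaleMultDescription} into the two identifications above.
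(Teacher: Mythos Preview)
Your proposal is correct and follows essentially the same approach as the paper: invoke Proposition~\ref{KeyComparison} to identify (\ref{TrivialEigenHodge}) with the Hodge filtration of $\D(\G_{r,0})_{R_r}$, use Lemma~\ref{HodgeFilOrdProps} (together with Corollary~\ref{SpecialFiberOrdinary1}) for the splitting and the descent via (\ref{DescentToWIsom}), and deduce the change-in-$r$ compatibility from the construction of (\ref{DescentToWIsom}) through (\ref{TwistyDieuIsoms}). Your explicit remark that $W=\Z_p$ and $\varphi=\id$ collapses $\tens_{W,\varphi^r}R_r$ to $\tens_{\Z_p}R_r$, and your account of why the $V^{-1}\times F$ twist appears, make transparent points the paper leaves implicit, but the argument is the same.
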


\begin{proof}
	Consider the isomorphism (\ref{CollectedComparisonIsom}) of Proposition \ref{KeyComparison}.
	As  $\G_r$ is an ordinary $p$-divisible group by Corollary \ref{SpecialFiberOrdinary1},
	the top row of (\ref{CollectedComparisonIsom}) is functorially split
	by Lemma \ref{HodgeFilOrdProps},  and this gives our first assertion.
	Composing the inverse of (\ref{CollectedComparisonIsom})
	with the isomorphism (\ref{DescentToWIsom}) of	Lemma \ref{HodgeFilOrdProps} gives
	the claimed identification (\ref{DescentZp}).
	That this isomorphism is compatible with change in $r$ via the specified maps
	follows easily from the construction of (\ref{DescentToWIsom}) via
	(\ref{TwistyDieuIsoms}).
\end{proof}

We can now prove Theorem \ref{dRtoDieudonne}.  Let us recall the statement:

\begin{theorem}\label{dRtoDieudonneInfty}
	There is a canonical isomorphism of 
	short exact sequences of finite free $\Lambda_{R_{\infty}}$-modules 
	\begin{equation}
	\begin{gathered}
		\xymatrix{
		0 \ar[r] & {{e^*}'H^0(\omega)} \ar[r]\ar[d]^-{\simeq} & 
		{{e^*}'H^1_{\dR}} \ar[r]\ar[d]^-{\simeq} & {{e^*}'H^1(\O)} \ar[r]\ar[d]^-{\simeq} & 0 \\
		0 \ar[r] & {\D_{\infty}^{\mult}\tens_{\Lambda} \Lambda_{R_{\infty}}} \ar[r] &
		{\D_{\infty}\tens_{\Lambda} \Lambda_{R_{\infty}}} \ar[r] &
		{\D_{\infty}^{\et}\tens_{\Lambda} \Lambda_{R_{\infty}}} \ar[r] & 0
		}
	\end{gathered}
	\end{equation}
	that is $\Gamma$ and $\H^*$-equivariant.  
	Here, the mappings on bottom row are the canonical inclusion and projection morphisms
	corresponding to the direct sum decomposition $\D_{\infty}=\D_{\infty}^{\mult}\oplus \D_{\infty}^{\et}$.
	In particular, the Hodge filtration exact sequence $(\ref{LambdaHodgeFilnomup})$ is canonically 
	split, and admits a canonical descent to $\Lambda$.
\end{theorem}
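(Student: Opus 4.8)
The plan is to pass to the inverse limit over $r$ in the level-by-level comparison isomorphism (\ref{DescentZp}) of Corollary~\ref{RelationToHodgeCor}, after extending scalars from $R_r$ to $R_{\infty}$. First I would apply $\tens_{R_r} R_{\infty}$ to (\ref{DescentZp}); by Corollary~\ref{RelationToHodgeCor} this produces, for each $r\ge 1$, a $\Gamma$- and $\H^*$-equivariant isomorphism of short exact sequences of free $R_{\infty}$-modules
\[
	\bigl[\,{e^*}'H^0(\omega_r)\to{e^*}'H^1_{\dR,r}\to{e^*}'H^1(\O_r)\,\bigr]\tens_{R_r}R_{\infty}
	\;\simeq\;
	\bigl[\,\D(\o{\G}_r^{\mult})\to\D(\o{\G}_r)\to\D(\o{\G}_r^{\et})\,\bigr]\tens_{\Z_p}R_{\infty},
\]
natural in $r$ for the trace maps $\pr_*$ on the left and for the maps induced by $\o{\G}_r\xrightarrow{V^{-1}\times F}\o{\G}_r\xrightarrow{\o{\rho}}\o{\G}_{r+1}$ on the right. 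Taking $\varprojlim_r$ of the left-hand sequences recovers (\ref{LambdaHodgeFilnomup}) by the very construction of the $\Lambda$-adic de~Rham cohomology in \cite{CaisHida1}, and the resulting limit is a short exact sequence of free $\Lambda_{R_{\infty}}$-modules by Theorem~\ref{dRMain}. It therefore remains to identify the inverse limit of the right-hand sequences with the bottom row of the asserted diagram, equipped with its standard structure.

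The crux is that the right-hand transition maps are \emph{twisted}: they are $\D(V^{-1}\times F)\circ\D(\o{\rho})$, whereas $\D_{\infty}^{\star}=\varprojlim_r\D(\o{\G}_r^{\star})$ is formed using the untwisted maps $\D(\o{\rho})$. To reconcile these I would exploit that $F$ and $V$ are functorial and, over the perfect field $\F_p$, central endomorphisms of $p$-divisible groups. Write $\vartheta_r:=\D(V^{-1}\times F)$ for the induced endomorphism of $\D(\o{\G}_r)$; by ordinarity (Corollary~\ref{SpecialFiberOrdinary1}) it is an automorphism respecting the decomposition $\D(\o{\G}_r)=\D(\o{\G}_r^{\mult})\oplus\D(\o{\G}_r^{\et})$, and it commutes with $\D(\o{\rho})$ across levels, with the $\H^*$-action, and with $\Gamma$ (which acts trivially on the \'etale factor and through the central scalar $\langle\chi\rangle^{-1}$ on the multiplicative factor). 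Setting $\Phi_r:=\vartheta_r^{-r}$, the centrality relation $\D(\o{\rho})\circ\vartheta_{r+1}=\vartheta_r\circ\D(\o{\rho})$ gives $\Phi_r\circ\D(\o{\rho})=\bigl(\vartheta_r\circ\D(\o{\rho})\bigr)\circ\Phi_{r+1}$, so $(\Phi_r)_r$ is an isomorphism from the untwisted projective system $\{\D(\o{\G}_r),\D(\o{\rho})\}$ onto the twisted one. Since each $\Phi_r$ is an isomorphism commuting with $\H^*$ and $\Gamma$ and preserving the sub/quotient decomposition, $\varprojlim_r(\Phi_r\tens\id)$ is a $\Gamma$- and $\H^*$-equivariant identification of the twisted limit with $\varprojlim_r\bigl(\D(\o{\G}_r^{\star})\tens_{\Z_p}R_{\infty}\bigr)$; and the latter is canonically $\D_{\infty}^{\star}\tens_{\Lambda}\Lambda_{R_{\infty}}$ by Theorem~\ref{MainDieudonne} together with $\Lambda_{R_{\infty}}=\varprojlim_r R_{\infty}[\Delta/\Delta_r]$.

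Composing the limit of the scalar-extended (\ref{DescentZp}) with $\varprojlim_r(\Phi_r\tens\id)$ then yields the asserted isomorphism of short exact sequences, with the multiplicative-to-\'etale ordering displayed in the statement. The last two claims are immediate consequences: the bottom row is canonically split because $\D_{\infty}=\D_{\infty}^{\mult}\oplus\D_{\infty}^{\et}$ (Theorem~\ref{MainDieudonne}), and it is visibly the base change to $\Lambda_{R_{\infty}}$ of the sequence $\D_{\infty}^{\mult}\to\D_{\infty}\to\D_{\infty}^{\et}$ of $\Lambda$-modules, so the Hodge filtration exact sequence (\ref{LambdaHodgeFilnomup}) inherits a canonical splitting and a canonical descent to $\Lambda$. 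I expect the main obstacle to be precisely the twisted-transition bookkeeping of the second paragraph---verifying that the cumulative twist $\vartheta_r^{-r}$ is simultaneously compatible with $\H^*$, with $\Gamma$, and with passage to the limit---since the exactness, freeness, and scalar-extension facts are supplied directly by Theorem~\ref{dRMain} and Theorem~\ref{MainDieudonne}.
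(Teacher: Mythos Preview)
Your proposal is correct and follows essentially the same route as the paper: extend (\ref{DescentZp}) to $R_{\infty}$, pass to the inverse limit, untwist the Dieudonn\'e-side transition maps using the cumulative automorphisms built from $F$ and $V^{-1}$, and then identify the untwisted limit with $\D_{\infty}^{\star}\otimes_{\Lambda}\Lambda_{R_{\infty}}$. The paper's proof is terser---it simply records the untwisting isomorphisms $V^{-r}\times F^{r}$ on $\o{\G}_r$ and invokes \cite[Lemma~3.1.2~(5)]{CaisHida1} (rather than Theorem~\ref{MainDieudonne} directly) for the final base-change identification---but the content is the same, and your more explicit bookkeeping with $\vartheta_r$ and $\Phi_r=\vartheta_r^{-r}$ is exactly what underlies that step.
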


\begin{proof}
	Applying $\otimes_{R_r} R_{\infty}$ to $(\ref{DescentZp})$ and passing to projective limits 
	yields an isomorphism of split exact sequences
	\begin{equation*}
		\xymatrix{
			0\ar[r] & {{e^*}'H^0(\omega)} \ar[r]\ar[d]_-{\simeq} & 
			{{e^*}'H^1_{\dR}} \ar[r]\ar[d]^-{\simeq} & {{e^*}'H^1(\O)} \ar[r]\ar[d]^-{\simeq} & 0\\
0 \ar[r] & {\varprojlim\limits_{\o{\rho}\circ V^{-1}} \left(\D(\o{\G}_r^{\mult})\tens_{\Z_p} R_{\infty}\right)} \ar[r] &
{\varprojlim\limits_{\o{\rho}\circ (V^{-1}\times F)}\left(\D(\o{\G}_r)\tens_{\Z_p} R_{\infty}\right)}\ar[r] &
			{\varprojlim\limits_{\o{\rho}\circ F}\left(\D(\o{\G}_r^{et})\tens_{\Z_p} R_{\infty}\right)} \ar[r] & 0
			}
		\end{equation*}
	On the other hand, the isomorphisms
$		\xymatrix@1@C=37pt{
			{\o{\G}_r = \o{\G}_r^{\mult}\times \o{\G}_r^{\et} } \ar[r]^-{V^{-r}\times F^{r}} &
			{\o{\G}_r^{\mult}\times \o{\G}_r^{\et} =\o{\G}_r}
		}
$
	induce an isomorphism of projective limits
	\begin{equation*}
		\xymatrix{
			{\varprojlim\limits_{\o{\rho}}\left(\D(\o{\G}_r)\tens_{\Z_p} R_{\infty}\right)}	\ar[r]^-{\simeq} &
			{\varprojlim\limits_{\o{\rho}\circ (V^{-1}\times F)}\left(\D(\o{\G}_r)\tens_{\Z_p} R_{\infty}\right)}
		}
	\end{equation*}
	which is visibly compatible with the the canonical splittings of source and target.  
	The result now follows from \cite[Lemma 3.1.2 (5)]{CaisHida1}
	and the proof of Theorem \ref{MainDieudonne},
	which guarantee that the canonical mapping 
	$\D_{\infty}\otimes_{\Lambda}\Lambda_{R_{\infty}}\rightarrow 
	\varprojlim_{\o{\rho}} (\D(\o{\G}_r)\otimes_{\Z_p}R_{\infty})$
	is an isomorphism respecting the natural splittings.
\end{proof}

As in \cite[\S3.3]{CaisHida1}, for any subfield $K$ of $\c_p$ with ring of integers $R$, we denote by
$eS(N;\Lambda_R)$ the module of ordinary $\Lambda_R$-adic cuspforms of level $N$
in the sense of \cite[2.5.5]{OhtaEichler}.  Following our convention 
above Proposition \ref{GoodRednProp},	
we write $e'S(N;\Lambda_R)$ for the direct summand of $eS(N;\Lambda_R)$
on which $\mu_{p-1}\hookrightarrow \Z_p^{\times}\subseteq \H$ acts 
nontrivially.

\begin{corollary}\label{MFIgusaDieudonne}
	There is a canonical isomorphism of finite free $\Lambda$-modules 
	\begin{equation}
		{e}'S(N;\Lambda) \simeq \D_{\infty}^{\mult}   
		\label{LambdaFormsCrystalline}
	\end{equation}
	that intertwines $T\in \H$ on $e'S(N;\Lambda)$ with $T^*\in \H^*$
	on $\D_{\infty}^{\mult}$, 
	where $U_p^*$ acts on $\D_{\infty}^{\mult}$as $\langle p\rangle_N V$.
\end{corollary}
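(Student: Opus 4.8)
The plan is to deduce the corollary by combining the de~Rham--crystalline comparison of Theorem \ref{dRtoDieudonneInfty} with the identification of the Hodge piece of $\Lambda$-adic de~Rham cohomology with $\Lambda$-adic cusp forms furnished by \cite[Corollary 3.3.5]{CaisHida1}, and then to descend the resulting isomorphism from $\Lambda_{R_{\infty}}$ down to $\Lambda$. Throughout I keep careful track of the Hecke actions, recalling that the comparison isomorphisms of \cite{CaisHida1} intertwine the $T$-action on forms with the $T^*$-action on cohomology.

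First I would recall \cite[Corollary 3.3.5]{CaisHida1}, which provides a canonical $\Gamma$-equivariant isomorphism $e^*H^0(\omega)\simeq eS(N,\Lambda_{R_{\infty}})$ intertwining $T^*\in\H^*$ with $T\in\H$; applying the idempotent $f'$ and using $eS(N,\Lambda_{R_{\infty}})=eS(N,\Lambda)\otimes_{\Lambda}\Lambda_{R_{\infty}}$ yields ${e^*}'H^0(\omega)\simeq e'S(N,\Lambda)\otimes_{\Lambda}\Lambda_{R_{\infty}}$. On the other hand, Theorem \ref{dRtoDieudonneInfty} identifies ${e^*}'H^0(\omega)$ with $\D_{\infty}^{\mult}\otimes_{\Lambda}\Lambda_{R_{\infty}}$, $\Gamma$- and $\H^*$-equivariantly. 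Composing, I obtain a $\Gamma$- and Hecke-equivariant $\Lambda_{R_{\infty}}$-linear isomorphism
\[
\Phi\colon e'S(N,\Lambda)\otimes_{\Lambda}\Lambda_{R_{\infty}} \simeq \D_{\infty}^{\mult}\otimes_{\Lambda}\Lambda_{R_{\infty}},
\]
under which $U_p^*$ acts as $\langle p\rangle_N V$ by Theorem \ref{MainDieudonne}, and which intertwines $T$ on forms with $T^*$ on the Dieudonn\'e side.

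The descent is the crux. By Theorem \ref{MainDieudonne} the target of $\Phi$ carries a diagonal $\Gamma$-action, namely $\langle \chi(\gamma)\rangle^{-1}$ on $\D_{\infty}^{\mult}$ tensored with the natural semilinear action on $\Lambda_{R_{\infty}}$; transporting this through the comparison isomorphisms above (each $\Gamma$-equivariant) shows that the source carries the same diagonal $\Gamma$-action, with the identical diamond-operator twist $\langle\chi\rangle^{-1}$ on the $\Lambda$-module $e'S(N,\Lambda)$. Since $e'S(N,\Lambda)$ and $\D_{\infty}^{\mult}$ are finite free over $\Lambda$ (by Hida theory and Theorem \ref{MainDieudonne}, respectively), I would write
\[
\Hom_{\Lambda_{R_{\infty}}}\!\big(e'S(N,\Lambda)\otimes_{\Lambda}\Lambda_{R_{\infty}},\, \D_{\infty}^{\mult}\otimes_{\Lambda}\Lambda_{R_{\infty}}\big) \simeq \Hom_{\Lambda}\big(e'S(N,\Lambda),\D_{\infty}^{\mult}\big)\otimes_{\Lambda}\Lambda_{R_{\infty}},
\]
and observe that because source and target carry the same $\langle\chi\rangle^{-1}$-twist, the induced $\Gamma$-action on this $\Hom$ module acts only through $\Lambda_{R_{\infty}}$, the two twists having cancelled. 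As $\Phi$ is $\Gamma$-invariant and $\Lambda_{R_{\infty}}^{\Gamma}=\Lambda$ (because $R_{\infty}^{\Gamma}=R_0=\Z_p$), I conclude that $\Phi$ lies in $\Hom_{\Lambda}(e'S(N,\Lambda),\D_{\infty}^{\mult})$, i.e.\ descends to a $\Lambda$-linear, Hecke-equivariant map $\phi$ whose base change to $\Lambda_{R_{\infty}}$ is $\Phi$. Faithful flatness of $\Lambda_{R_{\infty}}$ over $\Lambda$ then forces $\phi$ to be an isomorphism, yielding (\ref{LambdaFormsCrystalline}).

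The main obstacle is the bookkeeping in this last paragraph: verifying that the geometric $\Gamma$-action on ${e^*}'H^0(\omega)$ really corresponds, under \cite[Corollary 3.3.5]{CaisHida1}, to the diagonal action on $e'S(N,\Lambda)\otimes_{\Lambda}\Lambda_{R_{\infty}}$ carrying precisely the twist $\langle\chi\rangle^{-1}$ on the forms factor, so that the twists genuinely cancel in $\Hom$ and the computation $\Lambda_{R_{\infty}}^{\Gamma}=\Lambda$ suffices for descent. Once the twist is matched, the remaining steps---the identification of $\Hom$ after base change and the faithful flatness argument---are routine, and the Hecke compatibility (with $U_p^*$ acting as $\langle p\rangle_N V$) is inherited directly from the constituent isomorphisms.
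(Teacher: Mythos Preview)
Your proposal is correct and follows essentially the same route as the paper: both compose the identification ${e^*}'H^0(\omega)\simeq \D_{\infty}^{\mult}\otimes_{\Lambda}\Lambda_{R_{\infty}}$ of Theorem~\ref{dRtoDieudonneInfty} with \cite[Corollary~3.3.5]{CaisHida1} and $e'S(N,\Lambda_{R_\infty})\simeq e'S(N,\Lambda)\otimes_\Lambda\Lambda_{R_\infty}$, and then descend along $\Lambda\hookrightarrow\Lambda_{R_\infty}$ using that the $\Gamma$-action on both tensor products is $\langle\chi(\gamma)\rangle^{-1}\otimes\gamma$. The only cosmetic difference is in how the descent is packaged: the paper twists both sides by $\langle\chi\rangle$ so that $\Gamma$ acts purely through the $\Lambda_{R_\infty}$-factor and then takes $\Gamma$-invariants directly, whereas you phrase the same cancellation inside the $\Hom$-module and invoke $\Lambda_{R_\infty}^{\Gamma}=\Lambda$ together with faithful flatness; these are equivalent, and your acknowledgment that the bookkeeping of the $\Gamma$-twist on the cusp-forms side is the one point requiring care is exactly right.
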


\begin{proof}
	We claim that there are natural isomorphisms of  finite free $\Lambda_{R_{\infty}}$-modules
	\begin{equation}
		\D_{\infty}^{\mult} \otimes_{\Lambda} \Lambda_{R_{\infty}} \simeq 
		{e^*}'H^0(\omega) \simeq {e}'S(N,\Lambda_{R_{\infty}}) \simeq 
		e'S(N,\Lambda)\otimes_{\Lambda} \Lambda_{R_{\infty}}\label{TakeGammaInvariants}
	\end{equation}		
	and that the resulting composite isomorphism 
	intertwines $T^*\in \H^*$ on $\D_{\infty}^{\mult}$ with $T\in \H$ on 
	$e'S(N,\Lambda)$ and is $\Gamma$-equivariant, with $\gamma\in\Gamma$
	acting as $\langle \chi(\gamma)\rangle^{-1}\otimes \gamma$ on each tensor product.  
	Indeed, the first and second isomorphisms are
	due to Theorem \ref{dRtoDieudonneInfty} and \cite[Corollary 3.3.5]{CaisHida1}, respectively,
	while the final isomorphism is a consequence of the definition 
	of $e'S(N;\Lambda_R)$ and the facts that this $\Lambda_R$-module is free of finite rank
	\cite[Corollary 2.5.4]{OhtaEichler} and specializes as in \cite[2.6.1]{OhtaEichler}.
	Twisting the
	$\Gamma$-action on the source and target of the composite (\ref{TakeGammaInvariants}) by 
	$\langle \chi \rangle$ therefore gives a $\Gamma$-equivariant isomorphism
	\begin{equation}
		\D_{\infty}^{\mult} \otimes_{\Lambda} \Lambda_{R_{\infty}} \simeq 
		S(N,\Lambda)\otimes_{\Lambda} \Lambda_{R_{\infty}}\label{TwistedGammaIsom}
	\end{equation}
	with $\gamma\in \Gamma$ acting as $1\otimes \gamma$ on source and target.  Passing to $\Gamma$-invariants on
	(\ref{TwistedGammaIsom}) yields (\ref{LambdaFormsCrystalline}).	
\end{proof}

\begin{remark}\label{MFIgusaCrystal}
	Via Proposition \ref{DieudonneDuality} and the natural $\Lambda$-adic
	duality between $e\H$ and $eS(N;\Lambda)$ \cite[Theorem 2.5.3]{OhtaEichler}, we obtain
	a canonical $\Gal(K_0'/K_0)$-equivariant isomorphism of $\Lambda_{R_0'}$-modules
	\begin{equation*}
		e'\H\tens_{\Lambda} \Lambda_{R_0'} \simeq \D_{\infty}^{\et}(\langle a\rangle_N)\tens_{\Lambda}{\Lambda_{R_0'}}
	\end{equation*}
	that intertwines $T\otimes 1$ for $T\in \H$ acting on $e'\H$ by multiplication
	with $T^*\otimes 1$, with $U_p^*$ acting on $\D_{\infty}^{\et}(\langle a\rangle_N)$ as $F$.
	From Theorem \ref{DieudonneCrystalIgusa} and Corollary \ref{MFIgusaDieudonne}  
	we then obtain canonical
	isomorphisms 
	\begin{equation*}
		e'S(N;\Lambda)\simeq f'H^1_{\cris}(I^0)^{V_{\ord}}\qquad\text{respectively}\qquad
		e'\H\tens_{\Lambda}\Lambda_{R_0'} \simeq 
		f'H^1_{\cris}(I^{\infty})^{F_{\ord}}(\langle a\rangle_{N})\tens_{\Lambda}\Lambda_{R_0'}
	\end{equation*}
	intertwing $T$ (respectively $T\otimes 1$) with $T^*$ (respectively $T^*\otimes 1$)
	where $U_p^*$ acts on crystalline cohomology as $\langle p\rangle_N V$
	(respectively $F\otimes 1$).  The second of these isomorphisms is moreover $\Gal(K_0'/K_0)$-equivariant.
\end{remark}

In order to relate the slope filtration (\ref{DieudonneInfiniteExact}) of $\D_{\infty}$
to the ordinary filtration of ${e^*}'H^1_{\et}$, we 
require:
\begin{lemma}
	Let $r$ be a positive integer let $G_r={e^*}'J_r[p^{\infty}]$ 
	be the unique $\Q_p$-descent of the generic fiber of $\G_r$, as in Definition $\ref{ordpdivdefn}$.
	There are canonical isomophisms of free $W(\o{\F}_p)$-modules
	\begin{subequations}	
		\begin{equation}
			\D(\o{\G}_r^{\et})\tens_{\Z_p} W(\o{\F}_p) \simeq \Hom_{\Z_p}(T_pG_r^{\et},\Z_p)\tens_{\Z_p} W(\o{\F}_p)
		\label{etalecase}
		\end{equation}
		\begin{equation}
			\D(\o{\G}_r^{\mult})(-1)\tens_{\Z_p} W(\o{\F}_p) \simeq \Hom_{\Z_p}(T_pG_r^{\mult},\Z_p)
			\tens_{\Z_p} W(\o{\F}_p).	
		\label{multcase}	
		\end{equation}
		that are $\H_r^*$-equivariant and $\scrG_{\Q_p}$-compatible for the diagonal action on source and target,
		with $\scrG_{\Q_p}$ acting trivially on $\D(\o{\G}^{\et}_r)$ and via $\chi^{-1}\cdot \langle \chi^{-1}\rangle$
		on $\D(\o{\G}_r^{\mult})(-1):=\D(\o{\G}_r^{\mult})\otimes_{\Z_p} \Z_p(-1)$.  The isomorphism
		$(\ref{etalecase})$ intertwines $F\otimes\sigma$ with $1\otimes \sigma$
		while $(\ref{multcase})$ intertwines $V\otimes\sigma^{-1}$ with $1\otimes\sigma^{-1}$.
	\end{subequations}
\end{lemma}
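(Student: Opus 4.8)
The plan is to establish the étale comparison (\ref{etalecase}) first and to deduce the multiplicative comparison (\ref{multcase}) from it by Cartier duality. The essential linear-algebra input is the classical identification of the contravariant Dieudonn\'e functor on étale $p$-divisible groups over $\o{\F}_p$ with the dual Tate module: for an étale $p$-divisible group $H$ over $\F_p$ there is a canonical isomorphism of free $W(\o{\F}_p)$-modules $\D(H)\tens_{\Z_p}W(\o{\F}_p)\simeq \Hom_{\Z_p}(T_pH,\Z_p)\tens_{\Z_p}W(\o{\F}_p)$ carrying $F\otimes\sigma$ to $1\otimes\sigma$ and equivariant for $\Gal(\o{\F}_p/\F_p)$ acting diagonally on the right and through $W(\o{\F}_p)$ alone on the left (the unit-root comparison; the normalization is fixed by the computation for $\Q_p/\Z_p$ recorded in Example \ref{GmQpZpExamples}). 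Here $T_pH$ denotes the geometric Tate module of $H\times_{\F_p}\o{\F}_p$.

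For the étale case I would first observe that, because $K_r/\Q_p$ is totally ramified with residue field $\F_p$ and $\G_r^{\et}$ is étale over $R_r$, the specialization map furnishes a canonical $\scrG_{K_r}$-equivariant isomorphism $T_pG_r^{\et}\simeq T_p\o{\G}_r^{\et}$ identifying the (unramified) Galois action with the action of $\Gal(\o{\F}_p/\F_p)$ on the special fibre. Substituting this into the comparison above yields (\ref{etalecase}) together with the asserted intertwining of $F\otimes\sigma$ and $1\otimes\sigma$, while $\H_r^*$-equivariance is immediate from the functoriality of $\D(\cdot)$ and of $T_p(\cdot)$. To upgrade from $\scrG_{K_r}$- to $\scrG_{\Q_p}$-equivariance I would invoke Corollary \ref{SpecialFiberOrdinary1}: the geometric inertia action of $\Gamma$ on $\o{\G}_r^{\et}$ is trivial and $U_p^*$ acts as $F$, so the inertia subgroup $\I\subseteq\scrG_{\Q_p}$ acts trivially on $T_pG_r^{\et}$ and the diagonal $\scrG_{\Q_p}$-action on the right-hand side matches the Witt-functorial action on the left, with $\scrG_{\Q_p}$ acting trivially on $\D(\o{\G}_r^{\et})$.

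For the multiplicative case I would pass to the Cartier dual. Since $\o{\G}_r^{\mult}$ is of multiplicative type, $(\o{\G}_r^{\mult})^{\vee}$ is étale; the compatibility of $\D(\cdot)$ with Cartier duality identifies $\D(\o{\G}_r^{\mult})$ with $\Hom_{\Z_p}(\D((\o{\G}_r^{\mult})^{\vee}),\Z_p)$, carrying $V$ to $F^{\vee}$, whereas on Tate modules Cartier duality gives $T_pG_r^{\mult}\simeq \Hom_{\Z_p}(T_p(G_r^{\mult})^{\vee},\Z_p(1))$, the cyclotomic twist reflecting $T_p\mu_{p^{\infty}}=\Z_p(1)$. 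Applying the étale case to $(\o{\G}_r^{\mult})^{\vee}$ and transporting through these two dualities and the Tate twist produces (\ref{multcase}) with $V\otimes\sigma^{-1}$ intertwined with $1\otimes\sigma^{-1}$. The Galois character is then pinned down by Corollary \ref{SpecialFiberOrdinary1}, according to which $\Gamma$ acts on $\o{\G}_r^{\mult}$ through $\langle\chi\rangle^{-1}$ and $U_p^*$ through $\langle p\rangle_NV$: combining the resulting $\langle\chi^{-1}\rangle$ with the $\chi^{-1}$ contributed by the Tate twist $(-1)$ gives precisely the action of $\scrG_{\Q_p}$ on $\D(\o{\G}_r^{\mult})(-1)$ by $\chi^{-1}\cdot\langle\chi^{-1}\rangle$.

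The main obstacle will be the consistent bookkeeping of twists and Frobenius normalizations: one must align the geometric-inertia action of $\Gamma$ encoding the descent datum of $G_r$ to $\Q_p$ with the arithmetic $\scrG_{\Q_p}$-action on the Tate module and with the Witt-functorial action on $W(\o{\F}_p)$, keeping careful track of arithmetic versus geometric Frobenius and of the single cyclotomic twist in the multiplicative case. I expect the cleanest way to fix every sign and exponent is to verify the compatibilities on the explicit generators furnished by Example \ref{GmQpZpExamples} (the cases $\Q_p/\Z_p$ and $\mu_{p^{\infty}}$), after which the general ordinary case follows by the functoriality of all the maps involved.
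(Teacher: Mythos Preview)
Your proposal is correct and follows essentially the same approach as the paper: both arguments rest on the classical unit-root comparison $\D(H)\otimes_{W(k)}W(\o{k})\simeq \Hom_{\Z_p}(T_pH,\Z_p)\otimes_{\Z_p}W(\o{k})$ for \'etale $H$, combined with the specialization $T_p\G_r^{\et}\simeq T_p\o{\G}_r^{\et}$ and the triviality of the $\Gamma$-action on the \'etale part, and then handle the multiplicative case by Cartier duality (the paper states the dual formula $\D(H)\otimes W(\o{k})\simeq T_pH^t\otimes W(\o{k})$ directly, remarking it is ``proved as above,'' which amounts to exactly your dualization). Your explicit invocation of Corollary~\ref{SpecialFiberOrdinary1} and of Example~\ref{GmQpZpExamples} to pin down the $\Gamma$-twists and Frobenius normalizations is a reasonable way to organize the bookkeeping that the paper handles via the naturality of the semilinear $\Gamma$-action and a reference to \cite[\S4.1]{BBM1}.
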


\begin{proof}
	Let $\G$ be any object of $\pdiv_{R_r}^{\Gamma}$ and write $G$ for the unique
	descent of the generic fiber $\G_{K_r}$ to $\Q_p$.  We recall that the semilinear $\Gamma$-action
	on $\G$ gives the $\Z_p[\scrG_{K_r}]$-module
	$T_p\G:=\Hom_{\O_{\C_p}}(\Q_p/\Z_p,\G_{\O_{\C_p}})$ 
	the natural structure of $\Z_p[\scrG_{\Q_p}]$-module via $g\cdot f:= g^{-1}\circ g^*f\circ g$.
	It is straightforward to check that the natural map $T_p\G\rightarrow T_pG$, which is an isomorphism
	of $\Z_p[\scrG_{K_r}]$-modules by Tate's theorem, is an isomorphism of $\Z_p[\scrG_{\Q_p}]$-modules
	as well.

	For {\em any} \'etale $p$-divisible group $H$ over a perfect field $k$, one has a canonical
	isomorphism of $W(\o{k})$-modules with semilinear $\scrG_k$-action
	\begin{equation*}
		\D(H)\tens_{W(k)} W(\o{k}) \simeq \Hom_{\Z_p}(T_pH,\Z_p)\tens_{\Z_p} W(\o{k})
	\end{equation*}
	that intertwines $F\otimes\sigma$ with $1\otimes\sigma$ and $1\otimes g$
	with $g\otimes g$ for $g\in \scrG_k$; for example, this can be deduced
	by applying \cite[\S4.1 a)]{BBM1} to $H_{\o{k}}$ 
	and using the fact that
	the Dieudonn\'e crystal is compatible with base change.
	In our case, the \'etale $p$-divisible group $\G_r^{\et}$ lifts $\o{\G}_r^{\et}$
	over $R_r$, and we obtain a natural isomorphism of $W(\o{\F}_p)$-modules
	with semilinear $\scrG_{K_r}$-action
	\begin{equation*}
		\D(\o{\G}_r^{\et})\tens_{\Z_p} W(\o{\F}_p) \simeq \Hom_{\Z_p}(T_p\G_r^{\et},\Z_p)\tens_{\Z_p} W(\o{\F}_p).
	\end{equation*}
	By naturality in $\G_r$, this identification respects the semilinear $\Gamma$-actions
	on both sides (which are trivial, as $\Gamma$ acts trivially on $\G_r^{\et}$); as explained
	in our initial remarks, it is precisely this action which allows us to view $T_p\G_r^{\et}$ as a 
	$\Z_p[\scrG_{\Q_p}]$-module, and we deduce (\ref{etalecase}).  The proof of (\ref{multcase})
	is similar, using the natural isomorphism (proved as above) for any multiplicative $p$-divisible group $H/k$
	\begin{equation*}
		\D(H)\tens_{W(k)} W(\o{k}) \simeq T_p\Dual{H}\tens_{\Z_p} W(\o{k}),
	\end{equation*}
	which intertwines $V\otimes\sigma^{-1}$ with $1\otimes\sigma^{-1}$
	and $1\otimes g$ with $g\otimes g$, for $g\in \scrG_k$.
\end{proof}

\begin{proof}[Proof of Theorem $\ref{FiltrationRecover}$ and Corollary $\ref{MWmainThmCor}$]
	For a $p$-divisible group $H$ over a field $K$, we will write $H^1_{\et}(H):=\Hom_{\Z_p}(T_pH,\Z_p)$;
	our notation is justified by the standard fact that, for $J_X$ the Jacobian
	of a curve $X$ over $K$, there is a natural isomorphisms of $\Z_p[\scrG_K]$-modules
	\begin{equation}
		H^1_{\et}(J_X[p^{\infty}]) \simeq H^1_{\et}(X_{\Kbar},\Z_p).\label{etalecohcrvjac}
	\end{equation}
	It follows from (\ref{etalecase})--(\ref{multcase}) and 
	Theorem \ref{MainDieudonne} (\ref{MainDieudonne1})--(\ref{MainDieudonne2})
	that $H^1_{\et}(G_r^{\star})\otimes_{\Z_p} W(\o{\F}_p)$ is a free
	$W(\o{\F}_p)[\Delta/\Delta_r]$-module of rank $d'$
	for $\star\in \{\et,\mult\}$,
	and hence that $H^1_{\et}(G_r^{\star})$ is a free $\Z_p[\Delta/\Delta_r]$-module
	of rank $d'$ by \cite[Lemma 3.1.3]{CaisHida1}.  In a similar manner, using
	the faithful flatness of $W(\o{\F}_p)[\Delta/\Delta_r]$ over $\Z_p[\Delta/\Delta_r]$,
	we deduce that the canonical trace mappings
	\begin{equation}  
		\xymatrix{
			{H^1_{\et}(G_r^{\star})} \ar[r] & {H^1_{\et}(G_{r'}^{\star})}
			}\label{cantraceetale}
	\end{equation}
	are surjective for all $r\ge r'$.
	From the commutative algebra formalism of \cite[Lemma 3.1.2]{CaisHida1}, we conclude that
		$H^1_{\et}(G_{\infty}^{\star}):=\varprojlim_r H^1_{\et}(G_r^{\star})$
	is a free $\Lambda$-module of rank $d'$ and that there are canonical
	isomorphisms of $\Lambda_{W(\o{\F}_p)}$-modules
	\begin{equation*}
		H^1_{\et}(G_{\infty}^{\star})\tens_{\Lambda} \Lambda_{W(\o{\F}_p)} \simeq \varprojlim_r \left(H^1_{\et}(G_r^{\star})\tens_{\Z_p} W(\o{\F}_p)\right)
	\end{equation*}
	for $\star\in \{\et,\mult\}$.  Since we likewise have canonical identifications
	\begin{equation*}
		\D_{\infty}^{\star}\tens_{\Lambda} \Lambda_{W(\o{\F}_p)} \simeq \varprojlim_r \left(\D(G_r^{\star})\tens_{\Z_p} W(\o{\F}_p)\right)
	\end{equation*}
	thanks again to \cite[Lemma 3.1.2]{CaisHida1} and (the proof of) Theorem \ref{MainDieudonne}, 
	passing to inverse limits on (\ref{etalecase})--(\ref{multcase}) gives a canonical
	isomorphism of $\Lambda_{W(\o{\F}_p)}$-modules
	\begin{equation}
		\D_{\infty}^{\star}\tens_{\Lambda} \Lambda_{W(\o{\F}_p)} \simeq
		H^1_{\et}(G_{\infty}^{\star})\tens_{\Lambda} \Lambda_{W(\o{\F}_p)}\label{dieudonneordfillimit}
	\end{equation}
	for $\star\in \{\et,\mult\}$.
	
		Applying the functor $H^1_{\et}(\cdot)$ to the connected-\'etale sequence of $G_r$
	yields a short exact sequence of $\Z_p[\scrG_{\Q_p}]$-modules
	\begin{equation*}
		\xymatrix{
			0\ar[r] & {H^1_{\et}(G_r^{\et})} \ar[r] & {H^1_{\et}(G_r)} \ar[r] & {H^1_{\et}(G_r^{\mult})}\ar[r] & 0
		}
	\end{equation*}
	which naturally identifies ${H^1_{\et}(G_r^{\star})}$ with the invariants (respectively covariants)
	of $H^1_{\et}(G_r)$ under the inertia subgroup $\I\subseteq \scrG_{\Q_p}$  for $\star=\et$ (respectively 
	$\star=\mult$).
	As $G_r={e^*}'J_r[p^{\infty}]$ by definition, we deduce from this and (\ref{etalecohcrvjac})
	a natural isomorphism of short exact sequences of $\Z_p[\scrG_{\Q_p}]$-modules
	\begin{equation}
	\begin{gathered}
		\xymatrix{
			 0 \ar[r] & {H^1_{\et}(G_r^{\et})} \ar[r]\ar[d]^-{\simeq} & 
			 {H^1_{\et}(G_r)} \ar[r]\ar[d]^-{\simeq} & 
			 {H^1_{\et}(G_r^{\mult})} \ar[r]\ar[d]^-{\simeq} & 0 \\
			 0 \ar[r] & {({e^*}'H^1_{\et,r})^{\I}} \ar[r] &
			 {{e^*}'H^1_{\et,r}} \ar[r] &
			{({e^*}'H^1_{\et,r})_{\I}} \ar[r] & 0 
			}
	\end{gathered}\label{inertialinvariantsseq}
	\end{equation}
	where for notational ease we abbreviate $H^1_{\et,r}:=H^1_{\et}({X_r}_{\Qbar_p},\Z_p)$.
	As the trace maps (\ref{cantraceetale}) are surjective, passing to inverse limits
	on (\ref{inertialinvariantsseq}) yields an isomorphism of short exact sequences
	\begin{equation}
	\begin{gathered}
		\xymatrix{
			0 \ar[r] & {H^1_{\et}(G_{\infty}^{\et})} \ar[r]\ar[d]^-{\simeq} & 
			{H^1_{\et}(G_{\infty})} \ar[r]\ar[d]^-{\simeq} & 
			{H^1_{\et}(G_{\infty}^{\mult})} \ar[r]\ar[d]^-{\simeq} & 0 \\
			0 \ar[r] & {\varprojlim_r ({e^*}'H^1_{\et,r})^{\I}} \ar[r] &
			{\varprojlim_r {e^*}'H^1_{\et,r}} \ar[r] &
			{\varprojlim_r ({e^*}'H^1_{\et,r})_{\I}} \ar[r] & 0
		}
	\end{gathered}\label{limitetaleseq}
	\end{equation}
	Since inverse limits commute with group invariants, the bottom row of (\ref{limitetaleseq})
	is canonically isomorphic to the ordinary filtration of Hida's ${e^*}'H^1_{\et}$,
	and Theorem \ref{FiltrationRecover} follows immediately from (\ref{dieudonneordfillimit}).
	Corollary \ref{MWmainThmCor} is then an easy consequence of Theorem \ref{FiltrationRecover} and 
	\cite[Lemma 3.1.3]{CaisHida1};
	alternately one can prove Corollary \ref{MWmainThmCor} directly from \cite[3.1.2]{CaisHida1},
	using what we have seen above.
\end{proof}

\subsection{Ordinary families of \texorpdfstring{$(\varphi,\Gamma)$}{phi,Gamma}-modules}\label{OrdSigmaSection}

We now study the family of Dieudonn\'e crystals attached to 
the tower of $p$-divisible groups $\{\G_{r}/R_r\}_{r\ge 1}$.
For each pair of positive integers $r\ge s$,
we have a morphism $\rho_{r,s}: \G_{s}\times_{T_{s}} T_r\rightarrow \G_{r}$
in $\pdiv_{R_{r}}^{\Gamma}$;  
applying the contravariant functor $\m_r:\pdiv_{R_r}^{\Gamma}\rightarrow \BT_{\s_r}^{\Gamma}$
studied in \S\ref{PhiGammaCrystals}--\ref{pDivOrdSection} to the map on connected-\'etale sequences induced by $\rho_{r,s}$ 
and using the exactness of $\m_r$ and its compatibility with base change
(Theorem \ref{CaisLauMain}), we obtain
morphisms in  
$\BT_{\s_{r}}^{\Gamma}$
\begin{equation}
\begin{gathered}
	\xymatrix{
		0\ar[r] & {\m_r(\G_r^{\et})} \ar[r]\ar[d]_-{\m_r(\rho_{r,s})} & 
		{\m_r(\G_r)} \ar[r]\ar[d]^-{\m_r(\rho_{r,s})} & {\m_r(\G_r^{\mult})} 
		\ar[r]\ar[d]^-{\m_r(\rho_{r,s})} & 0\\
		0\ar[r ] & {\m_{s}(\G_{s}^{\et})\tens_{\s_{s}} \s_r} \ar[r] & 
		{\m_r(\G_{s})\tens_{\s_{s}} \s_r} \ar[r] & {\m_r(\G_{s}^{\mult})\tens_{\s_{s}} \s_r} \ar[r] & 0
	}\label{BTindLim}
\end{gathered}	
\end{equation}

\begin{definition}\label{DieudonneLimitDef}
	Let $\star=\et$ or $\star=\mult$ and define
	\begin{align}
			\m_{\infty}&:=\varprojlim_r \left(\m_r(\G_r) \tens_{\s_r} \s_{\infty}\right)
			& 
			\m_{\infty}^{\star}&:=\varprojlim_r \left(\m_r(\G_r^{\star}) \tens_{\s_r} \s_{\infty}\right),
			\label{UncompletedDieudonneLimit}
	\end{align}
	with the projective limits taken with respect to the mappings induced by (\ref{BTindLim}).
\end{definition}	

Each of (\ref{UncompletedDieudonneLimit}) is naturally a module over 
the completed group ring $\Lambda_{\s_{\infty}}$
and is equipped with a semilinear action of $\Gamma$ and a $\varphi$-semilinear
Frobenius morphism defined by $F:=\varprojlim (\varphi_{\m_r}\otimes \varphi)$.
Since $\varphi$ is bijective on $\s_{\infty}$, we also have a $\varphi^{-1}$-semilinear
Verscheibung morphism defined as follows.  For notational ease, we provisionally 
set $M_r:=\m_r(\G_r)\otimes_{\s_r} \s_{\infty}$ and we define 
\begin{equation}
	\xymatrix@C=70pt{
		{V_r: M_r} \ar[r]^{\psi_{\m_r}} & 
		{{\varphi}^*M_r }
		\ar[r]^-{\alpha\otimes m\mapsto \varphi^{-1}(\alpha)m} &
		{M_r}
		}\label{Verscheidef}
\end{equation}
with $\psi_{\m_r}$ as above Definition \ref{DualBTDef}.  It is easy to see
that the $V_r$ are compatible with change in $r$, and we
put $V:=\varprojlim V_r$ on $\m_{\infty}$.  We define Verscheibung
morphisms on $\m_{\infty}^{\star}$ for $\star=\et,\mult$
similarly.  Using that the composite of $\psi_{\m_r}$ and $1\otimes\varphi_{\m_r}$
in either order is multiplication by $E_r(u_r) =:\omega$, one checks
\begin{equation*}
	FV = \omega \qquad\text{and}\qquad VF = \varphi^{-1}(\omega).
\end{equation*}
Due to the functoriality of $\m_r$, we moreover have a
$\Lambda_{\s_{\infty}}$-linear action of 
$\H^*$
on each of (\ref{UncompletedDieudonneLimit})
which commutes with $F$, $V$, and $\Gamma$.

\begin{theorem}\label{MainThmCrystal}
	Let $d'$ be the integer specified in Theorem $\ref{MainDieudonne}$.
	Then $\m_{\infty}$ $($respectively $\m_{\infty}^{\star}$ for $\star=\et,\mult$$)$
	is a free $\Lambda_{\s_{\infty}}$-module of rank $2d'$ $($respectively $d'$$)$
	and there is a canonical short exact sequence of $\Lambda_{\s_{\infty}}$-modules
	with linear $\H^*$-action and semi linear actions of $\Gamma$, $F$ and $V$
		\begin{equation}
		\xymatrix{
		0\ar[r] & {\m_{\infty}^{\et}} \ar[r] & {\m_{\infty}} \ar[r] & {\m_{\infty}^{\mult}} \ar[r] & 0
		}.\label{DieudonneLimitFil}
	\end{equation}
	Extension of scalars of $(\ref{DieudonneLimitFil})$
	along the quotient 
	$\Lambda_{\s_{\infty}}\twoheadrightarrow  \s_{\infty}[\Delta/\Delta_r]$
	recovers the exact sequence
	\begin{equation}
		\xymatrix{
			0\ar[r] & {\m_r(\G_r^{\et})\tens_{\s_r} \s_{\infty}} \ar[r] &
			{\m_r(\G_r)\tens_{\s_r} \s_{\infty}} \ar[r] &
			{\m_r(\G_r^{\mult})\tens_{\s_r} \s_{\infty}} \ar[r] & 0
		}.
	\end{equation}
	for each integer $r>0$, compatibly with $\H^*$, $\Gamma$, $F$, and $V$.	
	The Frobenius endomorphism $F$
	commutes with $\H^*$ and $\Gamma$, while $V$ commutes with $\H^*$ and satisfies
	$V\gamma = \varphi^{-1}(\omega/\gamma\omega)\cdot \gamma V$ for all $\gamma\in \Gamma$.
\end{theorem}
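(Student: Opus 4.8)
The plan is to follow the template of the proof of Theorem~\ref{MainDieudonne}, transferring the freeness and control assertions for the flanking terms from the Dieudonn\'e modules of \S\ref{OrdDieuSection} and then handling the middle term by a splitting argument. First I would produce the finite-level exact sequences: since $\G_r$ is ordinary by Corollary~\ref{SpecialFiberOrdinary1}, its connected-\'etale sequence is exact in $\pdiv_{R_r}^{\Gamma}$, and applying the exact functor $\m_r$ (Theorem~\ref{CaisLauMain}~(\ref{exequiv})) produces a short exact sequence
\begin{equation*}
	\xymatrix{
		0 \ar[r] & {\m_r(\G_r^{\et})} \ar[r] & {\m_r(\G_r)} \ar[r] & {\m_r(\G_r^{\mult})} \ar[r] & 0
	}
\end{equation*}
in $\BT_{\s_r}^{\varphi,\Gamma}$. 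Because the outer terms are finite free over $\s_r$, this splits $\s_r$-linearly, so it remains exact after applying $-\tens_{\s_r}\s_{\infty}$, and the transition maps \eqref{BTindLim} organize these into a compatible projective system of short exact sequences of $\s_{\infty}$-modules.

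Next I would dispose of the flanking terms by comparison with \S\ref{OrdDieuSection}. Since $k=\F_p$ here, the twist $\varphi^{r-1}$ appearing in Proposition~\ref{EtaleMultDescription} acts trivially on $W=\Z_p$, so \eqref{EtMultSpecialIsoms} together with the compatibility diagram \eqref{EtMultSpecialIsomsBC} would identify the projective system $\{\m_r(\G_r^{\star})\tens_{\s_r}\s_{\infty}\}_r$ with $\{\D(\o{\G}_r^{\star})\tens_{\Z_p}\s_{\infty}\}_r$ for $\star\in\{\et,\mult\}$, up to the same $V^{-r}\times F^{r}$ renormalization of the transition maps used in the proof of Theorem~\ref{dRtoDieudonneInfty}. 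Arguing exactly as there---invoking \cite[Lemma~3.1.2~(5)]{CaisHida1} and the freeness established in Theorem~\ref{MainDieudonne}---I would obtain canonical isomorphisms $\m_{\infty}^{\star}\simeq\D_{\infty}^{\star}\tens_{\Lambda}\Lambda_{\s_{\infty}}$. This gives that each $\m_{\infty}^{\star}$ is free over $\Lambda_{\s_{\infty}}$ of rank $d'$ and satisfies the asserted control, namely $\m_{\infty}^{\star}\tens_{\Lambda_{\s_{\infty}}}\s_{\infty}[\Delta/\Delta_r]\simeq\m_r(\G_r^{\star})\tens_{\s_r}\s_{\infty}$.

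For the middle term I would proceed as follows. The control just obtained shows the transition maps of $\{\m_r(\G_r^{\et})\tens_{\s_r}\s_{\infty}\}_r$ are surjective, so the system is Mittag--Leffler, $\varprojlim_r$ is exact on the finite-level sequences, and the limit sequence \eqref{DieudonneLimitFil} is exact. Freeness of $\m_{\infty}$ of rank $2d'$ would then follow because $\m_{\infty}^{\mult}$ is free, hence projective, so $\Ext^1_{\Lambda_{\s_{\infty}}}(\m_{\infty}^{\mult},\m_{\infty}^{\et})=0$ and \eqref{DieudonneLimitFil} splits as a sequence of $\Lambda_{\s_{\infty}}$-modules (I would not claim this splitting respects $F$, $V$ or $\Gamma$---cf. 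Theorem~\ref{SplittingCriterion}). Control for $\m_{\infty}$ would come from tensoring \eqref{DieudonneLimitFil} with $\s_{\infty}[\Delta/\Delta_r]$ over $\Lambda_{\s_{\infty}}$, using flatness of $\m_{\infty}^{\mult}$ to preserve exactness, and comparing with the finite-level sequence via the five lemma and the control for the flanking terms.

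Finally, the extra structure is largely bookkeeping that passes to the limit: the linear $\H^*$-action and the fact that $F$ and $\Gamma$ commute with $\H^*$ are immediate from the functoriality of $\m_r$ and the definition of $F$ and $V$ via \eqref{Verscheidef}; the relations $FV=\omega$ and $VF=\varphi^{-1}(\omega)$ hold because each $\m_r(\G_r)$ lies in $\BT_{\s_r}^{\varphi,\Gamma}$; and the twisted commutation $V\gamma=\varphi^{-1}(\omega/\gamma\omega)\cdot\gamma V$ follows by applying $\varphi^{-1}$ to the scalar in the intertwining relation \eqref{psiGammarel}. I expect the main obstacle to be the flanking-terms step: one must verify that the $\varphi^{r-1}$-twists and the $F/V^{-1}$ discrepancies recorded in \eqref{EtMultSpecialIsomsBC} combine across all levels $r\ge s$ precisely into the renormalization that reduces the problem to $\D_{\infty}^{\star}\tens_{\Lambda}\Lambda_{\s_{\infty}}$, and that \cite[Lemma~3.1.2]{CaisHida1} applies verbatim with the flat $\Z_p$-algebra $\s_{\infty}$ in place of $R_{\infty}$.
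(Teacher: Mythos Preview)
Your approach is correct but organized differently from the paper's. The paper does not separate the flanking and middle terms: after a preliminary pullback by $\varphi$ (exploiting that $\varphi$ is an automorphism of $\s_{\infty}$, so pullback commutes with projective limits), it applies \cite[Lemma~3.1.2]{CaisHida1} uniformly to each of the three terms $\varphi^*\m_r(\G_r^{\star})$ with $A_r=\s_r$, $I_r=(u_r)$, $B=\s_{\infty}$, reducing the required freeness and surjectivity hypotheses modulo $u_r$ to the corresponding statements for $\D(\o{\G}_r^{\star})$ via the identification \eqref{MrToDieudonneMap}, which are already known from Theorem~\ref{MainDieudonne}. Freeness, control, and exactness of the limit sequence then all come out of the same commutative-algebra lemma, with no separate argument for the middle term. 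Your route instead handles the flanks first via Proposition~\ref{EtaleMultDescription}---which amounts to proving Theorem~\ref{etmultdescent} in advance---and then bootstraps the middle term from the flanks via Mittag--Leffler, projectivity, and the five lemma. Both work; the paper's is more uniform and sidesteps the renormalization bookkeeping you flag as the main obstacle, while yours yields the descent isomorphisms $\m_{\infty}^{\star}\simeq\D_{\infty}^{\star}\tens_{\Lambda}\Lambda_{\s_{\infty}}$ as a byproduct.
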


\begin{proof}

	Since $\varphi$ is an automorphism
	of $\s_{\infty}$, pullback by $\varphi$ commutes with 
	projective limits of $\s_{\infty}$-modules.
	As the canonical $\s_{\infty}$-linear map $\varphi^*\Lambda_{\s_{\infty}}\rightarrow \Lambda_{\s_{\infty}}$  
	is an isomorphism of rings (even of $\s_{\infty}$-algebras), it therefore suffices
	to prove the assertions of Theorem \ref{MainThmCrystal} after pullback by $\varphi$,
	which will be more convenient due to the relation between
	$\varphi^*\m_r(\G_r)$ and the Dieudonn\'e crystal of $\G_r$.
	
	Pulling back (\ref{BTindLim}) by $\varphi$ gives a commutative diagram
	with exact rows
	\begin{equation}
	\begin{gathered}
		\xymatrix{
			0\ar[r] & {\varphi^*\m_r(\G_r^{\et})} \ar[r]\ar[d] & 
			{\varphi^*\m_r(\G_r)} \ar[r]\ar[d] & {\varphi^*\m_r(\G_r^{\mult})} 
			\ar[r]\ar[d] & 0\\
			0\ar[r] & {\varphi^*\m_{s}(\G_{s}^{\et})\tens_{\s_{s}} \s_r} \ar[r] & 
			{\varphi^*\m_r(\G_{s})\tens_{\s_{s}} \s_r} \ar[r] &
			 {\varphi^*\m_r(\G_{s}^{\mult})\tens_{\s_{s}} \s_r} \ar[r] & 0
		}
	\end{gathered}\label{BTindLimPB}	
	\end{equation}	
	and, as in the proof of Theorem \ref{MainDieudonne},
	we apply the commutative algebra formalism of \cite[\S 3.1]{CaisHida1}.
	In the notation of \cite[Lemma 3.1.2]{CaisHida1}, we take
	$A_r:=\s_r$, $I_r:=(u_r)$, 
	$B=\s_{\infty}$, and $M_r$ each one of the
	terms in the top row of (\ref{BTindLimPB}),
	and we must check that the hypotheses
	 \begin{enumerate}
	\setcounter{equation}{7}
		\renewcommand{\theenumi}{\theequation{\rm\alph{enumi}}}
		{\setlength\itemindent{10pt} 
			\item $\o{M}_r:=M_r/u_r M_r$ is a free $\Z_p[\Delta/\Delta_r]$-module of rank 
			$d'$\label{freehyp2}}
		{\setlength\itemindent{10pt} 
		\item For all $s\le r$ the induced transition maps 
		$\xymatrix@1{
				{\overline{\pr}_{r,s}: \o{M}_r}\ar[r] & 
				{\o{M}_{s}}
				}$\label{surjhyp2}}
		are surjective
	\end{enumerate}
	hold.  
	The isomorphism (\ref{MrToDieudonneMap})
	of Proposition \ref{MrToHodge} 
	ensures, via Theorem \ref{MainDieudonne} (\ref{MainDieudonne1}), that the hypothesis (\ref{freehyp2})
	is satisfied.
	
	Due to the functoriality of (\ref{MrToDieudonneMap}), 
	for any $r\ge s$, the mapping obtained from (\ref{BTindLimPB}) by reducing
	modulo $I_r$ is identified with the mapping on (\ref{DieudonneFiniteExact}) induced (via functoriality
	of $\D(\cdot)$) by $\o{\pr}_{r,s}$.
	As was shown in the proof of Theorem (\ref{MainDieudonne}), these mappings are surjective
	for all $r\ge s$, and we conclude that the hypothesis (\ref{surjhyp2}) holds as well. 
	Moreover,
	the vertical mappings of (\ref{BTindLimPB}) are then surjective by Nakayama's Lemma,
	so as in the proof of Theorem \ref{MainDieudonne} (and keeping in mind
	that pullback by $\varphi$ commutes with projective limits of $\s_{\infty}$-modules), 
	we obtain, by applying $\otimes_{\s_r} \s_{\infty}$ to (\ref{BTindLimPB}), passing
	to projective limits, and pulling back by $(\varphi^{-1})^*$, 
	the short exact sequence (\ref{DieudonneLimitFil}).
	The final assertion is an immediate consequence of the functorial construction of $\varphi_{\m_{r}(\cdot)}$,
	the definition (\ref{Verscheidef}) of $V$, and the intertwining relation (\ref{psiGammarel}).
\end{proof}

\begin{remark}
	In the proof of Theorem \ref{MainThmCrystal}, we could have alternately applied \cite[Lemma 3.1.2]{CaisHida1}
	with $A_r=\s_r$ and $I_r:=(E_r)$, appealing to the identifications (\ref{MrToHodgeMap})
	of Proposition \ref{MrToHodge} and (\ref{CollectedComparisonIsom}) of Proposition \ref{KeyComparison},
	and to Theorem \ref{dRMain} (\cite[Theorem 3.2.3]{CaisHida1}).
\end{remark}

The short exact sequence (\ref{DieudonneLimitFil}) is closely related to its $\Lambda_{\s_{\infty}}$-linear
dual.  In what follows, we write $\s_{\infty}':=\varinjlim_r \Z_p[\mu_N][\![ u_r]\!]$,
taken along the mappings $u_r\mapsto \varphi(u_{r+1})$; it is naturally a $\s_{\infty}$-algebra.

\begin{theorem}\label{CrystalDuality}
		Let $\mu:\Gamma\rightarrow \Lambda_{\s_{\infty}}^{\times}$ be the crossed homomorphism 
		given by $\mu(\gamma):=\frac{u_1}{\gamma u_1}\chi(\gamma) \langle \chi(\gamma)\rangle$.
		There is a canonical $\H^*$ and $\Gal(K_{\infty}'/K_0)$-equivariant isomorphism of 
		exact sequences of $\Lambda_{\s_{\infty}'}$-modules
		\begin{equation}
		\begin{gathered}
			\xymatrix{
			0\ar[r] & {\m_{\infty}^{\et}(\mu \langle a\rangle_N)_{\Lambda_{\s_{\infty}'}}} \ar[r]\ar[d]_-{\simeq} & 
			{\m_{\infty}(\mu \langle a\rangle_N)_{\Lambda_{\s_{\infty}'}}} \ar[r]\ar[d]_-{\simeq} & 
				{\m_{\infty}^{\mult}(\mu \langle a\rangle_N)_{\Lambda_{\s_{\infty}'}}} \ar[r]\ar[d]_-{\simeq} & 0\\
	0\ar[r] & {(\m_{\infty}^{\mult})_{\Lambda_{\s_{\infty}'}}^{\vee}} \ar[r] & 
				{(\m_{\infty})_{\Lambda_{\s_{\infty}'}}^{\vee}} \ar[r] & 
				{(\m_{\infty}^{\et})_{\Lambda_{\s_{\infty}'}}^{\vee}} \ar[r] & 0 
		}
		\end{gathered}
		\label{MinftyDuality}	
		\end{equation}
		that intertwines $F$ 
		$($respectively $V$$)$ on the top row with  
		$V^{\vee}$ $($respectively $F^{\vee}$$)$  on the bottom.
\end{theorem}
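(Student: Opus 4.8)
The plan is to follow closely the template of the proof of Proposition \ref{DieudonneDuality}, replacing the Dieudonn\'e module functor and its duality with the functor $\m_r$ and its compatibility with duality furnished by Theorem \ref{CaisLauMain} (\ref{exequiv}). First I would manufacture, at each finite level $r$, a perfect pairing refining the tautological duality between a Barsotti--Tate module and its dual. Concretely, Theorem \ref{CaisLauMain} (\ref{exequiv}) gives a natural isomorphism $\m_r(\G_r')\simeq \m_r(\G_r)^t$ in $\BT_{\s_r}^{\varphi,\Gamma}$, hence a perfect $\varphi$-compatible pairing $\m_r(\G_r')\times\m_r(\G_r)\to\s_r$. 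Proposition \ref{GdualTwist} identifies $\G_r'=\G_r^{\vee}$ with the twisted descent $\G_r(\langle\chi\rangle\langle a\rangle_N)$; applying the contravariant $\m_r$, and extending scalars to incorporate the unramified $\langle a\rangle_N$-twist (which is nothing but the residue-field base change of Theorem \ref{CaisLauMain} (\ref{BaseChangeIsom}), since $\Z_p[\mu_N]=W(\F_{p^f})$), converts this twist of the descent datum into a twist of the semilinear $\Gamma\times\Gal(K_0'/K_0)$-action on $\m_r(\G_r)$. Composing the two identifications produces the level-$r$ pairing $\m_r(\G_r)(\ast)\times\m_r(\G_r)\to\s_r'$ that is $\H_r^*$-equivariant and $\Gal(K_r'/K_0)$-compatible.

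The next step is to pin down the twisting character $\ast$ and check it equals $\mu\langle a\rangle_N$; this is pure bookkeeping. The intrinsic $\Gamma$-action on $\m_r(\G_r)^t$ from Definition \ref{DualBTDef} differs from the standard dual action by the factor $\chi(\gamma)^{-1}\varphi^{r-1}(\gamma u_r/u_r)=\chi(\gamma)^{-1}\,\gamma u_1/u_1$ (using $\varphi^{r-1}(u_r)=u_1$ under our identifications), while Proposition \ref{GdualTwist} contributes the additional diamond-operator factor $\langle\chi(\gamma)\rangle\langle a(\gamma)\rangle_N$. Moving the first factor to the opposite side of the pairing yields exactly $\mu(\gamma)=\frac{u_1}{\gamma u_1}\chi(\gamma)\langle\chi(\gamma)\rangle$ together with the $\langle a\rangle_N$-twist. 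The interchange of $F$ with $V^{\vee}$ (respectively $V$ with $F^{\vee}$) then falls out of the construction of $\varphi_{\m^t}$ from $\psi_{\m}$ in Definition \ref{DualBTDef}, since $\psi_{\m}$ is the Verschiebung-type companion of $\varphi_{\m}$ (their composite in either order being multiplication by $\omega$), and the compatibility of the $\Gamma$-twist with these maps is governed by the intertwining relation (\ref{psiGammarel}).

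Having the finite-level pairings in hand, I would verify their compatibility with change in $r$ in the precise sense of \cite[Lemma 3.1.4]{CaisHida1}, namely that $\langle\rho_{r,s}x,\rho_{r,s}'y\rangle_s=\sum_{\delta\in\Delta_s/\Delta_r}\langle x,\delta^{-1}y\rangle_r$, where the $\rho_{r,s}$ are the transition maps defining $\m_{\infty}$ and the $\rho_{r,s}'$ are the corresponding maps for the dual tower. As in Proposition \ref{DieudonneDuality}, this reduces---via the definition (\ref{pdivTowers}) of the transition maps and the adjunction $\langle\m_r(\rho_{r,s})x,y\rangle_s=\langle x,\m_r((U_p^*)^{s-r}\Alb(\ps)^{r-s})y\rangle_r$---to the operator identity $\Pic^0(\pr)\circ\Alb(\ps)=U_p^*\sum_{\delta\in\Delta_r/\Delta_{r+1}}\langle\delta^{-1}\rangle$ of (\ref{PicAlbRelation}), itself a consequence of Lemma \ref{MFtraceLem}. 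Once this holds, \cite[Lemma 3.1.4]{CaisHida1} assembles the level-$r$ pairings into a perfect, $\Gal(K_{\infty}'/K_0)$-compatible $\Lambda_{\s_{\infty}'}$-duality between $\m_{\infty}(\mu\langle a\rangle_N)_{\Lambda_{\s_{\infty}'}}$ and $(\m_{\infty})^{\vee}_{\Lambda_{\s_{\infty}'}}$, which carries the slope filtration (\ref{DieudonneLimitFil}) to its dual with the roles of $\et$ and $\mult$ interchanged (precisely because $F$ is invertible on $\m_{\infty}^{\et}$ and dualizes to the $V$-invertible $\m_{\infty}^{\mult}$). The main obstacle I anticipate is exactly this change-in-$r$ compatibility: unlike the Dieudonn\'e-module situation, here the transition maps live over the varying rings $\s_r$ and interact with both $\varphi$ and the base-change embeddings $u_r\mapsto\varphi(u_{r+1})$, so the adjointness computation must be carried out carefully inside $\BT_{\s_r}^{\varphi,\Gamma}$ and checked to respect these structure maps before the projective limit can be formed.
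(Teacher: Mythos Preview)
Your proposal is correct and follows essentially the same route as the paper: construct the finite-level pairing from Theorem \ref{CaisLauMain} (\ref{exequiv}) and Proposition \ref{GdualTwist}, identify the twist as $\mu\langle a\rangle_N$ via Definition \ref{DualBTDef}, verify the change-in-$r$ compatibility by reducing to the operator identity (\ref{PicAlbRelation}), and then apply \cite[Lemma 3.1.4]{CaisHida1} together with Theorem \ref{MainThmCrystal}. The paper's argument is exactly this, and your anticipated obstacle (compatibility of the pairings under the transition maps over the varying $\s_r$) is indeed the computational crux, handled just as you outline.
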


\begin{proof}
	We first claim that there is a natural isomorphism of $\s_{\infty}'[\Delta/\Delta_r]$-modules
	\begin{equation}
		\m_r(\G_r)(\mu\langle a\rangle_N)\otimes_{\s_r} \s_{\infty}' \simeq 
		\Hom_{\s_{\infty}'}(\m_r(\G_r)\otimes_{\s_r}\s_{\infty}', \s_{\infty}')
		\label{twistyisom}
	\end{equation}
	that is $\H^*$-equivariant and $\Gal(K_{\infty}'/K_0)$-compatible for the standard action 
	$\gamma\cdot f(m):=\gamma f(\gamma^{-1}m)$ on the right side, and that intertwines
	$F$ and $V$ with $V^{\vee}$ and $F^{\vee}$, respectively.
	Indeed, this follows immediately from the identifications 
	\begin{equation}
			{\m_r(\G_r)(\langle \chi \rangle\langle a\rangle_N)\tens_{\s_r} \s_{\infty}'} \simeq 
			{\m_r(\G_r')\tens_{\s_r} \s_{\infty}'=:\m_r(\G_r^{\vee})\tens_{\s_r}\s_{\infty}'} 
			\simeq {\Dual{\m_r(\G_r)}\tens_{\s_r}{\s_{\infty}'}}
		\label{GrTwist}
	\end{equation}
	and the definition (Definition \ref{DualBTDef}) of duality in $\BT_{\s_r}^{\varphi,\Gamma}$; here, the
	first isomorphism in (\ref{GrTwist}) results from Proposition \ref{GdualTwist}
	and Theorem \ref{CaisLauMain} (\ref{BaseChangeIsom}), while the final 
	identification is due to Theorem \ref{CaisLauMain} (\ref{exequiv}). 
	The identification (\ref{twistyisom}) carries $F$ (respectively $V)$
	on its source to $V^{\vee}$ (respectively $F^{\vee}$) on its target due to the compatibility 
	of the functor $\m_r(\cdot)$ with duality (Theorem \ref{CaisLauMain} (\ref{exequiv})).
		
	From (\ref{twistyisom})
	we obtain a natural $\Gal(K_r'/K_0)$-compatible evaluation pairing of $\s_{\infty}'$-modules
	\begin{equation}
		\xymatrix{
			{\langle\cdot,\cdot\rangle_r: \m_r(\G_r)(\mu\langle a\rangle_N) \tens_{\s_r} \s_{\infty}' 
			\times \m_r(\G_r)\tens_{\s_r} \s_{\infty}'} \ar[r] & {\s_{\infty}'}
			}\label{crystalpairingdefs}
	\end{equation}
	with respect to which the natural action of $\H^*$ is self-adjoint, due to the
	fact that (\ref{GrTwist}) is $\H^*$-equivariant by Proposition \ref{GdualTwist}.
	Due to the compatibility with change in $r$ of the identification (\ref{GrprimeGr}) of Proposition \ref{GdualTwist}
	together with the definitions (\ref{pdivTowers}) of $\pr_{r,s}$ and 
	$\pr_{r,s}'$,
	the identification (\ref{GrTwist}) intertwines the map induced by $\Pic^0(\pr)$ on its source
	with the map induced by ${U_p^*}^{-1}\Alb(\ps)$ on its target. For $r\ge s$, we therefore have
	\begin{equation*}
		\langle \m_r(\rho_{r,s})x , \m_r(\rho_{r,s})y  \rangle_s = 
		\langle x, \m_r({U_p^*}^{s-r}\Pic^0(\pr)^{r-s}\Alb(\ps)^{r-s})y\rangle_r =
		\sum_{\delta\in \Delta_s/\Delta_r} \langle x, \delta^{-1} y \rangle_r, 
	\end{equation*}
	where the final equality follows from (\ref{PicAlbRelation}).
	Thus, the perfect pairings (\ref{crystalpairingdefs}) satisfy the compatibility condition 
	of \cite[Lemma 3.1.4]{CaisHida1} (as in (\ref{pairingchangeinr1}) of the proof 
	of Proposition \ref{DieudonneDuality}) which, together with
	Theorem \ref{MainThmCrystal}, completes the proof.
\end{proof}

The $\Lambda_{\s_{\infty}}$-modules $\m_{\infty}^{\et}$ and $\m_{\infty}^{\mult}$ admit
canonical descents to $\Lambda$: 

\begin{theorem}\label{etmultdescent}
	There are canonical $\H^*$, $\Gamma$, $F$ and $V$-equivariant isomorphisms
	of $\Lambda_{\s_{\infty}}$-modules
	\begin{subequations}
	\begin{equation}
		\m_{\infty}^{\et} \simeq \D_{\infty}^{\et}\tens_{\Lambda} \Lambda_{\s_{\infty}},
	\end{equation}	
	intertwining $F$ and $V$ with 
	$F\otimes \varphi$ and $F^{-1}\otimes \varphi^{-1}(\omega)\cdot \varphi^{-1}$, respectively,
	and $\gamma\in \Gamma$
	with $\gamma\otimes\gamma$, and
	\begin{equation}
		\m_{\infty}^{\mult}\simeq \D_{\infty}^{\mult}\tens_{\Lambda} \Lambda_{\s_{\infty}},	
	\end{equation}
	intertwing $F$ and $V$ with $V^{-1} \otimes \omega \cdot\varphi$
	and $V\otimes\varphi^{-1}$, respectively,
	and $\gamma$ with $\gamma\otimes \chi(\gamma)^{-1} \gamma u_1/u_1$.
	In particular, $F$ $($respectively $V$$)$ 
	acts invertibly on $\m_{\infty}^{\et}$ $($respectively $\m_{\infty}^{\mult}$$)$.
\end{subequations}
\end{theorem}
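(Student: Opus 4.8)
The plan is to reduce the statement to the explicit descriptions of $\m_r(\G_r^{\et})$ and $\m_r(\G_r^{\mult})$ furnished by Proposition \ref{EtaleMultDescription}, and then to ``untwist'' the transition maps of the projective systems defining $\m_{\infty}^{\et}$ and $\m_{\infty}^{\mult}$ using the invertibility of Frobenius (respectively Verschiebung) on the ordinary part. Since the special fibers $\o{\G}_r$ live over $\F_p$, we have $W=\Z_p$ and $\varphi=\id$ on $W$, so the identifications (\ref{EtMultSpecialIsoms}) simplify to $\m_r(\G_r^{\et})\tens_{\s_r}\s_{\infty}\simeq \D(\o{\G}_r^{\et})\tens_{\Z_p}\s_{\infty}$ with Frobenius $F\otimes\varphi$ and $\Gamma$ acting as $\gamma\otimes\gamma$, and $\m_r(\G_r^{\mult})\tens_{\s_r}\s_{\infty}\simeq\D(\o{\G}_r^{\mult})\tens_{\Z_p}\s_{\infty}$ with Frobenius $V^{-1}\otimes E_r\cdot\varphi$ and $\Gamma$ acting as $\gamma\otimes\chi(\gamma)^{-1}\varphi^{r-1}(\gamma u_r/u_r)\gamma$. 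Here I use that $E_r(u_r)=\omega$ and $\varphi^{r-1}(\gamma u_r/u_r)=\gamma u_1/u_1$ are both independent of $r$ under our identifications (Example \ref{GmQpZpExamples}), so that the relevant scalars descend.

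The key computation is to identify the transition maps of these systems. Fix $r\ge s$; the transition map is $\m_r(\rho_{r,s})$ followed by the base-change isomorphism of Theorem \ref{CaisLauMain} (\ref{BaseChangeIsom}). By the naturality of (\ref{EtMultSpecialIsoms}) in the $p$-divisible group, $\m_r(\rho_{r,s})$ corresponds to $\D(\o{\rho}_{r,s})\otimes\id$, while the base-change isomorphism, iterated $r-s$ times via the compatibility diagram (\ref{EtMultSpecialIsomsBC}), corresponds to $F^{r-s}\otimes\id$ on the \'etale part and to $V^{-(r-s)}\otimes\id$ on the multiplicative part. Because $\G_r$ is ordinary (Corollary \ref{SpecialFiberOrdinary1}), $F$ (respectively $V$) acts invertibly on $\D(\o{\G}_r^{\et})$ (respectively $\D(\o{\G}_r^{\mult})$); moreover $F$ and $V$ commute with $\D(\o{\rho}_{r,s})$ by functoriality and with the geometric inertia action of $\Gamma$ on these $\Z_p$-Dieudonn\'e modules. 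I can therefore modify the identification at level $r$ by the automorphism $F^{r}\otimes\id$ (respectively $V^{-r}\otimes\id$). These modified identifications carry the transition maps of $\m_{\infty}^{\star}$ exactly to $\D(\o{\rho}_{r,s})\otimes\id$, which are precisely the transition maps of the system $\{\D(\o{\G}_r^{\star})\tens_{\Z_p}\s_{\infty}\}$, and since $F^{r}$ and $V^{-r}$ act only on the $\Z_p$-Dieudonn\'e factor they commute with $F$, $V$ and the inertia action and so leave the semilinear structures undisturbed.

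Finally, passing to the inverse limit I would invoke the freeness and control theorem for $\D_{\infty}^{\star}$ (Theorem \ref{MainDieudonne}) together with the commutative-algebra formalism of \cite[Lemma 3.1.2]{CaisHida1} to commute the projective limit past the base change: using $\D(\o{\G}_r^{\star})\simeq \D_{\infty}^{\star}\tens_{\Lambda}\Z_p[\Delta/\Delta_r]$ and $\Z_p[\Delta/\Delta_r]\tens_{\Z_p}\s_{\infty}=\s_{\infty}[\Delta/\Delta_r]$, the freeness of $\D_{\infty}^{\star}$ over $\Lambda$ yields $\varprojlim_r(\D(\o{\G}_r^{\star})\tens_{\Z_p}\s_{\infty})\simeq \D_{\infty}^{\star}\tens_{\Lambda}\Lambda_{\s_{\infty}}$. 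Tracking $F$, $V$ and $\Gamma$ through the modified identifications then gives the asserted formulas, with the expressions for $V$ on the \'etale part and $F$ on the multiplicative part being forced by the relations $FV=\omega$ and $VF=\varphi^{-1}(\omega)$. I expect the main obstacle to be the bookkeeping of the middle paragraph: pinning down the exact powers and directions of $F$ and $V$ appearing in the iterated base-change isomorphism (\ref{EtMultSpecialIsomsBC}), and verifying that the untwisting automorphisms $F^{r}$ and $V^{-r}$ genuinely commute with the twisted geometric inertia action on $\m_r^{\star}$—which ultimately rests on the fact that they act only on the $\Z_p$-Dieudonn\'e factor, where Frobenius and Verschiebung commute with $\Gamma$.
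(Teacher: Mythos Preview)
Your proposal is correct and follows essentially the same route as the paper: both arguments apply Proposition \ref{EtaleMultDescription} at each finite level, untwist the transition maps by composing with $F^{r}\otimes\id$ (respectively $V^{-r}\otimes\id$) using the compatibility diagram (\ref{EtMultSpecialIsomsBC}), and then pass to the inverse limit via \cite[Lemma 3.1.2]{CaisHida1} and Theorem \ref{MainDieudonne}. Your write-up is in fact more explicit than the paper's about why the untwisting works and how the semilinear structures survive it.
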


\begin{proof}
	We twist the identifications (\ref{EtMultSpecialIsoms}) of Proposition 
	\ref{EtaleMultDescription} to obtain natural isomorphisms 
	\begin{equation*}
		\xymatrix@C=40pt{
			{\m_r(\G_r^{\et})} \ar[r]^-{\simeq}_-{F^r \circ (\ref{EtMultSpecialIsoms})} & 
			{\D(\o{\G}_r^{\et})_{\Z_p}\otimes_{\Z_p} \s_r}
		}\qquad\text{and}\qquad
		\xymatrix@C=40pt{
			{\m_r(\G_r^{\mult})} \ar[r]^-{\simeq}_-{V^{-r} \circ (\ref{EtMultSpecialIsoms})} & 
			{\D(\o{\G}_r^{\mult})_{\Z_p}\otimes_{\Z_p} \s_r}
		}
	\end{equation*}
	that are $\H_r^*$-equivariant and, Thanks to \ref{EtMultSpecialIsomsBC}, 
	compatible with change in $r$ using the maps on source and target
	induced by $\pr_{r,s}$.  Passing to inverse limits and appealing again to \cite[Lemma 3.1.2]{CaisHida1}
	and (the proof of) Theorem \ref{MainDieudonne}, we deduce for $\star=\et,\mult$
	natural isomorphisms of $\Lambda_{\s_{\infty}}$-modules
	\begin{equation*}
		\m_{\infty}^{\star} \simeq \varprojlim_r \left( \D(\o{\G}_r^{\star})_{\Z_p}\otimes_{\Z_p} \s_{\infty}\right)
		\simeq \D_{\infty}^{\star}\otimes_{\Lambda} \Lambda_{\s_{\infty}}
	\end{equation*}
	that are $\H^*$-equivariant and satisfy the asserted compatibility with respect to
	Frobenius, Verscheibung, and the action of $\Gamma$ due to 
	Proposition \ref{EtaleMultDescription} and the definitions (\ref{MrEtDef})--(\ref{MrMultDef}).
\end{proof}

We can now prove Theorem \ref{MinftySpecialize}, which asserts that
the slope filtration (\ref{MinftySpecialize}) of $\m_{\infty}$
specializes, on the one hand, to the slope filtration (\ref{DieudonneInfiniteExact})
of $\D_{\infty}$, and on the other hand to the Hodge filtration (\ref{LambdaHodgeFilnomup})
(in the opposite direction!) of ${e^*}'H^1_{\dR}$.  We recall the precise statement:

\begin{theorem}\label{SRecovery}
	Let $\tau:\Lambda_{\s_{\infty}}\twoheadrightarrow \Lambda$ be the $\Lambda$-algebra
	surjection induced by $u_r\mapsto 0$.  
	There is a canonical $\Gamma$ and $\H^*$-equivariant 
	isomorphism of split exact sequences of finite free $\Lambda$-modules
	\begin{equation}
	\begin{gathered}
		\xymatrix{
			0 \ar[r] & {\m_{\infty}^{\et}\tens_{\Lambda_{\s_{\infty}},\tau} \Lambda}\ar[d]_-{\simeq} \ar[r] &
			{\m_{\infty}\tens_{\Lambda_{\s_{\infty}},\tau} \Lambda}\ar[r] \ar[d]_-{\simeq}&
			{\m_{\infty}^{\mult}\tens_{\Lambda_{\s_{\infty}},\tau} \Lambda} \ar[r]\ar[d]_-{\simeq} & 0\\
			0 \ar[r] & {\D_{\infty}^{\et}} \ar[r] & {\D_{\infty}} \ar[r] &
			{\D_{\infty}^{\mult}} \ar[r] & 0
		}
	\end{gathered}\label{OrdFilSpecialize}
	\end{equation}
	which carries $F\otimes 1$ to $F$ and $V\otimes 1$ to $V$.
	
	Let $\theta\circ\varphi:\Lambda_{\s_{\infty}}\rightarrow \Lambda_{R_{\infty}}$
	be the $\Lambda$-algebra surjection induced by $u_r\mapsto (\varepsilon^{(r)})^p-1$.
	There is a canonical $\Gamma$ and $\H^*$-equivariant
	isomorphism of split exact sequences of finite free $\Lambda_{R_{\infty}}$-modules
	\begin{equation}
	\begin{gathered}
		\xymatrix{
				0 \ar[r] & {\m_{\infty}^{\et}\tens_{\Lambda_{\s_{\infty}},\theta\varphi} \Lambda_{R_{\infty}}}
				\ar[d]_-{\simeq} \ar[r] &
			{\m_{\infty}\tens_{\Lambda_{\s_{\infty}},\theta\varphi} \Lambda_{R_{\infty}}}\ar[r] \ar[d]_-{\simeq}&
			{\m_{\infty}^{\mult}\tens_{\Lambda_{\s_{\infty}},\theta\varphi} \Lambda_{R_{\infty}}} \ar[r]\ar[d]_-{\simeq} & 0\\
		0 \ar[r] & {{e^*}'H^1(\O)} \ar[r]_{i} & 
		{{e^*}'H^1_{\dR}} \ar[r]_-{j} & {{e^*}'H^0(\omega)} \ar[r] & 0 
		}
	\end{gathered}
	\end{equation}
	where $i$ and $j$ are the canonical sections 
	given by the splitting in Theorem $\ref{dRtoDieudonne}$.
\end{theorem}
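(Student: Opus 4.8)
The plan is to deduce both isomorphisms from the corresponding \emph{finite-level} comparison isomorphisms already recorded in Propositions \ref{MrToHodge} and \ref{KeyComparison}, and then to pass to the inverse limit using the commutative-algebra formalism of \cite[Lemma 3.1.2]{CaisHida1} — exactly the mechanism employed in the proofs of Theorems \ref{MainThmCrystal} and \ref{dRtoDieudonneInfty}. In both parts the crucial input is that, by \cite[Lemma 3.1.2 (5)]{CaisHida1} together with the freeness and control established for the terms of (\ref{DieudonneLimitFil}) in Theorem \ref{MainThmCrystal}, the canonical map $\m_{\infty}^{\bullet}\otimes_{\Lambda_{\s_{\infty}}}(\,\cdot\,)\to\varprojlim_r\big(\m_r(\G_r^{\bullet})\otimes_{\s_r}(\,\cdot\,)\big)$ is an isomorphism respecting the three-term sequences and all of the extra structure. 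It therefore suffices to produce the diagrams at each finite level $r$, compatibly with the transition maps, and take the limit.

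For the first assertion, I would first observe that since $k=\F_p$ we have $W=\Z_p$ and the Frobenius of $W$ is the identity, so $\varphi\circ\tau=\tau$ as $\Z_p$-algebra surjections $\s_r\twoheadrightarrow\Z_p$; hence the specialization $\otimes_{\Lambda_{\s_{\infty}},\tau}\Lambda$ is computed at each finite level by the evaluation $\otimes_{\s_r,\varphi\circ\tau}\Z_p$ of Theorem \ref{CaisLauMain} (\ref{EvaluationONW}). Applying the isomorphism (\ref{MrToDieudonneMap}) of Proposition \ref{MrToHodge} to $\G=\G_r$ then identifies the extension of scalars of the slope filtration (\ref{ConEtOrdinary}) along $\varphi\circ\tau$ with the connected-\'etale sequence (\ref{DieudonneFiniteExact}) of $\D(\o{\G}_r)$, compatibly with $\H_r^*$ and $\Gamma$ and carrying $\varphi_{\m_r}\otimes\varphi$ to $F$ and $\psi_{\m_r}\otimes 1$ to $V$ as in Theorem \ref{CaisLauMain} (\ref{EvaluationONW}). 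Feeding this into \cite[Lemma 3.1.2 (5)]{CaisHida1} yields the first diagram and its compatibility with $F$ and $V$; I note that the flanking identifications also fall out directly by specializing the descents of Theorem \ref{etmultdescent} along $\tau$, since $\Lambda_{\s_{\infty}}\otimes_{\Lambda_{\s_{\infty}},\tau}\Lambda=\Lambda$.

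For the second assertion, the specialization $\theta\circ\varphi$ sends $u_r\mapsto(\varepsilon^{(r)})^p-1$, which is precisely the evaluation of Theorem \ref{CaisLauMain} (\ref{EvaluationONR}) and of the isomorphism (\ref{MrToHodgeMap}) of Proposition \ref{MrToHodge}. At finite level, (\ref{MrToHodgeMap}) identifies the extension of scalars of (\ref{ConEtOrdinary}) along $\theta\circ\varphi$ with the split Hodge filtration $\Lie(\Dual{\G}_r)\xrightarrow{i}\D(\G_{r,0})_{R_r}\xrightarrow{j}\omega_{\G_r}$ — the reversal of direction relative to (\ref{ConEtOrdinary}) being the source of the reversal in the statement — and Proposition \ref{KeyComparison} (\ref{CollectedComparisonIsom}) in turn identifies this sequence with ${e^*}'H^1_{\dR,r}$ and its Hodge pieces ${e^*}'H^1(\O_r)$ and ${e^*}'H^0(\omega_r)$. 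Composing these two finite-level isomorphisms, extending scalars to $R_{\infty}$, and passing to the inverse limit over the trace maps via \cite[Lemma 3.1.2 (5)]{CaisHida1} produces the second diagram; that the resulting sections are the maps $i$, $j$ of Theorem \ref{dRtoDieudonne} follows because both are built from the same splittings of Lemma \ref{HodgeFilOrdProps}.

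The step I expect to require the most care is the compatibility of all of these finite-level identifications with change in $r$ — that is, that the transition maps $\m_r(\rho_{r,s})$ on the $\m$-side correspond, under $\varphi\circ\tau$ (respectively $\theta\circ\varphi$), to the maps $\D(\o{\rho}_{r,s})$ on Dieudonn\'e modules (respectively to the trace maps $\rho_*$ defining (\ref{orddRseq})). This reduces to checking that the evaluation functors are compatible with the base-change inclusions $\s_s\hookrightarrow\s_r$ and with the constructions of Propositions \ref{MrToHodge} and \ref{KeyComparison}, and that the system $\{{e^*}'H^1_{\dR,r}\otimes_{R_r}R_{\infty}\}$ assembles to ${e^*}'H^1_{\dR}$ as in \cite{CaisHida1}. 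The relevant change-in-$r$ statements were recorded in those propositions, but tracking them through the two-step comparison of the second assertion, though routine, is where the bookkeeping is heaviest.
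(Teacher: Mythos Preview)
Your proposal is correct and follows essentially the same approach as the paper: both assertions are obtained by applying the commutative-algebra formalism of \cite[Lemma 3.1.2]{CaisHida1} (with $A_r=\s_r$, $I_r=(u_r)$, $B'=\Z_p$ for the first and $I_r=(E_r)$, $B'=R_{\infty}$ for the second), using the finite-level identifications (\ref{MrToDieudonneMap}) and (\ref{MrToHodgeMap}) of Proposition~\ref{MrToHodge} together with Proposition~\ref{KeyComparison} and the freeness/surjectivity inputs from Theorems~\ref{MainDieudonne} and~\ref{dRMain}. Your explicit observation that $\varphi\circ\tau=\tau$ (since $W=\Z_p$) and your remark that the flanking isomorphisms also follow by specializing Theorem~\ref{etmultdescent} are helpful clarifications the paper leaves implicit.
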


\begin{proof}
	To prove the first assertion, we apply \cite[Lemma 3.1.2]{CaisHida1}
	with $A_r=\s_r,$ $I_r=(u_r)$, $B=\s_{\infty}$, $B'=\Z_p$ (viewed as 
	a $B$-algebra via $\tau$) and $M_r=\m_r^{\star}$ for $\star\in \{\et,\mult,\Null\}$,
	and, as in the proofs of Theorems \ref{MainDieudonne} and \ref{MainThmCrystal}, we must verify the hypotheses
	 \begin{enumerate}
	\setcounter{equation}{15}
		\renewcommand{\theenumi}{\theequation{\rm\alph{enumi}}}
		{\setlength\itemindent{10pt} 
			\item $\o{M}_r:=M_r/u_r M_r$ is a free $\Z_p[\Delta/\Delta_r]$-module of rank 
			$d'$\label{freehyp3}}
		{\setlength\itemindent{10pt} 
		\item For all $s\le r$ the induced transition maps 
		$\xymatrix@1{
				{\overline{\pr}_{r,s}: \o{M}_r}\ar[r] & 
				{\o{M}_{s}}
				}$\label{surjhyp3}}
		are surjective.
	\end{enumerate}
	Thanks to (\ref{MrToDieudonneMap}) in the case $G=\G_r$,
	we have a canonical identification $\o{M}_r:=M_r/I_rM_r  \simeq \D(\o{\G}_r^{\star})_{\Z_p}$
	that is compatible with change in $r$ in the sense that the induced projective
	system $\{\o{M}_r\}_{r}$ is identified with that of Definition \ref{DinftyDef}.
	It follows from this and 
	Theorem \ref{MainDieudonne} (\ref{MainDieudonne1})--(\ref{MainDieudonne2}) that
	the hypotheses (\ref{freehyp3})--(\ref{surjhyp3}) are satisfied,
	and (\ref{OrdFilSpecialize}) is an isomorphism by \cite[Lemma 3.1.3 (5)]{CaisHida1}.

	In exactly the same manner,
	the second assertion follows by appealing to 
	\cite[Lemma 3.1.2]{CaisHida1} with $A_r=\s_r$, $I_r=(E_r)$, $B=\s_{\infty}$, $B'=R_{\infty}$
	(viewed as a $B$-algebra via $\theta\circ\varphi$)
	and $M_r=\m_r^{\star}$, using (\ref{MrToHodgeMap}) and Proposition \ref{KeyComparison}
	together with Theorem \ref{dRMain}
	(see \cite[Theorem 3.2.3]{CaisHida1})
	to verify the requisite hypotheses 
	in this setting.
\end{proof}

\begin{proof}[Proof of Theorem $\ref{RecoverEtale}$ and Corollary $\ref{HidasThm}$]
	Applying Theorem \ref{comparison} to (the connected-\'etale sequence of) $\G_r$
	gives a natural isomorphism of short exact sequences 
	\begin{equation}
	\begin{gathered}
		\xymatrix{
			0 \ar[r] &{\m_r(\G_r^{\et})\tens_{\s_r,\varphi} \a_r } \ar[r]\ar[d]^-{\simeq} & 
			{\m_r(\G_r)\tens_{\s_r,\varphi} \a_r} \ar[r]\ar[d]^-{\simeq} & 
			{\m_r(\G_r^{\mult})\tens_{\s_r,\varphi} \a_r} \ar[r]\ar[d]^-{\simeq} & 0 \\
			0 \ar[r] & {H^1_{\et}(\G_r^{\et})\tens_{\Z_p} \a_r} \ar[r] &
			{H^1_{\et}(\G_r)\tens_{\Z_p} \a_r} \ar[r] & 
			{H^1_{\et}(\G_r^{\mult})\tens_{\Z_p}\a_r}\ar[r] & 0
		}
	\end{gathered}	
		\label{etalecompdiag}
	\end{equation}
	Due to Theorem \ref{MainThmCrystal}, the terms in the top row of \ref{etalecompdiag}
	are free of ranks $d'$, $2d'$, and $d'$ over $\wt{\a}_r[\Delta/\Delta_r]$, respectively,
	so we conclude from \cite[Lemma 3.1.3]{CaisHida1} (using $A=\Z_p[\Delta/\Delta_r]$
	and $B=\a_r[\Delta/\Delta_r]$ in the notation of that result) 
	that $H^1_{\et}(\G_r^{\star})$ is a free $\Z_p[\Delta/\Delta_r]$-module
	of rank $d'$ for $\star=\{\et,\mult\}$ and that $H^1_{\et}(\G_r)$ is free of rank $2d'$
	over $\Z_p[\Delta/\Delta_r]$.  Using the fact that 
	$\Z_p\rightarrow \a_r$ is faithfully flat, it then follows 
	from the surjectivity of the vertical maps in (\ref{BTindLimPB}) 
	(which was noted in the proof of Theorem \ref{MainThmCrystal})
	that the canonical trace mappings $H^1_{\et}(\G_r^{\star})\rightarrow H^1_{\et}(\G_{r'}^{\star})$
	for $\star\in \{\et,\mult,\Null\}$ are surjective for all $r\ge r'$.
	Applying \cite[Lemma 3.1.2]{CaisHida1} with $A_r=\Z_p$, $M_r:=H^1_{\et}(\G_r^{\star})$,
	$I_r=(0)$, $B=\Z_p$ and $B'=\wt{\a}$, we conclude that $H^1_{\et}(\G_{\infty}^{\star})$
	is free of rank $d'$ (respectively $2d'$) over $\Lambda$ for $\star=\et,$ $\mult$ (respectively $\star=\Null$),
	that the specialization mappings 
	\begin{equation*}
	\xymatrix{
		{H^1_{\et}(\G_{\infty}^{\star})\tens_{\Lambda} \Z_p[\Delta/\Delta_r]} \ar[r] & 
		{H^1_{\et}(\G_r^{\star})}
		}
	\end{equation*}
	are isomorphisms, and that the canonical mappings for $\star\in \{\et,\mult,\Null\}$
	\begin{equation}
		\xymatrix{
			{H^1_{\et}(\G_{\infty}^{\star})\tens_{\Lambda} \Lambda_{\wt{\a}}} \ar[r] & 
			{\varprojlim_r \left(H^1_{\et}(\G_r^{\star})\tens_{\Z_p} \wt{\a}\right)}
		}\label{etaleswitcheroo}
	\end{equation}
	are isomorphisms.  Invoking the isomorphism (\ref{limitetaleseq})
	gives Corollary \ref{HidasThm}. By \cite[Lemma 3.1.2]{CaisHida1} with $A_r=\s_r$, $M_r=\m_r(\G_r^{\star})$,
	$I_r=(0)$, $B=\s_{\infty}$ and $B'=\wt{\a}$, we similarly conclude from (the proof of) Theorem 
	\ref{MainThmCrystal} that the canonical mappings for $\star\in \{\et,\mult,\Null\}$
	\begin{equation}
		\xymatrix{
			{\m_{\infty}^{\star}\tens_{\s_{\infty},\varphi} \Lambda_{\wt{\a}}} \ar[r] & 
			{\varprojlim_r \left(\m_r(\G_r^{\star})\tens_{\s_r} \wt{\a}\right)}
		}\label{crystalswitcheroo}
	\end{equation}
	are isomorphisms.
	Applying $\otimes_{\a_r} \wt{\a}$ to the diagram (\ref{etalecompdiag}),
	passing to inverse limits, and using the isomorphisms
	(\ref{etaleswitcheroo}) and (\ref{crystalswitcheroo}) gives (again invoking (\ref{limitetaleseq}))
	the isomorphism (\ref{FinalComparisonIsom}).  
	Using the fact that the inclusion $\Z_p\hookrightarrow \wt{\a}^{\varphi=1}$
	is an equality, the isomorphism (\ref{RecoverEtaleIsom}) follows immediately from 
	(\ref{FinalComparisonIsom}) by taking $F\otimes\varphi$-invariants.	
\end{proof}

Using Theorems \ref{RecoverEtale} and \ref{CrystalDuality} we can give a new proof of
Ohta's duality theorem \cite[Theorem 4.3.1]{OhtaEichler} for the $\Lambda$-adic ordinary 
filtration of ${e^*}'H^1_{\et}$ (see Corollary \ref{OhtaDuality}):

\begin{theorem}\label{OhtaDualityText}
	There is a canonical $\Lambda$-bilinear and perfect duality pairing	
	\begin{equation}
		\langle \cdot,\cdot\rangle_{\Lambda}: {e^*}'H^1_{\et}\times {e^*}'H^1_{\et}\rightarrow \Lambda
		\quad\text{determined by}\quad
		\langle x,y\rangle_{\Lambda} \equiv \sum_{\delta\in \Delta/\Delta_r} 
		(x , w_r {U_p^*}^r\langle\delta^{-1}\rangle^*y)_r \delta \bmod I_r
		\label{EtaleDualityPairing}
	\end{equation}
	with respect to which the action of $\H^*$ is self-adjoint; here,
	$(\cdot,\cdot)_r$ is the usual cup-product pairing on $H^1_{\et,r}$ and 
	$I_r:=\ker(\Lambda\twoheadrightarrow \Z_p[\Delta/\Delta_r])$.  
	Writing $\nu:\scrG_{\Q_p}\rightarrow \H^*$ for the character 
	$\nu:=\chi\langle\chi\rangle \lambda(\langle p\rangle_N)$, the 
	pairing $(\ref{EtaleDualityPairing})$ induces a canonical
	$\scrG_{\Q_p}$ and $\H^*$-equivariant isomorphism of exact sequences 
	\begin{equation*}
		\xymatrix{
			0 \ar[r] & {({e^*}'H^1_{\et})^{\I}(\nu)}
			\ar[d]^-{\simeq} \ar[r] & 
			{{e^*}'H^1_{\et}(\nu)}\ar[d]^-{\simeq} \ar[r] &
			{({e^*}'H^1_{\et})_{\I}(\nu)}
			\ar[d]^-{\simeq}\ar[r] & 0 \\
			0 \ar[r] & {\Hom_{\Lambda}(({e^*}'H^1_{\et})_{\I},\Lambda)} \ar[r] & 
			{\Hom_{\Lambda}({e^*}'H^1_{\et},\Lambda)} \ar[r] &
			{\Hom_{\Lambda}(({e^*}'H^1_{\et})^{\I},\Lambda)}\ar[r] & 0
		}
	\end{equation*}
\end{theorem}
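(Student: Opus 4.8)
The plan is to transport the autoduality of $\m_{\infty}$ furnished by Theorem \ref{CrystalDuality} across the \'etale comparison isomorphism of Theorem \ref{RecoverEtale}, and then to descend the resulting pairing from $\Lambda_{\wt{\a}}$ to $\Lambda$ by passing to Frobenius-fixed vectors. Explicitly, base-changing the duality isomorphism (\ref{MinftyDuality}) along the $\varphi$-twisted structure map $\s_{\infty}\to\wt{\a}$ yields a perfect $\Lambda_{\wt{\a}}$-bilinear pairing on $\m_{\infty}\tens_{\Lambda_{\s_{\infty}},\varphi}\Lambda_{\wt{\a}}$ for which $\H^*$ is self-adjoint, since (\ref{MinftyDuality}) is $\H^*$-equivariant. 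Theorem \ref{RecoverEtale} identifies this module $\scrG_{\Q_p}$-equivariantly with ${e^*}'H^1_{\et}\tens_{\Lambda}\Lambda_{\wt{\a}}$, so we obtain a perfect pairing on the latter. Because (\ref{MinftyDuality}) intertwines $F$ with $V^{\vee}$ and $V$ with $F^{\vee}$, the transported pairing respects the Frobenius structures and hence restricts, on the Frobenius-fixed lattices singled out by $F\otimes\varphi=1$, to a pairing valued in $\Lambda_{\wt{\a}}^{\varphi=1}=\Lambda$. As these lattices are identified with ${e^*}'H^1_{\et}$ by (\ref{RecoverEtaleIsom}) and $\wt{\a}^{\varphi=1}=\Z_p$, we obtain $\langle\cdot,\cdot\rangle_{\Lambda}$ with $\H^*$ self-adjoint.

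The heart of the matter is to show that the crystalline twist $\mu\langle a\rangle_N$ becomes the arithmetic character $\nu=\chi\langle\chi\rangle\lambda(\langle p\rangle_N)$ under this transport. Here $\mu(\gamma)=\frac{u_1}{\gamma u_1}\chi(\gamma)\langle\chi(\gamma)\rangle$, and I would analyze the three factors separately. The image of $u_1$ in $\wt{\a}$ is a Frobenius shift of $t:=[\u{\varepsilon}]-1$, on which $\gamma\in\scrG_{\Q_p}$ acts through $\chi$ via $\gamma t=(1+t)^{\chi(\gamma)}-1$ (by the embedding $u_r\mapsto\varphi^{-r}([\u{\varepsilon}]-1)$ and (\ref{gamphiact})); consequently the geometric factor $\frac{u_1}{\gamma u_1}$ maps to the Galois coboundary $\frac{t}{\gamma t}$, which trivializes the corresponding twist after base change to $\wt{\a}$ via multiplication by $t$. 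The factors $\chi(\gamma)\langle\chi(\gamma)\rangle$ are unaffected and supply the $\chi\langle\chi\rangle$ part of $\nu$, while the unramified character $\langle a\rangle_N$ of $\Gal(K_0'/K_0)$ descends to $\lambda(\langle p\rangle_N)$ since $\mathrm{Frob}_p$ acts on $\mu_N$ by the $p$-th power, whence $a(\mathrm{Frob}_p)\equiv p\bmod N$.

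With the twist identified, applying the transport to the three-term sequence (\ref{MinftyDuality}) and invoking the filtration compatibility of Theorem \ref{RecoverEtale}---which matches $\m_{\infty}^{\et}$ and $\m_{\infty}^{\mult}$ with $({e^*}'H^1_{\et})^{\I}$ and $({e^*}'H^1_{\et})_{\I}$---yields the asserted $\scrG_{\Q_p}$- and $\H^*$-equivariant isomorphism of exact sequences. It remains to verify the explicit formula (\ref{EtaleDualityPairing}). For this I would specialize $\langle\cdot,\cdot\rangle_{\Lambda}$ along $\Lambda\twoheadrightarrow\Z_p[\Delta/\Delta_r]$ and compare with the cup-product pairing $(\cdot,\cdot)_r$ on $H^1_{\et,r}$: the factor $w_r{U_p^*}^r$ arises from the Atkin--Lehner intertwining $w_rT=T^*w_r$ together with the transition maps (\ref{pdivTowers}) in the tower, and the sum over $\delta\in\Delta/\Delta_r$ against $\langle\delta^{-1}\rangle^*$ is forced by the change-in-$r$ compatibility (\ref{pairingchangeinr1}) of \cite[Lemma 3.1.4]{CaisHida1}, exactly as in the proofs of Proposition \ref{DieudonneDuality} and Theorem \ref{CrystalDuality}.

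The main obstacle will be the twist computation of the second paragraph: one must carefully disentangle the interaction between the geometric unit $\frac{u_1}{\gamma u_1}$ and the semilinear Galois action on the period ring $\wt{\a}$, confirm that the coboundary trivialization is compatible with Frobenius descent, and correctly account for the unramified piece $\lambda(\langle p\rangle_N)$ emerging from $\langle a\rangle_N$. Matching the precise normalization in (\ref{EtaleDualityPairing}) against the cup product demands similar care, but is structurally parallel to the finite-level duality computations already established.
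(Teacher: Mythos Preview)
Your approach---transporting the crystalline autoduality of Theorem~\ref{CrystalDuality} across the comparison of Theorem~\ref{RecoverEtale} and descending via $F\otimes\varphi$-invariants---is correct and is precisely the alternative route the paper's own proof indicates in its final sentence. The paper's primary argument is more direct: following the template of Proposition~\ref{DieudonneDuality}, one works at finite level with the cup-product pairings $(\cdot,\cdot)_r$ on $H^1_{\et,r}$ (twisted by $w_r{U_p^*}^r$), verifies the change-in-$r$ compatibility required by \cite[Lemma~3.1.4]{CaisHida1} using (\ref{PicAlbRelation}), and invokes Corollary~\ref{HidasThm} for freeness and control. That route yields the explicit formula (\ref{EtaleDualityPairing}) by construction and produces the character $\nu$ directly from the known Galois behaviour of $w_r$, entirely bypassing the twist computation you identify as the main obstacle. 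Your route, by contrast, makes the link between the crystalline and \'etale dualities fully explicit---which is conceptually satisfying---but forces you to unwind $\mu\langle a\rangle_N\leadsto\nu$ through the period ring; note in particular that the base change to $\wt{\a}$ absorbs the extension $\s_{\infty}'\supseteq\s_{\infty}$ (since $\wt{\a}\supseteq W(\o{\F}_p)\supseteq R_0'$), which is what allows the $\Gal(K_0'/K_0)$-twist $\langle a\rangle_N$ to reappear as the unramified $\scrG_{\Q_p}$-character $\lambda(\langle p\rangle_N)$.
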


\begin{proof}
	The proof is similar to that of Proposition \ref{DieudonneDuality}, using 
	Corollary \ref{HidasThm} and applying \cite[Lemma 3.1.4]{CaisHida1} ({\em cf.} 
	the proofs of \cite[3.2.4]{CaisHida1} and \cite[Theorem 4.3.1]{OhtaEichler} and of \cite[Proposition 4.4]{SharifiConj}).    
	Alternatively, one can prove Theorem \ref{OhtaDualityText} by appealing to Theorem \ref{CrystalDuality}
	and the isomorphism (\ref{RecoverEtaleIsom}) of Theorem \ref{RecoverEtale}.
\end{proof}

\begin{proof}[Proof of Theorem $\ref{SplittingCriterion}$]
	Suppose first that (\ref{DieudonneLimitFil}) admits a $\Lambda_{\s_{\infty}}$-linear
	splitting $\m_{\infty}^{\mult}\rightarrow \m_{\infty}$
	which is compatible with $F$, $V$, and $\Gamma$.
	Extending scalars along $\Lambda \rightarrow \Lambda_{\wt{\a}}\xrightarrow{\varphi}\Lambda_{\wt{\a}}$ 
	and taking $F\otimes\varphi$-invariants
	yields, by Theorem \ref{RecoverEtale}, a $\Lambda$-linear and $\scrG_{\Q_p}$-equivariant map
	$({e^*}'H^1_{\et})_{\I}\rightarrow {e^*}'H^1_{\et}$
	whose composition with the canonical projection 
	${e^*}'H^1_{\et}\twoheadrightarrow ({e^*}'H^1_{\et})_{\I}$
	is necessarily the identity.
	
	Conversely, suppose that the ordinary filtration of ${e^*}'H^1_{\et}$ is $\Lambda$-linearly
	and $\scrG_{\Q_p}$-equivariantly split.  Applying $\otimes_{\Lambda} \Z_p[\Delta/\Delta_r]$
	to this splitting gives, thanks to Corollary \ref{HidasThm} and the isomorphism
	(\ref{inertialinvariantsseq}), a $\Z_p[\scrG_{\Q_p}]$-linear splitting of
	\begin{equation*}
		\xymatrix{
			0 \ar[r] & {T_pG_r^{\mult}} \ar[r] & {T_pG_r} \ar[r] & {T_pG_r^{\et}}\ar[r] & 0
		}
	\end{equation*}
	which is compatible with change in $r$ by construction.
	By $\Gamma$-descent and Tate's theorem, there is a natural isomorphism
	\begin{equation*}
			{\Hom_{\pdiv_{R_r}^{\Gamma}}(\G_r^{\et},\G_r)}\simeq {\Hom_{\Z_p[\scrG_{\Q_p}]}(T_pG_r^{\et},T_pG_r)}
	\end{equation*}
	and we conclude that the connected-\'etale sequence of $\G_r$ is split (in the category 
	$\pdiv_{R_r}^{\Gamma}$), compatibly with change in $r$.  Due to the functoriality
	of $\m_r(\cdot)$, this in turn implies that
	the top row of (\ref{BTindLim}) is split in $\BT_{\s_r}^{\Gamma}$,
	compatibly with change in $r$, which is easily seen to imply the splitting of (\ref{DieudonneLimitFil}).
\end{proof}

\bibliographystyle{amsalpha_noMR}
\bibliography{mybib}
\end{document}